\definecolor{darkred}{rgb}{0.9,0.,.2}
\definecolor{darkblue}{rgb}{0.,0.,.6}
\definecolor{darkgreen}{rgb}{0.,.6,0.1}
\newcommand{\N}{\mathbb{N}}
\newcommand{\Z}{\mathbb{Z}}
\newcommand{\R}{\mathbb{R}}
\newcommand{\C}{\mathcal{C}} 
\renewcommand{\ss}{\mathrm{SL_{n+1}(\mathbb{R})}}
\newcommand{\SO}{\mathrm{SO_{n,1}(\mathbb{R})}}
\newcommand*{\so}[1]{\mathrm{SO}_{#1,1}(\mathbb{R})}
\newcommand*{\s}[1]{\mathrm{SL}_{#1}(\mathbb{R})}
\newcommand{\LG}{\Lambda_{\Gamma}}
\newcommand{\G}{\Gamma}
\newcommand{\g}{\gamma}
\renewcommand{\L}{\Lambda}
\newcommand{\ph}{\varphi}
\renewcommand{\l}{\lambda}
\newcommand{\h}{\mathtt{h}}
\newcommand{\dgg}{\delta_{\Gamma}}
\renewcommand{\C}{\mathcal{C}}
\newcommand{\U}{\mathcal{U}}
\newcommand{\M}{\mathcal{M}}
\renewcommand{\H}{\mathcal{H}}
\newcommand{\Hh}{\mathcal{H}}
\newcommand{\V}{\mathcal{V}}
\newcommand{\E}{\mathcal{E}}
\renewcommand{\O}{\Omega}
\newcommand{\Og}{\mathcal{O}_{\Gamma}}
\newcommand{\dO}{\partial \Omega}
\renewcommand{\d}{d_{\Omega}}
\renewcommand{\P}{\mathcal{P}}
\newcommand{\PP}{\mathbb{P}}
\newcommand{\Quo}{\Omega/\!\raisebox{-.90ex}{\ensuremath{\Gamma}}}
\newcommand*{\Quotient}[2]{\ensuremath{#1/\!\raisebox{-.90ex}{\ensuremath{#2}}}}
\newcommand{\Aut}{\textrm{Aut}}
\newcommand{\Vol}{\textrm{Vol}}
\newcommand{\Stab}{\textrm{Stab}}
\newcommand{\NW}{\mathtt{NW}}
\newcommand{\HH}{\mathbb{H}}
\theoremstyle{plain}
\newtheorem{qu}{Question}
\newtheorem{fait}{Fait}
\theoremstyle{definition}
\theoremstyle{remark}
\title[Le flot géodésique en géométrie de Hilbert]{Le flot géodésique des quotients g\'eom\'etriquement finis des géométries de Hilbert}
\author{
\href{mailto:mickael.crampon@usach.cl}{Mickaël Crampon}
}
\address{Universidad de Santiago de Chile, Departamento de Matem\'atica y Ciencia de la Computaci\'on, Av. Las Sophoras 173 - Estaci\'on Central, Santiago de Chile - Chile}
\author{
\href{mailto:ludovic.marquis@univ-rennes1.fr}{Ludovic Marquis}
}
\address{IRMAR, 263 Av. du G\'en\'eral Leclerc, CS 74205 - 35042 Rennes Cedex - France \newline{} \newline{}}
\date{} 
\email{mickael.crampon@usach.cl \\ ludovic.marquis@univ-rennes1.fr}
\urladdr{
http://mikl.crampon.free.fr/
\href{http://mikl.crampon.free.fr/}{\Mundus}
\newline
http://perso.univ-rennes1.fr/ludovic.marquis
\href{http://perso.univ-rennes1.fr/ludovic.marquis}{\Mundus}
}
\begin{document}

\begin{abstract}
On \'etudie le flot g\'eod\'esique des quotients g\'eom\'etriquement finis $\Quo$ de géométries de Hilbert, en particulier ses propri\'et\'es de r\'ecurrence.\\
On prouve, sous une hypoth\`ese g\'eom\'etrique sur les cusps, que le flot g\'eod\'esique est uniform\'ement hyperbolique. Sans cette hypoth\`ese, on construit un exemple o\`u celui-ci a un exposant de Lyapunov nul.\\
On fait le lien entre la dynamique du flot g\'eod\'esique et certaines propri\'et\'es du convexe $\O$ et du groupe $\G$. On en d\'eduit des r\'esultats de rigidit\'e, qui \'etendent ceux de Benoist et Guichard pour les quotients compacts.\\
Enfin, on s'int\'eresse au lien entre entropie volumique et exposant critique; on montre entre autres qu'ils co\"incident lorsque le quotient est de volume fini.
\end{abstract}

\begin{altabstract}
We study the geodesic flow of geometrically finite quotients $\Quo$ of Hilbert geometries, in particular its recurrence properties.\\
We prove that, under a geometrical assumption on the cusps, the geodesic flow is uniformly hyperbolic. Without this assumption, we provide an example of a quotient whose geodesic flow has a zero Lyapunov exponent.\\
We make the link between the dynamics of the geodesic flow and some properties of the convex set $\O$ and the group $\G$. As a consequence, we get various rigidity results which extend previous results of Benoist and Guichard for compact quotients.\\
Finally, we study the link between volume entropy and critical exponent; for example, we show that they coincide provided the quotient has finite volume.
\end{altabstract}

\maketitle
\tableofcontents

\frontmatter

\section{Introduction}

\emph{Cet article dynamique fait logiquement suite \`a l'article g\'eom\'etrique \cite{Crampon:2012fk}, dans lequel nous \'etudions la notion de finitude g\'eom\'etrique en g\'eom\'etrie de Hilbert. Avec \cite{Crampon:2011fk}, ils forment un seul et m\^eme travail que nous avons d\'ecoup\'e en trois pour des raisons \'evidentes de longueur. Concernant la g\'eom\'etrie des vari\'et\'es g\'eom\'etriquement finies, nous ne rappellerons dans ce texte que les r\'esultats dont nous ferons usage et renvoyons le lecteur \`a \cite{Crampon:2012fk} pour plus d'informations.}\\

Une g\'eom\'etrie de Hilbert est un espace m\'etrique $(\O,\d)$ o\`u $\O$ est un ouvert proprement convexe de l'espace projectif réel $\PP^n=\PP^n(\R)$ et $\d$ est la distance d\'efinie sur $\O$ par 
$$d_{\O}(x,y) = \displaystyle \frac{1}{2} \Big|\ln \big([p:q:x:y]\big)\Big|,\ x,y\in\O\ \textrm{distincts};$$
dans cette formule, les points $p$ et $q$ sont les points d'intersection de la droite $(xy)$ avec le bord $\dO$ de $\O$. Ces g\'eom\'etries ont été introduites par Hilbert comme exemples de g\'eom\'etries dans lesquelles les droites sont des g\'eod\'esiques. Leur d\'efinition imite celle de l'espace hyperbolique dans le mod\`ele projectif de Beltrami, qui correspond \`a la g\'eom\'etrie de Hilbert d\'efinie par un ellipso\"ide.

\begin{center}
\begin{figure}[h!]
  \centering
\includegraphics[width=6cm]{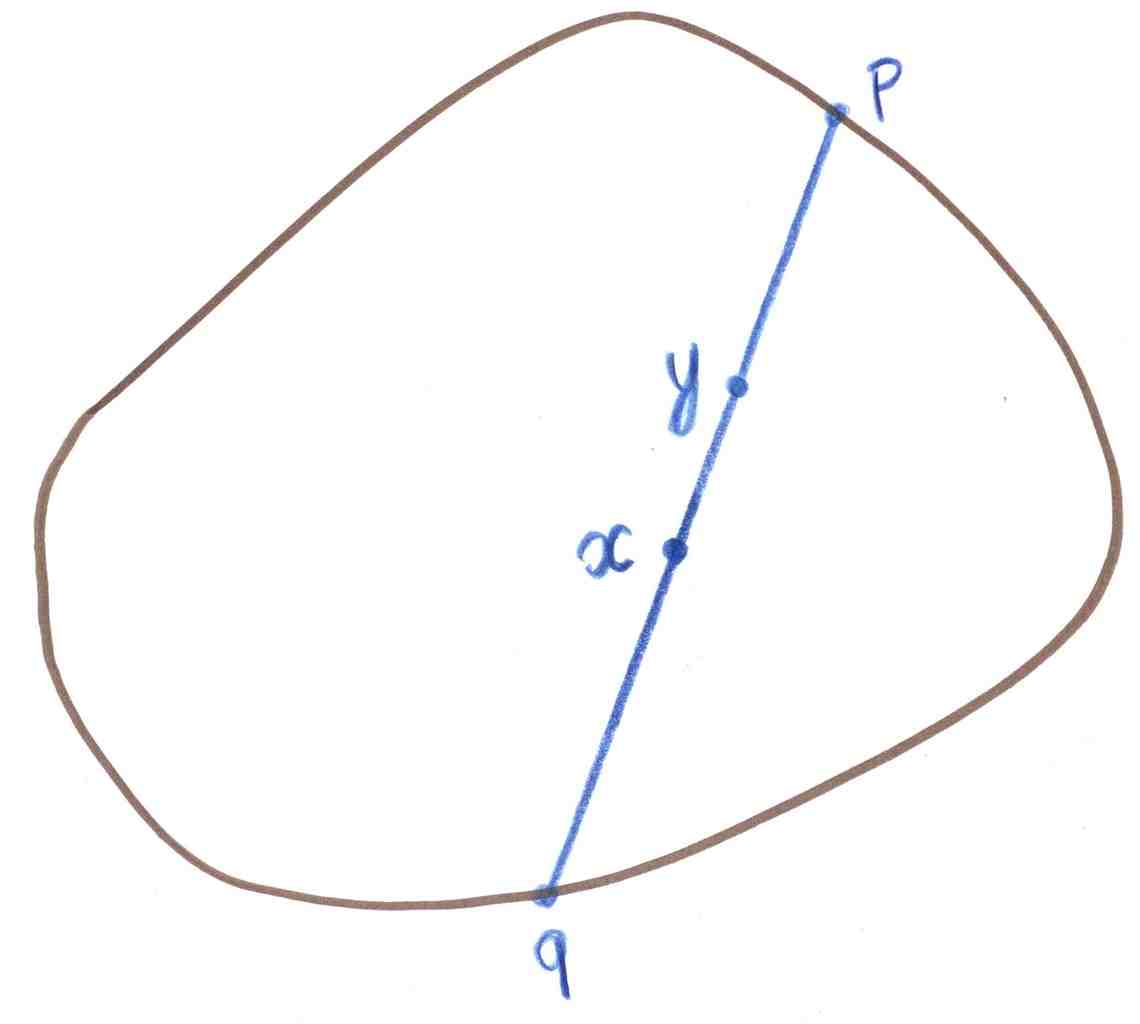}
\caption{La distance de Hilbert}

\end{figure}
\end{center}

Lorsque l'ouvert convexe $\O$ est strictement convexe, la g\'eom\'etrie de Hilbert est uniquement g\'eod\'esique: les droites sont les seules g\'eod\'esiques. On peut dans ce cas d\'efinir le flot g\'eod\'esique sans recourir \`a des \'equations g\'eod\'esiques, comme on le fait de fa\c con traditionnelle en g\'eom\'etrie riemannienne. Le flot g\'eod\'esique est ainsi le flot d\'efini sur le fibr\'e homog\`ene $H\O = \Quotient{T\O\smallsetminus\{0\}}{\R^+}$ de la fa\c con suivante: si $w=(x,[\xi])$ est un point de $H\O$, consistant en un point $x$ de $\O$ et une direction tangente $[\xi]$, on trouve son image $\ph^t(w)$ en suivant la droite g\'eod\'esique partant de $x$ dans la direction $[\xi]$.\\
Les g\'eom\'etries de Hilbert sont des espaces finsl\'eriens: la m\'etrique de Hilbert est engendr\'ee par un champ de normes $F:T\O\longrightarrow\R$ sur $\O$, donn\'e par la formule
$$F(x,\xi) = \frac{|\xi|}{2}\Bigg(\frac{1}{|xx^-|} + \frac{1}{| xx^+|} \Bigg),\ (x,\xi)\in T\O,$$
o\`u $x^+$ et $x^-$ sont les points d'intersection de la droite $\{x+\l \xi,\ \l\in\R\}$ avec $\partial \O$  (voir le paragraphe \ref{para_def_dist} pour plus de pr\'ecisions). Lorsque $\dO$ est de classe $\C^2$ \`a hessien d\'efini positif, alors on peut d\'efinir les g\'eod\'esiques au moyen d'une \'equation diff\'erentielle, et le flot g\'eod\'esique est le flot de cette \'equation.\\

La g\'eom\'etrie de Hilbert d\'efinie par un ouvert strictement convexe \`a bord $\C^1$ poss\`ede \emph{un certain comportement hyperbolique}. Par exemple, dans ce cadre-l\`a, on voit appara\^itre naturellement, au moyen des horosph\`eres, les vari\'et\'es stables et instables du flot g\'eod\'esique. Le flot g\'eod\'esique est dans ce cas de classe $\C^1$ et l'espace tangent \`a $H\O$ admet une d\'ecomposition en
$$TH\O = \R.X \oplus E^s \oplus E^u,$$
o\`u $X$ est le g\'en\'erateur du flot, $E^s$ est la distribution stable tangente au feuilletage stable, et $E^u$ est la distribution instable.\\
Les flots g\'eod\'esiques des vari\'et\'es riemanniennes compactes de courbure n\'egative sont les premiers exemples de flots d'Anosov, ou uniform\'ement hyperboliques. Cette propri\'et\'e d'hyperbolicit\'e ne d\'epend que des bornes sur la courbure et elle reste donc vraie pour une vari\'et\'e riemannienne non compacte \`a courbure n\'egative $K<-a^2<0$.\\
Pour une g\'eom\'etrie de Hilbert quelconque, on ne peut esp\'erer obtenir de propri\'et\'e d'hyperbolicit\'e. En effet, le comportement asymptotique autour d'une g\'eod\'esique d\'epend de la r\'egularit\'e du bord du convexe au point extr\'emal de la g\'eod\'esique (voir \cite{Crampon:2011fk2} pour une \'etude d\'etaill\'ee). Par contre, si la g\'eom\'etrie admet un quotient assez petit, on peut s'attendre \`a des propri\'et\'es de r\'ecurrence sur le quotient.\\
C'est le cas lorsqu'il existe un quotient compact: dans \cite{MR2094116}, Yves Benoist a prouv\'e que le flot g\'eod\'esique d'un quotient compact d'une g\'eom\'etrie de Hilbert (avec $\O$ strictement convexe) \'etait un flot d'Anosov. Notre premier th\'eor\`eme g\'en\'eralise cela au flot g\'eod\'esique de certaines vari\'et\'es g\'eom\'etriquement finies.\\

Les vari\'et\'es g\'eom\'etriquement finies sont en quelque sorte les vari\'et\'es non compactes les plus simples. Leur caract\'eristique essentielle pour nous est que leur c\oe ur convexe se d\'ecompose en une partie compacte et un nombre fini de cusps. C'est essentiel car le c\oe ur convexe est le support de l'ensemble non errant du flot g\'eod\'esique; c'est donc l\`a que se concentre la dynamique. On renvoie au fait \ref{decomposition} ou \`a l'article \cite{Crampon:2012fk} pour plus de d\'etails.\\
\`A chaque fois, on va essayer de comprendre s\'epar\'ement ce qu'il se passe sur la partie compacte puis sur les parties cuspidales. De fa\c con g\'en\'erale, on ne peut rien dire sans faire d'hypoth\`eses sur la g\'eom\'etrie des cusps:

\begin{prop}[Proposition \ref{nonanosov}]\label{contreex_intro}
Il existe une vari\'et\'e g\'eom\'etriquement finie $M=\Quo$ dont le flot g\'eod\'esique a un exposant de Lyapunov nul. En particulier, le flot g\'eod\'esique n'est pas uniform\'ement hyperbolique.
\end{prop}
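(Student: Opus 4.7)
L'id\'ee est d'exploiter directement le ph\'enom\`ene rappel\'e en introduction~: en g\'eom\'etrie de Hilbert, le comportement asymptotique d'une g\'eod\'esique est dict\'e par la r\'egularit\'e de $\dO$ en son point extr\'emal (cf.~\cite{Crampon:2011fk2}). Pour produire un contre-exemple \`a l'hyperbolicit\'e uniforme, il suffit donc de fabriquer un quotient g\'eom\'etriquement fini dont un cusp est ancr\'e en un point o\`u la r\'egularit\'e du bord est trop faible pour garantir une contraction exponentielle des horosph\`eres.

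Concr\`etement, je partirais d'une vari\'et\'e hyperbolique r\'eelle de volume fini non compacte, r\'ealis\'ee par un r\'eseau non cocompact $\G_0\subset\so{n}$ agissant sur l'ellipso\"ide $\O_0\subset\PP^n$, puis j'isolerais un sous-groupe parabolique maximal $P_0\subset\G_0$ fixant un point $p\in\dO_0$. Je chercherais ensuite \`a d\'eformer simultan\'ement $\O_0$ et $P_0$ au voisinage de $p$ de fa\c{c}on \'equivariante, pour obtenir un nouveau convexe $\O$, strictement convexe et \`a bord $\C^1$, dont la r\'egularit\'e en $p$ soit strictement plus faible que $\C^{1+\alpha}$ pour tout $\alpha>0$~: par exemple en rempla\c{c}ant localement la quadrique osculatrice par une surface dont le module de convexit\'e en $p$ d\'ecro\^it plus vite que toute puissance. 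De fa\c{c}on \'equivalente, on peut tenter d'ins\'erer dans $\O_0$ un cusp \emph{non hyperbolique} parmi ceux d\'ecrits dans \cite{Crampon:2012fk}, dont la structure projective invariante fournit directement le mod\`ele local voulu.

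Ceci fait, le recollement avec la partie compacte du c\oe ur convexe de $\O_0/\G_0$ s'effectue par les techniques d\'ej\`a employ\'ees dans \cite{Crampon:2012fk}, produisant un groupe discret $\G$ et un quotient g\'eom\'etriquement fini $\O/\G$ au sens du fait~\ref{decomposition}. L'estimation de l'exposant de Lyapunov se fait alors le long d'une g\'eod\'esique $c(t)$ entrant dans le cusp et tendant vers $p$~: en combinant la formule finsl\'erienne pour $F$ rappel\'ee ci-dessus et la caract\'erisation horosph\'erique des fibr\'es stable $E^s$ et instable $E^u$, la vitesse de contraction de $d\ph^t$ sur $E^s$ le long de $c$ s'exprime en fonction du module de convexit\'e de $\dO$ en $p$. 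La faible r\'egularit\'e impos\'ee par construction rend cette contraction sous-exponentielle, ce qui fait s'annuler l'exposant de Lyapunov correspondant et exclut toute hyperbolicit\'e uniforme.

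La principale difficult\'e me semble \^etre la construction \'equivariante elle-m\^eme~: il faut assurer simultan\'ement la stricte convexit\'e, la r\'egularit\'e $\C^1$ globale, l'invariance sous $P$, la r\'egularit\'e pathologique en $p$ et la compatibilit\'e du recollement avec la partie hyperbolique pr\'eserv\'ee. Passer par la classification des cusps g\'en\'eralis\'es \'etablie dans \cite{Crampon:2012fk} permet de contourner une construction explicite en ramenant le probl\`eme \`a l'existence d'un quotient g\'eom\'etriquement fini contenant l'un de ces mod\`eles~; une fois ce pas franchi, la v\'erification que l'exposant de Lyapunov s'annule est une analyse locale directe sur la dynamique du flot dans le cusp.
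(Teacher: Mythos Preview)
Votre sch\'ema bute sur une obstruction structurelle que l'article identifie pr\'ecis\'ement. En partant d'un r\'eseau non cocompact $\G_0\subset\so{n}$, les sous-groupes paraboliques maximaux sont \emph{de rang maximal}~: ils agissent cocompactement sur $\dO_0\smallsetminus\{p\}$. Or le th\'eor\`eme~\ref{ellipsoidesecurite} montre qu'un tel groupe parabolique coince $\dO$ entre deux ellipso\"ides $\P$-invariants tangents en~$p$, ce qui force $\dO$ \`a \^etre $\C^{1+1}$ et $2$-convexe en~$p$ au sens de la d\'efinition~\ref{defi_calpha}. Il n'existe donc aucune d\'eformation $\P$-\'equivariante du bord qui d\'egrade la r\'egularit\'e en~$p$ en de\c{c}\`a de $\C^{1+1}$ tout en restant strictement convexe~; via la proposition~\ref{regularite}, l'exposant de Lyapunov le long d'une g\'eod\'esique entrant dans le cusp reste minor\'e. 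C'est exactement cette rigidit\'e qui fait marcher le corollaire~\ref{loincusp} et, in fine, le th\'eor\`eme~\ref{anosov} pour les quotients de volume fini. Votre proposition alternative d'ins\'erer un \og cusp non hyperbolique\fg{} de \cite{Crampon:2012fk} heurte le m\^eme \'ecueil~: ces cusps proviennent n\'ecessairement de groupes paraboliques de rang non maximal, incompatibles avec un quotient de volume fini.

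L'article contourne cette obstruction en choisissant d\`es le d\'epart un groupe $\G$ dont les paraboliques ne sont \emph{pas} de rang maximal. Concr\`etement, il prend un groupe fuchsien de covolume fini dans $\so2$ (groupe fondamental d'un tore \`a un trou) et le plonge canoniquement dans $\so3$. Les sous-groupes paraboliques restent de rang~$1$ alors que le rang maximal en dimension~$3$ vaut~$2$~; l'action sur $\dO\smallsetminus\{p\}$ n'est donc pas cocompacte et le groupe n'impose la r\'egularit\'e du bord en~$p$ que dans une direction. L'article exploite cette direction transverse libre pour construire \`a la main, par recollement de morceaux de surfaces convexes bord\'es par des courbes judicieusement choisies, un nouvel ouvert $\G$-invariant $\O\subset\PP^3$ strictement convexe \`a bord~$\C^1$, dont le bord n'est $\C^{1+\varepsilon}$ en aucun point parabolique pour aucun $\varepsilon>0$. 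Le groupe $\G$ lui-m\^eme n'est jamais d\'eform\'e~: seul le convexe change. Une fois cela acquis, la proposition~\ref{regularite} donne directement l'exposant de Lyapunov nul, comme vous l'aviez anticip\'e dans votre derni\`ere \'etape.
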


Dans ce texte, nous \'etudierons donc principalement les vari\'et\'es g\'eom\'etriquement finies dont les cusps sont \og asymptotiquement hyperboliques\fg {}: dans un cusp, la m\'etrique de Hilbert est \'equivalente \`a une m\'etrique hyperbolique qui a les m\^emes g\'eod\'esiques (non param\'etr\'ees); voir la d\'efinition \ref{asymphypdefi}. Parmi les vari\'et\'es g\'eom\'etriquement finies \`a cusps asymptotiquement hyperboliques, on trouve en particulier les vari\'et\'es de volume fini, et plus g\'en\'eralement celles dont les sous-groupes paraboliques maximaux sont de rang maximal, c'est-\`a-dire qu'ils agissent cocompactement sur $\dO\smallsetminus\{p\}$, o\`u $p$ est le point fixe du groupe parabolique consid\'er\'e.\\
Il est fort possible que pour toute vari\'et\'e g\'eom\'etriquement finie $M=\Quo$, il existe un ouvert $\O'$, $\G$-invariant, strictement convexe et \`a bord $\C^1$, tel que le quotient $M'=\Quotient{\O'}{\G}$ soit g\'eom\'etriquement fini \`a cusps asymptotiquement hyperboliques. La raison principale qui nous pousse \`a penser qu'une telle construction est possible est que les sous-groupes paraboliques d'un tel groupe $\G$ sont conjugu\'es à des sous-groupes paraboliques de $\SO$.\\

Pour ces vari\'et\'es-l\`a, on peut prouver le

\begin{theo}[Th\'eor\`eme \ref{anosov}]\label{anosovintro}
Soient $\O$ un ouvert strictement convexe et \`a bord $\C^1$, et $M=\Quo$ une variété géométriquement finie \`a cusps asymptotiquement hyperboliques. Le flot géodésique de la m\'etrique de Hilbert est uniformément hyperbolique sur son ensemble non errant $\NW$: le fibré tangent à $HM$ admet en tout point de $\NW$ une d\'ecomposition $\ph^t$-invariante
$$THM = \R.X \oplus E^s \oplus E^u,$$
telle qu'il existe des constantes $\chi,C >0$ pour lesquelles
\begin{equation}\label{inegaliteanosov} 
\|d\ph^t Z^s\| \leqslant C e^{-\chi t},\ \|d\ph^{-t} Z^u\| \leqslant C e^{-\chi t},\ Z^s\in E^s,\ Z^u\in E^u,\ t\geqslant 0.
\end{equation}
\end{theo}

De fa\c con g\'en\'erale, on prouvera aussi les propri\'et\'es de r\'ecurrence suivantes:

\begin{prop}[Proposition \ref{melange}]\label{melange_intro}
Soient $\O$ un ouvert strictement convexe et \`a bord $\C^1$, et $M=\Quo$ une variété quotient. Le flot g\'eod\'esique de $M$ est topologiquement m\'elangeant sur son ensemble non errant.
\end{prop}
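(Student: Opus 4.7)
Le but est de montrer que pour tous ouverts non vides $U, V\subset\NW$, il existe $T>0$ tel que $\ph^t(U)\cap V\neq\emptyset$ pour tout $t\geqslant T$. Conformément au schéma classique de type Anosov--Plante, on décompose la preuve en deux ingrédients : (i) la transitivité topologique de $\ph^t$ sur $\NW$, et (ii) la non-arithméticité du spectre des longueurs $\{\ell(\g) : \g\in\G \text{ hyperbolique}\}$.

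Pour (i), la stricte convexité de $\O$ et la régularité $\C^1$ de $\dO$ permettent la paramétrisation de Hopf : tout vecteur $w\in H\O$ est repéré par son couple d'extrémités $(\xi^-, \xi^+)\in\dO\times\dO\smallsetminus\textrm{diag}$ et un paramètre $s\in\R$ le long de la géodésique correspondante. Dans ces coordonnées, le flot agit par translation sur $s$. L'ensemble non errant se relève à $(\LG\times\LG\smallsetminus\textrm{diag})\times\R$, et la transitivité de $\ph^t$ sur $\NW$ équivaut à l'existence d'une orbite dense de $\G$ sur $\LG\times\LG\smallsetminus\textrm{diag}$. Cette propriété résulte de la dynamique de convergence de $\G$ sur $\LG$ : pour tout voisinage $W$ d'un couple $(\xi^-,\xi^+)$ et toute paire $(\eta^-,\eta^+)$, on choisit un élément hyperbolique $\g\in\G$ de points fixes proches de $(\xi^-, \xi^+)$ ; alors $\g^n(\eta^-, \eta^+)$ entre dans $W$ pour $n$ assez grand.

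Pour (ii), on raisonne par l'absurde en supposant $\ell(\g)\in c_0\Z$ pour tout hyperbolique $\g\in\G$, avec un $c_0>0$ fixé. Choisissons deux éléments hyperboliques $\g_1, \g_2\in\G$ d'axes disjoints ; leur existence découle de ce que $\G$ n'est pas élémentaire (sinon $\NW$ est vide ou réduit à une unique orbite périodique, cas triviaux ou vides pour l'énoncé). On étudie alors la suite $\ell(\g_1\g_2^k)$ : un développement asymptotique de type ping-pong montre
\[
\ell(\g_1\g_2^k) = k\,\ell(\g_2) + \beta(\g_1,\g_2) + o(1),\quad k\to\infty,
\]
où $\beta$ est un terme de type birapport, continu en $\g_1$ sur la composante des éléments hyperboliques. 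En remplaçant $\g_1$ par d'autres éléments hyperboliques de $\G$ obtenus par composition avec des puissances d'un troisième hyperbolique indépendant, on fait varier continûment $\beta$ dans un intervalle de $\R$, ce qui contredit l'inclusion $\ell(\g_1\g_2^k)\in c_0\Z$ pour tout $k$.

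On conclut par l'argument standard de Plante : étant donnés $U, V$ ouverts dans $\NW$, (i) fournit un $\g_0\in\G$ et un $t_0$ avec $\ph^{t_0}(U)$ rencontrant $V$ ; (ii), combinée à la densité des orbites fermées dans $\NW$ (corollaire de la dynamique de convergence), permet alors d'ajuster le paramètre temporel par des périodes convenables, garantissant $\ph^t(U)\cap V\neq\emptyset$ pour tout $t$ assez grand. L'obstacle principal réside dans l'étape (ii) : le développement asymptotique ci-dessus exige de contrôler finement les longueurs de translation dans le cadre finslérien sans recourir à l'hyperbolicité uniforme (non disponible en toute généralité, cf. la proposition \ref{contreex_intro}), et doit s'obtenir à partir de la seule stricte convexité de $\O$ et de la régularité $\C^1$ de $\dO$, via les estimées de type Busemann.
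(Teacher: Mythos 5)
Your overall scheme --- topological transitivity plus non-arithmeticity of the length spectrum, combined through the standard criterion (exercise 18.3.4 of Katok--Hasselblatt, or Plante's argument) --- is exactly the paper's. But step (ii), which you yourself flag as the delicate point, contains a genuine gap: you claim that replacing $\g_1$ by compositions with powers of a third hyperbolic element lets you vary $\beta$ \emph{continuously} in an interval of $\R$. This is impossible: $\G$ is discrete, so this family of elements is countable and $\beta$ takes only countably many values. From the expansion $\ell(\g_1\g_2^k)=k\,\ell(\g_2)+\beta+o(1)$ and the assumed inclusion of the length spectrum in $c_0\Z$ one only deduces $\beta\in c_0\Z$ (the subgroup $c_0\Z$ being closed), with no contradiction at all. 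To conclude you would have to show that the countable set of attainable values of $\beta$ is not contained in $c_0\Z$ --- and that is precisely the nontrivial content of non-arithmeticity. In CAT($-1$) spaces this can be carried out via the density of crossratio values on the limit set (Dal'bo), but Hilbert geometries are not CAT($-1$) and no such result is available off the shelf; the paper takes a completely different route: after restricting to the projective span of $\LG$ the action is strongly irreducible (lemme \ref{irreductible}), the Zariski closure of $\G$ is then semisimple (proposition \ref{adh_semisimple}), and Benoist's theorem \ref{cone_limite} asserts that the group generated by the Jordan projections $\ln\G$ is dense in the vector space spanned by $\ln G$, which rules out the arithmetic relation --- including when $\G$ is \emph{not} Zariski-dense in $\ss$ (for instance $\G$ conjugate into $\SO$), a case your sketch does not address either.

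A second, repairable, error sits in step (i): the diagonal action of $\g^n$ sends $(\eta^-,\eta^+)$ to the diagonal point $(x_{\g}^+,x_{\g}^+)$, not into a neighbourhood of $(\xi^-,\xi^+)$, so your convergence-dynamics argument as written does not produce a dense orbit of $\G$ on $\LG\times\LG\smallsetminus\textrm{diag}$. The standard fix is the paper's own argument: density of the fixed-point pairs $(x_g^+,x_g^-)$ in $\LG\times\LG$ (via $k_n=g^nh^n$, lemme \ref{lemme_gromov_sullivan}), and then, given open sets $U,V\subset\NW$, the geodesic joining the repelling endpoint of a periodic orbit meeting $U$ to the attracting endpoint of a periodic orbit meeting $V$; it is backward and forward asymptotic to these two periodic orbits respectively (strict convexity and $\C^1$ regularity of $\dO$), hence its projection visits both $U$ and $V$.
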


Notre deuxi\`eme th\'eor\`eme concerne la r\'egularit\'e du bord des ouverts convexes $\O$ qui admettent un quotient g\'eom\'etriquement fini $M=\Quo$ non compact. Ce r\'esultat est li\'e au fait que les propri\'et\'es hyperboliques des orbites du flot géodésique se lisent directement sur la r\'egularit\'e du bord au niveau de leur point extr\'emal.\\
Bien entendu, cela permet de d\'ecrire le bord uniquement au niveau de l'ensemble limite $\LG$ du groupe. Ce n'est pas \'etonnant puisque celui-ci constitue l'ensemble des points extr\'emaux des g\'eod\'esiques r\'ecurrentes. De plus, c'est la seule partie du bord qui est \emph{impos\'ee} par le groupe $\G$: on peut en effet modifier le bord (presque) \`a sa guise hors de l'ensemble limite; c'est d'ailleurs ainsi qu'on obtient l'exemple de la proposition \ref{contreex_intro}. Pour un quotient compact ou de volume fini, l'ensemble limite est le bord tout entier et donc le convexe $\O$ est enti\`erement d\'etermin\'e par le groupe $\G$.

\begin{theo}[Corollaire \ref{regu_geo_fini}]\label{regbord_intro}
Soient $\O$ un ouvert strictement convexe et \`a bord $\C^1$, et $M=\Quo$ une variété géométriquement finie \`a cusps asymptotiquement hyperboliques. Il existe $\varepsilon>0$ tel que le bord $\dO$ du convexe $\O$ soit de classe $\C^{1+\varepsilon}$ en tout point de $\LG$.
\end{theo}

Via la caract\'erisation des quotients de volume fini par leur ensemble limite, on obtient le corollaire suivant.

\begin{coro}[Corollaire \ref{regu_geo_fini}]\label{regbord_volfini_intro}
Soit $\O$ un ouvert strictement convexe et \`a bord $\C^1$. Si $\O$ admet un quotient de volume fini, alors son bord $\dO$ est de classe $\C^{1+\varepsilon}$ pour un certain $\varepsilon>0$.
\end{coro}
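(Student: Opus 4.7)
Le plan consiste \`a d\'eduire ce corollaire du Th\'eor\`eme \ref{regbord_intro} via la \og caract\'erisation des quotients de volume fini par leur ensemble limite \fg{} annonc\'ee juste avant l'\'enonc\'e. Tout le travail se r\'eduit essentiellement \`a v\'erifier les hypoth\`eses du th\'eor\`eme et \`a identifier $\LG$ avec $\dO$ tout entier.

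D'abord, je v\'erifierais que $M=\Quo$ est g\'eom\'etriquement fini \`a cusps asymptotiquement hyperboliques. La g\'eom\'etrique finitude d'un quotient de volume fini est un r\'esultat \'etabli dans l'article compagnon \cite{Crampon:2012fk}. De plus, comme le souligne l'introduction, la finitude du volume force chaque sous-groupe parabolique maximal de $\G$ \`a \^etre de rang maximal, c'est-\`a-dire \`a agir cocompactement sur $\dO\smallsetminus\{p\}$ o\`u $p$ est son point fixe; ceci suffit \`a garantir que les cusps de $M$ sont asymptotiquement hyperboliques.

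Ensuite, il faut justifier que $\LG = \dO$. Ce fait classique se d\'emontre par contraposition: si l'ensemble limite \'etait un ferm\'e propre de $\dO$, on pourrait, au voisinage d'un point $x\in\dO\smallsetminus\LG$, exhiber une r\'egion de $\O$ s'injectant dans $M$; mais le volume de Hilbert d'une telle r\'egion, qui touche le bord de $\O$, est n\'ecessairement infini puisque la m\'etrique de Hilbert explose pr\`es de $\dO$, ce qui contredit l'hypoth\`ese de volume fini.

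L'application du Th\'eor\`eme \ref{regbord_intro} fournit alors la r\'egularit\'e $\C^{1+\varepsilon}$ en tout point de $\LG = \dO$. L'obstacle principal, s'il en est un, sera de s'assurer que l'exposant $\varepsilon$ peut \^etre choisi uniform\'ement sur l'ensemble du bord, et non seulement localement en chaque point. Cela r\'esultera de la d\'ecomposition du c\oe ur convexe de $M$ en une partie compacte et un nombre fini de cusps (voir le fait \ref{decomposition}): sur la partie compacte, l'uniformit\'e provient de la compacit\'e, et sur chaque cusp, de l'invariance par l'action parabolique correspondante. Il suffit alors de prendre le minimum des exposants obtenus.
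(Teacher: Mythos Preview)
Your approach is correct and mirrors the paper's: the corollary is the second assertion of Corollaire~\ref{regu_geo_fini}, which follows from the first once one uses that a finite-volume quotient is geometrically finite with $\LG=\dO$ and with maximal-rank parabolic subgroups (hence asymptotically hyperbolic cusps via Corollaire~\ref{loincusp}), all of which is established in \cite{Crampon:2012fk}. Your final paragraph is unnecessary, however: the statement of Th\'eor\`eme~\ref{regbord_intro} already furnishes a \emph{single} $\varepsilon>0$ valid at every point of $\LG$, coming directly from the uniform contraction exponent $\chi(\NW)>0$ of Th\'eor\`eme~\ref{anosovintro} through Proposition~\ref{regularite}, so no separate uniformity argument over the compact part and the cusps is required.
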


Lorsque $\O$ admet un quotient compact, Olivier Guichard a pu d\'eterminer exactement la r\'egularit\'e optimale du bord, c'est-\`a-dire le plus grand $\varepsilon$ tel que le bord $\dO$ soit $\C^{1+\varepsilon}$. Celle-ci est encore une fois d\'etermin\'ee par le groupe $\G$, via les valeurs propres de ses \'el\'ements hyperboliques. Cela n'est pas \'etonnant, \'etant donn\'e que les orbites p\'eriodiques sont denses, et que celles-ci sont en bijection avec les classes de conjugaison d'\'el\'ements hyperboliques de $\G$.\\
Si on se restreint \`a l'ensemble limite et l'ensemble non errant, cette observation reste valable pour un quotient quelconque. Nous pouvons ainsi prouver un r\'esultat similaire pour les ouverts convexes qui admettent un quotient g\'eom\'etriquement fini \`a cusps asymptotiquement hyperboliques. Pour l'\'enonc\'e, d\'efinissons d'abord
$$\varepsilon(\LG) = \sup \{\varepsilon\in [0,1],\ \textrm{le bord}\ \dO\ \textrm{est}\ \C^{1+\varepsilon}\ \textrm{en tout point de}\ \LG\}.$$
Ensuite, pour tout élément hyperbolique $\g \in \G$, notons
$$\varepsilon(\g) = \sup \{\varepsilon\in [0,1],\ \textrm{le bord}\ \dO\ \textrm{est}\ \C^{1+\varepsilon}\ \textrm{au point attractif }\ x_{\g}^+ \textrm{ de } \g\},$$
et $\varepsilon(\G) = \inf\{ \varepsilon(\g),\ \g\in\G\ \textrm{hyperbolique}\}$. Ainsi, le bord $\dO$ est $\C^{1+\varepsilon(\G)}$ en tout point fixe hyperbolique. On obtient alors:

\begin{theo}[Th\'eor\`eme \ref{bordexposant}]\label{bordexposant_intro}
Soient $\O$ un ouvert strictement convexe et \`a bord $\C^1$, et $M=\Quo$ une variété géométriquement finie \`a cusps asymptotiquement hyperboliques. On a 
$$\varepsilon(\LG) = \varepsilon(\G).$$
\end{theo}

Notre démonstration de ce th\'eor\`eme est diff\'erente de celle de Guichard et repose sur l'extension d'un th\'eor\`eme de Ursula Hamenst\"adt \cite{MR1279472}, qui s'int\'eresse au meilleur coefficient de contraction d'un flot uniform\'ement hyperbolique.\\
Pour l'\'enoncer, il nous faut d\'efinir les meilleurs coefficients de contraction du flot sur l'ensemble non errant 

$$\chi(\NW) = \sup\{\chi,\ \textrm{il existe}\ C>0\ \textrm{tel que l'in\'egalit\'e (\ref{inegaliteanosov}) ait lieu en tout point de}\ \NW\},$$
et sur les orbites p\'eriodiques
$$\chi(Per)=\inf\{\chi(w)\ |\ w\in \NW\ \text{périodique}\}.$$

\begin{theo}[Th\'eor\`eme \ref{hamenstadt}]\label{hamenstadt_intro}
Soient $\O$ un ouvert strictement convexe et \`a bord $\C^1$, et $M=\Quo$ une variété géométriquement finie \`a cusps asymptotiquement hyperboliques. On a 
$$\chi(Per) = \chi(\NW).$$
\end{theo}

Comme corollaire de ces r\'esultats et du travail pr\'ec\'edent \cite{Crampon:2012fk}, on obtient un r\'esultat de rigidit\'e:

\begin{theo}[Corollaire \ref{rigidite_geofini}]\label{rigidite_geofini_intro}
Soit $\O$ un ouvert strictement convexe et \`a bord $\C^1$, qui admet une action g\'eom\'etriquement finie d'un groupe $\G$ contenant un \'el\'ement parabolique. Si le bord $\dO$ est de classe $\C^{1+\varepsilon}$ pour tout $0<\varepsilon<1$, alors $\G$ est un sous-groupe d'un conjugu\'e de $\SO$.
\end{theo}

Comme cas particulier, on obtient une version volume fini d'un th\'eor\`eme de Benoist \cite{MR2094116} qui concernait les quotients compacts:

\begin{coro}[Corollaire \ref{rigidite_volfini}]\label{rigidite_volfini_intro}
Soit $\O$ un ouvert strictement convexe et \`a bord $\C^1$ qui admet un quotient de volume fini non compact. Si le bord $\dO$ est de classe $\C^{1+\varepsilon}$ pour tout $0<\varepsilon<1$, alors $\O$ est un ellipso\"ide.
\end{coro}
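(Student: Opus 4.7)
L'approche consiste \`a d\'eduire ce corollaire du Th\'eor\`eme \ref{rigidite_geofini_intro} appliqu\'e \`a l'action de $\G$ sur $\O$. Commen\c cons par v\'erifier les hypoth\`eses de ce th\'eor\`eme. D'apr\`es \cite{Crampon:2012fk}, un quotient de volume fini d'une g\'eom\'etrie de Hilbert strictement convexe est automatiquement g\'eom\'etriquement fini, et l'introduction rappelle que les quotients de volume fini rentrent dans la cat\'egorie des vari\'et\'es \`a cusps asymptotiquement hyperboliques. La non-compacit\'e garantit par ailleurs que la d\'ecomposition du c\oe ur convexe (fait \ref{decomposition}) comporte au moins un cusp, for\c cant $\G$ \`a contenir un \'el\'ement parabolique. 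Le Th\'eor\`eme \ref{rigidite_geofini_intro} fournit alors un \'el\'ement $g\in\sss_{n+1}(\R)$ tel que $\G\subset g\cdot\SO\cdot g^{-1}$ ; en particulier, $\G$ pr\'eserve l'ellipso\"ide $\E:=g(\HH^n)\subset\PP^n$.

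Il reste ensuite \`a \'etablir l'\'egalit\'e $\O=\E$. Pour cela, le point cl\'e est d'exploiter la densit\'e des points fixes attractifs des \'el\'ements hyperboliques dans l'ensemble limite $\LG$. Pour tout $\g\in\G$ hyperbolique, son point fixe attractif $x_{\g}^+\in\PP^n$ appartient \`a $\dO\cap\partial\E$, puisque $\g$ pr\'eserve \`a la fois $\O$ et $\E$. L'adh\'erence dans $\dO$ de ces points fixes vaut $\LG$, lequel co\"incide avec $\dO$ tout entier lorsque le quotient est de volume fini (voir l'introduction). On en d\'eduit l'inclusion $\dO\subset\partial\E$. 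Comme $\dO$ et $\partial\E$ sont tous deux hom\'eomorphes \`a $\mathbb{S}^{n-1}$ (bords d'ouverts proprement convexes de $\PP^n$), cette inclusion entre sous-vari\'et\'es topologiques de m\^eme dimension est n\'ecessairement une \'egalit\'e, d'o\`u $\dO=\partial\E$ puis $\O=\E$.

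Le principal obstacle r\'eside dans la justification soigneuse de la densit\'e des points fixes attractifs d'\'el\'ements hyperboliques de $\G$ dans $\LG$ dans le cadre g\'eom\'etriquement fini non compact : la pr\'esence des cusps rend ce point moins imm\'ediat qu'en situation compacte, o\`u il d\'ecoule imm\'ediatement du caract\`ere Anosov du flot. Ce fait devrait toutefois r\'esulter des propri\'et\'es dynamiques d\'egag\'ees plus haut, notamment du m\'elange topologique sur l'ensemble non errant (Proposition \ref{melange_intro}) combin\'e \`a la densit\'e classique des orbites p\'eriodiques pour les flots uniform\'ement hyperboliques (Th\'eor\`eme \ref{anosovintro}). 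Une fois ce point v\'erifi\'e, la conclusion est essentiellement topologique.
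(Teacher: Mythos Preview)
Your approach is correct and is precisely the argument the paper has in mind: the paper presents Corollaire~\ref{rigidite_volfini} as the particular case of Corollaire~\ref{rigidite_geofini} where $\LG=\dO$, without spelling out the passage from ``$\G$ is conjugate into $\SO$'' to ``$\O$ is an ellipsoid''. Your step~2 fills that gap exactly as one would expect: the attractive fixed point of a hyperbolic $\g$ is its top-eigenvalue line, hence lies on $\partial\E$ as well as on $\dO$, and density plus $\LG=\dO$ forces $\dO\subset\partial\E$.

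One remark: the ``principal obstacle'' you flag in your last paragraph is not an obstacle at all. The density of the attractive fixed points of hyperbolic elements in $\LG$ is stated in the paper immediately after D\'efinition~\ref{def_ens_lim} (``$\LG$ est l'adh\'erence des points fixes des \'el\'ements hyperboliques de $\G$'') and requires only that $\G$ be non-elementary; it does not use uniform hyperbolicity, topological mixing, or anything about cusps. You can therefore drop that paragraph and simply cite this fact. Similarly, the hypothesis of asymptotically hyperbolic cusps is not actually needed to invoke Corollaire~\ref{rigidite_geofini}: its proof only uses $\varepsilon(\G)=1$, Corollaire~\ref{coco}, and the Zariski-closure theorem, none of which depend on the cusp geometry.
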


Remarquons que dans l'\'enonc\'e de ce th\'eor\`eme, tout comme dans celui du th\'eor\`eme \ref{regbord_volfini_intro}, l'une des hypoth\`eses strictement convexe/\`a bord $\C^1$  est superflue: c'est une cons\'equence du travail de Daryl Cooper, Darren Long et Stephan Tillmann \cite{Cooper:2011fk}.\\

\`A la fin de ce texte, on revient sur la repr\'esentation sph\'erique de $\s2$ dans $\s5$, que nous avions \'etudi\'e dans \cite{Crampon:2012fk} car elle permettait de distinguer les deux notions de finitude g\'eom\'etrique que nous y avions introduites. En particulier, on avait vu que l'ensemble des ouverts proprement convexes pr\'eserv\'es par cette repr\'esentation formait, \`a action de $\s5$ pr\`es, une famille croissante $\{\O_r,\ 0\leqslant r\leqslant \infty\}$. Parmi eux, les convexes $\O_0$ et $\O_{\infty}$, duaux l'un de l'autre, n'\'etaient ni strictement convexes ni \`a bord $\C^1$. Les autres par contre l'\'etaient. En fait, on peut d\'eterminer pr\'ecis\'ement leur r\'egularit\'e (voir d\'efinition \ref{defi_calpha} pour les notions de r\'egularit\'e $\C^{1+\varepsilon}$ et la $\beta$-convexit\'e):

\begin{prop}[Proposition \ref{reg_spherique}]\label{reg_spherique_intro}
Pour $0<r<\infty$, le bord de l'ouvert convexe $\O_r$ est de classe $\C^{4/3}$ et $4$-convexe.
\end{prop}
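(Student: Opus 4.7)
The plan is to exploit the $\sss_2(\R)$-equivariance of $\partial\O_r$ under the spherical representation $\sss_2(\R) \to \sss_5(\R)$, in order to reduce both claims to a single distinguished boundary point and then read off the exponents from the weights of the representation $\mathrm{Sym}^4(\R^2)$. The Veronese curve $c: [s:t] \mapsto [s^4 : s^3 t : s^2 t^2 : s t^3 : t^4]$ is the unique closed $\sss_2(\R)$-orbit on $\PP^4$ and lies in $\partial\O_r$ for every $r$; all other orbits in $\partial\O_r$ are open and less degenerate, so one expects the worst regularity and the flattest convexity to occur precisely along $c(\PP^1)$. Since regularity and $\beta$-convexity are invariant under the automorphism group of $\O_r$, it suffices to check both $\C^{4/3}$-regularity and $4$-convexity at the single point $p_0 = c([1:0]) = [1:0:0:0:0]$.

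Near $p_0$, the osculating hyperplane of the Veronese curve is $\{x_4 = 0\}$ in the affine chart $\{x_0 = 1\}$, and by strict convexity together with $\C^1$-regularity of $\O_r$ it is exactly the supporting hyperplane at $p_0$. Hence $\partial\O_r$ is locally the graph $x_4 = f(x_1, x_2, x_3)$ of a non-negative convex function with $f(0) = 0$ and $\nabla f(0) = 0$. The one-parameter subgroup $a_t = \mathrm{diag}(e^t, e^{-t})$ of $\sss_2(\R)$ acts on $\mathrm{Sym}^4(\R^2)$ with weights $(4, 2, 0, -2, -4)$, hence on the affine chart by $(x_1, x_2, x_3, x_4) \mapsto (e^{-2t} x_1, e^{-4t} x_2, e^{-6t} x_3, e^{-8t} x_4)$, and invariance of the graph forces the quasi-homogeneity
\begin{equation*}
f(u^2 x_1, u^4 x_2, u^6 x_3) = u^8\, f(x_1, x_2, x_3), \qquad u > 0.
\end{equation*}
Evaluated along the coordinate axes, this yields $f(x_1, 0, 0) = C_1 x_1^4$, $f(0, x_2, 0) = C_2 x_2^2$ and $f(0, 0, x_3) = C_3 |x_3|^{4/3}$, with constants $C_i > 0$ guaranteed by strict convexity of $\O_r$. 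The worst regularity appears along the $x_3$-direction, since $|x_3|^{4/3}$ has $\C^{1/3}$-Hölder derivative and no better, giving the $\C^{4/3}$-regularity claim; the worst convexity appears along the $x_1$-direction, where the graph osculates the tangent hyperplane at order four, giving the $4$-convexity claim.

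The main obstacle will be upgrading these axis-wise statements to global ones in a full neighborhood of $p_0$, since the quasi-homogeneity only controls $f$ along the $a_t$-invariant coordinate subspaces. The strategy here is to combine the scaling with convexity: any point $x$ sufficiently close to $0$ can be mapped by a suitable $a_t$ into the compact region $\{|x_3| = 1\}$, where $f$ is bounded on both sides by continuous positive functions; the quasi-homogeneity then transports both the upper bound $f(x) \leq C\, (|x_1|^4 + x_2^2 + |x_3|^{4/3})$ and the lower bound $f(x) \geq c\, \|x\|^4$ back to $x$, convexity of $f$ interpolating in between. Once $\C^{4/3}$-regularity and $4$-convexity are established at $p_0$, the $\sss_2(\R)$-equivariance spreads them along $c(\PP^1)$, and a separate (easier) inspection of the open orbits of $\sss_2(\R)$ in $\partial\O_r$, where no axis degenerates, concludes the proof.
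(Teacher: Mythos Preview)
Your approach is correct and reaches the same conclusion, but takes a genuinely different route from the paper. The paper invokes the dynamical machinery already developed: since each point of the Veronese curve is the attracting fixed point of a hyperbolic element $g$ whose eigenvalues are $\lambda^4,\lambda^2,1,\lambda^{-2},\lambda^{-4}$, Proposition~\ref{regularite} together with Lemme~\ref{exposanthyperbolique} read off $\varepsilon(g)=\tfrac{\ln\lambda^{-2}-\ln\lambda^{-4}}{\ln\lambda^{4}-\ln\lambda^{-2}}=\tfrac13$ directly; the $4$-convexity then comes for free by duality, because the dual of $\O_r$ is some $\O_{r'}$ of the same family and $\tfrac{1}{1+\varepsilon}+\tfrac{1}{\beta}=1$ gives $\beta=4$. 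Away from the Veronese curve the paper simply notes that $G$ acts freely and transitively on $\partial\O_r\smallsetminus\Lambda_G$, so this part of the boundary is a single smooth orbit, and the same duality argument makes it $2$-convex there.

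What you do instead is unpack, in this specific example, the content of Proposition~\ref{regularite}: the quasi-homogeneity $f(u^2x_1,u^4x_2,u^6x_3)=u^8f(x_1,x_2,x_3)$ is exactly the eigenvalue information of $a_t$, and your axis-wise exponents $4,\ 2,\ 4/3$ are what the paper's formula encodes. Your argument is therefore more elementary and self-contained, at the price of having to establish the two-sided estimate $c\,|x|^4\leqslant f(x)\leqslant C\,|x|^{4/3}$ by hand rather than getting the lower bound via duality. Two small points deserve care: first, your normalisation ``map to $\{|x_3|=1\}$'' fails when $x_3=0$; the clean fix is to normalise by the weighted gauge $\rho(x)=\max(|x_1|^{1/2},|x_2|^{1/4},|x_3|^{1/6})$, which transforms by $\rho\mapsto u\rho$ and sends every nonzero $x$ to the compact level set $\{\rho=\epsilon\}$ inside the domain of $f$, where strict convexity gives a positive lower bound. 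Second, for the complement of the Veronese curve you should make explicit that $\partial\O_r\smallsetminus\Lambda_G$ is a single free $G$-orbit (this is recorded in \cite{Crampon:2012fk}), which immediately gives smoothness; you then still need a convexity lower bound there, and the paper's duality trick is the cleanest way to obtain it.
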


Nous avons inclus pour finir une premi\`ere \'etude de la croissance des groupes discrets dont l'action est g\'eom\'etriquement finie sur $\O$. L'objet principal est l'exposant critique $\dgg$ du groupe $\G$, qui mesure la croissance exponentielle du groupe agissant sur $\O$; \`a savoir
$$\dgg = \limsup_{R\to +\infty} \frac{1}{R} \log \sharp\{g\in \G,\ d_{\O}(x,gx)\leqslant R\}.$$
Lorsque $\G$ est un groupe cocompact, il est imm\'ediat que l'exposant critique et l'entropie volumique de la g\'eom\'etrie de Hilbert sont \'egaux. Rappelons que l'entropie volumique de la g\'eom\'etrie de Hilbert $(\O,\d)$ est le taux de croissance exponentiel des volumes des boules:
$$h_{vol}(\O) = \limsup_{R\to +\infty} \frac{1}{R} \log \Vol_{\O} B(x,R).$$
Lorsque $\G$ n'est plus cocompact, on a de fa\c con g\'en\'erale $\dgg\leqslant h_{vol}$ mais il n'y a a priori plus de raisons pour que ces deux quantit\'es co\"incident, m\^eme si $\G$ est de covolume fini: Fran\c coise Dal'bo, Marc Peign\'e, Jean-Claude Picaud et Andrea Sambusetti ont construit des exemples de r\'eseaux non uniformes d'espaces de courbure n\'egative pinc\'ee o\`u $\dgg<h_{vol}$.\\
Dans notre cas, le fait que les cusps d'une vari\'et\'e de volume fini soient asymptotiquement hyperboliques entraîne l'\'egalit\'e:

\begin{theo}[Th\'eor\`eme \ref{volegaltop}]\label{volegaltop_intro}
Soient $\O$ un ouvert strictement convexe et \`a bord $\C^1$, et $\G$ un sous-groupe discret de $\Aut(\O)$ de covolume fini. Alors
$$\delta_{\G} = h_{vol}(\O).$$
\end{theo}

Ce r\'esultat peut m\^eme s'\'etendre au cas des actions g\'eom\'etriquement finies de la mani\`ere suivante:

\begin{theo}[Th\'eor\`eme \ref{volegaltopgeneral}]\label{volegaltopgeneral_intro}
Soit $\G$ un sous-groupe discret de $\Aut(\O)$ dont l'action sur $\O$ est géométriquement finie. Alors
$$\delta_{\G} = \limsup_{R\to +\infty} \frac{1}{R} \log \Vol_{\O}(B(o,R)\cap C(\LG)),$$
o\`u $o$ est un point quelconque de $\O$.
\end{theo}

\paragraph*{Plan}
Les sections 2 et 3 sont des pr\'eliminaires portant respectivement sur les g\'eom\'etries de Hilbert et leur flot g\'eod\'esique.\\
La section 4 explique ce qui nous sera utile sur les vari\'et\'es g\'eom\'etriquement finies, en pr\'esentant notamment l'hypoth\`ese d'asymptoticit\'e hyperbolique des cusps.\\
La section 5 est consacr\'ee \`a la démonstration du th\'eor\`eme \ref{anosovintro}. Bien que l'id\'ee soit claire et tr\'es simple, la d\'emonstration reste malgr\'e tout quelque peu technique.\\
La section 6 se concentre sur les propri\'et\'es de r\'ecurrence du flot g\'eod\'esique d'une vari\'et\'e quelconque $M=\Quo$; en particulier, on y montre la proposition \ref{melange_intro}.\\
Dans la section 7, on s'int\'eresse \`a la r\'egularit\'e du bord de l'ouvert convexe. C'est l\`a qu'on montre le th\'eor\`eme \ref{regbord_intro} et le corollaire \ref{regbord_volfini_intro}. Une bonne partie de cette section est d\'edi\'ee au th\'eor\`eme \ref{bordexposant_intro}, via le th\'eor\`eme \ref{hamenstadt_intro} dont la démonstration pr\'esente quelques technicit\'es.\\
La section 8 construit le contre-exemple de la proposition \ref{contreex_intro} et d\'etaille la proposition \ref{reg_spherique_intro}.
Enfin, dans la partie 9, on montre les th\'eor\`emes \ref{volegaltop_intro} et \ref{volegaltopgeneral_intro} qui lient exposant critique et entropie volumique. L\`a encore, les démonstrations pr\'esentent quelques difficult\'es techniques.

\paragraph*{Remerciements} Nous tenons \`a remercier Fran\c coise Dal'bo pour son int\'er\^et, notamment pour la partie 9, ainsi que Fran\c cois Ledrappier qui nous a indiqu\'e l'article \cite{MR1279472} de Hamenst\"adt.\\
Le premier auteur est financ\'e par le programme FONDECYT N$^\circ$ 3120071 de la CONICYT (Chile) .

\mainmatter

\section{G\'eom\'etries de Hilbert}

\subsection{Distance et volume}\label{para_def_dist}
\par{
Une \emph{carte affine} $A$ de $\PP^n$ est le complémentaire d'un hyperplan projectif. Une carte affine possède une structure naturelle d'espace affine. Un ouvert $\O$ de $\PP^n$ différent de $\PP^n$ est \emph{convexe} lorsqu'il est inclus dans une carte affine et qu'il est convexe dans cette carte. Un ouvert convexe $\O$ de $\PP^n$ est dit \emph{proprement convexe} lorsqu'il existe une carte affine contenant son adhérence $\overline{\O}$. Autrement dit, un ouvert convexe est proprement convexe lorsqu'il ne contient pas de droite affine. Un ouvert proprement convexe $\O$ de $\PP^n$ est dit \emph{strictement convexe} lorsque son bord $\partial \O$ ne contient pas de segment non trivial. 
}
\\
\label{base}

Hilbert a introduit sur un ouvert proprement convexe $\O$ de $\PP^n$ la distance qui porte aujourd'hui son nom. Pour $x \neq y \in \O$, on note $p,q$ les points d'intersection de la droite $(xy)$ et du bord $\partial \O$ de $\O$, de telle fa\c con que $x$ soit entre $q$ et $y$, et $y$ entre $x$ et $p$ (voir figure \ref{dist}). On pose

$$
\begin{array}{ccc}
d_{\O}(x,y) = \displaystyle\frac{1}{2}\ln \big([p:q:x:y]\big) = \displaystyle\frac{1}{2}\ln \bigg(\frac{|qy|\cdot |
px|}{|qx| \cdot |py|} \bigg) & \textrm{et} &
d_{\O}(x,x)=0,
\end{array}
$$
o\`u
\begin{enumerate}
\item la quantit\'e $[p:q:x:y]$ désigne le birapport des points $p,q,x,y$;
\item $| \cdot |$ est une norme euclidienne quelconque sur une carte affine $A$ qui contient l'adhérence $\overline{\O}$ de $\O$.
\end{enumerate}

Le birapport \'etant une notion projective, il est clair que $d_{\O}$ ne dépend ni du choix de $A$, ni du choix de la norme euclidienne sur $A$.

\begin{center}
\begin{figure}[h!]
  \centering
\includegraphics[width=6cm]{hilbertdistance.jpg}\ \includegraphics[width=7cm]{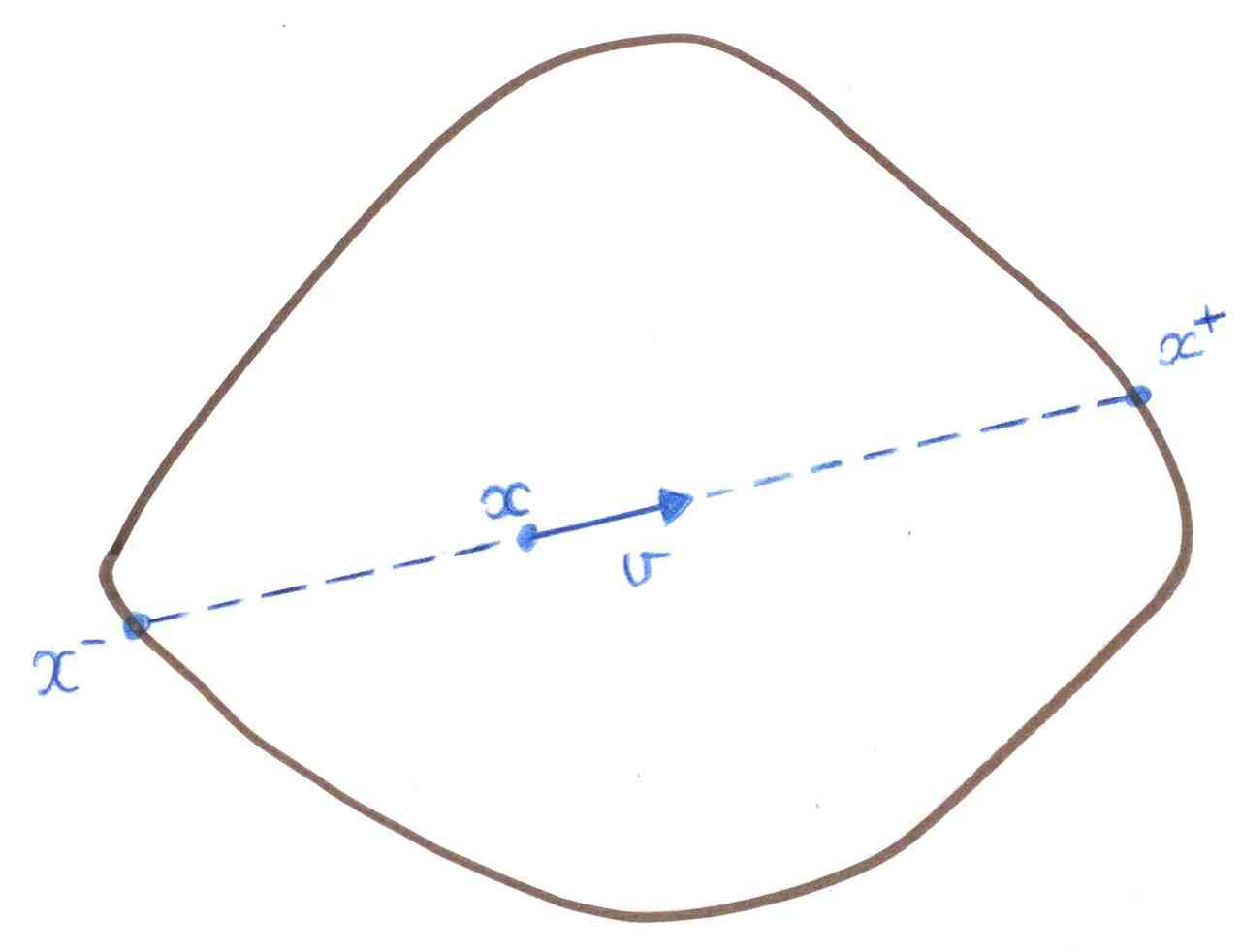}
\caption{La distance de Hilbert et la norme de Finsler}
\label{dist}
\end{figure}
\end{center}

\begin{fait}
Soit $\O$ un ouvert proprement convexe de $\PP^n$.
\begin{enumerate}
\item $d_{\O}$ est une distance sur $\O$;
\item $(\O,d_{\O})$ est un espace métrique complet;
\item La topologie induite par $d_{\O}$ coïncide avec celle induite par $\PP^n$;
\item Le groupe $\Aut(\O)$ des transformations projectives de $\ss$ qui préservent $\O$ est un sous-groupe fermé de $\ss$ qui agit par isométries sur $(\O,d_{\O})$. Il agit donc proprement sur $\O$.
\end{enumerate}
\end{fait}

La distance de Hilbert $d_{\O}$ est induite par une structure finslérienne sur l'ouvert $\O$. On choisit une carte affine $A$ et une m\'etrique euclidienne $|\cdot|$ sur $A$ pour lesquelles $\O$ appara\^it comme un ouvert convexe born\'e. On identifie le fibré tangent $T \O$ de $\O$ à $\O \times A$. Soient $x \in \O$ et $v \in A$, on note $x^+=x^+(x,v)$ (resp. $x^-$) le point d'intersection de la demi-droite définie par $x$ et $v$ (resp $-v$) avec $\partial \O$ (voir figure \ref{dist}). On pose
\begin{equation}\label{metrique_finsler}
 F(x,v) = \frac{|v|}{2}\Bigg(\frac{1}{|xx^-|} + \frac{1}{| xx^+|} \Bigg),
\end{equation}
quantit\'e indépendante du choix de $A$ et de $|\cdot|$, puisqu'on ne consid\`ere que des rapports de longueurs.


\begin{fait}
Soient $\O$ un ouvert proprement convexe de $\PP^n$ et $A$ une carte affine qui contient $\overline{\O}$. La distance induite par la métrique finslérienne $F$ est la distance $d_{\O}$. Autrement dit on a les formules suivantes:
\begin{itemize}
\item $\displaystyle{F(x,v) = \left. \frac{d}{dt}\right| _{t=0} d_{\O}(x,x+tv)}$, pour $v \in A$;
\item $d_{\O}(x,y) = \inf \displaystyle\int_0^1 F(\dot\sigma(t))\ dt$, où l'infimum est pris sur les chemins $\sigma$ de classe $\C^1$ tel que $\sigma(0)=x$ et $\sigma(1)=y$.
\end{itemize}
\end{fait}

\par{
Il y a plusieurs mani\`eres naturelles de construire un volume pour une g\'eom\'etrie de Finsler, la d\'efinition riemannienne acceptant plusieurs g\'en\'eralisations. Nous travaillerons avec le volume de Busemann, not\'e $\textrm{Vol}_{\O}$.\\
Pour le construire, on se donne une carte affine $A$ et une m\'etrique euclidienne $|\cdot|$ sur $A$ pour lesquelles $\O$ appara\^it comme un ouvert convexe born\'e. On note $B_{T_x\O}(r) = \{ v \in T_x \O \, | \, F(x,v) < r \}$ la boule de rayon $r>0$ de l'espace tangent à $\O$ en $x$, $\textrm{Vol}$ la mesure de Lebesgue sur $A$ associ\'ee \`a $|\cdot|$ et $v_n=\textrm{Vol}(\{ v \in A \, | \, |v| < 1 \})$ le volume de la boule unit\'e euclidienne en dimension $n$.

\par{
Pour tout borélien $\mathcal{A} \subset \O \subset A$, on pose:
$$\textrm{Vol}_{\O} (\mathcal{A})= \int_{\mathcal{A}} \frac{v_n}{\textrm{Vol}(B_{T_x\O}(1))}\ d\Vol(x)$$
L\`a encore, la mesure $\textrm{Vol}_{\O}$ est indépendante du choix de $A$ et de $|\cdot|$. En particulier, elle est préservée par le groupe $\Aut(\O)$.
}

La proposition suivante permet de comparer deux géométries de Hilbert entre elles.

\begin{prop}\label{compa}
Soient $\O_1$ et $\O_2$ deux ouverts proprement convexes de $\PP^n$ tels que $\O_1 \subset \O_2$.
\begin{itemize}
\item Les m\'etriques finsl\'eriennes $F_1$ et $F_2$ de $\O_1$ et $\O_2$ v\'erifient: $F_2(w) \leqslant F_1(w)$, $w \in T \O_1 \subset T \O_2$, l'\'egalit\'e ayant lieu si et seulement si $x^+_{\O_1}(w)=x^+_{\O_2}(w)$ et $x^-_{\O_1}(w)=x^-_{\O_2}(w)$.
\item Pour tous $x,y \in \O_1$, on a $d_{\O_2}(x,y) \leqslant d_{\O_1}(x,y)$.
\item Les boules m\'etriques et m\'etriques tangentes vérifient, pour tout $x \in \O_1$ et $r>0$, $B_{\O_1}(x,r) \subset B_{\O_2}(x,r)$ et $B_{T_x\O_1}(r) \subset B_{T_x\O_2}(r)$, avec \'egalit\'e si et seulement si $\O_1=\O_2$.
\item Pour tout bor\'elien $\mathcal{A}$ de $\O_1$, on a $\textrm{Vol}_{\O_2}(\mathcal{A}) \leqslant \textrm{Vol}_{\O_1}(\mathcal{A})$.
\end{itemize}
\end{prop}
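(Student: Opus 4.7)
The plan is to derive all four assertions from a single geometric observation about exit points of rays. For every $x\in\O_1$ and every $v\in T_x\O_1$, the oriented ray from $x$ with direction $v$ meets $\partial\O_1$ no later than $\partial\O_2$, so
\[
|xx^+_{\O_1}(v)|\leqslant |xx^+_{\O_2}(v)|,\qquad |xx^-_{\O_1}(v)|\leqslant |xx^-_{\O_2}(v)|,
\]
with equality in either inequality if and only if the two corresponding boundary points coincide. Plugging this monotonicity termwise into the Finsler formula (\ref{metrique_finsler}) yields at once $F_2(w)\leqslant F_1(w)$, with equality exactly when both pairs of boundary points agree. This proves the first bullet.

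For the second bullet, I use the second formula of the preceding Fait, which describes $d_\O(x,y)$ as the infimum of $\int_0^1 F(\dot\sigma)\,dt$ over $\C^1$ paths $\sigma$ from $x$ to $y$. Since any $\C^1$ path in $\O_1$ is also a $\C^1$ path in $\O_2$ and since $F_2\leqslant F_1$ along any such path, the infimum defining $d_{\O_2}(x,y)$ is computed over a larger family of paths with a smaller integrand, hence $d_{\O_2}(x,y)\leqslant d_{\O_1}(x,y)$. Alternatively, one can compare the two cross-ratios directly on the affine line $(xy)$: the boundary points $q_2,p_2$ of $\O_2$ lie outside the segment $[q_1,p_1]$, and both $|qy|/|qx|$ and $|px|/|py|$ are monotone in the positions of $q$ and $p$.

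The two ball inclusions in the third bullet are then immediate: $d_{\O_1}\geqslant d_{\O_2}$ forces $B_{\O_1}(x,r)\subset B_{\O_2}(x,r)$, and $F_1\geqslant F_2$ forces $B_{T_x\O_1}(r)\subset B_{T_x\O_2}(r)$. For the equality case, I rely on the positive homogeneity of $F$: if $B_{T_x\O_1}(r)=B_{T_x\O_2}(r)$ for some $x\in\O_1$ and $r>0$, rescaling gives $B_{T_x\O_1}(1)=B_{T_x\O_2}(1)$, so $F_1(x,\cdot)\equiv F_2(x,\cdot)$ on $T_x\O_1$; the equality case of the first bullet then forces $x^{\pm}_{\O_1}(v)=x^{\pm}_{\O_2}(v)$ for every direction $v$, and since every point of $\partial\O_2$ lies on some line through $x$ this gives $\partial\O_1=\partial\O_2$, whence $\O_1=\O_2$. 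The metric-ball case is handled similarly, either by differentiating the equality of distances at $x$ to recover the equality of Finsler norms, or by letting $r\to 0$ and using the infinitesimal description of $F$.

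For the fourth bullet, the inclusion $B_{T_x\O_1}(1)\subset B_{T_x\O_2}(1)$ from the third bullet gives $\Vol(B_{T_x\O_1}(1))\leqslant \Vol(B_{T_x\O_2}(1))$, so the Busemann density $v_n/\Vol(B_{T_x\O}(1))$ is pointwise at least as large for $\O_1$ as for $\O_2$; integrating over the Borel set $\mathcal{A}\subset\O_1$ produces the announced volume inequality. None of the four steps is a genuine obstacle; the only point requiring some care is the equality case in bullet 3, where the rigidity is propagated from ``equal Finsler unit balls at one point'' to ``equal convex sets'' using the equality case of bullet 1 together with the fact that the lines through a single interior point reach every boundary point.
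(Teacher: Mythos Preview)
Your argument is correct and is exactly the standard one; the paper itself states this proposition without proof, treating it as an elementary comparison fact. One minor remark: in the equality case for the \emph{metric} balls you only assume $B_{\O_1}(x,r)=B_{\O_2}(x,r)$ for a single $r$, so ``letting $r\to 0$'' is not available as stated; the clean way is your cross-ratio alternative from the second bullet, which shows directly that $d_{\O_1}(x,y)=d_{\O_2}(x,y)$ along a fixed line through $x$ forces the two pairs of boundary points on that line to coincide (both factors in the cross-ratio are monotone in the position of $p$ and $q$).
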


\subsection{Fonctions de Busemann et horosphères}\label{busemannhoro}

Nous supposons dans ce paragraphe que l'ouvert proprement convexe $\O$ de $\PP^n$ est strictement convexe et \`a bord $\C^1$. Dans ce cadre, il est possible de d\'efinir les fonctions de Busemann et les horosphères de la m\^eme mani\`ere qu'en g\'eom\'etrie hyperbolique, et nous ne donnerons pas de d\'etails.\\

Pour $\xi\in\dO$ et $x\in \O$, notons $c_{x,\xi}: [0,+\infty)\longrightarrow\O$ la g\'eod\'esique issue de $x$ et d'extr\'emit\'e $\xi$, soit $c_{x,\xi}(0)=x$ et $c_{x,\xi}(+\infty)=\xi$. La \emph{fonction de Busemann} bas\'ee en $\xi\in\dO$ $b_{\xi}(.,.):\O\times\O\longrightarrow\R$ est d\'efinie par:
$$b_{\xi}(x,y) = \lim_{t\to+\infty} \d(y,c_{x,\xi}(t)) - t = \lim_{z\to\xi} \d(y,z) - \d(x,z),\ x,y\in\O.$$
L'existence de ces limites est due aux hypoth\`eses de r\'egularit\'e faites sur $\O$. Les fonctions de Busemann sont de classe $\C^1$.\\

\noindent \emph{L'horosph\`ere} bas\'ee en $\xi\in\dO$ et passant par $x\in\O$ est l'ensemble
$$\H_{\xi}(x) = \{y\in\O \ | \ b_{\xi}(x,y) = 0\}.$$
\emph{L'horoboule} bas\'ee en $\xi\in\dO$ et passant par $x\in\O$ est l'ensemble
$$H_{\xi}(x) = \{y\in\O \ | \ b_{\xi}(x,y) < 0\}.$$
L'horoboule bas\'ee en $\xi\in\dO$ et passant par $x\in\O$ est un ouvert strictement convexe de $\O$, dont le bord est l'horosph\`ere correspondante, qui est elle une sous-vari\'et\'e de classe $\C^1$ de $\O$.\\
Dans une carte affine $A$ dans laquelle $\O$ appara\^it comme un ouvert convexe relativement compact, on peut, en identifiant $T\O$ avec $\O \times A$, construire g\'eom\'etriquement l'espace tangent \`a $\H_{\xi}(x)$ en $x$: c'est le sous-espace affine contenant $x$ et l'intersection $T_{\xi}\dO\cap T_{\eta}\dO$ des espaces tangents \`a $\dO$ en $\xi$ et $\eta=(x\xi)\cap\dO\smallsetminus\{\xi\}$.\\
On peut voir que l'horoboule et l'horosph\`ere bas\'ees en $\xi\in\dO$ et passant par $x\in\O$ sont les limites des boules et des sphères m\'etriques centr\'ees au point $z\in\O$ et passant par $x$ lorsque $z$ tend vers $\xi$.

\begin{center}
\begin{figure}[h!]
  \centering
\includegraphics[width=7cm]{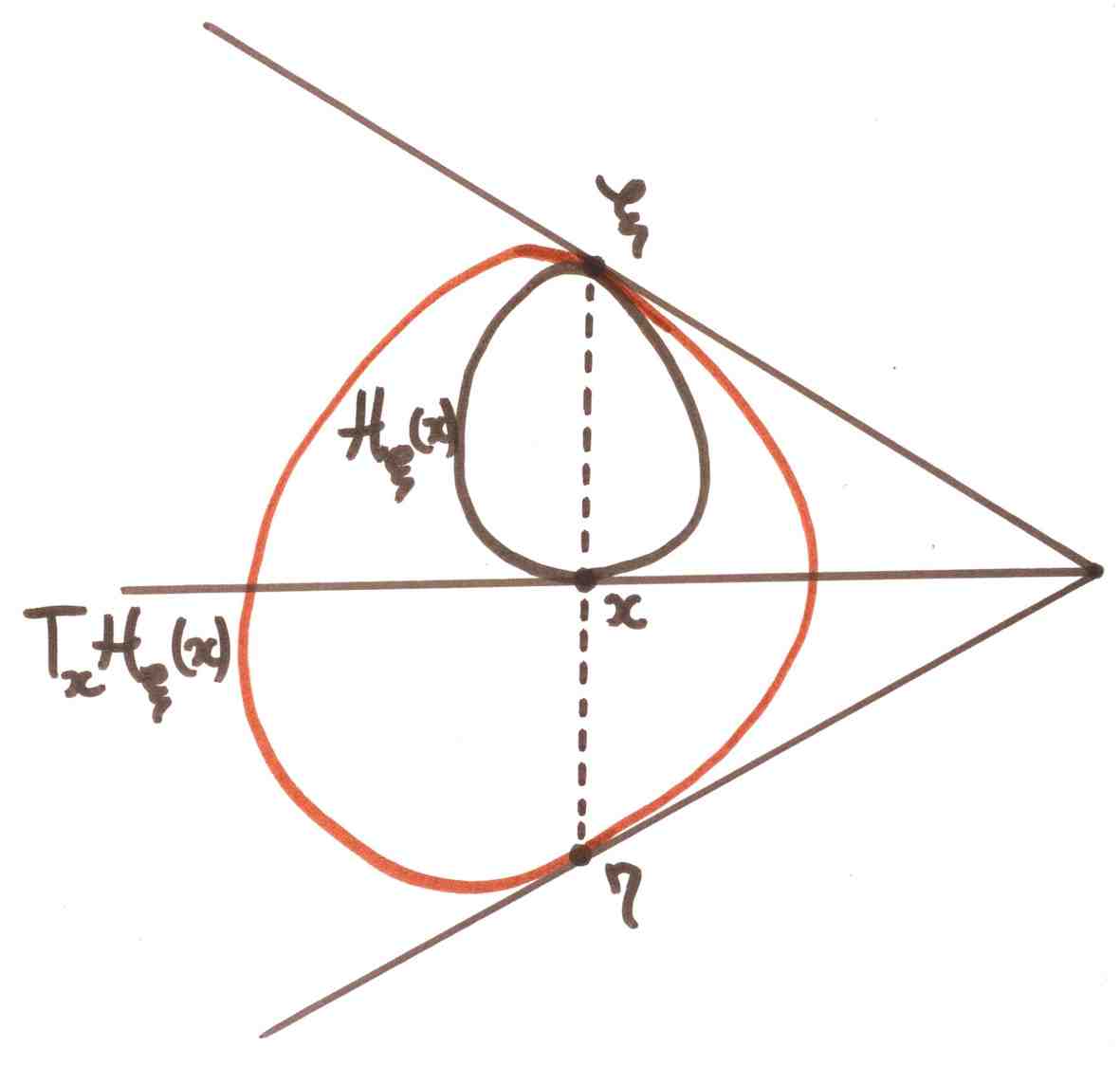}
\caption{Une horosph\`ere et son espace tangent}
\label{horo}
\end{figure}
\end{center}

\subsection{Dualité}\label{def_dualite}

\`A l'ouvert proprement convexe $\O$ de $\PP^n$ est associé l'ouvert proprement convexe dual $\O^*$: on considère un des deux cônes $C\subset \R^{n+1}$ au-dessus de $\O$, et son dual
$$C^* = \{ f\in (\R^{n+1})^*,\ \forall x\in C,\ f(x)>0\}.$$
Le convexe $\O^*$ est par définition la trace de $C^*$ dans $\PP((\R^{n+1})^*)$.\\

Le bord de $\dO^*$ est facile à comprendre, car il s'identifie à l'ensemble des hyperplans tangents à $\O$. En effet, un hyperplan tangent $T_x$ à $\dO$ en $x$ est la trace d'un hyperplan $H_x$ de $\R^{n+1}$. L'ensemble des formes linéaires dont le noyau est $H_x$ forme une droite de $(\R^{n+1})^*$, dont la trace $x^*$ dans $\PP((\R^{n+1})^*)$ est dans $\dO^*$. Il n'est pas dur de voir qu'on obtient ainsi tout le bord $\dO^*$.\\
Cette remarque permet de voir que le dual d'un ouvert strictement convexe a un bord de classe $\C^1$, et inversement. En particulier, lorsque $\O$ est strictement convexe et que son bord est de classe $\C^1$, ce qui est le cas que nous étudierons, on obtient une involution continue $x\longmapsto x^*$ entre les bords de $\O$ et $\O^*$.\\

Tout sous-groupe $\G$ de $\ss$ agit par dualité sur $(\R^{n+1})^*$ et donc sur $\PP((\R^{n+1})^*)$, via la formule suivante:
\[(\g\cdot f)(x) = f(\g^{-1}x),\ \g\in\G,\ x\in \R^{n+1}.\]
Le convexe dual $\O^*$ est préservé par un \'el\'ement $\g \in \ss$ si et seulement si $\O$ est pr\'eserv\'e par $\g$. On obtient de cette fa\c con une action de tout sous-groupe $\G$ de $\Aut(\O)$ sur le convexe dual $\O^*$.
Le sous-groupe discret de $\Aut(\O^*)$ ainsi obtenu sera noté $\G^*$. Bien entendu, on a $(\O^*)^*=\O$ et $(\G^*)^*=\G$.\\

\vspace*{.5cm}
{\bf \large Dans tout ce qui suit, sauf mention explicite, $\O$  désignera un ouvert proprement convexe, strictement convexe et à bord $\C^1$.}
\vspace*{.5cm}

\subsection{Isom\'etries}\label{para_isometrie}

Les isom\'etries d'une g\'eom\'etrie $(\O,\d)$ avec $\O$ strictement convexe \`a bord $\C^1$ ont \'et\'e classifi\'ees dans \cite{Crampon:2012fk}. Ce sont toutes des transformations projectives qui pr\'eservent $\O$, et, quitte \`a consid\'erer leur carr\'e, on les verra donc comme des \'el\'ements du groupe lin\'eaire $\ss$, agissant sur $\PP^n$. Outre les isom\'etries elliptiques qui sont de torsion et qui ne nous int\'eresseront pas ici, on trouve les isom\'etries hyperboliques et paraboliques.
\begin{itemize}
 \item Une \emph{isom\'etrie hyperbolique} $\g$ a exactement deux points fixes $x_{\g}^+,x_{\g}^- \in \partial \O$, l'un répulsif et l'autre attractif. Cela veut dire que la suite $(\g^n)_{n \in \N}$ converge uniformément sur les compacts de $\overline{\O}\smallsetminus \{ x_{\g}^-\}$ vers $x_{\g}^+$, et la suite $(\g^{-n})_{n \in \N}$ converge uniformément sur les compacts de $\overline{\O}\smallsetminus \{ x_{\g}^+\}$ vers $x_{\g}^-$. De plus, les valeurs propres $\l_{0}(\g)$ et $\l_n(\g)$ associ\'ees aux points fixes $x_{\g}^+$ et $x_{\g}^-$ sont positives: c'est une cons\'equence du fait que le rayon spectral de $\g$ est valeur propre de $\g$ (lemme 2.3 de \cite{MR2218481} par exemple); elles sont de multiplicité 1 car, sinon, il y aurait un segment dans le bord de $\O$. Finalement, $\g$ agit par translation sur le segment ouvert $]x_{\g}^-x_{\g}^+[$ de $\O$, translation de force $\tau(\g)=\ln \frac{\l_{0}(\g)}{\l_n(\g)}$.
\item Une \emph{isom\'etrie parabolique} $\g$ a exactement un point fixe $p \in \partial \O$ et préserve toute horosphère basée en $p$. De plus, la famille $(\g^n)_{n \in \Z}$ converge uniformément sur les compacts de $\overline{\O}\smallsetminus \{ p\}$ vers $p$.
\end{itemize}

On dira qu'un sous-groupe discret $\P$ de $\Aut(\O)$, sans torsion, est \emph{parabolique} si tous ses \'el\'ements sont paraboliques. Un tel groupe est nilpotent et ses \'el\'ements fixent un m\^eme point $p\in\dO$. On dira que le groupe $\P$ est \emph{de rang maximal} si son action sur $\dO\smallsetminus\{p\}$ est cocompacte.\\
Si un sous-groupe discret $\G$ de $\Aut(\O)$ est donn\'e, on dira qu'un sous-groupe parabolique de $\G$ est \emph{maximal} s'il n'est contenu dans aucun autre sous-groupe parabolique.

\subsection{Ensemble limite}

Comme en géométrie hyperbolique, on peut définir l'ensemble limite et le domaine de discontinuité d'un sous-groupe discret de $\Aut(\O)$ de la façon suivante. On utilise ici de fa\c con essentielle la stricte convexit\'e de $\O$.

\begin{defi}\label{def_ens_lim}
Soit $\G$ un sous-groupe discret de $\Aut(\O)$ et $x\in\O$. L'ensemble limite $\LG$ de $\G$ est le sous-ensemble de $\dO$ suivant:
$$\LG = \overline{\G\cdot x} \smallsetminus \G\cdot x.$$
\emph{Le domaine de discontinuité} $\Og$ de $\G$ est le complémentaire de l'ensemble limite de $\G$ dans $\dO$.
\end{defi}

L'ensemble limite $\LG$, s'il n'est pas infini, est vide ou consiste en 1 ou 2 points. On dit que $\G$ est \emph{non élémentaire} si $\LG$ est infini. Dans ce dernier cas, l'ensemble limite $\LG$ est le plus petit fermé $\G$-invariant non vide de $\dO$. En particulier, $\LG$ est l'adhérence des points fixes des éléments hyperboliques de $\G$.

\begin{defi}
Soit $\Gamma$ un sous-groupe de $\ss$. On dira que $\G$ est \emph{irréductible} lorsque les seuls sous-espaces vectoriels de $\R^{n+1}$ invariants par $\G$ sont $\{0 \}$ et $\R^{n+1}$. On dira que $\G$ est \emph{fortement irréductible} si tous ses sous-groupes d'indice fini sont irréductibles, autrement dit, si $\G$ ne préserve pas une union finie de sous-espaces vectoriels non triviaux.
\end{defi}

\begin{lemm}\label{irreductible}
Soit $\G$ un sous-groupe discret de $\Aut(\O)$. Les propositions suivantes sont \'equivalentes:
\begin{enumerate}[(i)]
 \item l'ensemble limite $\LG$ de $\G$ engendre $\PP^n$;
 \item le groupe $\G$ est irr\'eductible;
 \item le groupe $\G$ est fortement irr\'eductible.
\end{enumerate}
\end{lemm}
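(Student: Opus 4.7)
The plan is to establish the cycle (iii)$\Rightarrow$(ii)$\Rightarrow$(i)$\Rightarrow$(iii). The first implication is immediate since $\G$ is itself a finite-index subgroup of $\G$, and any strongly irreducible group is \emph{a fortiori} irreducible.

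For (ii)$\Rightarrow$(i), the idea is that the linear span of the limit set is a $\G$-invariant subspace. More precisely, choose a lift of each point of $\LG\subset\PP^n$ to the cone in $\R^{n+1}$, and let $W$ denote the $\R$-linear span of these lifts. Since $\LG$ is $\G$-invariant, so is $W$. By (ii), $W=\{0\}$ or $W=\R^{n+1}$. The first alternative would force $\LG=\emptyset$, which can be ruled out: an irreducible discrete subgroup of $\Aut(\O)$ has no global fixed point in $\overline\O$ and must therefore be non-elementary, so $\LG$ is infinite (it is in fact the closure of the attractive fixed points of hyperbolic elements). Hence $W=\R^{n+1}$, which is exactly (i).

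For (i)$\Rightarrow$(iii), I would first reduce to showing that any \emph{normal} subgroup $\G'\trianglelefteq\G$ of finite index is irreducible (strong irreducibility of $\G$ follows by intersecting conjugates). The key input is the standard fact that $\Lambda_{\G'}=\LG$ for such a $\G'$, because $\Lambda_{\G'}$ is a non-empty closed $\G$-invariant subset of $\dO$ contained in $\LG$, and $\LG$ is minimal with these properties. In particular $\Lambda_{\G'}$ also generates $\PP^n$. Now suppose, for contradiction, that $\G'$ preserves a proper subspace $W\subsetneq\R^{n+1}$; then $\PP(W)\cap\dO$ is closed and $\G'$-invariant, so by minimality of $\Lambda_{\G'}$ in $\dO$, either $\Lambda_{\G'}\subset \PP(W)$ or $\PP(W)\cap\dO=\emptyset$. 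The first case directly contradicts the spanning property.

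The main obstacle is the second case $\PP(W)\cap\dO=\emptyset$, in which minimality gives no information. I plan to handle it via hyperbolic elements: $\G'$ is still non-elementary, so it contains hyperbolic $\g$, and $W$, being $\g$-invariant, is a direct sum of generalized eigenspaces of $\g$; since $x_\g^\pm\in \Lambda_{\G'}\subset\dO$ lie outside $\PP(W)$, $W$ contains neither the top nor the bottom eigenline of $\g$, so $W$ is contained in $T_{x_\g^+}\dO\cap T_{x_\g^-}\dO$ (the intersection of the two tangent hyperplanes, which are precisely the invariant hyperplanes through the other fixed point, using that $\dO$ is $\C^1$ and strictly convex). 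Translating through the duality $x\mapsto x^*$ of paragraph~\ref{def_dualite} between $\dO$ and $\dO^*$, this says $W^\perp\subset(\R^{n+1})^*$ contains the dual points $x_\g^{+\,*}$ and $x_\g^{-\,*}$ for every hyperbolic $\g\in\G'$. These dual points are exactly the attractive and repulsive fixed points of $\g$ acting on $\O^*$, whose closure is $\Lambda_{\G'^*}$. Applying (ii)$\Rightarrow$(i) to the irreducible dual action, or equivalently observing that $\Lambda_{\G^*}$ spans $\PP((\R^{n+1})^*)$ under our hypothesis, forces $W^\perp=(\R^{n+1})^*$, i.e.\ $W=\{0\}$, contradicting the properness of $W$. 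This closes the loop and yields strong irreducibility.
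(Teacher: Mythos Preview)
Your reductions (iii)$\Rightarrow$(ii)$\Rightarrow$(i) are fine, and in (i)$\Rightarrow$(iii) the passage to a normal finite-index subgroup together with $\Lambda_{\G'}=\LG$ is correct. The case $\PP(W)\cap\dO\neq\emptyset$ is also handled correctly via minimality. The gap is in the ``main obstacle'' case $\PP(W)\cap\dO=\emptyset$. You correctly deduce $\Lambda_{\G'^*}\subset\PP(W^\perp)$, but the last step is circular: to conclude $W^\perp=(\R^{n+1})^*$ you need $\Lambda_{\G'^*}$ to span, and for this you invoke ``(ii)$\Rightarrow$(i) for the irreducible dual action''. But irreducibility of the dual action of $\G'$ is \emph{equivalent} to irreducibility of $\G'$ on $\R^{n+1}$ (invariant subspaces correspond under $W\mapsto W^\perp$), which is exactly what you are trying to prove. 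Your alternative justification, that $\Lambda_{\G^*}$ spans ``under our hypothesis'', would require showing that $\LG$ spanning $\PP^n$ forces $(\LG)^*$ to span $(\PP^n)^*$; since the duality map $x\mapsto x^*$ is not linear, this is not automatic and you give no argument for it.

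By comparison, the paper does not attempt your case split at all for (i)$\Rightarrow$(iii). It first records (i)$\Leftrightarrow$(ii), and then reduces (i)$\Rightarrow$(iii) to that equivalence by the single observation that every hyperbolic $h\in\G$ has a power in any finite-index subgroup $G$, hence $\Lambda_G=\LG$ spans, hence $G$ is irreducible by (i)$\Rightarrow$(ii). This is strictly simpler than your normal-subgroup/minimality detour. Note however that the paper's own justification of (i)$\Rightarrow$(ii) is a single sentence (``$\LG$ is the closure of hyperbolic fixed points''), which really only covers your first case; the situation $\PP(W)\cap\overline\O=\emptyset$ is not addressed explicitly there either, so this is where the genuine work lies. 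If you want to repair your argument, you must give an \emph{independent} reason why a $\G'$-invariant subspace $W$ with $\PP(W)\cap\overline\O=\emptyset$ forces $\LG$ into a proper subspace (equivalently, why (i) for $\O$ implies (i) for $\O^*$), without assuming irreducibility anywhere.
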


\begin{proof}

L'implication (i)$\Rightarrow$(ii) vient du fait que $\LG$ est l'adhérence des points fixes des éléments hyperboliques de $\G$. Pour les implications (ii)$\Rightarrow$(i) et (iii)$\Rightarrow$(i), il suffit de voir que l'espace engendr\'e par $\LG$ est invariant par $\G$.

Montrons pour finir l'implication (i)$\Rightarrow$(iii). Supposons donc que $\LG$ engendre $\PP^n$. Si $G$ est un sous-groupe d'indice fini de $\G$, alors, pour tout élément hyperbolique $h$ de $\G$, il existe un entier $n \geqslant 1$ tel que $h^n\in G$. Ainsi, $\Lambda_{G}=\LG$ et donc $\Lambda_{G}$ engendre $\PP^n$, ce qui \'equivaut \`a l'irr\'eductibilit\'e de $G$.

\end{proof}


\section{Le flot géodésique}

\subsection{G\'en\'eralit\'es}

Le flot g\'eod\'esique est le principal objet d'\'etude de ce travail. Nous le d\'efinirons sur le fibré tangent homogène, ou en demi-droites, de $\O$, qui est le fibré $\pi : H\O \longrightarrow \O$, avec $H\O =\Quotient{T\O\smallsetminus\{0\}}{\R_+}$:
deux points $(x,u)$ et $(y,v)$ de $T\O\smallsetminus\{0\}$ sont identifiés si $x=y$ et $u=\l v$ pour un certain réel $\l>0$.\\
L'image d'un point $w=(x,[\xi]) \in H\O$ par le flot géodésique $\ph^t : H\O \longrightarrow H\O$ est le point $\ph^t(w)=(x_t,[\xi_t])$ obtenu en suivant la g\'eod\'esique partant de $x$ dans la direction $[\xi]$ pendant le temps $t$. Il est engendré par le champ de vecteurs $X$ sur $H\O$, qui a la même régularité que le bord de $\O$. Ainsi, $\ph^t$ est au moins de classe $\C^1$.\\

Nous ferons les calculs de fa\c con intelligente en utilisant l'invariance projective. Une \emph{carte adapt\'ee \`a un point $w\in H\O$} est une carte affine munie d'une m\'etrique euclidienne telle que
\begin{itemize}
 \item la fermeture de $\O$ est incluse dans la carte;
 \item l'intersection des plans tangents \`a $\dO$ en $x^+$ et $x^-$ sont \`a l'infini de la carte; autrement dit, ils y sont parall\`eles;
 \item la droite $(xx^+)$ et les plans tangents \`a $\dO$ en $x^+$ et $x^-$ sont orthogonaux.
\end{itemize}

\begin{center}
\begin{figure}[h!]
  \centering
\includegraphics[width=7cm]{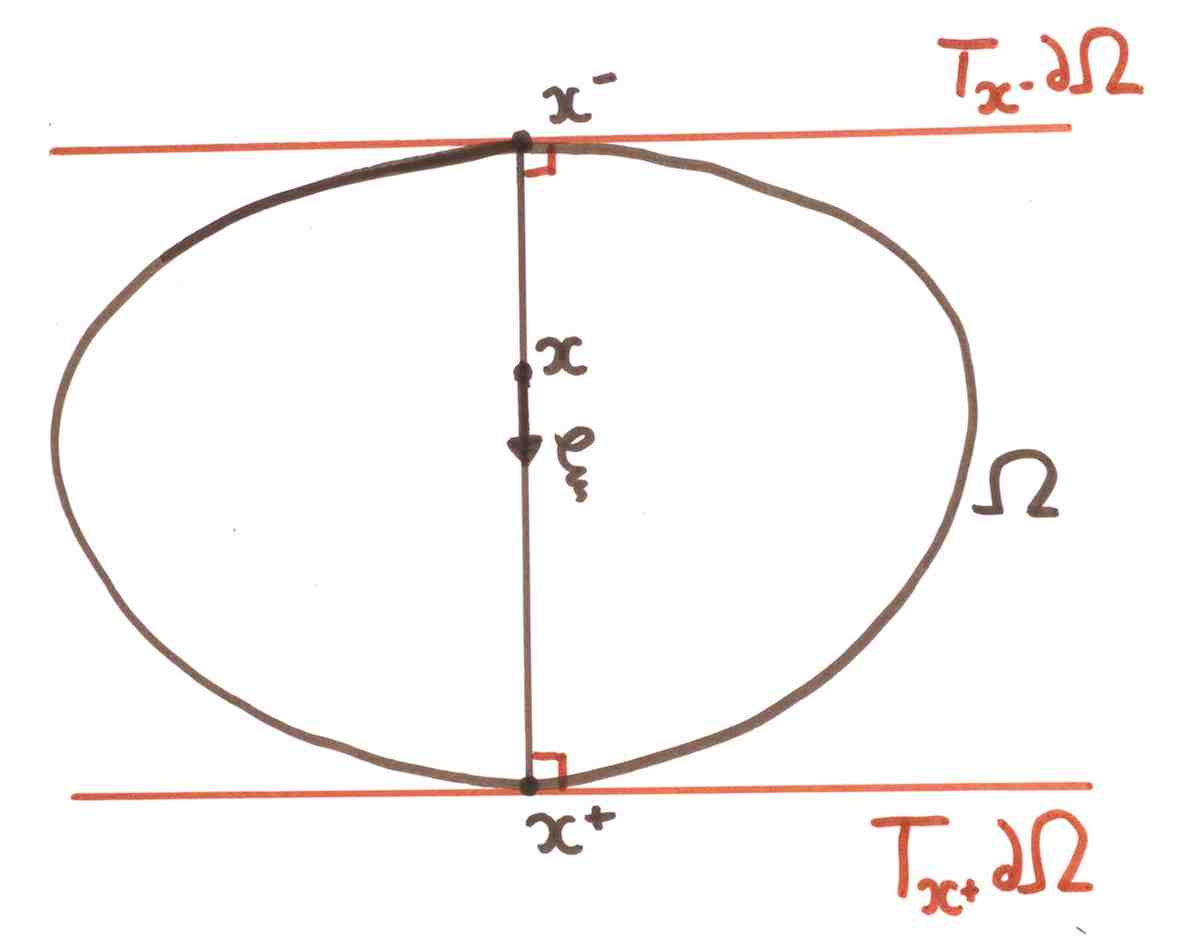}
\caption{Une carte adapt\'ee en $w$}
\label{goodchart}
\end{figure}
\end{center}

\subsection{Variétés stable et instable}

On définit les variétés stable $W^s(w)$ et instable $W^u(w)$ de $w=(x,[\xi])\in H\O$ par
$$W^s(w) = \{w'=(y,[yx^+]),\ y\in\Hh_{x^+}(x)\},\ W^u(w) = \{w'=(y,[x^-y]),\ y\in\Hh_{x^-}(x)\}.$$
Il n'est pas difficile de voir que, comme $\O$ est strictement convexe à bord $\C^1$, on a
$$\begin{array}{rcl}
W^s(w)&=&\{w'\in H\O\ |\ \displaystyle\lim_{t\to +\infty} d_{\O}(\pi\ph^t(w),\pi\ph^t(w'))=0\}\\

W^u(w)&=&\{w'\in H\O\ |\ \displaystyle\lim_{t\to -\infty} d_{\O}(\pi\ph^t(w),\pi\ph^t(w'))=0\}.
\end{array}$$
Les sous-espaces stable $E^s$ et instable $E^u$ sont les espaces tangents aux variétés stable et instable. On a clairement que $E^s\cap E^u = \{0\}$ et donc la décomposition
$$TH\O = \R.X \oplus E^s \oplus E^u,$$
qu'on appellera décomposition d'Anosov.\\

\begin{center}
\begin{figure}[h!]
  \centering
\includegraphics[width=6cm]{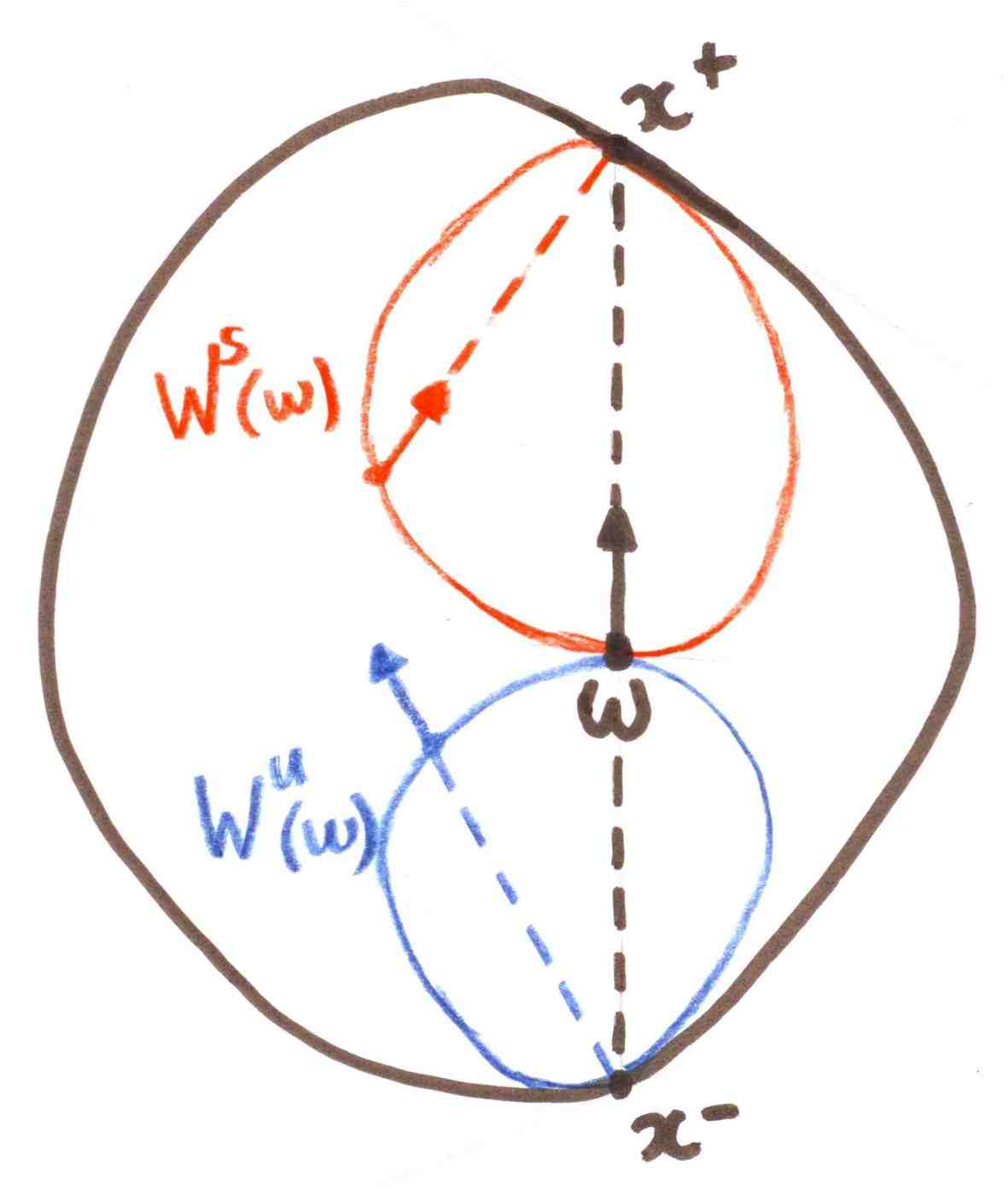}
\caption{Vari\'et\'es stable et instable}
\label{varistable}
\end{figure}
\end{center}

On peut définir une norme de Finsler $\|.\|$ sur $HM$ de la façon suivante : pour $Z = aX + Z^s + Z^u \in \R.X \oplus E^s \oplus E^u$, on pose
\begin{equation}\label{normhm}
 \|Z\| = \left( a^2 + F(d\pi Z^s) + F(d\pi Z^u) \right)^{\frac{1}{2}}.
\end{equation}
Cette m\'etrique est pr\'ecis\'ement celle qui a \'et\'e introduite dans \cite{Crampon:2011fk2}, et qui appara\^it naturellement via une d\'ecomposition en sous-fibr\'es horizontaux et verticaux. Nous n'aurons toutefois pas besoin ici de ces notions.\\
Remarquons qu'en particulier, si $Z$ est un vecteur tangent stable ou instable, c'est-à-dire $Z=Z^s$ ou $Z=Z^u$, on a $\|Z^s\| = F(d\pi Z^s)$ ou  $\|Z^u\| = F(d\pi Z^u)$. \\

Rappelons les deux lemmes suivants, dont les démonstrations permettront de fixer certaines notations.

\begin{lemm}\label{decroissance}
Soient $w\in H\O$ et $Z\in T_w H\O$ un vecteur stable (resp. instable). L'application $t \longmapsto \| d\ph^t(Z)\|$ est une bijection décroissante (resp. croissante) de $(0,+\infty)$ dans $(0,+\infty)$.
\end{lemm}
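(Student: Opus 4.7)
The plan is to reduce to the stable case by time reversal, since $\ph^{-t}$ exchanges $W^s$ and $W^u$; so I assume $Z = Z^s$. First, I would realize $Z^s$ as the velocity at $s=0$ of a smooth curve $\sigma:(-\varepsilon,\varepsilon)\to W^s(w)$ of the form $\sigma(s)=(y(s),[y(s)x^+])$ with $y(s)\in\Hh_{x^+}(x)$, $y(0)=x$, and $y'(0)=d\pi\,Z^s$. Since $d\ph^t$ commutes with differentiation in $s$, the curve $y_t(s):=\pi\ph^t(\sigma(s))$ lies on the horosphere $\Hh_{x^+}(\pi\ph^t(w))$, and by the definition (\ref{normhm}) together with the remark preceding the lemma,
$$\|d\ph^t Z^s\| \;=\; F\bigl(y_t(0),\, y_t'(0)\bigr).$$

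The heart of the argument is the strict monotonic decrease of $F(y_t(0), y_t'(0))$ in $t$. I would work in a chart adapted to $w$, so that the orbit $t\mapsto \pi\ph^t(w)$ traces a straight line toward $x^+$, and the horoballs $H_{x^+}(\pi\ph^t(w))$ form a strictly nested family in $t$ --- a consequence of the strict convexity of $\O$ together with the $\C^1$ regularity of $\dO$ at $x^+$. Each $y(s)$ is carried by the flow to $y_t(s)$ along the geodesic $(y(s)x^+)$. Applying Proposition \ref{compa} to the horoballs, viewed as proper convex subsets of $\O$ sharing the boundary point $x^+$, or equivalently computing directly with the explicit Finsler formula (\ref{metrique_finsler}) in the chart while tracking the endpoints $x^\pm(y_t(0), y_t'(0))$, yields the strict decrease.

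For the bijection onto $(0,+\infty)$: the limit $\|d\ph^t Z^s\|\to 0$ as $t\to+\infty$ follows from the stable-manifold characterization $\d(\pi\ph^t(w),\pi\ph^t(\sigma(s)))\to 0$, combined with the mean value inequality applied to $s\mapsto \d(\pi\ph^t(w),\pi\ph^t(\sigma(s)))$ and the monotonic decrease already obtained. Divergence to $+\infty$ as $t\to-\infty$ follows symmetrically: under the reverse flow $\sigma(s)$ is carried along geodesics diverging toward $x^-$, and the infinitesimal horospherical spread blows up.

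The main obstacle will be establishing the \emph{strict} inequality in the monotonicity step: naive comparison yields only non-strict decrease, and one must exclude any flat interval. This is precisely where the equality case of Proposition \ref{compa} has to be ruled out, using the strict convexity of $\O$ and the observation that along the flow the tangent endpoints $x^\pm$ controlling the Finsler norm of $y_t'(0)$ genuinely move inside $\O$.
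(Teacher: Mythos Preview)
Your setup is exactly the paper's: realize $Z^s$ through a curve in $W^s(w)$, compute $\|d\ph^t Z^s\|=F(x_t,z_t)$ in an adapted chart, and read off the monotonicity from the explicit Finsler formula. Where you propose the direct computation tracking the endpoints $y_t^\pm$, you are doing precisely what the paper does, and the strict decrease then follows from the strict convexity of $\O$.

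Two points, however, do not work as written. First, the alternative via Proposition~\ref{compa} applied to the nested horoballs is a red herring: the norm you need to control is $F=F_\O$, computed in $\O$ itself, not in any horoball; Proposition~\ref{compa} only compares $F_{\O_1}$ with $F_{\O_2}$ at a \emph{common} point, and gives you nothing about $F_\O(x_{t_1},z_{t_1})$ versus $F_\O(x_{t_2},z_{t_2})$. So you must use the explicit formula, and there is no shortcut here.

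Second, your argument for the limits is incomplete. The mean value inequality gives
\[
\d\bigl(y_t(0),y_t(s)\bigr)\ \leqslant\ \int_0^s F\bigl(y_t(u),y_t'(u)\bigr)\,du,
\]
which bounds the distance \emph{above} by the Finsler length, not the other way round; knowing the left side tends to $0$ says nothing about $F(y_t(0),y_t'(0))$ without a uniform lower bound of the form $\d(y_t(0),y_t(s))\geqslant c(s)\,F(y_t(0),y_t'(0))$, and you have not produced one (this would amount to the horospherical curve being uniformly quasi-geodesic in $t$, which is not obvious). The paper instead reads both limits directly off the formula
\[
F(x_t,z_t)=\frac{|z|}{2|xx^+|}\left(\frac{|x_tx^+|}{|x_ty_t^+|}+\frac{|x_tx^+|}{|x_ty_t^-|}\right):
\]
the $\C^1$ regularity of $\dO$ at $x^+$ forces $|x_tx^+|/|x_ty_t^\pm|\to 0$ as $t\to+\infty$, while strict convexity at $x^-$ forces the ratios to blow up as $t\to-\infty$. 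That is where the two hypotheses on $\dO$ enter, each at its own endpoint; your sketch invokes neither explicitly for the limits.
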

\begin{proof} Choisissons une carte adaptée au point $w$. Notons $x=\pi w$ et $x_t=\pi \ph^t(w), t\in\R$. Supposons que $Z$ est un vecteur stable, tangent à $H\O$ en $w$ et notons $z=d\pi(Z)$, $z_t = d\pi d\ph^t(Z),\ t\in\R$. Rappelons que par définition de la norme, on a $\|d\ph^t(Z)\| = F(z_t)$. Or,
$$F(z_t) = \frac{|z_t|}{2}\left(\frac{1}{|x_ty_t^+|} + \frac{1}{|x_ty_t^-|}\right),$$
où $y_t^+$ et $y_t^-$ sont les points d'intersection de $x+\R.z$ avec $\dO$. 

\begin{center}
\begin{figure}[h!]
  \centering
\includegraphics[width=7cm]{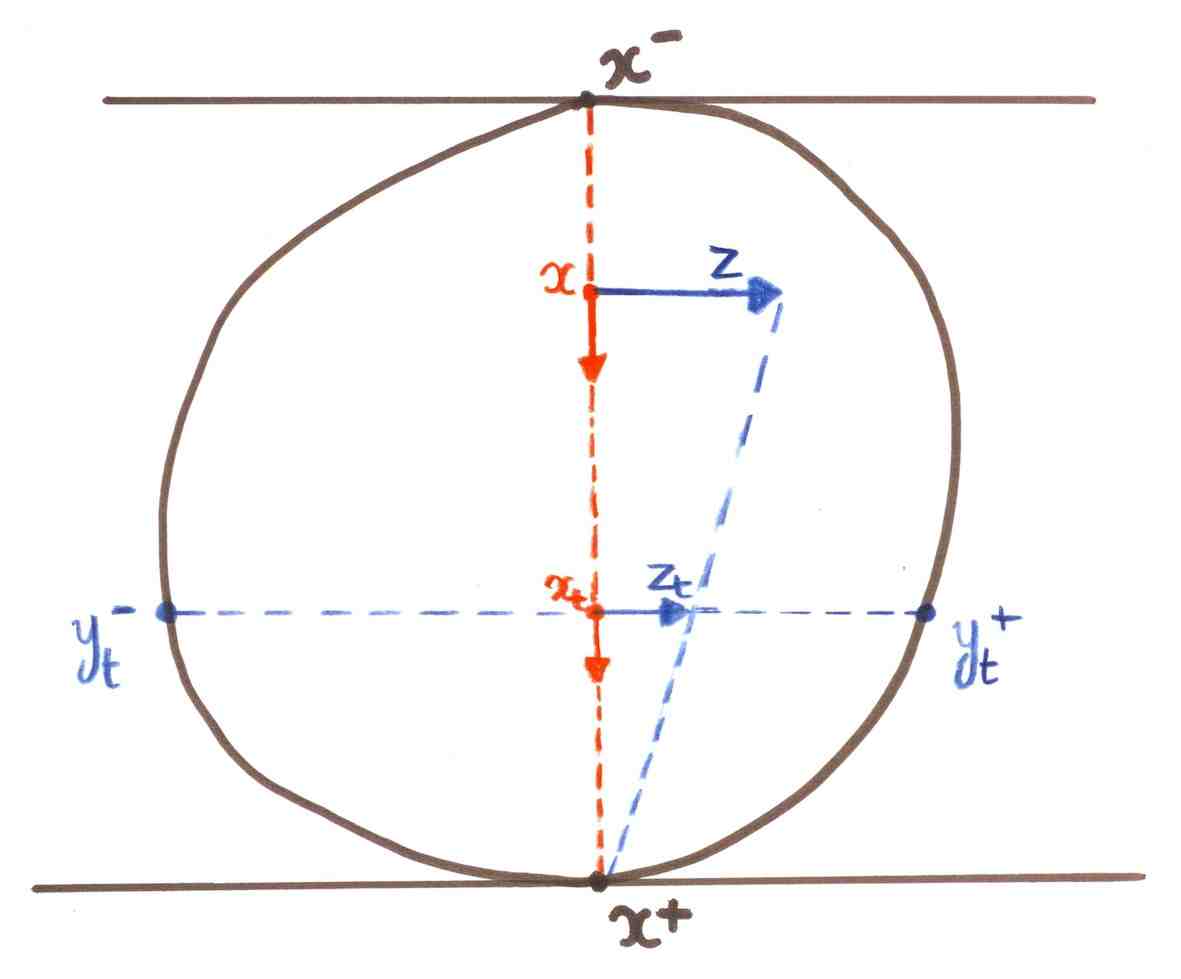}
\caption{Contraction du flot}
\label{flot}
\end{figure}
\end{center}

Si on considère l'application
$$h_t: y\in \Hh_{x^+}(x) \longmapsto y_t = \pi \ph^t(y,[yx^+])= (yx^+)\cap \Hh_{x^+}(x_t),$$ on voit que $z_t$ est en fait donné par
$$z_t=dh_t(z)=\frac{|x_tx^+|}{|xx^+|}z.$$
On obtient ainsi
$$F(z_t) = \frac{|z|}{2|xx^+|}\left(\frac{|x_tx^+|}{|x_ty_t^+|} + \frac{|x_tx^+|}{|x_ty_t^-|}\right).$$
Que $t \longmapsto \|d\ph^t(Z)\|$ soit strictement décroissante est alors une conséquence directe de la stricte convexité de $\O$. La régularité $\C^1$ au point extrémal $\ph^{+\infty}(w)$ de l'orbite de $w$ entraîne que $\|d\ph^t(Z)\|$ tend vers $0$ en $+\infty$. La stricte convexité à l'autre point extrémal $\ph^{-\infty}(w)$ de l'orbite de $w$ implique $\lim_{t\to - \infty}  \|d\ph^t(Z)\| = +\infty$.\\

Dans le cas où $Z$ est un vecteur instable, on obtient, en gardant les mêmes notations:
$$F(z_t) = \frac{|z|}{2|xx^-|}\left(\frac{|x_tx^-|}{|x_ty_t^+|} + \frac{|x_tx^-|}{|x_ty_t^-|}\right).$$
On peut donc appliquer le même raisonnement.
\end{proof}

Remarquons ici le corollaire suivant, qui dit que la décroissance et la croissance du lemme précédent sont contrôlées.

\begin{coro}\label{decroitpasvite}
Pour tout vecteur $Z\in TH\O$, on a
$$e^{-2|t|} \|Z\| \leqslant \|d\ph^t(Z)\| \leqslant e^{2|t|} \|Z\|.$$
\end{coro}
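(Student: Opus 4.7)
The plan is to decompose $Z$ along the Anosov splitting $TH\O = \R X \oplus E^s \oplus E^u$, bound each summand separately, and recombine through the norm formula \eqref{normhm}. Since $X$ generates the flow, $d\ph^t(aX) = aX_{\ph^t(w)}$, so the $X$-component is invariant under $d\ph^t$. Thus the corollary reduces to showing that for every stable vector $Z^s$ and all $t\in\R$,
$$e^{-2|t|}\|Z^s\|\leqslant \|d\ph^t Z^s\|\leqslant e^{2|t|}\|Z^s\|,$$
together with the analogous statement for unstable vectors. By Lemma \ref{decroissance}, $t\mapsto \|d\ph^t Z^s\|$ is strictly decreasing, which yields the upper bound for $t\geqslant 0$ and the lower bound for $t\leqslant 0$ for free; the unstable inequalities follow symmetrically by applying the same argument to the time-reversed flow, which swaps the roles of $E^s$ and $E^u$. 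The sole substantive point is therefore the lower bound $\|d\ph^t Z^s\|\geqslant e^{-2t}\|Z^s\|$ for $t\geqslant 0$ and $Z^s$ stable.

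To prove this, I would work in an adapted chart at $w$ and reuse the explicit formula established in the proof of Lemma \ref{decroissance},
$$F(z_t)=\frac{|z|}{2|xx^+|}\left(\frac{|x_tx^+|}{|x_ty_t^+|}+\frac{|x_tx^+|}{|x_ty_t^-|}\right).$$
The heart of the argument is the geometric estimate: for $t\geqslant 0$,
$$|x_ty_t^{\pm}|\;\leqslant\;\frac{|x_tx^-|}{|xx^-|}\,|xy^{\pm}|.$$
To prove it, consider the line $L$ joining $x^-$ to $y^+$; by strict convexity of $\dO$, we have $L\cap\overline{\O}=[x^-,y^+]$. Parametrize $L$ affinely so that the parameter takes the value $0$ at $x^-$ and $1$ at $y^+$. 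The point of $L$ at the same height as $x_t$ along the axis $(x^-x^+)$ of the adapted chart has parameter $s=|x_tx^-|/|xx^-|\geqslant 1$ (since $t\geqslant 0$), lies beyond $y^+$ on $L$, and hence is exterior to $\overline{\O}$; its coordinate in the stable direction is $s\,|xy^+|$. Since $y_t^+\in\dO$ lies on the same horizontal line through $x_t$, which is a chord of the convex set $\overline{\O}$, the inequality $|x_ty_t^+|\leqslant s|xy^+|$ follows. The bound for $y_t^-$ is obtained identically, using the line through $x^-$ and $y^-$.

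Substituting these estimates into the formula for $F(z_t)$ and invoking the cross-ratio identity $e^{2t}=|xx^+||x_tx^-|/(|xx^-||x_tx^+|)$, which is nothing but the definition $d_{\O}(x,x_t)=t$, gives
$$F(z_t)\;\geqslant\;\frac{|x_tx^+||xx^-|}{|xx^+||x_tx^-|}\,F(z)\;=\;e^{-2t}F(z),$$
which is the desired lower bound. Recombining the $X$, stable, and unstable components through \eqref{normhm} then delivers the corollary. The main obstacle in this strategy is isolating the right convexity estimate above: it is precisely the way in which strict convexity of $\O$ is converted into the quantitative exponential control, and once it is in hand the rest is a direct computation.
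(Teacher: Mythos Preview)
Your argument is correct, but it follows a different path from the paper's. You prove the lower bound $\|d\ph^t Z^s\|\geqslant e^{-2t}\|Z^s\|$ by a direct geometric estimate: bounding the horizontal chord lengths $|x_ty_t^{\pm}|$ from above using secant lines through $x^-$, which is a clean use of convexity. The paper instead introduces the auxiliary unstable vector $Z^u\in E^u(w)$ with the same projection $d\pi Z^u=d\pi Z^s=z$, and observes from the two explicit formulas of Lemma~\ref{decroissance} that the ratio satisfies the exact identity
\[
\frac{\|d\ph^t Z^s\|}{\|d\ph^t Z^u\|}=\frac{|x_tx^+|\,|xx^-|}{|x_tx^-|\,|xx^+|}=e^{-2t}.
\]
Since $t\mapsto\|d\ph^t Z^u\|$ is increasing and $\|Z^u\|=\|Z^s\|$, this gives $\|d\ph^t Z^s\|=e^{-2t}\|d\ph^t Z^u\|\geqslant e^{-2t}\|Z^s\|$ immediately. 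The paper's route is shorter and yields the identity~(\ref{liensu}) as a byproduct, which has independent interest (it reappears implicitly in Proposition~\ref{regularite}); your route is more self-contained and makes the role of convexity of $\partial\O$ at $x^-$ completely explicit.
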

\begin{proof}
Soient $w\in H\O$ et $Z^s \in E^s(w)$ un vecteur stable. Posons $z:= d\pi Z^s$. Soit $Z^u\in E^u(w)$ l'unique vecteur instable tel que $d\pi Z^u = z$. On a vu, dans la démonstration du lemme précédent, et avec les mêmes notations que, pour tout $t\in\R$,
$$\|d\ph^t Z^s\| =  \frac{|z|}{2|xx^+|}\left(\frac{|x_tx^+|}{|x_ty_t^+|} + \frac{|x_tx^+|}{|x_ty_t^-|}\right),$$
et
$$\|d\ph^t Z^u\| = \frac{|z|}{2|xx^-|}\left(\frac{|x_tx^-|}{|x_ty_t^+|} + \frac{|x_tx^-|}{|x_ty_t^-|}\right).$$
Ainsi,
$$\frac{\|d\ph^t Z^s\|}{\|d\ph^t Z^u\|} = \frac{|x_tx^+|}{|x_tx^-|}\frac{|xx^+|}{|xx^-|}.$$
L'égalité $d_{\O}(x,x_t)=t$ implique directement que
$$\frac{|x_tx^+|}{|x_tx^-|}\frac{|xx^+|}{|xx^-|} = e^{-2t},$$
et donc
\begin{equation}\label{liensu}
\frac{\|d\ph^t Z^s\|}{\|d\ph^t Z^u\|} = e^{-2t}.
\end{equation}
Maintenant, le fait que la fonction $t\mapsto \|d\ph^t Z^s\|$ soit décroissante implique que
$$\limsup_{t\to +\infty}\frac{1}{t} \ln \|d\ph^t Z^s\| \leqslant 0;$$
de même, comme $t\mapsto \|d\ph^t Z^u\|$ est croissante,
$$\liminf_{t\to +\infty}\frac{1}{t} \ln \|d\ph^t Z^u\| \geqslant 0.$$
De l'égalité (\ref{liensu}), on déduit donc que
$$\liminf_{t\to +\infty}\frac{1}{t} \ln \|d\ph^t Z^s\| \geqslant -2,$$
et
$$\limsup_{t\to +\infty}\frac{1}{t} \ln \|d\ph^t Z^u\| \leqslant 2.$$
Ainsi, on a, pour tout $t\geqslant 0$ et $Z^s\in E^s\smallsetminus \{0\}$,
$$e^{-2|t|} \leqslant \frac{\|d\ph^t(Z^s)\|}{\|Z^s\|} \leqslant 1.$$
De manière similaire, on obtient pour tout $t\geqslant 0$ et $Z^u\in E^u\smallsetminus \{0\}$,
$$1 \leqslant  \frac{\|d\ph^t(Z^u)\|}{\|Z^u\|} \leqslant e^{2|t|}.$$
On obtient le résultat en décomposant un vecteur $Z$ selon $E^s\oplus E^u\oplus \R.X$.
\end{proof}

\subsection{Ensemble non errant}

Nous voulons par la suite \'etudier des propri\'et\'es de r\'ecurrence du flot g\'eod\'esique. Pour cela, il nous faut regarder l'ensemble des points qui ne partent pas pour toujours \`a l'infini.\\
\'Etant donnée la variété $M=\Quo$, on notera $HM$ le fibré tangent homog\`ene de $M$, quotient du fibré $H\Omega$ par le groupe $\G$. \emph{L'ensemble non errant} du flot géodésique de $M$ est l'ensemble fermé $\NW$ des points $w\in HM$ dont l'orbite passe une infinité de fois dans tout voisinage ouvert de $w$, dans le passé \emph{et} dans le futur. Cet ensemble est naturellement relié à l'ensemble limite: c'est la projection sur $HM$ de l'ensemble
$$\{w=(x,\xi) \in H\O\ |\ x^+(w),x^-(w) \in \LG\}.$$
En particulier, la projection de $\NW$ sur $M$ est incluse dans le c\oe ur convexe de $M$; cela nous permettra d'utiliser le fait \ref{decomposition} ci-dessous lorsque nous seront confront\'es \`a des vari\'et\'es g\'eom\'etriquement finies.

\section{Vari\'et\'es g\'eom\'etriquement finies}

\subsection{D\'ecomposition du c\oe ur convexe}

Les vari\'et\'es g\'eom\'etriquement finies sont le contexte de cet article. Des d\'efinitions \'equivalentes de la finitude g\'eom\'etrique ont \'et\'e donn\'ees dans \cite{Crampon:2012fk}. Rappelons seulement le r\'esultat suivant, essentiel dans le pr\'esent travail (voir la section 8 de \cite{Crampon:2012fk}):

\begin{fait}\label{decomposition}
Soit $M=\Quo$ une vari\'et\'e g\'eom\'etriquement finie. Le c\oe ur convexe $C(M)$ de $M$ est l'union d'un compact $K$ et d'un nombre fini de cusps $\C_i = (H_i\cap \overline{C(\LG)}^{\O})/\P_i$, $1\leqslant i \leqslant l$, o\`u $H_i$ est une horoboule bas\'ee en un point $p_i\in\dO$, et $\P_i$ est le sous-groupe parabolique maximal de $\G$ fixant $p_i$, soit $\P_i=\Stab_{\G}(p_i)$.
\end{fait}

\subsection{Groupes paraboliques de rang maximal}

Les sous-groupes paraboliques maximaux qui apparaissent ici sont conjugu\'es \`a des sous-groupes paraboliques d'isom\'etries hyperboliques; c'est un des r\'esultats principaux de \cite{Crampon:2012fk}. Un cas particulier est celui o\`u les sous-groupes paraboliques sont de rang maximal, c'est-\`a-dire que leur action sur $\dO\smallsetminus\{p\}$, o\`u $p$ est le point fixe du groupe en question, est cocompacte. Dans ce cas, on a le r\'esultat suivant.

\begin{theo}[\cite{Crampon:2012fk}, section 7]\label{ellipsoidesecurite}
Soit $\P$ un sous-groupe parabolique de $\Aut(\O)$, de rang maximal et de point fixe $p\in\dO$. Il existe deux ellipsoïdes $\P$-invariants $\E^{int}$ et $\E^{ext}$ tels que
\begin{itemize}
 \item $\partial\E^{int}\cap \partial\E^{ext}=\partial\E^{int}\cap\dO = \partial\E^{ext}\cap\dO=\{p\}$;
 \item $\E^{int}\subset \O\subset \E^{ext}$;
 \item $\E^{int}$ est une horoboule de $\E^{ext}$.
\end{itemize}
\end{theo}

Anticipons un peu. Pour pouvoir dire quelque chose du flot g\'eod\'esique d'une vari\'et\'e g\'eom\'etriquement finie, il va nous falloir ma\^itriser ce qui se passe dans les parties qui partent \`a l'infini, les cusps $\C_i$. De fa\c con g\'en\'erale, cela ne sera pas possible, comme le montre le contre-exemple que nous donnons dans la partie \ref{sectionex1}. Toutefois, lorsque les sous-groupes paraboliques sont de rang maximal, les deux ellipso\"ides du th\'eor\`eme pr\'ec\'edent nous donnent deux m\'etriques hyperboliques dans chaque cusp qui contr\^olent la m\'etrique de Hilbert.

\begin{coro}\label{loincusp}
Soit $M=\Quo$ une vari\'et\'e g\'eom\'etriquement finie. Supposons que les sous-groupes paraboliques maximaux de $\G$ soient tous de rang maximal. Alors, pour toute constante $C>1$, on peut trouver une d\'ecomposition
$$C(M)=K  \bigsqcup\sqcup_{1\leqslant i\leqslant l} \C_i$$
du c\oe ur convexe de $M$, et, sur chaque $\C_i$, deux m\'etriques hyperboliques $\mathtt{h}_i$ et $\mathtt{h}_i'$, telles que
\begin{itemize}
 \item $F$, $\mathtt{h}_i$ et $\mathtt{h}_i'$ ont les mêmes géodésiques, à paramétrisation près ;
 \item $\displaystyle\frac{1}{C} \mathtt{h}_i \leqslant \mathtt{h}_i' \leqslant F \leqslant \mathtt{h}_i \leqslant C\mathtt{h}_i'$.
\end{itemize}
\end{coro}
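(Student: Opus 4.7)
The plan is to combine the decomposition of the convex core given by Fact \ref{decomposition} with the ellipsoid sandwich $\E_i^{int} \subset \O \subset \E_i^{ext}$ provided by Theorem \ref{ellipsoidesecurite} at each cusp point, and then to shrink the horoballs enough so that, deep in each cusp, the two ellipsoid Hilbert metrics become $C$-bi-Lipschitz equivalent.

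First I would fix the decomposition $C(M) = K \sqcup \bigsqcup_{i=1}^l \C_i$ given by Fact \ref{decomposition}, where each cusp $\C_i = (H_i \cap \overline{C(\LG)}^{\O})/\P_i$ sits inside a horoball $H_i$ based at a parabolic fixed point $p_i \in \dO$, with $\P_i = \Stab_\G(p_i)$ the maximal parabolic stabilizer. Since by hypothesis every $\P_i$ has maximal rank, Theorem \ref{ellipsoidesecurite} applied at each cusp produces two $\P_i$-invariant ellipsoids $\E_i^{int} \subset \O \subset \E_i^{ext}$ that are mutually tangent and tangent to $\dO$ at the single point $p_i$, with $\E_i^{int}$ a horoball of $\E_i^{ext}$. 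I would then define $\mathtt{h}_i$ and $\mathtt{h}_i'$ as the Hilbert metrics of $\E_i^{int}$ and $\E_i^{ext}$ respectively, restricted to $H_i$. Both are hyperbolic, since the Hilbert metric of any ellipsoid is the hyperbolic metric in its Beltrami--Klein model, and both are $\P_i$-invariant, hence descend to $\C_i$.

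The agreement of geodesics is then immediate: $\O$, $\E_i^{int}$ and $\E_i^{ext}$ are all strictly convex, so each of the three Finsler metrics $F$, $\mathtt{h}_i$, $\mathtt{h}_i'$ has as geodesics the intersections of projective lines with the corresponding domain, and they therefore share the same geodesics up to reparametrization. The comparison Proposition \ref{compa}, applied to $\E_i^{int} \subset \O$ and to $\O \subset \E_i^{ext}$, gives directly $\mathtt{h}_i' \leqslant F \leqslant \mathtt{h}_i$ on $H_i$. Only the outer inequalities $\frac{1}{C}\mathtt{h}_i \leqslant \mathtt{h}_i'$ and $\mathtt{h}_i \leqslant C\mathtt{h}_i'$, which are equivalent to each other, still need to be arranged.

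The hard part will be establishing these last inequalities on a sufficiently small sub-horoball. The key observation is that $\E_i^{int}$ is a horoball of $\E_i^{ext}$ based at $p_i$, so the two ellipsoid boundaries share the same tangent hyperplane and, through the horoball identification, the same asymptotic geometry at $p_i$. In a projective chart sending $p_i$ and the common tangent hyperplane to infinity, both ellipsoids become half-space-like regions of the same type, and a direct computation using formula \eqref{metrique_finsler} shows that for $(x,v) \in T\O$ the ratio $\mathtt{h}_i(x,v)/\mathtt{h}_i'(x,v)$ tends to $1$ as $x \to p_i$ inside the horoball, uniformly in the unit tangent direction $v$. For the given $C > 1$, one can then choose a $\P_i$-invariant sub-horoball $H_i' \subset H_i$ deep enough in the cusp on which $\mathtt{h}_i \leqslant C\mathtt{h}_i'$ holds everywhere. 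The quotient $((H_i \smallsetminus H_i') \cap \overline{C(\LG)}^{\O})/\P_i$ is compact, so it can be absorbed into $K$, and replacing each $H_i$ by such an $H_i'$ yields the required decomposition.
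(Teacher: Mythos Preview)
Your proposal is correct and follows essentially the same approach as the paper: use the ellipsoid sandwich from Theorem~\ref{ellipsoidesecurite} to produce the two hyperbolic metrics, get $\mathtt{h}_i' \leqslant F \leqslant \mathtt{h}_i$ from Proposition~\ref{compa}, show the ratio $\mathtt{h}_i/\mathtt{h}_i'$ tends to $1$ at the cusp point, and shrink the horoballs accordingly. The paper carries out your ``direct computation'' in explicit coordinates via an orthogonal decomposition of $T_{x(t)}\O$ along $\R\cdot xp$ and the tangent hyperplane, and it makes the uniformity in $x$ explicit by invoking invariance under the Zariski closure $\U\cong\R^{n-1}$ of $\P_i$ (your $\P_i$-invariance plus cocompactness on horospheres would equally suffice).
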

\begin{proof}
Soit $p\in\LG$ un point parabolique, $\P$ le sous-groupe parabolique maximal de $\G$ fixant $p$. Soient $\E^{int}$ et $\E^{ext}$ deux ellipso\"ides donn\'es par le th\'eor\`eme \ref{ellipsoidesecurite}. Ils d\'efinissent deux m\'etriques hyperboliques $\h$ et $\h'$ telles que $\h'\leqslant F\leqslant \h$.\\

\begin{center}
\begin{figure}[h!]
  \centering
\includegraphics[width=6cm, angle=-2]{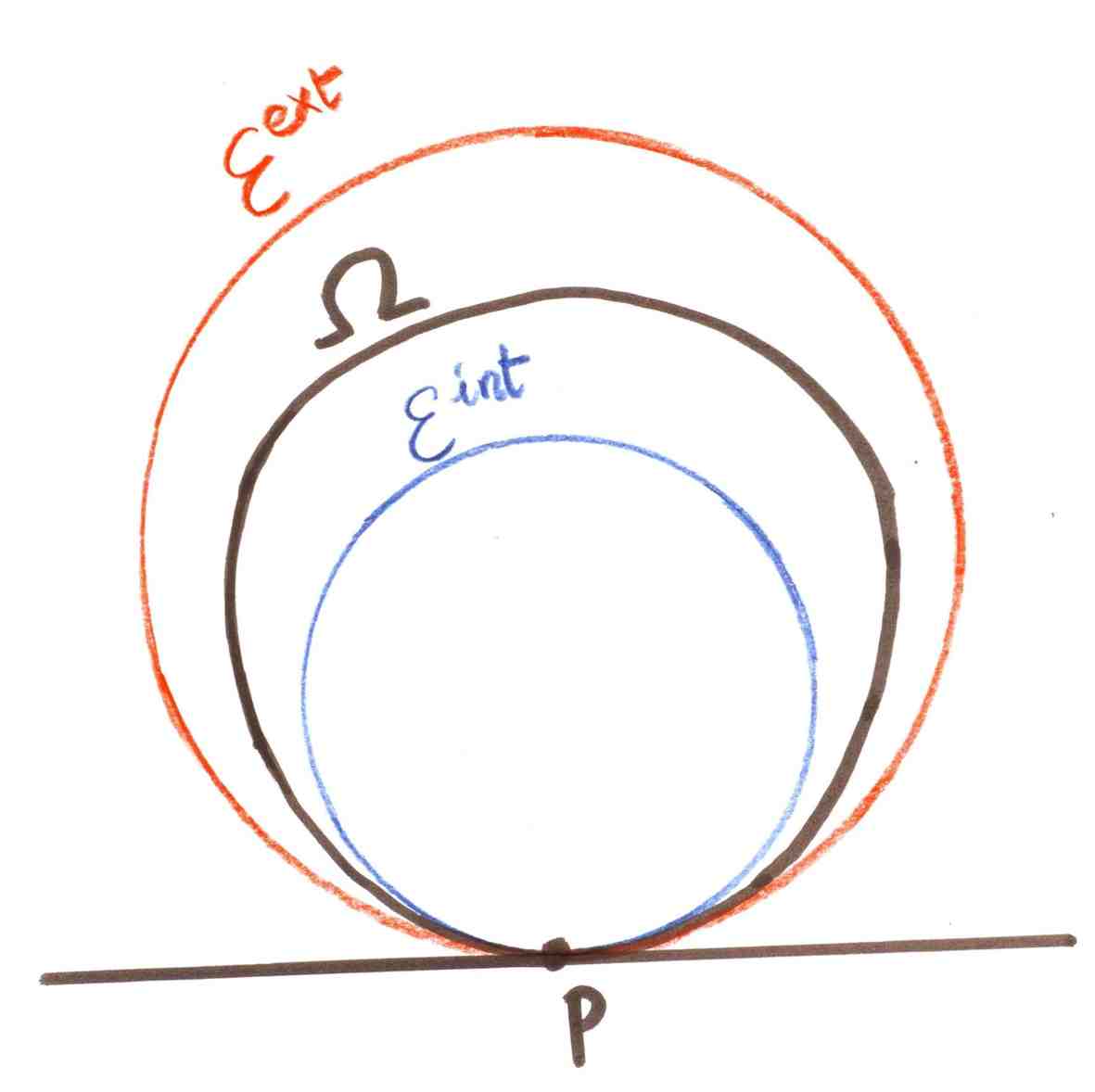}
\caption{Les ellipso\"ides tangents}
\label{secu}
\end{figure}
\end{center}

L'adh\'erence de Zariski $\U$ de $\P$ dans $\ss$ est isomorphe \`a $\R^{n-1}$. Les ensembles $\partial\E^{int}\smallsetminus\{p\}$ et $\partial\E^{ext}\smallsetminus\{p\}$ sont des orbites de $\U$. Par exemple, dans une certaine base de $\R^{n+1}$, $\partial\E^{ext}$ est d'\'equation 
$$x_nx_{n+1}=x_1^2+\cdots+x_{n-1}^2,$$ 
et alors $\partial\E^{int}$ est d'\'equation 
$$-x_n^2 + 2ax_nx_{n+1} = 2a (x_1^2+\cdots+x_{n-1}^2)$$ pour un certain $a>0$.\\
Soient $C>1$ et $x$ un point de $\E^{int}$. Pour $t\geqslant 0$, on note $x(t)$ le point du segment $[xp]$ tel que $\d(x,x(t))=t$. On consid\`ere la fonction
$$f_x: t\longmapsto \max_{v\in T_{x(t)}\O\smallsetminus \{0\}} \frac{\h(x(t),v)}{\h'(x(t),v)}.$$

\begin{lemm} 
Il existe $T=T(C)$ tel que, pour tout $t\geqslant T$, $1\leqslant f_x(t)\leqslant C$.
\end{lemm}
\begin{proof} Pour simplifier les calculs, on peut supposer que les \'equations de $\partial\E^{ext}$ et $\partial\E^{int}$ sont les pr\'ec\'edentes et qu'on travaille dans la carte affine $\{x_{n+1}=1\}$ avec la structure euclidienne induite par celle de $\R^{n+1}$. On peut aussi supposer que le point $x$ a pour coordonn\'ees $(0,\cdots,0,x_n)$ pour un certain $x_n>0$; le point $p$ est ici l'origine et $T_p\partial\E^{ext}=\{t_n=0\}$. La d\'efinition de la m\'etrique de Finsler (formule (\ref{metrique_finsler})) donne imm\'ediatement que, pour $v\in \R\cdot xp\smallsetminus\{0\}$ ou $v\in \{t_n=0\}\smallsetminus\{0\}$,
$$\lim_{t\to +\infty}  \frac{\h(x(t),v)}{\h'(x(t),v)} = 1.$$ 
Il existe donc $T(C)$ tel que pour $t\geqslant T(C)$, $\frac{\h(x(t),v)}{\h'(x(t),v)} \leqslant C$ pour $v\in \R\cdot xp\smallsetminus\{0\}$ ou $v\in \{t_n=0\}\smallsetminus\{0\}$.\\
De plus, pour tout $t\geqslant 0$, les sous-espaces $\R\cdot xp$ et $\{t_n=0\}$ de $T_{x(t)}\O$ sont orthogonaux, tant pour $\h(x(t),\cdot)$ que pour $\h'(x(t),\cdot)$. En d\'ecomposant le vecteur $v\in T_{x(t)}\O\smallsetminus\{0\}$ selon $\R\cdot xp$ et $\{t_n=0\}$, on voit que 
$$\frac{\h(x(t),v)}{\h'(x(t),v)} \leqslant C$$ 
d\`es que $t\geqslant T(C)$.
\end{proof}
Comme les m\'etriques $\h$ et $\h'$ sont invariantes par $\U$, on a aussi $1\leqslant f_{u\cdot x}(t)\leqslant C$, pour tout $u\in\U$ et tout $t\geqslant T$. Il existe donc une horoboule $H^{int}$ de $\E^{int}$ bas\'ee en $p$ telle que, sur $H^{int}$,
$\displaystyle\frac{1}{C} \mathtt{h} \leqslant \mathtt{h}' \leqslant F \leqslant \mathtt{h} \leqslant C\mathtt{h}'$.
Comme $\P$ agit de fa\c con cocompacte sur $\dO\smallsetminus\{p\}$, $H^{int}$ contient une horoboule $H$ de $\O$ bas\'ee en $p$ telle que, sur $H$,
\begin{itemize}
 \item $F$, $\mathtt{h}$ et $\mathtt{h}'$ ont les mêmes géodésiques (les droites), à paramétrisation près ;
 \item $\displaystyle\frac{1}{C} \mathtt{h} \leqslant \mathtt{h}' \leqslant F \leqslant \mathtt{h} \leqslant C\mathtt{h}'$.
\end{itemize}

On peut maintenant conclure. Consid\'erons un ensemble de repr\'esentants $\{p_i\}_{1\leqslant i \leqslant l}$ des points paraboliques de $\LG$. On note $\P_i$ le sous-groupe parabolique maximal de $\G$ qui fixe le point $p_i$. On peut faire la construction pr\'ec\'edente pour chaque point $p_i$. On obtient ainsi une horoboule $H_i$ de $\O$ bas\'ee en $p_i$ et, sur $H_i$, deux m\'etriques hyperboliques $\P_i$-invariantes $\mathtt{h}_i$ et $\mathtt{h}_i'$, v\'erifiant les propri\'et\'es pr\'ec\'edentes. Elles induisent par projection deux m\'etriques hyperboliques sur le cusp $\C_i=H_i/\P_i$, qui satisfont aux conditions de l'\'enonc\'e.\\
On peut supposer que $H_i\subset C(\LG)$, $1\leqslant i \leqslant l$. D'apr\`es le fait \ref{decomposition}, il est aussi possible de prendre les $H_i$ telles que l'union $\cup_{\g\in\G, 1\leqslant i \leqslant l} \g\cdot H_i $ soit disjointe. Le cusp $\C_i=H_i/\P_i$ s'identifie ainsi \`a une partie de $C(M)$. L'ensemble $K=C(M)\smallsetminus \sqcup_i \C_i$ est n\'ecessairement compact et cela donne la d\'ecomposition annonc\'ee.
\end{proof}

\subsection{Cusps asymptotiquement hyperboliques}

Nous allons suivre le chemin indiqu\'e par les groupes paraboliques de rang maximal en nous restreignant \`a ces vari\'et\'es g\'eom\'etriquement finies dont nous savons contr\^oler la m\'etrique de Hilbert dans les cusps:

\begin{defi}\label{asymphypdefi}
On dira qu'une vari\'et\'e $M=\Quo$ g\'eom\'etriquement finie est \emph{\`a cusps asymptotiquement hyperboliques} s'il existe une d\'ecomposition du c\oe ur convexe $C(M)=K\bigsqcup \sqcup_{1\leqslant i\leqslant l} \C_i$ telle que, sur chaque $\C_i$, il existe une m\'etrique hyperbolique $\h_i$, ayant les m\^emes g\'eod\'esiques (non param\'etr\'ees) que $F$ et qui soit \'equivalente \`a $F$, c'est-\`a-dire que, pour un certain $C_i\geqslant 1$,
$$\frac{1}{C_i}\h_i \leqslant F \leqslant C_i\h_i.$$
\end{defi}

Si la condition d'\^etre g\'eom\'etriquement fini porte sur le groupe $\G$, celle d'hyperbolicit\'e asymptotique des cusps porte sur $\O$. Le lemme \ref{loincusp1} qui suit donne une condition sur le bord de $\O$, inspir\'ee par les observations pr\'ec\'edentes, pour que les cusps soient asymptotiquement hyperboliques. Pour l'\'enoncer, il nous faut rappeler une

\begin{defi}\label{defi_calpha}
Soient $\varepsilon>0,\beta> 1$. On dit qu'une fonction $f : U \subset \R^n \longrightarrow \R$, d\'efinie et de classe $\C^1$ sur un ouvert $U$, est
\begin{itemize}
 \item de classe $\C^{1+\varepsilon}$ si, pour une certaine constante $C>0$,
$$|f(x)-f(y)-d_xf(y-x)| \leqslant C|x-y|^{1+\varepsilon},\ x,y\in U;$$
 \item $\beta$-convexe si, pour une certaine constante $C>0$,
$$|f(x)-f(y)-d_xf(y-x)| \geqslant C|x-y|^{\beta},\ x,y\in U.$$
\end{itemize}
On dit que $f$ est de classe $\C^{1+\varepsilon}$ ou $\beta$-convexe en un point $x\in U$ si on a les in\'egalit\'es pr\'ec\'edentes pour tout $y$ dans un voisinage de $x$.
\end{defi}

\begin{lemm}\label{loincusp1}
Soit $M=\Quo$ une vari\'et\'e g\'eom\'etriquement finie. Si le bord $\dO$ est de classe $\C^{1+1}$ et $2$-convexe en chaque point parabolique de $\LG$, alors la vari\'et\'e $M$ est \`a cusps asymptotiquement hyperboliques.
\end{lemm}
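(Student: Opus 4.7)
The plan is to show, for each parabolic point $p$ of $\LG$ with maximal parabolic subgroup $\P$, that the Hilbert metric on a sufficiently small horoball $H$ of $\O$ based at $p$ is equivalent to a $\P$-invariant hyperbolic metric sharing its straight-line geodesics; this hyperbolic metric then descends to the corresponding cusp $\C_i$ of $M$ and yields the structure required by Definition \ref{asymphypdefi}.

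The first step is to produce, for each parabolic fixed point $p$, a pair of $\P$-invariant ellipsoids $\E^{\mathrm{int}}$ and $\E^{\mathrm{ext}}$ which are tangent to $\dO$ at $p$ with common tangent plane $T_p\dO$, and which nest $\O$ locally: $\E^{\mathrm{int}}\cap U\subset\O\cap U\subset\E^{\mathrm{ext}}\cap U$ on some neighborhood $U$ of $p$. The existence of $\P$-invariant ellipsoids tangent at $p$ uses the fact (recalled in the introduction) that maximal parabolic subgroups of $\Aut(\O)$ are conjugate in $\s{n+1}$ to parabolic subgroups of $\SO$: transporting the invariant horoballs of hyperbolic space through the conjugation gives a one-parameter nested family $\{\E_t\}_{t\in\R}$ of $\P$-invariant ellipsoids tangent to $\dO$ at $p$. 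Choosing an affine chart at $p$ with $T_p\dO$ horizontal, $\dO$ appears as a graph $x_n=f(x')$ with $f(0)=0$ and $df(0)=0$; the $\C^{1+1}$ hypothesis gives $f(x')\leqslant c_2|x'|^2$ and $2$-convexity gives $f(x')\geqslant c_1|x'|^2$ for $|x'|$ small, while each ellipsoid of the family is locally described by $x_n=q_t(x')+O(|x'|^3)$ with $q_t$ a positive definite quadratic form depending monotonically on $t$. Picking parameters so that $q_{t_{\mathrm{int}}}\geqslant c_2\,|\cdot|^2$ and $q_{t_{\mathrm{ext}}}\leqslant c_1\,|\cdot|^2$ produces $\E^{\mathrm{int}}$ and $\E^{\mathrm{ext}}$ satisfying the local nesting.

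Proposition \ref{compa} then gives on $\O\cap U$ the comparison $\h:=F_{\E^{\mathrm{ext}}}\leqslant F\leqslant F_{\E^{\mathrm{int}}}=:\h'$, where $\h$ and $\h'$ are two $\P$-invariant hyperbolic metrics whose geodesics coincide with those of $F$. It remains to upgrade this to a \emph{uniform} equivalence of $\h$ and $\h'$ on a cusp region. Since $\E^{\mathrm{int}}$ and $\E^{\mathrm{ext}}$ osculate $\dO$ at $p$ with the same tangent plane, a direct pointwise estimate in the affine chart, mimicking the argument in the proof of Corollary \ref{loincusp}, shows that $\h'(x,v)/\h(x,v)\to 1$ as $x\to p$, uniformly in the tangent vector $v$. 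By Fact \ref{decomposition}, geometric finiteness of $M$ ensures that $\P$ acts cocompactly on $H_0\cap\overline{C(\LG)}^{\O}$ for any horoball $H_0$ based at $p$; as $\h'/\h$ is $\P$-invariant and continuous, compactness of a fundamental domain gives a uniform bound $\h'\leqslant C\h$ on $H_0\cap\overline{C(\LG)}^{\O}$.

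Combining these ingredients, one takes $H_i$ small enough that both the nesting of ellipsoids and the uniform bound hold on $H_i\cap\overline{C(\LG)}^{\O}$. Then $\h\leqslant F\leqslant\h'\leqslant C\h$ there, and $\h$, being $\P_i$-invariant, descends to a hyperbolic metric on $\C_i=(H_i\cap\overline{C(\LG)}^{\O})/\P_i$ with the required properties. Applying this at each of the finitely many parabolic points produces the cusp decomposition of Definition \ref{asymphypdefi}. The main technical point, and where both hypotheses on $\dO$ at $p$ are really used, is in the construction of the ellipsoids: one needs to select two members of the $\P$-invariant nested family whose quadratic osculating forms bracket the quadratic bounds on $f$ furnished by the $\C^{1+1}$ and $2$-convex hypotheses. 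This is the step that has no analogue in the rank-maximal case of Theorem \ref{ellipsoidesecurite}, where global nesting of ellipsoids is furnished \emph{a priori} without any regularity assumption on $\dO$.
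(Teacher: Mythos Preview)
Your overall strategy---sandwiching $\O$ near $p$ between two $\P$-invariant ellipsoids tangent at $p$, then bounding the ratio of the two induced hyperbolic metrics on a cusp region via $\P$-invariance and the cocompactness of $\P$ on a horospherical slice of $C(\LG)$---is precisely the paper's approach. There is, however, a real gap in your execution.

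You establish only \emph{local} nesting $\E^{\mathrm{int}}\cap U\subset\O\cap U\subset\E^{\mathrm{ext}}\cap U$ on a neighborhood $U$ of $p$, and then invoke Proposition~\ref{compa} to obtain $\h\leqslant F\leqslant\h'$ on $\O\cap U$. But the Finsler norm $F(x,v)$ at a point $x$ depends on the \emph{entire} chord of the convex body through $x$, and that chord need not stay in $U$: for $x$ close to $p$ and $v$ nearly tangent to $T_p\dO$, one endpoint of the $\O$-chord lies far from $p$. Proposition~\ref{compa} really requires the global inclusion $\O_1\subset\O_2$; with only local nesting neither $F_{\E^{\mathrm{ext}}}\leqslant F$ nor $F\leqslant F_{\E^{\mathrm{int}}}$ follows on any horoball, however small you take it. The paper avoids this by asserting \emph{global} nesting $\E^{\mathrm{int}}\subset\O\subset\E^{\mathrm{ext}}$, which is exactly what the comparison needs.

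A secondary issue is that your one-parameter family, obtained by transporting horoballs of hyperbolic space, lies entirely inside a single $\P$-invariant ellipsoid $\E$; its osculating quadratic forms $q_t$ are bounded below by that of $\E$ at $p$. This family therefore only supplies interior candidates, and you cannot in general pick $q_{t_{\mathrm{ext}}}\leqslant c_1|\cdot|^2$ from it. For $\E^{\mathrm{ext}}$ you must exhibit a $\P$-invariant ellipsoid actually \emph{containing} $\O$; this uses that the collection of $\P$-invariant ellipsoids tangent at $p$ is richer than a single horoball family (one may vary the conjugation into $\SO$, or write things explicitly in coordinates). Once one has both global inclusions, your remaining steps---the pointwise estimate of $\h'/\h$ along rays to $p$ and the compactness argument on a fundamental domain for $\P$ on $\H\cap C(\LG)$---go through and coincide with the paper's ``petit calcul''.
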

\begin{proof}
L'hypoth\`ese de r\'egularit\'e de $\dO$ aux points paraboliques nous permet, pour chaque point parabolique $p$ de $\LG$ de stabilisateur le groupe $\P=\Stab_{\G}(p)$, de trouver deux ellipsoïdes $\E^{int}$ et $\E^{ext}$ tels que
\begin{itemize}
 \item $\partial\E^{int}\cap \partial\E^{ext}=\partial\E^{int}\cap\dO = \partial\E^{ext}\cap\dO=\{p\}$;
 \item $\E^{int}\subset \O\subset \E^{ext}$.
\end{itemize}
Comme les sous-groupes paraboliques de $\G$ sont conjugu\'es \`a des sous-groupes de $\SO$, on peut choisir ces deux ellipso\"ides de telle fa\c con qu'ils soient $\P$-invariants. Soit $H$ une horoboule de $\O$ bas\'ee en $p$, d'adh\'erence incluse dans $\E^{int}$. Les ellipso\"ides $\E^{int}$ et $\E^{ext}$ d\'efinissent sur $H$ deux m\'etriques hyperboliques $\P$-invariantes $\h$ et $\h'$, qui ont les m\^emes g\'eod\'esiques que $F$, et telles que $\h' \leqslant F \leqslant \h$.\\
On peut maintenant voir qu'il existe une constante $C\geqslant 1$ telle que $1/C \leqslant \h/\h' \leqslant C$ sur $H\cap C(\LG)$. Comme $H$ est $\P$-invariante, il suffit de le montrer sur un domaine fondamental $D$ de $P$ sur $H\cap C(\LG)$. Soit $\H$ l'horosphère au bord de $H$. Comme $M$ est g\'eom\'etriquement finie, l'intersection $\mathcal{D}=\overline{D}^{\O} \cap \H$ est compacte. Il suffit donc de voir que pour tout $x\in \mathcal{D}$, la fonction 
$$f_x: t\in[0,+\infty) \longmapsto \max_{v\in T_{x(t)}\O\smallsetminus \{0\}} \frac{\h(x(t),v)}{\h'(x(t),v)}$$
est born\'ee dans $(0,+\infty)$, o\`u $x(t)$ est le point du segment $[xp]$ tel que $\d(x,x(t))=t$. C'est un petit calcul.\\
Comme les m\'etriques $\h$ et $\h'$ sont $\P$-invariantes, elles donnent deux m\'etriques hyperboliques sur le quotient $H\cap C(\LG)/\P$, qui v\'erifient les conditions voulues. On conclut alors comme dans la d\'emonstration du corollaire \ref{loincusp}.
\end{proof}

On doit pouvoir obtenir la m\^eme conclusion que celle du corollaire \ref{loincusp} sous l'hypoth\`ese que le bord $\dO$ est deux fois diff\'erentiable en chaque point parabolique de $\LG$. Toutefois, cette observation plus pr\'ecise ne nous sera pas utile dans ce texte: nous n'utiliserons que le corollaire \ref{loincusp}, dans la partie \ref{sec_entropie}.\\

De fa\c con g\'en\'erale, les sous-groupes paraboliques \'etant conjugu\'es \`a des sous-groupes de $\SO$, on peut se poser la:

\begin{qu}
Soit $\G$ un sous-groupe discret de $\Aut(\O)$ dont l'action est g\'eom\'etriquement finie. Existe t-il un ouvert convexe $\O'$ sur lequel $\G$ agit de fa\c con g\'eom\'etriquement finie \`a cusps asymptotiquement hyperboliques ?
\end{qu}

Beaucoup de r\'esultats dynamiques ne d\'ependent pas du convexe que l'on consid\`ere et le r\'esultat pr\'ec\'edent permettrait de se ramener \`a une situation g\'eom\'etrique et dynamique agr\'eable, qui sera notre propos dans cette article. Par exemple, le spectre des longueurs ne d\'epend que du groupe $\G$, les longueurs des g\'eod\'esiques ferm\'ees \'etant donn\'ees par les valeurs propres des \'el\'ements hyperboliques du groupe.

\subsection{Cas particuliers}

Parmi les vari\'et\'es g\'eom\'etriquement finies, on peut distinguer celles qui ont volume fini, et celles dont le c\oe ur convexe est compact.\\
Dans \cite{Crampon:2012fk}, on a pu voir que les quotients $\Quo$ qui ont volume fini sont pr\'ecis\'ement les vari\'et\'es g\'eom\'etriquement finies dont l'ensemble limite est le bord $\dO$ tout entier. En particulier, si $\Quo$ est une vari\'et\'e de volume fini, les sous-groupes paraboliques maximaux de $\G$ sont de rang maximal. Remarquons que dans tous les cas, un cusp $\C$ d'une vari\'et\'e g\'eom\'etriquement finie a un volume fini (voir la partie 8 de \cite{Crampon:2012fk}). \\
Les vari\'et\'es \emph{convexes cocompactes} sont celles dont le c\oe ur convexe est compact; autrement dit, le quotient $\Quo$ est géométriquement finie et le groupe $\G$ ne contient pas d'\'el\'ements paraboliques.

\section{Hyperbolicité uniforme du flot géodésique}

Rappelons d'abord quelques définitions.

\begin{defi}
Soit $W$ une variété munie d'une métrique de Finsler $\|\cdot\|$ continue. Soient $\ph^t : W \longrightarrow W$ un flot de classe $\C^1$ engendré par le champ de vecteurs $X$ sur $W$, et $V$ une partie $\ph^t$-invariante de $W$. On dit que le flot $\ph^t$ est \emph{uniformément hyperbolique} sur $V$ s'il existe une décomposition $\ph^t$-invariante
$$TW = \R.X \oplus E^s \oplus E^u$$
du fibré tangent à $W$ en tout point de $V$, et des constantes $a,C >0$ pour lesquelles
$$\|d\ph^t Z^s\| \leqslant C e^{-at} \|Z^s\|,\ \|d\ph^{-t} Z^u\| \leqslant C e^{-at}  \|Z^u\|,\ Z^s\in E^s,\ Z^u\in E^u,\ t\geqslant 0.$$
\end{defi}

Dans le cas où $W$ est une variété compacte et $V=W$, on parle plus souvent de \emph{flot d'Anosov}. Les distributions $E^s$ et $E^u$ s'appellent les distributions stables et instables du flot. Le but de cette partie est de montrer une telle propriété d'hyperbolicité pour notre flot géodésique, restreint à son ensemble non errant.\\
Dans le cas où la variété $M$ est compacte, l'ensemble non errant est $HM$ tout entier, et Yves Benoist a déjà prouvé que le flot géodésique était d'Anosov. Si la variété $M$ est convexe-cocompacte, c'est-\`a-dire que son c\oe ur convexe est compact, l'ensemble non errant est lui-m\^eme compact, et une démonstration similaire fonctionnerait pour prouver l'uniforme hyperbolicité sur l'ensemble non errant. Nous étendons ce résultat au cas d'une variété géométriquement finie \`a cusps asymptotiquement hyperboliques:

\begin{theo}\label{anosov}
Soit $M=\Quo$ une variété géométriquement finie \`a cusps asymptotiquement hyperboliques. Le flot géodésique est uniformément hyperbolique sur l'ensemble non errant, de décomposition
$$THM = \R.X \oplus E^s \oplus E^u.$$
\end{theo}

Nous montrerons le théorème en plusieurs temps. Fixons une fois pour toutes une d\'ecomposition du c\oe ur convexe $C(M)$ de $M$ en une partie compacte $K$ et une union finie de cusps $\C_i, 1\leqslant i\leqslant l$, chacun d'entre eux portant une métrique hyperbolique $\mathtt{h}_i$ telle que
\begin{itemize}
 \item $F$ et $\mathtt{h}_i$ ont les mêmes géodésiques, à paramétrisation près ;
 \item $\displaystyle\frac{1}{C} \mathtt{h}_i \leqslant F \leqslant C \mathtt{h}_i$, pour une certaine constante $C\geqslant 1$.
\end{itemize}

Pour la partie compacte, on se servira du lemme suivant:
\begin{lemm}
Soient $V$ une partie compacte de $HM$ et $T> 0$. Il existe un réel $0<b(V,T)<1$ tel que, si $\ph^t(w)\in V$ pour $0\leqslant t\leqslant T$, alors, pour tout $Z\in E^s(w)$, $$\|d\ph^T(Z)\| \leqslant b(V,T) \|Z\|.$$
\end{lemm}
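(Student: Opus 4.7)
The plan is to upgrade the strict monotonicity from Lemma \ref{decroissance} to uniform contraction by a straightforward compactness argument on the set of base-points whose $T$-orbit segment stays in $V$.

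First, I would introduce the auxiliary set
\[
V_T = \bigcap_{0\leqslant t \leqslant T} \varphi^{-t}(V) = \{w\in HM \ |\ \varphi^t(w)\in V \textrm{ pour } 0\leqslant t\leqslant T\},
\]
which is a closed subset of the compact $V$ (hence compact), and contains every $w$ to which the lemma must apply. Over $V_T$, I would consider the function
\[
g(w) = \sup_{Z\in E^s(w),\ \|Z\|=1} \|d\varphi^T(Z)\|.
\]
The supremum is attained because the unit sphere in the finite-dimensional subspace $E^s(w)$ is compact, and the operator norm $Z\mapsto \|d\varphi^T(Z)\|$ is continuous.

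Next I would argue that $g$ is continuous on $V_T$. This rests on two facts: (a) the flow $\varphi^T$ is $C^1$, since $\dO$ is $C^1$, so $d\varphi^T$ depends continuously on the base-point; (b) the stable distribution $E^s$ varies continuously with $w$, which follows from the construction of $W^s(w)$ via the $C^1$ horospheres of \S\ref{busemannhoro}. Once $g$ is continuous, Lemma \ref{decroissance} gives the strict pointwise inequality $g(w)<1$ for every $w\in V_T$ (indeed the lemma asserts that $t\mapsto \|d\varphi^t(Z)\|$ is a strictly decreasing bijection of $(0,+\infty)$ to itself for a nonzero stable vector).

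Finally, since $V_T$ is compact and $g$ is continuous and strictly less than $1$, the supremum $b(V,T) = \sup_{w\in V_T} g(w)$ is attained and satisfies $0 < b(V,T) < 1$. Applying this to an arbitrary nonzero $Z\in E^s(w)$ by normalization yields $\|d\varphi^T(Z)\|\leqslant b(V,T)\|Z\|$, which is the desired conclusion. The only non-routine point is the continuous dependence of $E^s$ on the base-point, and this is already embedded in the geometric description of the stable manifolds via horospheres that was recalled earlier in the paper.
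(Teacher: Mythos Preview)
Your proof is correct and follows essentially the same approach as the paper: both define the set $V_T$, invoke compactness (you via the continuous function $g$ on $V_T$, the paper via the compact unit stable bundle $E_1=\{Z\in E^s(w)\mid w\in V_T,\ \|Z\|=1\}$), and then use the strict decrease from Lemma~\ref{decroissance} to conclude that the attained maximum is strictly less than $1$. The only difference is packaging---you take a sup of operator norms over base-points, the paper takes a max over unit vectors directly---and both rely on the same continuous dependence of $E^s$ on the base-point.
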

\begin{proof}
C'est une simple conséquence du lemme \ref{decroissance}. Notons $V_T$ l'ensemble des $w\in V$ tels que $\ph^t(w)\in V$ pour $0\leqslant t\leqslant T$ et $E_1 = \{Z \in E^s(w)\ |\ w\in V_T, \|Z\|=1\}$. Les ensembles $V_T$ et $E_1$ sont compacts. La fonction $Z\in E_1 \longmapsto \|d\ph^T(Z)\|$ est continue et atteint donc son maximum pour un certain vecteur $Z_M$. Le lemme \ref{decroissance} nous dit que $\|d\ph^T(Z_M)\| <1$, d'où le résultat.
\end{proof}

Pour les cusps, c'est un peu plus délicat. Choisissons un des cusps $\C_i$, et oublions les indices: on note $\C$ le cusp et $\mathtt{h}$ la métrique hyperbolique sur $\C$.
\begin{lemm}\label{decroissancecusp}
Pour tout $0<a<1$, on peut trouver un temps $T_a=T_a(C)>0$ tel que, pour tout $w\in H\C$ tel que $\ph^t(w)\in H\C$ pour $0\leqslant t\leqslant T_a$ et $Z \in E^s(w)$, on ait $$\|d\ph^{T_a} Z\| \leqslant a \|Z\|.$$
\end{lemm}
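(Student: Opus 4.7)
The strategy is to use the asymptotic hyperbolicity of the cusp to transfer the well-known uniform exponential contraction of the hyperbolic geodesic flow onto the Hilbert flow. Fix a hyperbolic metric $\h$ on $\C$ with $\tfrac{1}{C}\h \leqslant F \leqslant C\h$ sharing unparametrized geodesics with $F$. Then $\ph^t$ and the hyperbolic flow $\psi^s$ have the same orbits on $H\C$, related by a time change $\ph^t(w) = \psi^{s(t,w)}(w)$ with $s(t,w) \geqslant t/C$. On the hyperbolic side, stable vectors contract at rate $e^{-s}$, which, combined with the bound $s \geqslant t/C$, suggests a contraction rate of order $e^{-t/C}$ for $\ph^t$ up to a constant factor depending on $C$.

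To prove this rigorously, I would combine the explicit formula of Lemme \ref{decroissance},
\[\|d\ph^T Z\|_F = \frac{|x_T x^+|}{|xx^+|}\, F(x_T, z)\]
(with $z = d\pi Z$), the cross-ratio identity $\tfrac{|x_T x^+|}{|xx^+|} = \tfrac{|x_T x^-|}{|xx^-|}\, e^{-2T}$, and the pointwise Finsler equivalence $F \asymp \h$ to obtain
\[\frac{\|d\ph^T Z\|_F}{\|Z\|_F} \leqslant C^2 \cdot \frac{|x_T x^+|}{|xx^+|} \cdot \frac{\h(x_T, z)}{\h(x,z)}.\]
Lifting to the universal cover, the cusp is a horoball $H$ on which the parabolic stabilizer $\P$ acts cocompactly on $\partial H$. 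On a fundamental domain for this action the relevant geometric quantities (such as $|x_T x^-|/|xx^-|$ and the hyperbolic norm ratio) are uniformly bounded, so together with the decay $e^{-2T}$ one obtains an estimate of the form $\|d\ph^T Z\|_F \leqslant K e^{-\lambda T}\|Z\|_F$ for constants $K,\lambda>0$ depending only on $C$. Taking $T_a$ with $K e^{-\lambda T_a}< a$ concludes.

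The delicate point is controlling $\h(x_T,z)/\h(x,z)$ in the displayed inequality: although $z$ lies in the $F$-stable subspace $E^s(w)$, the two stable distributions for $F$ and $\h$ are distinct subspaces of $T_w H\O$ (their defining horospheres are based at different points on different hypersurfaces $\partial\O$ and $\partial\E^{int}$), so $z$ is not hyperbolic-stable. One must therefore show directly that this hyperbolic norm ratio does not grow faster than the factor $e^{-2T}$ decays as $x_T$ moves along the geodesic within the cocompact portion of the horosphere. This is where cocompactness of $\P$ enters essentially, and is the main technical hurdle of the argument.
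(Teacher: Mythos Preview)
Your high-level strategy is exactly the paper's: relate $\ph^t$ to the hyperbolic flow via a time change $\ph^t(w)=\ph_{\h}^{\alpha(w,t)}(w)$ with $\alpha(w,t)\geqslant t/C$, and import the exact contraction $e^{-s}$ of the hyperbolic stable bundle. You also correctly isolate the real difficulty: a Hilbert-stable vector $Z^s\in E^s(w)$ is \emph{not} $\h$-stable, so one must control the discrepancy between $E^s$ and $E^s_{\h}$.

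The gap is in how you propose to close that step. You invoke cocompactness of $\P$ on the horosphere $\partial H$ to get uniform bounds on $\h(x_T,z)/\h(x,z)$ and $|x_Tx^-|/|xx^-|$. This fails for two reasons. First, the hypothesis of the lemma is only that the cusp is asymptotically hyperbolic, which does \emph{not} force $\P$ to have maximal rank; in general $\P$ is cocompact only on $\H\cap C(\LG)$, not on $\H$. Second, and more seriously, even when $\P$ is cocompact on $\H$, the points $x$ and $x_T$ lie \emph{inside} the horoball, and a $\P$-fundamental domain on $H$ is noncompact (it reaches the cusp point $p$). So ``uniformly bounded on a fundamental domain'' is precisely the statement you are trying to prove; the argument is circular. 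Your chart-dependent formula also hides a uniformity issue: the adapted chart at $w$ varies with $w$, so Euclidean ratios like $|x_Tx^-|/|xx^-|$ carry no intrinsic meaning across different $w\in H\C$.

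The paper resolves the difficulty by a purely metric observation that requires no group action: since the $F$-unit ball at each point is trapped between the $\h$-balls of radii $1/C$ and $C$, the tangent hyperplanes to these unit spheres differ by a bounded angle. This yields Lemma~\ref{angle}: on all of $H\C$, the distribution $E^s$ lies in a cone of $\h$-angle $\theta=\theta(C)<\pi/2$ around $E^s_{\h}$. From this one gets $\frac{1}{C}\|Z^s_{\h}\|_{\h}\leqslant \|Z^s\|\leqslant \frac{C}{\cos\theta}\|Z^s_{\h}\|_{\h}$, where $Z^s_{\h}$ is the $E^s_{\h}$-component of $Z^s$. Differentiating the time-change relation shows that the $E^s_{\h}$-component of $d\ph^t(Z^s)$ is exactly $d\ph^{\alpha(w,t)}_{\h}(Z^s_{\h})$ (the $E^u_{\h}$- and $\R.X$-components of $Z^s$ are sent into $E^u_{\h}\oplus\R.X$, hence do not contaminate the $E^s_{\h}$-part). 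Combining these gives
\[
\|d\ph^t Z^s\| \leqslant \frac{C}{\cos\theta}\, e^{-\alpha(w,t)}\|Z^s_{\h}\|_{\h} \leqslant \frac{C^2}{\cos\theta}\, e^{-t/C}\|Z^s\|,
\]
from which $T_a$ is obtained. The missing idea in your proposal is precisely this angle control, which replaces your attempted compactness argument and depends only on the bilipschitz constant $C$.
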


Avant de montrer ce dernier lemme, voyons d'abord comment en déduire une
\begin{proof}[Démonstration du théorème \ref{anosov}]
Rappelons la d\'ecomposition du c\oe ur convexe en
$$C(M)= K\bigsqcup \sqcup_i \C_i.$$
Choisissons un réel $0<a<1$ et un temps $T_a>0$ comme dans le lemme \ref{decroissancecusp}, et posons
$$K_a= \bigcup_{-T_a\leqslant t \leqslant T_a} \ph^t \big( HM|_K\big).$$
Pour tout point $w$ de l'ensemble non errant $\NW$, le morceau d'orbite $\{\ph^t(w)\}_{0\leqslant t\leqslant T_a}$ est inclus soit dans $K_a$, soit dans un des $H\C_i$. Les deux lemmes précédents impliquent alors que, pour tout $Z\in E^s|_{\NW}$, on a
$$\|d\ph^{T_a} Z\| \leqslant A \|Z\|,$$ avec $A = \max(a,b(K_a,T_a))<1$.
Ainsi, pour tout $t\geqslant 0$, en posant $N=\left[\displaystyle\frac{t}{T_a}\right]$, on a
$$\|d\ph^t(Z)\| \leqslant A^N \|d\ph^{t-NT_a}(Z)\| \leqslant \frac{\|d\ph^{t-NT_a}(Z)\|}{e^{\frac{t-NT_a}{T_a}\ln A}} e^{\frac{t}{T_a} \ln A} \leqslant A^{-1} e^{-\frac{\ln A^{-1}}{T_a} t}  \|Z\|.$$
Cela prouve la décroissance uniformément hyperbolique sur la distribution stable. On fait de même pour la distribution instable en considérant $\ph^{-t}$.
\end{proof}

Le reste de cette partie est consacrée à la démonstration du lemme \ref{decroissancecusp}. Bien entendu, l'idée est de comparer les flots géodésiques des métriques $F$ et $\mathtt{h}$ sur $H\C$, qui satisfont $C^{-1} \mathtt{h} \leqslant F \leqslant C\mathtt{h}$ pour une certaine constante $C>1$. Comme $F$ et $\h$ ont les mêmes géodésiques à paramétrisation près, le flot $\ph^t$ est en effet une renormalisation du flot $\ph^t_{\h}$ de la métrique $\h$: on a
$$\ph^t(w) = \ph^{\alpha(w,t)}_{\h}(w)$$ pour un certain $\alpha(w,t)\in\R$. Bien sûr, cette expression ne fait sens que si $\ph^s(w)$ est dans $H\C$ pour tout $0\leqslant s \leqslant t$. La fonction $\alpha$ est donc définie sur l'ensemble $$W=\{(w,t)\ |\ \ph^s(w)\in H\C,\ 0\leqslant s \leqslant t\} \subset H\C \times \R.$$
Soit $g$ la fonction définie sur $H\C$ par $F=g^{-1}\h$. C'est une fonction de classe $\C^1$, qui prend ses valeurs dans l'intervalle $[\frac{1}{C},C]$. Si $X_{\h}$ est le générateur du flot géodésique de $\h$, alors on a $X=g X_{\h}$. On retrouve la fonction $\alpha$ en intégrant $g$:
$$\alpha(w,t) = \int_{0}^{t} g(\ph^s(w))\ ds;$$
la fonction $\alpha$ est donc de classe $\C^1$ et satisfait $$\frac{1}{C}t \leqslant \alpha(w,t)\leqslant Ct,\ t\geqslant 0.$$
L'espace tangent à $H\C$ se décompose de deux façons, selon que l'on considère le flot de $F$ ou de $\h$:
$$TH\C = E^s \oplus E^u \oplus \R.X = E^s_{\h} \oplus E^u_{\h} \oplus \R.X.$$
Sur $H\C$, on dispose des métriques $\|.\|$ et $\|.\|_{\h}$ associées respectivement à $F$ et $\h$ et définies par la formule (\ref{normhm}) via les décompositions précédentes. La métrique $\|.\|_{\h}$ est bien entendu une métrique riemannienne, qui n'est rien d'autre que la métrique de Sasaki, et pour laquelle la décomposition $TH\C = E^s_{\h} \oplus E^u_{\h} \oplus \R.X$ est orthogonale. Rappelons que, si $Z^s_{\h}\in E^s_{\h}$, alors
$$\|d\ph^t_{\h}(Z^s_{\h})\| = e^{-t} \|Z^s_{\h}\|,\ t\in\R,$$
sous réserve, bien sûr, que $Z$ soit tangent à $H\C$ en un point $w$ tel que $\ph_{\h}^s(w)\in H\C$ pour $0\leqslant s \leqslant t$.\\

Le lemme essentiel est le suivant:
\begin{lemm}\label{angle}
La distribution $E^s$ est, sur $H\C$, incluse dans un cône d'angle $\theta$ pour $\|.\|_{\h}$ autour de la distribution $E^s_{\h}$, avec $0 \leqslant \theta = \theta(C) < \frac{\pi}{2}$.
\end{lemm}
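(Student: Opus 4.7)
The plan is to exploit the $\ph^t$-invariance of $E^s$ together with the forward contraction of $E^s$-vectors (Lemma \ref{decroissance}), pulled through the reparametrization $\ph^t = \ph^{\alpha(w,t)}_{\h}$ into the explicit $\h$-Sasaki splitting $TH\C = E^s_{\h}\oplus E^u_{\h}\oplus\R.X$, which is orthogonal for $\|\cdot\|_{\h}$ and on which $d\ph^t_{\h}$ acts by $e^{-t}$, $e^{t}$, and the identity on the three summands. The equivalence $\frac{1}{C}\h\leqslant F\leqslant C\h$ transfers to an equivalence $\frac{1}{K}\|\cdot\|_{\h}\leqslant\|\cdot\|\leqslant K\|\cdot\|_{\h}$ on $TH\C$ for some $K=K(C)\geqslant 1$. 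Writing $Z = Z^s_{\h} + Z^u_{\h} + aX$ with $\|Z\|_{\h}^2 = \|Z^s_{\h}\|_{\h}^2 + \|Z^u_{\h}\|_{\h}^2 + a^2$, the statement is equivalent to a uniform bound $\|Z^u_{\h}\|_{\h}^2 + a^2\leqslant \sin^2\theta\cdot\|Z\|_{\h}^2$ for every $Z\in E^s(w)$ and $w\in H\C$, with $\theta=\theta(C)<\pi/2$.

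The core estimate comes from differentiating $\ph^t(w) = \ph^{\alpha(w,t)}_{\h}(w)$ with respect to $w$, which gives
$$d\ph^t Z = d\ph^{\alpha(w,t)}_{\h}(Z) + \bigl(d_w\alpha\cdot Z\bigr)\,X(\ph^t w).$$
Decomposing $Z$ in the Sasaki splitting then yields
$$\|d\ph^t Z\|_{\h}^2 = e^{-2\alpha(w,t)}\|Z^s_{\h}\|_{\h}^2 + e^{2\alpha(w,t)}\|Z^u_{\h}\|_{\h}^2 + \bigl(a + d_w\alpha\cdot Z\bigr)^2.$$
For $Z\in E^s(w)$, Lemma \ref{decroissance} gives $\|d\ph^t Z\|\leqslant\|Z\|$ whenever the orbit $\{\ph^s w : 0\leqslant s \leqslant t\}$ stays in $H\C$, and the norm equivalence promotes this to $\|d\ph^t Z\|_{\h}\leqslant K^2\|Z\|_{\h}$. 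Isolating the $E^u_{\h}$-coefficient and using $\alpha(w,t)\geqslant t/C$ produces
$$\|Z^u_{\h}\|_{\h}\leqslant K^2\,e^{-t/C}\,\|Z\|_{\h},\qquad 0\leqslant t\leqslant T_+(w),$$
where $T_+(w)$ is the forward time the orbit of $w$ remains in $H\C$; controlling the error $|d_w\alpha\cdot Z|$ via the $\C^1$-regularity of $g$ together with Corollary \ref{decroitpasvite} similarly yields a parallel estimate $|a|\leqslant K'\,e^{-t/C}\,\|Z\|_{\h}$.

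Fix $T_0>0$. On $\{w\in H\C : T_+(w)\geqslant T_0\}$ the preceding estimate already produces a uniform angle bound $\theta(T_0,C)<\pi/2$. The complementary set is a neighborhood of the cusp boundary and is relatively compact in $HM$ thanks to the geometrical finiteness of $M$ and the $\P$-cocompactness on a fundamental domain of $\partial H\cap\overline{C(\LG)}^{\O}$; on this compact piece, the map $w\mapsto\angle\bigl(E^s(w),E^s_{\h}(w)\bigr)$ is continuous and strictly less than $\pi/2$ everywhere (the transversality $E^s(w)\cap(E^u_{\h}(w)\oplus\R.X(w))=\{0\}$ being a routine consequence of the strictly convex, $\C^1$ hypotheses on $\dO$), hence bounded away from $\pi/2$ by compactness. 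Combining the two regions gives a single $\theta=\theta(C)<\pi/2$ valid uniformly on $H\C$.

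The main obstacle will be the careful treatment of the error term $d_w\alpha\cdot Z$ arising from the reparametrization: one needs the $\C^1$-regularity of $g$ together with Corollary \ref{decroitpasvite} to secure a bound $|d_w\alpha\cdot Z|\leqslant c(t)\,\|Z\|_{\h}$ whose growth in $t$ is dominated by the $e^{2\alpha(w,t)}$ factor extracted from the Sasaki decomposition. Verifying the transversality of $E^s$ with $E^u_{\h}\oplus\R.X$ on the compact cusp-boundary region is also somewhat delicate, but ultimately follows from the continuity of the stable and unstable distributions in a strictly convex, $\C^1$ Hilbert geometry.
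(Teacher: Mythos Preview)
Your approach is fundamentally different from the paper's, and it contains genuine gaps.

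The paper's argument is static and pointwise: it reduces to comparing $d\pi(E^s(w))$ and $d\pi(E^s_{\h}(w))$ inside $T_x\C$, then observes (via Proposition~3.6 of \cite{MR2587084}) that $d\pi(E^s(w))$ is the tangent hyperplane to the $F$-unit sphere of $T_x\O$ at the unit vector $\xi$, and likewise $d\pi(E^s_{\h}(w))$ is the tangent to the $\h$-unit sphere. Since the $F$-unit ball is trapped between the $\h$-balls of radii $1/C$ and $C$, elementary convex geometry bounds the angle between these tangent hyperplanes by some $\theta(C)<\pi/2$. No dynamics is invoked.

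Your dynamical route has two substantial problems. First, the claimed uniform equivalence $\tfrac{1}{K}\|\cdot\|_{\h}\leqslant\|\cdot\|\leqslant K\|\cdot\|_{\h}$ with $K=K(C)$ is not justified: the two norms are built from \emph{different} splittings $E^s\oplus E^u\oplus\R X$ versus $E^s_{\h}\oplus E^u_{\h}\oplus\R X$, and comparing them uniformly over the noncompact set $H\C$ amounts precisely to controlling the angle between these splittings --- which is the content of the lemma. One can partly salvage the $E^u_{\h}$-estimate by working with $\h(d\pi\,\cdot)$ instead (this \emph{is} directly comparable to $F(d\pi\,\cdot)=\|\cdot\|$ on $E^s$), but then $d\pi Z^s_{\h}$ and $d\pi Z^u_{\h}$ land in the same subspace of $T_x\C$ and your orthogonal decomposition of $\|d\ph^t Z\|_{\h}^2$ no longer separates them cleanly.

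Second, the bound on the $X$-component $a$ does not follow from your argument. From the displayed identity you only obtain that $|a+d_w\alpha\cdot Z|$ is bounded; to conclude $|a|\leqslant K'e^{-t/C}\|Z\|_{\h}$ you would need a uniform bound on $|d_w\alpha\cdot Z|=\bigl|\int_0^t dg_{\ph^s w}(d\ph^s Z)\,ds\bigr|$, hence a uniform bound on $dg$ over the noncompact cusp, which is nowhere assumed. In fact $a$ is nothing but the $\h$-component of $d\pi Z$ along the geodesic direction, and since $d\pi Z$ is tangent to the $F$-unit sphere at $\xi$, bounding $|a|$ away from $\|Z\|_{\h}$ is exactly the tangent-hyperplane sandwiching that the paper's geometric argument provides directly.
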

\begin{proof}
Il revient au m\^eme de montrer qu'il existe $\theta$ tel que, pour tout $w=(x,[\xi])$ dans $H\C$, la projection $d\pi(E^s(w))$ est dans un cône d'angle $\theta$ pour $\h$ autour de $d\pi(E^s_{\h}(w))$.\\
De la démonstration de la proposition 3.6 de \cite{MR2587084}, on peut tirer que la projection $d\pi(E^s(w))$ co\"incide avec l'espace tangent en $\xi\in T_x\C$ \`a la boule unit\'e tangente de la norme $F(x,.)$; où $\xi$ est le vecteur de norme $1$ de $[\xi]$. La m\^eme chose est bien s\^ur valable pour $d\pi(E^s_{\h}(w))$ et la m\'etrique $\h$.
\begin{center}
\begin{figure}[h!]
  \centering
\includegraphics[width=7cm]{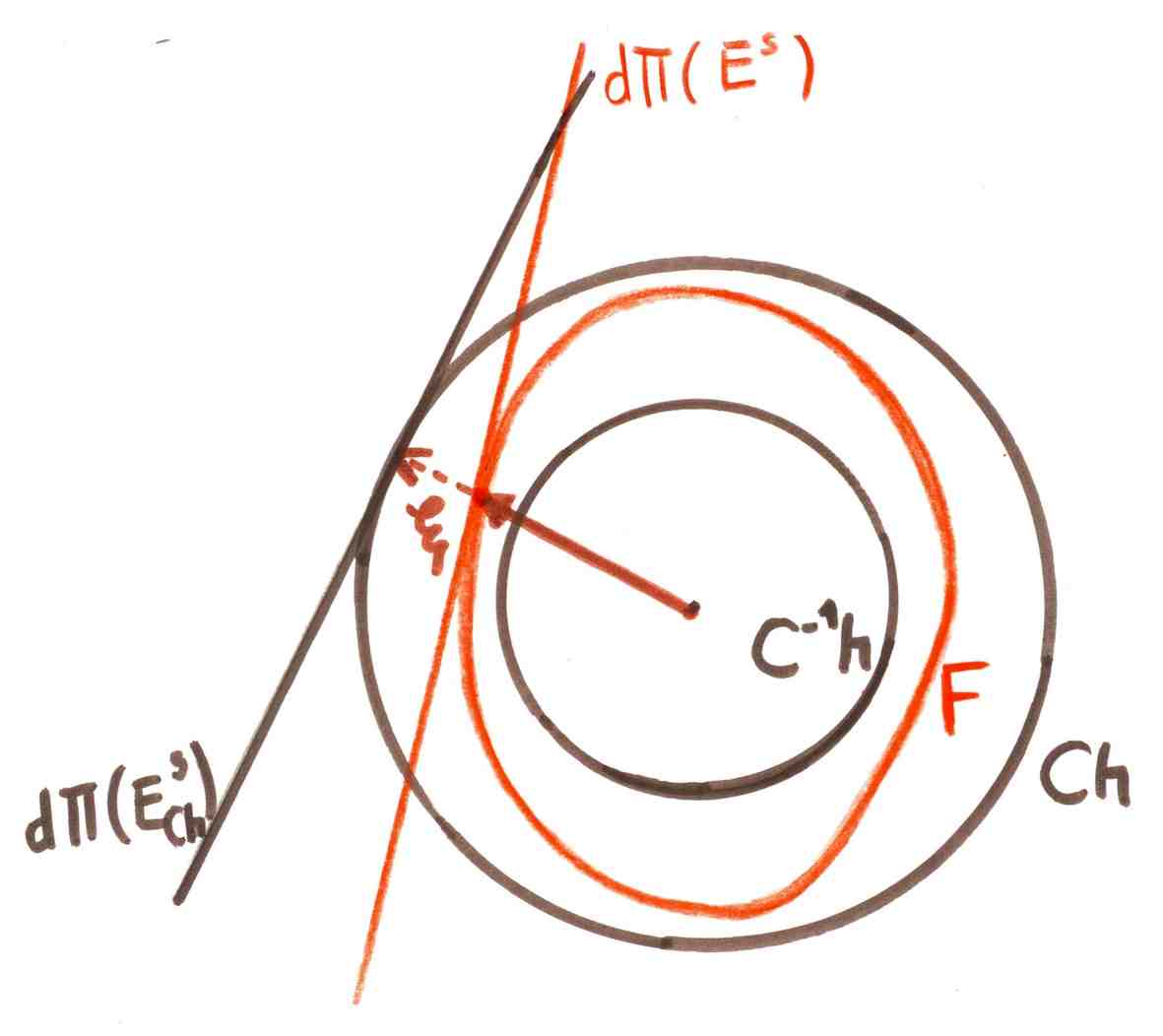}
\caption{La boule unit\'e de $F$ est coinc\'ee entre celles de $C^{-1}\h$ et $C\h$ }
\label{angle_fig}
\end{figure}
\end{center}
Or, la boule unit\'e tangente de $F$ est coinc\'ee entre les boules de rayon $\frac{1}{C}$ et $C$ de $\h$. Cet encadrement et le fait pr\'ec\'edent impliquent l'existence de $\theta$.	
\end{proof}

Tout vecteur $Z^s\in E^s$ se décompose en
$$Z^s=Z_{\h}^s + Z^u_{\h} + Z^{X}_{\h} \in E^s_{\h} \oplus E^u_{\h} \oplus \R.X_{\h}.$$
On déduit du lemme précédent que l'angle entre $Z^s$ et sa projection $Z_{\h}^s$ sur $E^s_{\h}$ est toujours inférieur à $\theta$; de même en ce qui concerne l'angle, pour $\h$, entre $d\pi Z^s$ et $d\pi Z^s_{\h}$. D'où le
\begin{coro}\label{maj}
Pour tout vecteur $Z^s\in E^s$, on a
$$\frac{1}{C} \| Z_{\h}^s\|_{\h} \leqslant \|Z^s\| \leqslant \frac{C}{\cos \theta} \| Z_{\h}^s\|_{\h} $$
\end{coro}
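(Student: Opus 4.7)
The plan is to combine the angle bound from Lemma~\ref{angle} with the Finsler comparison $\frac{1}{C}\h\leqslant F\leqslant C\h$, exploiting the fact that $\|\cdot\|_\h$ is Riemannian (Sasaki) and the $\h$-Anosov decomposition $E^s_\h\oplus E^u_\h\oplus\R X_\h$ is orthogonal for $\|\cdot\|_\h$. In particular, the map $Z^s\mapsto Z^s_\h$ is the $\|\cdot\|_\h$-orthogonal projection onto $E^s_\h$, so Lemma~\ref{angle} translates into the cosine inequality
\[\|Z^s_\h\|_\h\;\geqslant\;\cos\theta\,\|Z^s\|_\h\]
for every $Z^s\in E^s$.

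For the upper bound, I would chain the following inequalities: first, $\|Z^s\|=F(d\pi Z^s)$ by the definition~(\ref{normhm}) specialized to purely stable vectors; next, $F(d\pi Z^s)\leqslant C\,\h(d\pi Z^s)$ by the Finsler comparison; then $\h(d\pi Z^s)\leqslant\|Z^s\|_\h$ because the horizontal projection is $1$-Lipschitz for the Sasaki norm $\|\cdot\|_\h$; and finally $\|Z^s\|_\h\leqslant\frac{1}{\cos\theta}\|Z^s_\h\|_\h$ by the cosine inequality above. Combined, these give exactly $\|Z^s\|\leqslant\frac{C}{\cos\theta}\|Z^s_\h\|_\h$.

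For the lower bound, the $\|\cdot\|_\h$-orthogonality gives $\|Z^s_\h\|_\h\leqslant\|Z^s\|_\h$, so it suffices to establish $\|Z^s\|_\h\leqslant C\|Z^s\|$. This is a direct comparison between the two norms defined by formula~(\ref{normhm}) with the $F$- and the $\h$-Anosov decompositions respectively; it follows from $\h\leqslant CF$ applied coefficient by coefficient, once one controls the change of basis between the two decompositions. The stable and unstable parts of the change of basis are bounded by Lemma~\ref{angle}, while the generator directions are collinear with bounded ratio $g\in[1/C,C]$ since $X=gX_\h$, so the bookkeeping yields the sharp constant $1/C$.

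The main subtlety will be this last change-of-basis step in the lower bound: although conceptually it amounts to tracking how the Finsler comparison propagates through two distinct but nearby orthogonal decompositions of $TH\C$, the computation must be done cleanly to recover the constant $1/C$ rather than a weaker $\cos\theta$-dependent bound. The upper bound, by contrast, is essentially a three-line composition of already established ingredients.
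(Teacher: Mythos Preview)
Your upper bound follows the paper's route closely; the only difference is that you insert an intermediate stop at $\|Z^s\|_\h$ upstairs in $TH\C$, whereas the paper stays entirely at the level of $T\C$:
\[
\|Z^s\| = F(d\pi Z^s) \leqslant C\,\h(d\pi Z^s) \leqslant \frac{C}{\cos\theta}\,\h(d\pi Z^s_\h) = \frac{C}{\cos\theta}\,\|Z^s_\h\|_\h,
\]
invoking the angle bound directly on the projected vectors $d\pi Z^s$ and $d\pi Z^s_\h$ (this is what the sentence just before the corollary records). A caveat on your detour: the step $\h(d\pi Z^s)\leqslant\|Z^s\|_\h$ is automatic for the genuine Sasaki metric, but the metric $\|\cdot\|_\h$ actually in play here is the one given by formula~(\ref{normhm}) with the $\h$-decomposition, and for that metric $d\pi$ is not literally $1$-Lipschitz, since $d\pi(E^s_\h)$ and $d\pi(E^u_\h)$ land in the \emph{same} hyperplane $\xi^{\perp_\h}\subset T_x\C$ and can add constructively. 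So your extra step is not as innocent as it looks.

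For the lower bound you are working much harder than necessary, and this is where you should change strategy. There is no need to compare $\|\cdot\|$ and $\|\cdot\|_\h$ on all of $TH\C$, nor to track any change of basis between the two Anosov splittings. The paper simply writes
\[
\|Z^s_\h\|_\h = \h(d\pi Z^s_\h) \leqslant \h(d\pi Z^s) \leqslant C\,F(d\pi Z^s) = C\,\|Z^s\|,
\]
again entirely in $T\C$. The outer equalities are the definitions of the two norms on purely stable vectors; the last inequality is the Finsler comparison $\h\leqslant CF$; the middle inequality is the projection bound at the base. The constant $1/C$ falls out immediately---no bookkeeping, no $\cos\theta$. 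The point you are missing is that both $\|Z^s\|$ and $\|Z^s_\h\|_\h$ are, \emph{by definition}, the $F$- and $\h$-lengths of $d\pi$-images, so the whole comparison lives naturally in $T\C$, not in $TH\C$.
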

\begin{proof}
On a
$$\|Z^s\| = F(d\pi Z^s) \leqslant C\h(d\pi Z^s) \leqslant \frac{C}{\cos \theta} \h(d\pi Z_{\h}^s) = \frac{C}{\cos \theta} \| Z_{\h}^s\|_{\h},$$
et
$$\| Z_{\h}^s\|_{\h} = \h(d\pi Z_{\h}^s) \leqslant \h(d\pi Z^s) \leqslant C F (d\pi Z^s) = C \| Z^s\|.$$
\end{proof}

On peut maintenant terminer la
\begin{proof}[Démonstration du lemme \ref{decroissancecusp}]
Fixons $(w,t)\in W$ et un vecteur stable $Z^s \in E^s(w)$, qui se décompose en
$$Z^s=Z_{\h}^s + Z^u_{\h} + Z^{X}_{\h} \in E^s_{\h} \oplus E^u_{\h} \oplus \R.X_{\h}.$$
On a donc
\begin{equation}\label{truc1}
 d\ph^t(Z^s) = d\ph^t(Z_{\h}^s) + d\ph^t(Z^u_{\h}) + d\ph^t(Z^{X}_{\h}).
\end{equation}

D'autre part, considérons les fonctions $\ph$ et $\ph_{\h}$ définies par
$$\ph(w,t) = \ph^t(w),\ \ph_{\h}(w,t) = \ph_{\h}^t(w);$$ les fonctions $\ph$ et $\ph_{\h}$ sont définies, respectivement, sur $W$ et sur l'ensemble $$\{(w,t)\ |\ \ph_{\h}^s(w)\in H\C,\ 0\leqslant s \leqslant t\}.$$
On a ainsi $\ph(w,t) = \ph_{\h}(w,\alpha(w,t))$, d'où
\begin{equation}\label{truc2}
 d\ph^t(Z^s) = \frac{\partial\ph}{\partial w}(w,t)(Z^s) = \frac{\partial \ph_{\h}}{\partial t}\frac{\partial \alpha}{\partial w}(w,t)(Z^s) + d\ph^{\alpha(w,t)}_{\h}(Z^s).
\end{equation}

L'application $\displaystyle\frac{\partial \ph_{\h}}{\partial t}$ a son image dans $\R.X$; le premier terme $ \frac{\partial \ph_{\h}}{\partial t}\frac{\partial \alpha}{\partial w}(w,t)(Z^s)$ de la dernière expression est donc un vecteur de $\R.X$. Comme $d\ph_{\h}^t$ préserve la décomposition  $TH\C=E^s_{\h} \oplus E^u_{\h} \oplus \R.X_{\h}$, on déduit de (\ref{truc1}) et (\ref{truc2}) que
$$ (d\ph^t(Z^s))^s_{\h} =  d\ph^{\alpha(w,t)}_{\h}(Z_{\h}^s).$$

On a alors, d'après le corollaire \ref{maj},
$$\|d\ph^t Z^s\| \leqslant \frac{C}{\cos \theta} \|d\ph^{\alpha(w,t)}_{\h}(Z_{\h}^s)\|_{\h}
=  \frac{C}{\cos \theta} e^{-\alpha(w,t)} \|Z_{\h}^s\|_{\h} \leqslant \frac{C^2}{\cos \theta} e^{-t} \|Z^s\|.$$

Mais on peut écrire
$$\frac{C^2}{\cos \theta} e^{-t} = e^{-t\left(1-\frac{1}{t}\ln \frac{C^2}{\cos \theta}\right)}.$$
Aussi, en prenant $T_a = \ln \frac{C^2}{a\cos \theta}$, on obtient
$$\|d\ph^{T_a} Z^s\| \leqslant a \|Z^s\|.$$
\end{proof}

\begin{rema}\label{deccusp}
Dans le lemme \ref{angle}, on pourrait voir que plus $C$ est proche 1, plus $\theta(C)$ peut être pris proche de $0$. Or, dans le cusp $\C$ choisi pour la constante $C$, on a
$$\|d\ph^t Z^s \| \leqslant  \frac{C^2}{\cos \theta(C)} e^{-t}\|Z^s\|,$$
pour tout vecteur stable $Z^s$ tangent en un point $w\in H\C$ tel que $\ph^s(w)\in H\C,\ 0\leqslant s\leqslant t$. En particulier, sous l'hypoth\`ese de rang maximal des sous-groupes paraboliques maximaux, on peut, d'apr\`es le corollaire \ref{loincusp}, choisir le cusp de telle fa\c con que la constante $\displaystyle\frac{C^2}{\cos \theta(C)}$ soit aussi proche de $1$ qu'on le souhaite.
\end{rema}

\section{Propri\'et\'es de r\'ecurrence}

\subsection{Transitivité et mélange topologique}

Le but de cette partie est d'étudier les propriétés de récurrence du flot géodésique d'une variété quotient $M=\Quo$ \emph{quelconque}. Rappelons qu'un flot $\ph^t$ sur un espace topologique $X$ est dit:
\begin{itemize}
 \item \emph{topologiquement transitif} s'il existe une orbite dense ou, de fa\c con \'equivalente, si pour tous ouverts $U$ et $V$ de $X$, il existe $T\in\R$ tel que $\ph^T(U)\cap V \not= \emptyset$;
 \item \emph{topologiquement m\'elangeant} si pour tous ouverts $U$ et $V$ de $X$, il existe $T\in\R$ tel que, pour tout $t\geqslant T$, $\ph^t(U)\cap V \not= \emptyset$.
\end{itemize}
Le r\'esultat principal est le suivant.

\begin{prop}\label{melange}
Soit $M=\Quo$, avec $\G$ non \'el\'ementaire. Le flot géodésique est topologiquement m\'elangeant sur son ensemble non errant.
\end{prop}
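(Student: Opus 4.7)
The plan is to combine (i) topological transitivity of $\ph^t|_{\NW}$, obtained from the density of pairs of fixed points of hyperbolic elements of $\G$ in $\LG\times\LG$, with (ii) non-arithmeticity of the length spectrum of $\G$, to conclude mixing by a standard orbit-grafting argument.

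For (i), I would first observe that the non-elementarity of $\G$ (equivalently, irreducibility by Lemma \ref{irreductible}) implies that hyperbolic fixed points are dense in $\LG$ and, more strongly, that the $\G$-action on $\LG\times\LG\setminus\Delta$ is topologically transitive: given any four points $a,b,a',b'\in\LG$ with $a\neq b$ and $a'\neq b'$, a ping-pong argument applied to a pair of hyperbolic elements with disjoint fixed point sets produces $\g\in\G$ sending $(a,b)$ arbitrarily close to $(a',b')$. Via the identification of the lift of $\NW$ with a subset of $\{w\in H\O:x^\pm(w)\in\LG\}$ parametrized by endpoints and a real flow parameter, this transitivity translates directly into topological transitivity of $\ph^t$ on $\NW\subset HM$.

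For (ii), I would show that $L_\G=\{\tau(\g):\g\in\G\text{ hyperbolic}\}$ generates a dense subgroup of $\R$. If on the contrary $L_\G\subset c\Z$, pick hyperbolic $\g_1,\g_2\in\G$ with disjoint fixed-point sets, and consider ping-pong combinations $\g_1^n\g_2^m$, which are hyperbolic for $n,m$ large with translation lengths asymptotic to $n\tau(\g_1)+m\tau(\g_2)$ up to a bounded error. The condition $\tau(\g_1^n\g_2^m)\in c\Z$ for all large $n,m$ forces $\tau(\g_1)$ and $\tau(\g_2)$ to be rational multiples of $c$; iterating over all hyperbolic elements contradicts the Zariski-density of $\G$ in its algebraic hull (non-trivial by irreducibility), since the translation lengths would all be commensurable.

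To conclude mixing, fix open $U,V\subset\NW$ and, by transitivity, a time $T_0$ with $\ph^{T_0}(U)\cap V\neq\emptyset$. Density of periodic orbits in $\NW$ (itself a consequence of (i)) produces closed geodesics arbitrarily close to the transit trajectory; grafting $n_i$ extra loops of lengths $\ell_i\in L_\G$ at these closing points yields orbits from $U$ to $V$ with transit times filling a small neighborhood of $\{T_0+\sum_i n_i\ell_i\}$, a set dense in $[T^*,\infty)$ for some $T^*$ by (ii). Continuity of $\ph^t$ then gives $\ph^t(U)\cap V\neq\emptyset$ for every sufficiently large $t$. The main obstacle is the grafting step: the present proposition assumes nothing on the cusps of $M$, so $\ph^t$ is not uniformly hyperbolic on $\NW$, and the closing-lemma-style insertion of loops must be performed locally around each closed geodesic, relying only on the pointwise Anosov decomposition and the contraction statement of Lemma \ref{decroissance}, which hold on all of $H\O$ without any cusp hypothesis.
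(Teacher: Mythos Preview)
Your step (i) and the grafting conclusion are essentially what the paper does (the paper cites exercise 18.3.4 of Katok--Hasselblatt for the passage from transitivity plus non-arithmetic length spectrum to mixing, rather than spelling out the grafting, but your observation that the global horospherical product structure and Lemma \ref{decroissance} suffice without uniform hyperbolicity is correct and addresses the only delicate point).

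The gap is in step (ii). Your ping-pong asymptotic $\tau(\g_1^n\g_2^m)=n\tau(\g_1)+m\tau(\g_2)+O(1)$ yields no contradiction: you have already assumed $L_\G\subset c\Z$, so $\tau(\g_1),\tau(\g_2)\in c\Z$ trivially, and the bounded error term being in $c\Z$ for all large $n,m$ is compatible with it taking finitely many values. The assertion that commensurability of all translation lengths ``contradicts Zariski-density'' is precisely the non-trivial content that needs proof. Zariski-density of $\G$ in its semi-simple hull $G$ does not by itself prevent the countable set $\{\tau(\g):\g\in\G\}$ from lying in $c\Z$: the map $g\mapsto\ln(\l_0(g)/\l_n(g))$ is not algebraic, so no Zariski-closure argument applies directly. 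The paper closes this gap by invoking Benoist's theorem (Th\'eor\`eme \ref{cone_limite}): after restricting to the span of $\LG$ so that $\G$ becomes irreducible (hence strongly irreducible by Lemma \ref{irreductible}), Proposition \ref{adh_semisimple} gives that the Zariski closure $G$ is semi-simple, and then Benoist's result says the subgroup generated by $\{\ln\g:\g\in\G\}$ is dense in the linear span of $\ln G$, which is incompatible with the discrete constraint $\frac{1}{2}\ln(\l_0(\g)/\l_n(\g))\in c\Z$. This is a genuinely deep input with no elementary substitute in the general Hilbert setting.
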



Une orbite périodique du flot géodésique sur $HM$ se projette sur une géodésique fermée de $M$, parcourue dans un sens ou dans l'autre. Or, les géodésiques fermées orientées sont en bijection avec les classes de conjugaison d'\'el\'ements hyperboliques de $\G$. La géodésique fermée orientée définie par un tel $\g\in\G$ est précisément la projection sur $M$ de l'axe orienté $(x_{\g}^- x_{\g}^+)$.\\
\`A un élément $\g\in\G$ correspond ainsi une unique orbite périodique. Bien sûr, l'orbite périodique associée à $\g^{-1}$ se projette sur la même géodésique fermée que celle associée à $\g$, mais l'orientation est inversée. Le lemme suivant est immédiat:

\begin{lemm}\label{lemme_gromov_sullivan}
Soient $g$ et $h$ deux éléments hyperboliques de $\Aut(\O)$ tel que l'intersection des groupes engendrés par $g$ et $h$ est triviale. Posons $k_n = g^n h^n$, alors $\underset{n\to+\infty}{\lim} x_{k_n}^+ = x_{g}^+$ et  $\underset{n\to+\infty}{\lim} x_{k_n}^- = x_{h}^-$.
\end{lemm}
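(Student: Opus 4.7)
The plan is to exploit the convergence dynamics of hyperbolic isometries recalled in paragraph~\ref{para_isometrie}: for any hyperbolic $\gamma\in\Aut(\O)$, the sequence $(\gamma^n)$ converges uniformly on compact subsets of $\overline{\O}\smallsetminus\{x_\gamma^-\}$ to the constant map $x_\gamma^+$. The underlying idea is elementary: provided the four fixed points $x_g^\pm, x_h^\pm$ are pairwise distinct, for any $y\in\overline{\O}$ away from $x_h^-$ the point $h^n(y)$ eventually lies close to $x_h^+$, which is itself distinct from $x_g^-$, so $k_n(y)=g^n h^n(y)$ eventually lies close to $x_g^+$. From this strongly contracting behaviour of $k_n$ one reads off the positions of its own attracting and repelling fixed points.

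More precisely, assume first that the four points are pairwise distinct. Choose small disjoint neighbourhoods $U^+$ of $x_g^+$ and $U^-$ of $x_h^-$ in $\overline{\O}$, together with a neighbourhood $V$ of $x_h^+$ whose closure avoids both $x_g^-$ and $\overline{U^-}$. For $n$ large, uniform convergence of $(h^n)$ on $\overline{\O}\smallsetminus U^-$ gives $h^n(\overline{\O}\smallsetminus U^-)\subset V$, and uniform convergence of $(g^n)$ on $\overline{V}$ gives $g^n(V)\subset U^+$. Combining,
\[ k_n\bigl(\overline{\O}\smallsetminus U^-\bigr)\subset U^+\subset \overline{\O}\smallsetminus U^-, \]
and iterating, $k_n^m(\overline{\O}\smallsetminus U^-)\subset U^+$ for every $m\geq 1$. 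Brouwer's fixed point theorem, applied on the topological ball $\overline{U^+}$ which $k_n$ sends into itself, produces a fixed point of $k_n$ in $\overline{U^+}$; the symmetric argument with $k_n^{-1}$ produces another in $\overline{U^-}$. The classification of isometries of paragraph~\ref{para_isometrie} then forces $k_n$ to be hyperbolic: the elliptic case is ruled out because a torsion element would satisfy $k_n^N=\mathrm{id}$, contradicting $k_n^N(\overline{\O}\smallsetminus U^-)\subset U^+$, and the parabolic case is ruled out because we have exhibited two distinct fixed points. Hence for $n$ large $x_{k_n}^+\in\overline{U^+}$ and $x_{k_n}^-\in\overline{U^-}$; shrinking the neighbourhoods to $\{x_g^+\}$ and $\{x_h^-\}$ yields the conclusion.

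The main obstacle I foresee is justifying the assumption that $x_g^\pm$ and $x_h^\pm$ are pairwise distinct, which is not a formal consequence of the triviality of $\langle g\rangle\cap\langle h\rangle$. If $g$ and $h$ shared both fixed points, then by paragraph~\ref{para_isometrie} both would act by translation along the segment $]x_g^-x_g^+[$, so some non-trivial $g^p$ would equal some non-trivial $h^q$, contradicting the hypothesis. The delicate case is that of a single common fixed point; there I expect one must use the strict convexity and the $\C^1$ regularity of $\dO$ to produce an algebraic obstruction (two hyperbolic elements of $\Aut(\O)$ sharing exactly one fixed point generate a subgroup whose combinatorics again forces a non-trivial power relation), or else one slightly modifies the choice of neighbourhoods $U^+$, $U^-$, $V$ to accommodate the coincidence; the contraction step of the previous paragraph then applies unchanged.
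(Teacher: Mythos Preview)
The paper offers no proof: the lemma is introduced with ``Le lemme suivant est imm\'ediat''. Your argument via the north--south dynamics of $g^n$ and $h^n$, together with Brouwer's theorem to locate the fixed points of $k_n$, is precisely the standard way to make ``imm\'ediat'' precise, and it is correct once the four points $x_g^\pm$, $x_h^\pm$ are pairwise distinct.

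Your hesitation about the case of a shared fixed point is justified, and in fact the difficulty is sharper than you indicate. The hypothesis $\langle g\rangle\cap\langle h\rangle=\{1\}$ does not exclude $x_g^+=x_h^-$, and in that situation $k_n$ need not be hyperbolic at all. In the disk $\O=\HH^2\subset\PP^2$, take (in half--plane coordinates) $g(z)=2z$ and $h(z)=z/2+1$: then $x_g^+=x_h^-=\infty$, the two cyclic groups meet trivially, yet $k_n(z)=z+2^{n+1}-2$ is parabolic for every $n$, so $x_{k_n}^\pm$ are not even defined. Thus the lemma as literally stated is slightly imprecise; no amount of ``modifying the neighbourhoods'' will rescue it in this configuration.

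This is a wrinkle in the paper's formulation rather than a gap in your argument. In the only application (the corollary that follows), $g$ and $h$ are chosen inside a discrete group $\G$ so that $x_g^+$ and $x_h^-$ approximate two prescribed points of $\LG$; since attracting fixed points of hyperbolic elements are dense in $\LG$, one may always pick $g$ and $h$ with pairwise distinct fixed points, and then your proof applies verbatim. Alternatively, in a discrete subgroup two hyperbolic elements sharing exactly one fixed point would produce, via their commutator, a non-trivial parabolic sharing that point with a hyperbolic element; conjugating the parabolic by powers of the hyperbolic then yields a sequence of parabolics accumulating at the identity, contradicting discreteness. Either route shows there is nothing further to repair for the purposes of the paper.
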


\begin{coro}
Soit $\G$ un sous-groupe de $\Aut(\O)$. L'ensemble
$\{(x_g^+,x_g^-)\ | g\in\G\}$
est dense dans $\LG\times\LG$. En particulier, les orbites périodiques de $HM$ sont denses dans $\NW$.
\end{coro}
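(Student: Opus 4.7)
Le plan est de d\'eduire les deux assertions du lemme \ref{lemme_gromov_sullivan} qui pr\'ec\`ede. Le cas o\`u $\G$ est \'el\'ementaire ($\LG$ contient au plus deux points) \'etant trivial, je peux supposer $\G$ non \'el\'ementaire. On a alors rappel\'e plus haut que $\LG$ co\"incide avec l'adh\'erence des points fixes attractifs des \'el\'ements hyperboliques de $\G$; en passant aux inverses, il en va de m\^eme des points fixes r\'epulsifs.

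Pour la premi\`ere assertion, je fixe $(\xi,\eta)\in\LG\times\LG$ et un voisinage ouvert $U\times V\subset\dO\times\dO$. Par densit\'e, je commence par choisir un \'el\'ement hyperbolique $g\in\G$ avec $x_g^+\in U$. La non-\'el\'ementarit\'e de $\G$ permet ensuite de choisir un second \'el\'ement hyperbolique $h\in\G$ ayant ses points fixes distincts de ceux de $g$ et tel que $x_h^-\in V$. Deux \'el\'ements hyperboliques aux ensembles de points fixes disjoints engendrent des sous-groupes cycliques d'intersection triviale, de sorte que le lemme \ref{lemme_gromov_sullivan} s'applique: la suite $k_n=g^nh^n$ est form\'ee d'\'el\'ements hyperboliques (pour $n$ grand) dont le couple de points fixes converge vers $(x_g^+,x_h^-)\in U\times V$, ce qui donne la densit\'e annonc\'ee.

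La seconde assertion s'ensuivra via la description rappel\'ee de l'ensemble non errant comme image dans $HM$ de
$$\widetilde{\NW}=\{w=(x,[\xi])\in H\O\ |\ x^+(w),x^-(w)\in\LG\}.$$
\'Etant donn\'es $w\in\NW$ et un rel\`evement $\widetilde w=(x,[\xi])\in\widetilde{\NW}$ d'extr\'emit\'es $x^-,x^+\in\LG$, la premi\`ere partie fournit une suite $(g_n)$ d'\'el\'ements hyperboliques de $\G$ avec $(x_{g_n}^+,x_{g_n}^-)\to(x^+,x^-)$. La stricte convexit\'e et la r\'egularit\'e $\C^1$ de $\dO$ assurent que les axes orient\'es $(x_{g_n}^-x_{g_n}^+)$ convergent vers la g\'eod\'esique $(x^-x^+)$ au sens de $H\O$; je peux donc choisir sur chaque axe un rel\`evement $\widetilde w_n\in H\O$ convergeant vers $\widetilde w$, dont la projection dans $HM$ appartient \`a l'orbite p\'eriodique associ\'ee \`a $g_n$ et converge vers $w$.

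La difficult\'e essentielle est d\'ej\`a absorb\'ee par le lemme \ref{lemme_gromov_sullivan} (convergence de type nord-sud pour $g^n$ et $h^n$ assurant \`a la fois l'hyperbolicit\'e de $k_n$ pour $n$ grand et la localisation de ses points fixes); une fois ce lemme acquis, le sch\'ema ci-dessus est essentiellement formel, les seules v\'erifications restantes \'etant la possibilit\'e de choisir $h$ avec l'ouvert $V$ et des points fixes disjoints de ceux de $g$ (cons\'equence imm\'ediate de la non-\'el\'ementarit\'e) et la continuit\'e g\'eom\'etrique reliant couple de points fixes et axe orient\'e.
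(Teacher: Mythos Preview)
Your proof is correct and follows essentially the same approach as the paper: both use the density of attractive (resp.\ repulsive) fixed points in $\LG$ to find $g,h$ with $x_g^+$ and $x_h^-$ close to the target pair, then apply the lemma on $k_n=g^nh^n$. You are actually more careful than the paper in two respects: you explicitly verify the hypothesis of the lemma (disjoint fixed point sets imply trivial intersection of the cyclic groups), and you spell out the passage from density of fixed-point pairs to density of periodic orbits in $\NW$, which the paper leaves entirely implicit in the ``En particulier''.
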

\begin{proof}
Il suffit de se rappeler que l'action de $\G$ sur $\LG$ est minimale, puisque $\LG$ est le plus petit ferm\'e $\G$-invariant de $\dO$; en particulier, l'ensemble des $x_g^+$ pour $g\in\G$ est dense dans $\LG$. Fixons une métrique riemannienne quelconque sur $\dO$. Si on prend un couple $(x,y)$ dans $\LG\times\LG$, il existe, pour tout $\varepsilon>0$, des éléments $g$ et $h$ hyperboliques de $\G$ tels que $x_g^+$ et $x_h^-$ sont $\varepsilon$-proches de, respectivement, $x$ et $y$. Le lemme précédent affirme alors que pour $n$ assez grand, si $k_n = g^n h^n \in \G$, les points $x_{k_n}^+$ et $x_{k_n}^-$ sont $2\varepsilon$-proches de, respectivement, $x$ et $y$.
\end{proof}

On peut maintenant donner une

\begin{proof}[Démonstration de la proposition \ref{melange}]
Prenons $U$ et $V$ deux ouverts de $\NW$. Les orbites périodiques étant denses dans $\NW$, il existe une orbite périodique passant dans $U$, et une autre, distincte de la première, passant dans $V$. Considérons des relevés $(xy)$ et $(x'y')$ dans $H\O$ de ces orbites. Ce sont les axes d'élements $\g$ et $\g'$ distincts de $\G$. Le projeté de l'orbite $(xy')$ sur $HM$ est alors une orbite qui rencontre à la fois $U$ et $V$. Ainsi, il existe $t\geqslant 0$ tel que $\ph^t(U)\cap V \not = \emptyset$ et le flot est topologiquement transitif.\\

Comme le flot est topologiquement transitif, le mélange topologique est équivalent au fait que le spectre des longueurs des orbites périodiques engendre un sous-groupe dense de $\R$ (exercice 18.3.4 du livre \cite{MR1326374}). Or, la longueur de l'orbite périodique définie par l'élément hyperbolique $\g\in\G$ est exactement $\frac{1}{2}\ln \frac{\l_0(\g)}{\l_n(\g)}$, où $\l_0(\g)$ et $\l_n(\g)$ sont le module de, respectivement, sa plus grande et plus petite valeur propre. Si le groupe engendré par les longueurs n'était pas dense dans $\R$, il existerait $l>0$ tel que pour tout $\g\in\G$, il existe $k_{\g}\in\N$ tel que
\begin{equation}\label{relation}
\frac{1}{2}\ln \frac{\l_0(\g)}{\l_n(\g)} = k_{\g} l.
\end{equation}

Quitte \`a se restreindre au sous-espace projectif engendr\'e par $\LG$, on peut supposer que l'action de $\G$ est irr\'eductible. D'apr\'es le lemme \ref{irreductible}, elle est m\^eme fortement irr\'eductible. 
La proposition \ref{adh_semisimple} nous dit que l'adhérence de Zariski $G$ de $\G$ est alors un groupe semi-simple. Le théorème \ref{cone_limite} qui suit implique que la relation (\ref{relation}) ne peut \^etre v\'erifi\'ee pour tout $\g\in\G$.
\end{proof}

Le théorème permettant de conclure la démonstration est dû à Yves Benoist dans \cite{MR1770716}. Rappelons-en ici un énoncé dans notre contexte particulier.\\
Soit $G$ un sous-groupe de Lie semi-simple de $\ss$. \`A tout \'el\'ement $g$ de $G$, on associe le vecteur $\ln (g) = (\ln \l_0(g),\cdots,\ln \l_n(g)) \in \R^{n+1}$, o\`u $\l_0(g) \geqslant \l_1(g) \geqslant \cdots \geqslant \l_n(g)$ d\'esignent les modules des valeurs propres de $g$. Pour un sous-groupe $\G$ de $G$, on note $\ln \G$ l'ensemble des $\ln \g$ pour $\g\in\G$. Le résultat est le suivant:


\begin{theo}[Yves Benoist, \cite{MR1770716}]\label{cone_limite}
Soient $G$ un sous-groupe de Lie semi-simple de $\ss$ et $\G$ un sous-groupe de $G$. Si $\G$ est Zariski-dense dans $G$, alors le sous-groupe engendré par $\ln \G$ est dense dans le sous-espace vectoriel de $\R^{n+1}$ engendr\'e par $\ln G$.
\end{theo}

L'autre r\'esultat que l'on a utilis\'e \'etait la proposition suivante, due à Yves Benoist. Pour faciliter la lecture de ce texte, nous en donnons une d\'emonstration sous l'hypothèse $\O$ strictement convexe, qui n'est toutefois pas nécessaire.

\begin{prop}[Benoist, remarque suivant le corollaire 3.2 de \cite{MR1767272}]\label{adh_semisimple}
Soit $\G$ un sous-groupe irréductible de $\ss$ qui préserve un ouvert $\O\subset \PP^n$ proprement convexe et strictement convexe. La composante connexe $G$ de l'adhérence de Zariski de $\G$ est un groupe de Lie semi-simple.
\end{prop}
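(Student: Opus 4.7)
Le plan sera de montrer que le radical r\'esoluble $R$ de $G$ est trivial. Commen\c{c}ons par son radical unipotent $U$. Comme $G$ est la composante neutre de l'adh\'erence de Zariski de $\G$, $G$ est normal dans cette adh\'erence ; $U$ \'etant caract\'eristique dans $G$ est alors normal dans l'adh\'erence de Zariski de $\G$. Le th\'eor\`eme d'Engel fournit un sous-espace non nul $V^U \subset \R^{n+1}$ de vecteurs fixes par $U$ ; ce sous-espace est pr\'eserv\'e par l'adh\'erence de Zariski, donc $\G$-invariant. L'irr\'eductibilit\'e de $\G$ imposera alors $V^U = \R^{n+1}$, c'est-\`a-dire $U = \{I\}$.

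Il restera \`a montrer que $R$ lui-m\^eme est trivial. \`A ce stade, $R$ sera r\'esoluble connexe \`a radical unipotent trivial, donc un tore. On consid\`erera la d\'ecomposition isotypique $\R^{n+1} = \bigoplus_{\alpha} V_{\alpha}$ sous l'action de $R$, index\'ee par les caract\`eres r\'eels ou les paires de caract\`eres complexes conjugu\'es de $R$. Comme $R$ est caract\'eristique dans $G$, il est normal dans l'adh\'erence de Zariski de $\G$, qui permute donc les $V_\alpha$ via son action par conjugaison sur les caract\`eres de $R$. Par irr\'eductibilit\'e de $\G$, cette permutation sera transitive : sinon, la somme des composantes d'une $\G$-orbite propre fournirait un sous-espace $\G$-invariant non trivial.

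L'\'etape cruciale consistera \`a utiliser la stricte convexit\'e de $\O$ pour montrer qu'il n'y a en fait qu'une seule composante isotypique. Supposant par l'absurde $k \geqslant 2$ composantes, on produira un \'el\'ement $\gamma \in \G$ qui les permute non trivialement. Dans une base adapt\'ee \`a la d\'ecomposition, un tel $\gamma$ aura une forme de permutation de blocs, dont les valeurs propres viendront en groupes de m\^eme module de taille \'egale \`a la longueur du cycle sous-jacent ($\geqslant 2$) ; en particulier, son rayon spectral sera atteint avec multiplicit\'e au moins $2$. Or, l'argument d\'evelopp\'e au \S\ref{para_isometrie} montre que pour un \'el\'ement hyperbolique d'$\Aut(\O)$, la stricte convexit\'e de $\O$ impose la simplicit\'e du rayon spectral (sinon un segment non trivial appara\^itrait dans $\dO$). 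Pour pouvoir appliquer cet argument \`a $\gamma$, il faudra le remplacer si n\'ecessaire par $\gamma h^N$ o\`u $h \in \G$ est un \'el\'ement hyperbolique bien choisi et $N$ assez grand, de mani\`ere \`a rendre l'\'el\'ement proximal tout en pr\'eservant la permutation non triviale des composantes. La principale difficult\'e technique du plan r\'esidera dans cette construction.

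On obtiendra ainsi $k = 1$ : le tore $R$ agira sur $\R^{n+1}$ par un unique caract\`ere, donc par homoth\'eties. La connexit\'e de $R$ et son inclusion dans $\ss$ forceront alors $R = \{I\}$, puisque les matrices scalaires de $\ss$ forment un groupe fini. Le radical de $G$ sera donc trivial, ce qui est la d\'efinition d'un groupe semi-simple.
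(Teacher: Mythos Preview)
Your treatment of the unipotent radical is essentially the paper's argument (Kolchin/Engel, then irreducibility). The divergence is in the second half.

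The paper does not go through the solvable radical and the isotypic decomposition. Once $G$ is reductive, it shows directly that the center of $G$ is finite: a central element $a$ commutes with a hyperbolic $\gamma\in\G$, hence preserves the line $\ker(\gamma-\rho^+)$, which is one--dimensional by strict convexity; so $a$ has a real eigenvalue $\lambda$, and the $\G$--invariant subspace $\ker(a-\lambda)$ must be all of $\R^{n+1}$ by irreducibility, i.e.\ $a=\pm I$. Three lines instead of your steps 3--5.

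Your step 4, as written, is confused. You propose to produce $\gamma h^N$ that is \emph{simultan\'ement} proximal and permutes the isotypic components non trivially. But your own observation says that a non--trivial permutation forces the spectral radius to be attained with multiplicity $\geqslant 2$: such an element can \emph{never} be proximal. You are trying to construct an object whose impossibility is the very contradiction you want; the plan does not make clear what survives once the construction inevitably fails, nor does it isolate the statement you would actually need (namely that any element of $\Aut(\O)$ with spectral radius $>1$ is proximal, which is what strict convexity buys and what does the work in the paper's argument).

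There is a one--line repair you overlooked. By Lemma~\ref{irreductible}, irreducibility of $\G$ together with the existence of a $\G$--invariant strictly convex $\O$ forces $\G$ to be \emph{strongly} irreducible. The isotypic components $V_\alpha$ form a finite $\G$--stable family of subspaces of $\R^{n+1}$; strong irreducibility therefore gives $k=1$ immediately, and your step 4 disappears. Your concluding step (the torus $R$ then acts by a single real character, hence by scalars, hence is trivial in $\ss$ by connectedness) is correct.
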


\begin{proof}
\par{
D'après le lemme \ref{irreductible}, $\G$ est fortement irréductible. Quitte à consid\'erer un sous-groupe d'indice fini, on peut donc supposer que $\G$ est Zariski-connexe et ainsi que $G$ est d'indice fini dans l'adhérence de Zariski de $\G$. Le groupe $G$ est Zariski-fermé à indice fini près et agit de façon irréductible sur $\R^{n+1}$. Il est donc réductif\footnote{c'est-à-dire que tout sous-groupe unipotent distingué est réduit à l'identité.}.
}
\par{
En effet, soit $N$ un sous-groupe unipotent et distingué de $G$. Considérons le sous-espace vectoriel $E$ de $\R^{n+1}$ des points fixes de $N$. Comme $N$ est distingué dans $G$, l'espace vectoriel $E$ est préservé par $G$. Comme l'action de $G$ sur $\R^{n+1}$ est irréductible, on en d\'eduit que $E$ est trivial. Or, le théorème de Kolchin affirme que tout groupe unipotent fixe un vecteur non trivial. On a donc n\'ecessairement $E=\R^{n+1}$ et $N=\{1\}$. Ainsi, $G$ est bien réductif.
}
\par{
Par conséquent, pour montrer que $G$ est semi-simple, il suffit de montrer que son centre est trivial. Soit $a$ un élément du centre de $G$. Comme $\G$ est fortement irréductible, d'apr\`es le lemme \ref{irreductible}, il possède au moins un élément hyperbolique $\g$. Notons $\rho^+$ son rayon spectral. L'espace propre $\ker(\g-\rho^+)$ est donc de dimension 1 (voir le paragraphe \ref{para_isometrie}) et préservé par $a$. Par suite, $a$ possède  une valeur propre $\lambda$ réelle. L'espace propre $\ker(a-\lambda)$ est préservé par $\G$ et doit donc être trivial. On en d\'eduit que $a$ est une homothétie. Comme $a\in\G \subset \ss$, on conclut que $a=1$ ou $a=-1$.
}
\end{proof}

\subsection{Lemme de fermeture et conséquences}

Nous rappelons ici un résultat classique pour les flots d'Anosov et une de ses conséquences, qui nous servira dans la partie suivante. On trouve dans un article d'Yves Coudene et Barbara Schapira \cite{MR2735038} une démonstration de ces deux r\'esultats dans le cadre de la courbure n\'egative (ou nulle), qui s'adapte sans changement aucun. On pourra aussi consulter \cite{MR1441541} en ce qui concerne le lemme de fermeture:

\begin{lemm}[Appendice de \cite{MR2735038}, Proposition 4.5.15 de \cite{MR1441541}]
Soient $M=\Quo$ une variété quotient, avec $\G$ non élémentaire, et $K$ une partie compacte de $HM$. Fixons $\varepsilon>0$.\\
Il existe $\delta>0$ et $T>0$ tels que, si $w\in K$ satisfait $d(w,\ph^t(w))<\delta$ pour un certain $t>T$, alors il existe un point $w'\in HM$ tel que
\begin{itemize}
 \item  $w'$ est p\'eriodique de p\'eriode $t'\in(t-\varepsilon,t+\varepsilon)$;
 \item  pour tout $0<s<\min\{t,t'\}$, $d(\ph^s(w),\ph^s(w'))<\varepsilon$.
\end{itemize}
\end{lemm}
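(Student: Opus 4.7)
Je suivrais le sch\'ema classique du lemme de fermeture d'Anosov (\cite{MR1441541}, proposition 4.5.15), adapt\'e par Coudene et Schapira \cite{MR2735038} au cadre non compact: on construirait $w'$ comme point fixe hyperbolique d'une application de premier retour sur une section transverse au flot en $w$, puis on en d\'eduirait la p\'eriodicit\'e et la propri\'et\'e de tracking.

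\emph{\'Etape 1 (structure de produit local).} Comme $\O$ est strictement convexe \`a bord $\C^1$, les distributions stable et instable $E^s,E^u$ int\`egrent en des feuilletages continus $W^s,W^u$ sur $HM$, deux \`a deux transverses et transverses au flot. Par compacit\'e de $K$ et continuit\'e, il existerait $r_0>0$ tel que, pour tous $w_1,w_2\in K$ avec $d(w_1,w_2)<r_0$, l'intersection
\[
W^s_{\varepsilon/4}(w_1) \cap \bigcup_{|s|<\varepsilon/4}\ph^s\bigl(W^u_{\varepsilon/4}(w_2)\bigr)
\]
soit r\'eduite \`a un unique point, not\'e $[w_1,w_2]$, d\'ependant contin\^ument du couple $(w_1,w_2)$.

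\emph{\'Etape 2 (point fixe).} \'Etant donn\'e $w\in K$ avec $d(w,\ph^t(w))<\delta$, je consid\'ererais la section transverse $\Sigma_w \simeq W^s_{\varepsilon/4}(w)\times W^u_{\varepsilon/4}(w)$ (param\'etr\'ee via l'application bracket ci-dessus), et l'application de premier retour $P:\Sigma_w\to\Sigma_w$ au voisinage de $w$, bien d\'efinie d\`es que $\delta$ est assez petit. La diff\'erentielle $dP_w$ contracte la direction stable et dilate la direction instable d'apr\`es le lemme \ref{decroissance}; par compacit\'e de $K$, on peut s'assurer que ces facteurs deviennent uniform\'ement $\lambda<1$ respectivement $\lambda^{-1}>1$ d\`es que $t\geqslant T$ convenable. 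Un argument de point fixe hyperbolique sur $\Sigma_w$ fournit alors un unique $w'$ avec $P(w')=w'$, d'o\`u $\ph^{t'}(w')=w'$ pour un certain $t'\in(t-\varepsilon,t+\varepsilon)$. L'estim\'ee $d(\ph^s(w),\ph^s(w'))<\varepsilon$ pour $0<s<\min(t,t')$ suivrait alors par interpolation entre la d\'ecroissance stable (pour $s$ proche de $t$) et la croissance instable (pour $s$ proche de $0$), via le lemme \ref{decroissance}.

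\emph{Obstacle principal.} L'uniformit\'e des constantes $\delta$ et $T$ sur $K$ est le point d\'elicat. En l'absence d'hyperbolicit\'e uniforme globale sur $HM$ (non suppos\'ee ici), les taux de contraction/expansion ne sont pas uniform\'ement exponentiels: ils d\'ependent du chemin que suit l'orbite entre $w$ et $\ph^t(w)$, qui peut traverser des r\'egions de faible hyperbolicit\'e. Cependant, la compacit\'e de $K$ borne les variations des feuilletages et des normes au voisinage de $w$ et de $\ph^t(w)$, et la stricte monotonie du lemme \ref{decroissance} garantit qu'on obtient une contraction effective (non n\'ecessairement exponentielle en $t$) d\`es que $t\geqslant T$ convenable. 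C'est ce contr\^ole qui permettrait de transposer \emph{mutatis mutandis} la preuve de Coudene et Schapira, les estim\'ees riemanniennes de la courbure n\'egative \'etant remplac\'ees par les contr\^oles finsl\'eriens du lemme \ref{decroissance} et du corollaire \ref{decroitpasvite}.
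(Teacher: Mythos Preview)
Le papier ne donne pas de d\'emonstration propre : il renvoie \`a Coudene--Schapira en affirmant que leur argument \og s'adapte sans changement aucun\fg{}. Or l'argument de \cite{MR2735038} n'est pas celui que tu esquisses. Leur preuve est purement g\'eom\'etrique, dans le rev\^etement universel : si $w\in K$ et $d(w,\ph^t w)<\delta$, on rel\`eve $w$ en $\tilde w\in H\O$ et on trouve $\g\in\G$ avec $d(\g\tilde w,\ph^t\tilde w)$ petit; pour $t$ grand, $\g$ d\'eplace tout point d'au moins $t-\delta$, donc est hyperbolique (classification des isom\'etries, section \ref{para_isometrie}); son axe fournit l'orbite p\'eriodique $w'$, et la propri\'et\'e de pistage d\'ecoule de la convexit\'e de la distance (ou de la Gromov-hyperbolicit\'e de $(\O,\d)$) appliqu\'ee aux segments $[\tilde x,\ph^t\tilde x]$ et $[\g^-,\g^+]$, dont les extr\'emit\'es sont proches. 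Aucune estim\'ee d'hyperbolicit\'e uniforme du flot n'intervient.

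Ton approche \`a la Anosov (section transverse, application de premier retour, point fixe hyperbolique) est naturelle mais pr\'esente ici une vraie lacune, que tu identifies toi-m\^eme sans la r\'esoudre. Pour que l'application de premier retour $P$ soit une contraction sur la direction stable avec un facteur $\lambda<1$ \emph{uniforme sur $K$}, il faudrait contr\^oler $\|d\ph^t Z^s\|/\|Z^s\|$ uniform\'ement pour $w\in K$ et $t\geqslant T$. Le lemme \ref{decroissance} donne seulement la stricte d\'ecroissance : le quotient est $<1$ mais peut \^etre arbitrairement proche de $1$ si l'orbite de $w$ traverse des r\'egions de faible hyperbolicit\'e entre $0$ et $t$. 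La compacit\'e de $K$ ne contr\^ole que les extr\'emit\'es $w$ et $\ph^t w$, pas la trajectoire interm\'ediaire; le lemme de contraction uniforme utilis\'e dans la preuve du th\'eor\`eme \ref{anosov} exige pr\'ecis\'ement que toute l'orbite reste dans un compact. Sans hypoth\`ese de finitude g\'eom\'etrique (non faite dans l'\'enonc\'e), tu ne peux donc pas obtenir des constantes $\delta,T$ d\'ependant seulement de $K$ et $\varepsilon$ par cette voie. L'argument g\'eom\'etrique \`a la Coudene--Schapira contourne compl\`etement cette difficult\'e en n'utilisant que la classification des isom\'etries et la convexit\'e dans $\O$.
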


En version courte, cela signifie que si un point revient assez proche de sa position d'origine après un temps $t$, alors il existe une orbite périodique qui suit son orbite pendant le temps $t$, et ce en restant aussi proche qu'on le veut.\\

Prenons une variété quotient $M=\O/\G$, avec $\G$ non élémentaire. Notons $\M$ l'ensemble des mesures de probabilité sur $\NW$ invariantes par le flot, qu'on munit de la convergence étroite des mesures: une suite $(\eta_n)$ de  $\M$ converge vers $\eta$ si, pour toute fonction $f$ continue sur $\NW$, $\int f\ d\eta_n$ converge vers $\int f\ d\eta$. L'ensemble $\M$ est un convexe dont les points extrémaux sont les mesures ergodiques. Parmi les mesures ergodiques, on peut distinguer le sous-ensemble $\M_{Per}$ constitué des mesures de Lebesgue portées par les orbites périodiques. Coudene et Schapira ont remarqué que le lemme de fermeture impliquait la

\begin{prop}[Corollaire 2.3 de \cite{MR2735038}]\label{densitemesures}
Soit $M=\Quo$ une variété quotient, avec $\G$ non élémentaire. L'enveloppe convexe de $\M_{Per}$ est dense dans $\M$.
\end{prop}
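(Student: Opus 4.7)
L'id\'ee est de combiner le th\'eor\`eme de d\'ecomposition ergodique et le lemme de fermeture rappel\'e ci-dessus. \'Etant donn\'ee une mesure invariante $\mu\in\M$, on l'approche d'abord par des combinaisons convexes finies de mesures ergodiques, puis on montre que toute mesure ergodique est limite faible-$*$ d'\'el\'ements de $\M_{Per}$.

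\emph{R\'eduction aux mesures ergodiques.} Le th\'eor\`eme de d\'ecomposition ergodique permet d'\'ecrire toute mesure $\mu\in\M$ comme une int\'egrale $\mu=\int_{\M_e}\nu\,d\rho(\nu)$, o\`u $\M_e\subset\M$ d\'esigne l'ensemble des mesures ergodiques et $\rho$ est une probabilit\'e sur $\M_e$. En partitionnant $\M_e$ en morceaux de faible diam\`etre pour la topologie faible-$*$ et en rempla\c cant chaque morceau par un repr\'esentant, on voit que les combinaisons convexes finies de mesures ergodiques sont faible-$*$ denses dans $\M$. Il suffit donc de montrer que toute mesure ergodique $\nu$ appartient \`a l'adh\'erence faible-$*$ de $\M_{Per}$.

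\emph{Approximation par une orbite p\'eriodique.} Fixons $\nu\in\M$ ergodique et $\varepsilon>0$. Par r\'egularit\'e des mesures bor\'eliennes, on choisit un compact $K\subset\NW$ avec $\nu(K)>1-\varepsilon$, puis on applique le lemme de fermeture \`a $K$ et $\varepsilon$, ce qui fournit des constantes $\delta>0$ et $T>0$. D'apr\`es le th\'eor\`eme de Birkhoff appliqu\'e \`a $\nu$, pour $\nu$-presque tout $w$, les mesures empiriques $\mu_t^w=\frac{1}{t}\int_0^t\delta_{\ph^s(w)}\,ds$ convergent faible-$*$ vers $\nu$ et la proportion asymptotique du temps pass\'e dans $K$ vaut $\nu(K)>1-\varepsilon$. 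Pour un tel point g\'en\'erique $w\in K$, on recouvre $K$ par un nombre fini de boules de rayon $\delta/2$ et on applique \`a nouveau Birkhoff \`a la fonction caract\'eristique de la boule contenant $w$ : cela fournit un ensemble de densit\'e positive de temps $t>T$ pour lesquels $\ph^t(w)\in K$ et $d(\ph^t(w),w)<\delta$. En prenant $t$ suffisamment grand pour que $\mu_t^w$ soit faible-$*$ $\varepsilon$-proche de $\nu$, le lemme de fermeture produit un point p\'eriodique $w'$ de p\'eriode $t'\in(t-\varepsilon,t+\varepsilon)$ dont l'orbite reste $\varepsilon$-proche de celle de $w$ pendant le temps $\min(t,t')$. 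La mesure de Lebesgue $\nu_{w'}\in\M_{Per}$ port\'ee par cette orbite p\'eriodique est alors faible-$*$ $O(\varepsilon)$-proche de $\mu_t^w$, par l'estimation habituelle utilisant le module de continuit\'e d'une fonction test uniform\'ement continue, donc de $\nu$.

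\emph{Principale difficult\'e.} Le point d\'elicat est la non-compacit\'e \'eventuelle de $\NW$, qui rendrait l'argument immediat dans le cas compact. Dans le cadre g\'eom\'etriquement fini, l'ensemble non errant peut contenir des orbites qui font de longues excursions dans les cusps, et le lemme de fermeture n'est utilisable qu'apr\`es localisation sur un compact $K$ contenant \`a la fois le point $w$ et son image $\ph^t(w)$. Il faut donc s'assurer qu'une proportion positive des retours $\delta$-proches de $w$ se produit effectivement dans $K$, ce que Birkhoff garantit pourvu que $\nu(K)$ soit assez proche de $1$ ; c'est de cette interd\'ependance fine entre les param\`etres $\varepsilon$, $\delta$ et $T$ que d\'ecoule la force du r\'esultat.
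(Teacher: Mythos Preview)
Your proposal is correct and follows precisely the strategy of Coudene--Schapira \cite{MR2735038}, which the paper cites without reproducing: ergodic decomposition reduces to the ergodic case, and each ergodic measure is approximated by a periodic one via Birkhoff combined with the closing lemma on a compact set carrying most of the mass. One small point of presentation: you should choose the $\delta/2$-ball of positive $\nu$-measure \emph{before} selecting the generic point $w$ inside it, since otherwise the ball containing your chosen $w$ might be $\nu$-null and Birkhoff would give zero return frequency; this is a harmless reordering of the argument.
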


\section{Régularité du bord}\label{sectionregularite}

\begin{defi}
On dira qu'un point $w=(x,[\xi])\in H\O$, ou le rayon géodésique $\{\pi\ph^t(w)\}_{t\geqslant 0}$ qu'il définit, est \emph{hyperbolique} si, pour tout vecteur stable $Z^s \in E^s(w)$, on a
$$\limsup_{t\to +\infty} \frac{1}{t} \ln \|d\ph^t Z^s\| < 0.$$
\end{defi}
Si $w$ est un point hyperbolique, il existe alors $\chi>0$ tel que, pour tout vecteur stable $Z^s \in E^s(w)$,
$$\limsup_{t\to +\infty} \frac{1}{t} \ln \|d\ph^t Z^s\| \leqslant -\chi.$$
Pour tout $w\in H\O$, on notera $\chi(w)\geqslant 0$ le plus grand des réels $\chi$ qui vérifie l'inégalité précédente pour tout vecteur $Z^s \in E^s(w)$; autrement dit,
$$\chi(w) = \inf_{Z^s \in E^s(w)} \liminf_{t\to +\infty} -\frac{1}{t} \ln \|d\ph^t Z^s\|.$$
Le point $w$ est donc hyperbolique si et seulement si $\chi(w)>0$.\\

En fait, on peut facilement caractériser les points de $H\O$ qui sont hyperboliques, et même déterminer $\chi(w)$, selon la régularité du bord $\dO$ au point extrémal $x^+$ du rayon défini par $w$.


\begin{prop}\label{regularite}
Un point $w\in H\O$ est hyperbolique, de coefficient $\chi(w)>0$, si et seulement si $\dO$ est de classe $\C^{1+\varepsilon}$ en $x^+$, pour tout $0<\varepsilon<\displaystyle\left(\frac{2}{\chi(w)} -1\right)^{-1}$.
\end{prop}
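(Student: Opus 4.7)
The plan is to work in an adapted chart at $w$ so that the geometry near $x^+$ becomes transparent. Place $x^+$ at the origin with $T_{x^+}\dO$ horizontal and the axis $(x^-x^+)$ vertical, and write $x_t := \pi \ph^t(w) = (0, h_t)$ with $h_t > 0$. Since $|x^- x_t|$ stays bounded away from $0$ as $t \to +\infty$, computing the cross-ratio defining $d_{\O}(x, x_t) = t$ yields the asymptotic $h_t \asymp e^{-2t}$. For a stable vector $Z^s \in E^s(w)$ with projection $z = d\pi Z^s$ (horizontal, since $d\pi E^s(w) = T_x \Hh_{x^+}(x)$), the calculation already carried out in the proof of Lemma \ref{decroissance} gives
\[
\|d\ph^t Z^s\| = F(z_t) = \frac{|z|}{2|xx^+|}\left(\frac{h_t}{|x_t y_t^+|} + \frac{h_t}{|x_t y_t^-|}\right),
\]
where $y_t^{\pm}$ are the intersections with $\dO$ of the horizontal line through $x_t$ directed by $z$. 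Near $x^+$, the boundary $\dO$ is the graph of a $\C^1$ function $f: T_{x^+}\dO \longrightarrow \R_+$ with $f(0) = 0$ and $d_0 f = 0$, and $|x_t y_t^{\pm}|$ is precisely the $s$-value along $z$ at which $f$ attains the height $h_t$.

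For the implication regularity $\Rightarrow$ hyperbolicity, suppose $\dO$ is $\C^{1+\varepsilon}$ at $x^+$, so that $f(s) \leqslant C|s|^{1+\varepsilon}$ on a neighborhood of $0$, uniformly in the direction $s$. Then $|x_t y_t^{\pm}| \geqslant (h_t/C)^{1/(1+\varepsilon)}$, which yields
\[
\|d\ph^t Z^s\| \leqslant C'\, h_t^{\varepsilon/(1+\varepsilon)} \asymp e^{-2\varepsilon t/(1+\varepsilon)}.
\]
Since the regularity is uniform over horizontal directions at $x^+$ and every stable vector has a horizontal projection, this bound holds for every $Z^s \in E^s(w)$, producing $\chi(w) \geqslant 2\varepsilon/(1+\varepsilon)$, equivalently $\varepsilon \leqslant (2/\chi(w) - 1)^{-1}$.

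The converse direction is by contrapositive. If $\dO$ fails to be $\C^{1+\varepsilon}$ at $x^+$ for some $\varepsilon < (2/\chi(w) - 1)^{-1}$, then there exist a unit direction $z \in T_{x^+}\dO$ and a sequence $s_n \to 0$ along $z$ with $f(s_n)/s_n^{1+\varepsilon} \to +\infty$. Choose $t_n$ such that $h_{t_n} = f(s_n)$; then $|x_{t_n} y_{t_n}^+| \leqslant s_n$, and the stable vector $Z^s$ with $d\pi Z^s = z$ satisfies
\[
\|d\ph^{t_n} Z^s\| \geqslant C\, \frac{h_{t_n}}{s_n} = C\, \frac{f(s_n)}{s_n}.
\]
Combining $h_{t_n} \asymp e^{-2t_n}$ with $f(s_n) \gg s_n^{1+\varepsilon}$ produces a lower bound strictly larger than $e^{-2\varepsilon t_n/(1+\varepsilon)}$, which contradicts the hyperbolicity estimate with coefficient $\chi(w) > 2\varepsilon/(1+\varepsilon)$.

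The main technical obstacle is bookkeeping: verifying the asymptotic $h_t \asymp e^{-2t}$ sharply (a direct cross-ratio computation), controlling the two symmetric terms $|x_t y_t^{\pm}|$ in tandem — both approach $x^+$ at comparable speed because $\dO$ is $\C^1$ at $x^+$ — and checking that the projection $d\pi : E^s(w) \to T_x\Hh_{x^+}(x)$ is an isomorphism onto the full horosphere tangent space, so that varying $Z^s$ really probes every tangential direction at $x^+$. Once these ingredients are in place, the two estimates are sharp inverses of each other, which is what allows the iff to be stated with the single formula $\varepsilon < (2/\chi(w) - 1)^{-1}$.
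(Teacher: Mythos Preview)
Your proposal is correct and follows essentially the same approach as the paper's proof: both work in an adapted chart at $w$, invoke the formula for $\|d\ph^t Z^s\|$ established in Lemma~\ref{decroissance}, derive the asymptotic $|x_t x^+|\asymp e^{-2t}$ from the cross-ratio, and translate the decay condition into the graph inequality $f(s)\lesssim |s|^{1/(1-\chi/2)}$. The only organisational difference is that the paper runs a single chain of equivalences (and explicitly reduces to a $2$-plane), whereas you separate the two implications and keep track of all tangential directions at once; neither difference is substantive.
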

\begin{proof}
Reprenons les notations du lemme \ref{decroissance}: on a choisi une carte adaptée au point $w\in H\O$, $Z$ est un vecteur stable tangent à $H\O$ au point $w$, et $z =  d\pi(Z)$, $z_t=(d\pi d\ph^t Z)$. Tout se passe dans un plan et on peut donc supposer qu'on est en dimension $2$. On a vu durant la démonstration du lemme \ref{decroissance} que
$$\|d\ph^t(Z)\|=F(z_t) = \frac{|z|}{2|xx^+|}\left(\frac{|x_tx^+|}{|x_ty_t^+|} + \frac{|x_tx^+|}{|x_ty_t^-|}\right).$$


Ainsi, le rayon géodésique défini par $w$ est hyperbolique si et seulement si
$$\displaystyle\liminf_{t\to +\infty} \frac{1}{t} \ln \left(\frac{|x_tx^+|}{|x_ty_t^+|} + \frac{|x_tx^+|}{|x_ty_t^-|}\right) < 0;$$
autrement dit si et seulement s'il existe $\chi,C>0$ tels que pour $t\geqslant 0$,
$$\frac{|x_tx^+|}{|x_ty_t^+|} + \frac{|x_tx^+|}{|x_ty_t^-|} < C e^{-\chi t},$$
soit
$$\frac{|x_tx^+|}{|x_ty_t^+|}  < C e^{-\chi t} \ \textrm{et}\ \frac{|x_tx^+|}{|x_ty_t^-|} < C e^{-\chi t}.$$

Mais de l'égalité $d_{\O}(x,x_t) = t = \frac{1}{2} \ln [x^+ : x^- : x : x_t]$, on tire
$$|x_tx^+| = e^{-2t} \frac{|x_tx^-|}{|xx^-|}|xx^+|,$$
et donc il existe une constante $C_0\geqslant 1$ pour laquelle
$$\frac{1}{C_0}e^{-2t} \leqslant |x_tx^+| \leqslant C_0e^{-2t}.$$

Ainsi, le rayon géodésique défini par $w$ est hyperbolique si et seulement s'il existe $\chi,D>0$ tels que pour $t\geqslant 0$,
$$\frac{|x_tx^+|}{|x_ty_t^+|}  < D |x_tx^+|^{\frac{\chi}{2}} \ \textrm{et}\ \frac{|x_tx^+|}{|x_ty_t^-|} < D|x_tx^+|^{\frac{\chi}{2}},$$
soit
$$|x_tx^+|^{1-\frac{\chi}{2}} < D|x_ty_t^+| \ \textrm{et}\ |x_tx^+|^{1-\frac{\chi}{2}} < D|x_ty_t^-|.$$
Appellons $f : T_{x^+}\dO \longmapsto \R$ le graphe de $\dO$, de telle façon que
$$|x_tx^+| = f(|x_ty_t^+|) = f(-|x_ty_t^-|).$$
La condition précédente est alors équivalente à : pour tout $s$ (assez petit),
$$f(s) < Ds^{\frac{1}{1-\chi/2}}.$$
Or, dans notre carte adapt\'ee, on a $d_{x^+}f=0$. L'in\'egalit\'e pr\'ec\'edente veut donc dire que la fonction $f$ est $\C^{1+\varepsilon}$ en $x^+$ avec $\varepsilon=\displaystyle\frac{1}{\frac{2}{\chi} -1}$.


\end{proof}

Cette proposition et le théorème \ref{anosov} entraînent le

\begin{coro}\label{regu_geo_fini}
Supposons que $\O$ admette une action géométriquement finie \`a cusps asymptotiquement hyperboliques d'un sous-groupe discret $\G$ de $\Aut(\O)$. Alors il existe $\varepsilon>0$ tel que le bord $\dO$ de $\O$ soit de classe $\C^{1+\varepsilon}$ en tout point de $\LG$.\\
En particulier, si $\O$ admet une action de covolume fini, alors le bord $\dO$ de $\O$ est de classe $\C^{1+\varepsilon}$  pour un certain $\varepsilon>0$.
\end{coro}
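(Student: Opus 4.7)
The plan is to combine Th\'eor\`eme~\ref{anosov} with Proposition~\ref{regularite}, using the explicit description of the non-wandering set recalled at the end of Section~3: $\NW$ is the projection to $HM$ of
\[
\widetilde\NW = \{w \in H\O \mid x^+(w),\,x^-(w) \in \LG\}.
\]

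First, I would show that every $\xi\in\LG$ is realized as the forward endpoint of an orbit in $\widetilde\NW$. We may assume $\G$ is non-elementary, otherwise the statement is vacuous or trivially true. Then $\LG$ is infinite, so one can pick $\eta\in\LG\smallsetminus\{\xi\}$; by strict convexity and $\C^1$ regularity of $\dO$, the open segment $]\eta\xi[$ is a geodesic line contained in $\O$, and any $w=(x,[\xi])$ with $x$ on this line belongs to $\widetilde\NW$ and thus projects to $\NW$.

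Second, I would apply Th\'eor\`eme~\ref{anosov}: the hypotheses of geometric finiteness and asymptotically hyperbolic cusps yield \emph{uniform} constants $\chi,C>0$ such that $\|d\ph^t Z^s\|\leqslant C e^{-\chi t}\|Z^s\|$ for every $w\in\NW$, every $Z^s\in E^s(w)$ and every $t\geqslant 0$. Lifting back to $H\O$, the orbit produced in the previous step satisfies $\chi(w)\geqslant\chi$, with the same $\chi$ regardless of the chosen point $\xi$.

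Third, I would invoke Proposition~\ref{regularite} at $x^+(w)=\xi$: the lower bound $\chi(w)\geqslant\chi>0$ translates into the existence of an $\varepsilon_0>0$, depending only on $\chi$, such that $\dO$ is $\C^{1+\varepsilon_0}$ at $\xi$. Since $\xi\in\LG$ was arbitrary and $\varepsilon_0$ does not depend on it, the first assertion follows. For the second assertion, I would appeal to the results recalled in Section~4.4: a finite covolume quotient satisfies $\LG=\dO$ and has maximal parabolic subgroups of maximal rank, so by Corollaire~\ref{loincusp} its cusps are automatically asymptotically hyperbolic, and the first assertion then applies with $\LG=\dO$. The only delicate point in the whole argument is the \emph{uniformity} of the hyperbolicity exponent in Th\'eor\`eme~\ref{anosov}: without it one would only obtain a pointwise $\varepsilon(\xi)>0$, not a single $\varepsilon_0$ working simultaneously at every point of $\LG$. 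That uniformity, however, is precisely what the previous section established, so the corollary reduces to a clean concatenation of the two dictionaries ``uniform hyperbolicity $\Leftrightarrow$ uniform boundary regularity''.
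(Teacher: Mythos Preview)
Your proposal is correct and follows exactly the route the paper intends: the corollary is stated immediately after Proposition~\ref{regularite} with the one-line justification that it is entailed by that proposition together with Th\'eor\`eme~\ref{anosov}, and you have simply spelled out the natural details (realizing each $\xi\in\LG$ as an endpoint of a non-wandering orbit, transferring the uniform exponent $\chi$ to a uniform $\varepsilon_0$ via the formula in Proposition~\ref{regularite}, and invoking $\LG=\dO$ plus Corollaire~\ref{loincusp} for the finite-covolume case).
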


\subsection{Régularité optimale du bord}

Soit $\G$ un sous-groupe discret de $\Aut(\O)$. Notons
\begin{equation}\label{elg}
\varepsilon(\LG) = \sup \{\varepsilon\in [0,1],\ \textrm{le bord}\ \dO\ \textrm{est}\ \C^{1+\varepsilon}\ \textrm{en tout point de}\ \LG\}.
\end{equation}
Pour tout élément hyperbolique $\g \in \G$, notons
$$\varepsilon(\g) = \sup \{\varepsilon\in [0,1],\ \textrm{le bord}\ \dO\ \textrm{est}\ \C^{1+\varepsilon}\ \textrm{en}\ x_{\g}^+\},$$
et $\varepsilon(\G) = \inf\{ \varepsilon(\g),\ \g\in\G\ \textrm{hyperbolique}\}$. Le bord $\dO$ est ainsi de classe $\C^{1+\varepsilon(\G)}$ en tout point fixe d'un élément hyperbolique de $\G$. Rappelons-nous que l'ensemble des points fixes d'élements hyperboliques de $\g$ est dense dans $\LG$, dès que $\G$ n'est pas élémentaire; on pourrait donc s'attendre au théorème qui suit, qui est toutefois faux en général.

\begin{theo}\label{bordexposant}
Supposons que $\O$ admette une action géométriquement finie \`a cusps asymptotiquement hyperboliques $\G$  d'un sous-groupe discret $\G$ de $\Aut(\O)$. Alors
$$\varepsilon(\LG) = \varepsilon(\G).$$
\end{theo}

Dans le cas où le groupe $\G$ est cocompact, l'ensemble limite $\LG$ est tout le bord $\dO$ et ce résultat a déjà été prouvé par Olivier Guichard. Nous allons donner ici une toute autre démonstration. Toutefois, remarquons que la méthode de Guichard est plus précise car elle permet de prouver que le supremum dans la définition (\ref{elg}) est en fait un maximum, c'est-à-dire que $\dO$ est exactement $\C^{1+\varepsilon(\G)}$ (et pas plus, sauf si $\O$ est un ellipsoïde).\\
Remarquons que l'hypoth\`ese faite sur les cusps est essentielle. En effet, il est possible de faire en sorte que la r\'egularit\'e en un point parabolique soit aussi mauvaise que l'on veut car lorsque le groupe parabolique n'est pas de rang maximal, il n'impose la r\'egularit\'e du bord au point fixe que dans certaines directions. On consultera la partie \ref{sectionex1} \`a ce propos.\\

Notre démonstration repose sur une approche dynamique et en particulier sur l'extension d'un résultat de Ursula Hamenstädt concernant le ''meilleur'' coefficient de contraction du flot géodésique \cite{MR1279472}. Au vu de la proposition \ref{regularite}, cette approche est en fait totalement naturelle. Dans toute la suite, nous supposerons que $\G$ est sans torsion, ce qui ne change rien d'après le lemme de Selberg, et nous étudierons plus en détail le flot géodésique de $M=\Quo$.\\

Rappelons que l'ensemble non errant $\NW$ du flot géodésique sur $HM$ est la projection sur $HM$ de l'ensemble
$$\{w=(x,\xi) \in H\O \ |\ x^+,x^- \in \LG\}.$$
Notons $\chi(\NW)$ la meilleure constante d'hyperbolicité du flot géodésique sur l'ensemble non errant; autrement dit, $\chi(\NW)$ est le supremum des réels $\chi\geqslant 0$ tels qu'il existe $C>0$ tel que, pour tout $w\in \NW$ et tous $Z^s\in E^s(w), Z^u\in E^u(w)$, on ait
$$\|d\ph^t(Z^s)\| \leqslant C e^{-\chi t} \|Z^s\|,\ \|d\ph^{-t}(Z^u)\| \leqslant C e^{-\chi t} \|Z^u\|,\ t\geqslant 0.$$
On a en fait
$$\chi(\NW) = \inf_{w\in \NW} \chi(w).$$

On a déjà vu que les points périodiques formaient un ensemble $Per$ dense dans $\NW$. Notons
$$\chi(Per)=\inf\{\chi(w)\ |\ w\in \NW\ \text{périodique}\}.$$
Le résultat, inspiré de celui d'Hamenstädt, est le suivant:

\begin{theo}\label{hamenstadt}
Soit $M=\Quo$ une variété géométriquement finie \`a cusps asymptotiquement hyperboliques. On a
$$\chi(\NW) = \chi(Per).$$
\end{theo}

Voyons tout de suite comment ce dernier résultat implique directement le théorème \ref{bordexposant}. Rappelons que l'ensemble des orbites périodiques est en bijection avec les classes de conjugaison d'éléments hyperboliques de $\G$: si $\g\in\G$ est hyperbolique, la projection de la géodésique orientée $(x_{\g}^-x_{\g}^+)$ sur $HM$ est une orbite périodique du flot géodésique, qu'on note encore $\g$. Associé à cette orbite $\g$, on dispose du plus petit coefficient de contraction:
$$\chi(\g) := \inf_{Z^s\in E^s(w)} \liminf_{t\to +\infty} -\frac{1}{t} \ln \|d\ph^t Z^s\| = \chi(w),$$
où $w$ est un point quelconque de l'orbite $\g$. L'\'egalit\'e principale est la suivante, qui d\'ecoule de la proposition \ref{regularite}:
\begin{equation}\label{reghyp}
\varepsilon(\G)=\displaystyle\frac{1}{\frac{2}{\chi(Per)} -1}.
\end{equation}

Il s'av\`ere qu'on peut exprimer $\chi(\g)$ en fonction des valeurs propres de $\g$, comme l'affirme le lemme suivant, montré dans \cite{MR2094116} ou dans \cite{MR2587084}:

\begin{lemm}\label{exposanthyperbolique}
Soit $\g\in\Aut(\O)$ un élément hyperbolique. Notons $\l_0(\g) \geqslant \l_1(\g) \geqslant \cdots \geqslant \l_n(\g)>0$ les modules de ses valeurs propres, comptées avec multiplicité. Alors
$$\chi(\g) = 2\left(1 - \frac{ \ln \l_0(\g) - \ln \l_{n-1}(\g)}{ \ln \l_0(\g) - \ln \l_n(\g)}\right).$$
\end{lemm}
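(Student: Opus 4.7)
The plan is to compute $\chi(\g)$ directly by analyzing the contraction of stable vectors along the periodic orbit of $\g$, applying the explicit formula derived in the proof of Lemma \ref{decroissance} in affine coordinates adapted to $\g$. I would choose a basis $(e_0,\ldots,e_n)$ of $\R^{n+1}$ consisting of generalized eigenvectors of $\g$ of eigenvalue modulus $\l_i$, with $x_\g^+=[e_0]$ and $x_\g^-=[e_n]$, and work in the affine chart $\{x_0\neq 0\}$ with coordinates $y_i=x_i/x_0$. In this chart $x^+$ is the origin, $\g$ acts (up to Jordan structure) as the diagonal map with entries $\l_i/\l_0$, the $\g$-invariant tangent hyperplane $T_{x^+}\dO$ is $\{y_n=0\}$, and for $w=(x,[xx^+])$ with $x=(0,\ldots,0,1)$ on the axis the geodesic orbit is $x_t=(0,\ldots,0,e^{-2t})$. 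The stable subspace $E^s(w)$ then identifies via $d\pi$ with the affine hyperplane through $x$ parallel to $\{y_n=0\}$, spanned by $\partial/\partial y_1,\ldots,\partial/\partial y_{n-1}$.

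The core of the argument is to extract the local shape of $\dO$ near $x^+$ from $\g$-invariance. Writing $\dO$ locally as a graph $y_n=g(y_1,\ldots,y_{n-1})$, invariance of $\dO$ under $\g$ gives the functional equation
$$g\bigl((\l_1/\l_0)y_1,\ldots,(\l_{n-1}/\l_0)y_{n-1}\bigr)=(\l_n/\l_0)\,g(y_1,\ldots,y_{n-1}).$$
Iterating this along the $i$-th coordinate axis, $g_i(s):=g(0,\ldots,s,\ldots,0)$ takes the form $g_i(s)=|s|^{\alpha_i}h_i(\ln|s|)$ for a bounded positive log-periodic factor $h_i$, with
$$\alpha_i=\frac{\ln\l_0-\ln\l_n}{\ln\l_0-\ln\l_i}\geqslant 1.$$

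Now I would apply Lemma \ref{decroissance}: for a stable vector $Z_i^s$ with $d\pi Z_i^s=\partial/\partial y_i$, the formula gives $z_t=(|x_tx^+|/|xx^+|)\partial/\partial y_i=e^{-2t}\partial/\partial y_i$, hence $\|d\ph^t Z_i^s\|=e^{-2t}F(x_t,\partial/\partial y_i)$. The line $x_t+\R\cdot\partial/\partial y_i$ meets $\dO$ at points where $g_i(|y_i|)=e^{-2t}$, so $|y_i|\asymp e^{-2t/\alpha_i}$ (up to the bounded $h_i$-factor), whence $F(x_t,\partial/\partial y_i)\asymp e^{2t/\alpha_i}$ and the Lyapunov exponent in the $i$-th direction is
$$\chi_i=-\lim_{t\to\infty}\tfrac{1}{t}\ln\|d\ph^t Z_i^s\|=2\Bigl(1-\frac{1}{\alpha_i}\Bigr).$$
For a general stable vector $Z^s$ with $d\pi Z^s=\sum c_i\partial/\partial y_i$, the same analysis applied to $g$ restricted to the line $\R\cdot d\pi Z^s$ shows that the contraction rate equals $\chi_{i_0}$ where $i_0=\max\{i:c_i\neq 0\}$, since after sufficiently many $\g^{-1}$-iterations the direction of smallest $\l_i$ among those present dominates, forcing the leading asymptotics. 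Since $\chi_i$ is non-increasing in $i$, the infimum over $E^s(w)$ is attained at $i=n-1$, giving
$$\chi(\g)=\chi_{n-1}=2\Bigl(1-\frac{\ln\l_0-\ln\l_{n-1}}{\ln\l_0-\ln\l_n}\Bigr).$$

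The main technical obstacle is the rigorous control of the log-periodic factor $h_i$: only along discrete $\g$-orbits is the exact form $g_i(s)\sim C|s|^{\alpha_i}$ forced by invariance, but boundedness of $h_i$ ensures only an $O(1)$ error in $\ln\|d\ph^tZ^s\|$, absorbed by the factor $1/t$. A secondary subtlety is the presence of Jordan blocks or complex eigenvalues on the intermediate generalized eigenspaces of $\g$, for which the action on the $(y_1,\ldots,y_{n-1})$-plane is only block-diagonal; however, the asymptotic moduli still dictate the boundary shape in each eigendirection, and the resulting subexponential corrections do not alter the final value of $\chi(\g)$.
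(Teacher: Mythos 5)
Your argument is correct in substance, and it is worth knowing that the paper itself does not prove Lemma \ref{exposanthyperbolique}: it quotes it from \cite{MR2094116} and \cite{MR2587084}, where the proof is essentially the one you reconstruct, namely quasi-homogeneity of the boundary graph at $x_{\g}^+$ combined with the contraction formula from the proof of Lemma \ref{decroissance}. Your functional equation for $g$, the exponents $\alpha_i=\frac{\ln\l_0-\ln\l_n}{\ln\l_0-\ln\l_i}$, the rates $\chi_i=2(1-1/\alpha_i)$, and the identification of the infimum at $i=n-1$ all check out; they are also consistent with Proposition \ref{regularite} and with the optimal Holder exponent at hyperbolic fixed points computed by Benoist. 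A cleaner packaging of the same computation, closer to \cite{MR2587084}, avoids the functional equation altogether: since $\g$ acts by isometry and $\ph^{T}(w)=\g w$ with $T=\frac{1}{2}\ln(\l_0/\l_n)$, one has $\|d\ph^{kT}Z\|=\|(d\g^{-1}\circ d\ph^{T})^{k}Z\|$, and in your chart the return map $d\g^{-1}\circ d\ph^{T}$ acts on $d\pi E^s(w)$ linearly with eigenvalue moduli $\l_n/\l_i$, $1\leqslant i\leqslant n-1$; this gives $\chi_i=2\,\frac{\ln\l_i-\ln\l_n}{\ln\l_0-\ln\l_n}$ (the same number) at once, with Jordan blocks contributing only polynomial factors, and Lemma \ref{decroissance} interpolates between the times $kT$.

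Three points in your write-up should be tightened. First, your chart $\{x_0\neq 0\}$ is not an adapted chart in the paper's sense: $x^-=[e_n]$ lies on the hyperplane at infinity and $\O$ is unbounded there, so the formula $z_t=\frac{|x_tx^+|}{|xx^+|}z$ from Lemma \ref{decroissance} requires a one-line re-justification; it does hold, because every horosphere based at $x^+$ has, along the axis, tangent space parallel to $\{y_n=0\}$ (the intersection $T_{x^+}\dO\cap T_{x^-}\dO$ is entirely at infinity in this chart). Second, you implicitly use $\l_0>\l_1$ and $\l_{n-1}>\l_n$, so that the contraction ratios $\l_i/\l_0$ are strictly less than $1$ and the $\alpha_i$ are well defined; this biproximality follows from the north-south dynamics in the paper's definition of a hyperbolic isometry and should be invoked. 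Third, the mixed-vector step is only sketched; the clean fix is your own pull-back idea made quantitative: for $z=\sum c_ie_i$ and small $s$, choose $k$ with $(\l_0/\l_{i_0})^{k}|s|\asymp 1$ and apply $\g^{-k}$ to the boundary point over $sz$; all coordinates stay bounded, the $i_0$-th is of size $1$, and since $g$ is bounded above and below on the resulting compact region away from the origin (strict convexity plus $\C^1$ regularity: the tangent hyperplane meets $\overline{\O}$ only at $x^+$), one gets $g(sz)=|s|^{\alpha_{i_0}+o(1)}$, which is all the $\liminf$ needs. With these repairs the proof is complete.
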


Ce lemme permet d'obtenir le
\begin{coro}\label{coco}
Pour toute variété quotient $M=\O/\G$, on a $\chi(Per)\leqslant 1$. De plus, si $\chi(Per)=1$, alors $\G$ n'est pas Zariski-dense dans $\ss$.
\end{coro}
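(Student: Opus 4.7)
The plan is to exploit the symmetry between $\gamma$ and $\gamma^{-1}$, which share the same axis but give rise to two different periodic orbits in $HM$ (traversed in opposite directions). First I would apply Lemma \ref{exposanthyperbolique} to both elements; since $\lambda_i(\gamma^{-1})=1/\lambda_{n-i}(\gamma)$, one obtains
$$\chi(\gamma) = \frac{2(\ln\lambda_{n-1}-\ln\lambda_n)}{\ln\lambda_0-\ln\lambda_n}, \qquad \chi(\gamma^{-1}) = \frac{2(\ln\lambda_0-\ln\lambda_1)}{\ln\lambda_0-\ln\lambda_n}.$$
Summing and using $\lambda_{n-1}\leq \lambda_1$ (natural ordering of eigenvalue moduli) immediately gives $\chi(\gamma)+\chi(\gamma^{-1})\leq 2$, hence $\chi(Per)\leq\min(\chi(\gamma),\chi(\gamma^{-1}))\leq 1$, which proves the first assertion.

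For the second part, I would assume $\chi(Per)=1$. Then $\chi(\gamma)\geq 1$ and $\chi(\gamma^{-1})\geq 1$ for every hyperbolic $\gamma\in\G$, and the inequality above forces equality $\chi(\gamma)=\chi(\gamma^{-1})=1$. The two identities translate into $2\ln\lambda_{n-1}=\ln\lambda_0+\ln\lambda_n = 2\ln\lambda_1$, so $\lambda_1=\lambda_2=\cdots=\lambda_{n-1}$ equals a common value $m$ with $\lambda_0\lambda_n=m^2$. Combining with $\det\gamma=1$ (so $\lambda_0\, m^{n-1}\,\lambda_n=m^{n+1}=1$) yields $m=1$ and $\lambda_n=\lambda_0^{-1}$. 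Hence $\ln\gamma = (\ln\lambda_0,0,\dots,0,-\ln\lambda_0)$ lies in the one-dimensional subspace $V\subset\R^{n+1}$ spanned by $(1,0,\dots,0,-1)$. Non-hyperbolic elements of $\Aut(\O)$ are parabolic or elliptic; the former are conjugate to parabolic elements of $\SO$, hence unipotent with all eigenvalue moduli equal to $1$, while the latter have finite order, so again eigenvalue moduli $1$. In both cases $\ln\gamma=0\in V$, so $\ln\G\subset V$.

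If $\G$ were Zariski-dense in $\ss$, Theorem \ref{cone_limite} would apply (since $\ss$ is semisimple) and would force the subgroup generated by $\ln\G$ to be dense in the vector space generated by $\ln\ss$, namely the hyperplane $\{\sum x_i=0\}\subset\R^{n+1}$ of dimension $n\geq 2$. This contradicts $\ln\G\subset V$ because $\dim V=1$, so $\G$ cannot be Zariski-dense. The delicate point will be the algebraic deduction that $m=1$ from the unit-determinant condition; once the eigenvalue structure of hyperbolic elements is pinned down, the triviality of $\ln$ on non-hyperbolic elements and the invocation of Theorem \ref{cone_limite} make the conclusion immediate.
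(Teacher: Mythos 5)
Your proof is correct and follows essentially the same route as the paper: Lemma \ref{exposanthyperbolique} applied to $\g$ and $\g^{-1}$ to get $\chi(\g)+\chi(\g^{-1})\leqslant 2$ via $\l_{n-1}\leqslant\l_1$, then, when $\chi(Per)=1$, a linear constraint on $\ln\G$ that contradicts Benoist's Theorem \ref{cone_limite} for a Zariski-dense subgroup. The only difference is cosmetic: the paper stops at the single relation $\ln\l_0(\g)+\ln\l_n(\g)-2\ln\l_1(\g)=0$, which already confines $\ln\G$ to a proper subspace of $\{\sum_i x_i=0\}$, whereas you pin down the full eigenvalue structure ($\l_1=\cdots=\l_{n-1}=1$, $\l_n=\l_0^{-1}$) and treat elliptic and parabolic elements explicitly --- both variants suffice.
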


\begin{proof}
En gardant les notations du lemme, on voit que si $\g\in\Aut(\O)$ est hyperbolique, alors
$$\chi(\g^{-1}) =  2\left(1 - \frac{ \ln \l_{1}(\g) - \ln \l_{n}(\g)}{ \ln \l_0(\g) - \ln \l_n(\g)}\right).$$
Ainsi,
$$\chi(\g)+\chi(\g^{-1}) = 2\left(1 - \frac{ \ln \l_{1}(\g) - \ln \l_{n-1}(\g)}{ \ln \l_0(\g) - \ln \l_n(\g)}\right) \leqslant 2.$$
Cela implique que soit $\chi(\g)\leqslant 1$ soit $\chi(\g^{-1})\leqslant 1$, et donc que
$$\chi(Per) = \inf \{\chi(\g)\ |\ \g\in\G\  \text{hyperbolique}\}\leqslant 1.$$

Maintenant, supposons que $\chi(Per)=1$, c'est-\`a-dire que $\chi(\g)\geqslant 1$ pour tout élément $\g\in\G$ hyperbolique. De l'in\'egalit\'e $\chi(\g)+\chi(\g^{-1})\leqslant 2$, on d\'eduit que $\chi(\g)= 1$ pour tout élément $\g\in\G$ hyperbolique. D'après le lemme pr\'ec\'edent, cela veut dire que, pour tout élément $\g\in\G$ hyperbolique,
$$2\left(1- \frac{ \ln \l_0(\g) - \ln \l_1(\g)}{ \ln \l_0(\g) - \ln \l_n(\g)}\right) = 1,$$
soit
$$\ln \l_0(\g) + \ln \l_n(\g) - 2\ln \l_1(\g) = 0.$$
En particulier, l'ensemble $\ln(\G)$ n'engendre pas tout l'espace
$$\ln(\ss)=\{x=(x_0,\cdots,x_n)\in\R^{n+1},\ \sum_{i=0}^n x_i = 0\}.$$ D'après le théorème \ref{cone_limite}, cela implique que $\G$ n'est pas Zariski-dense dans $\ss$.
\end{proof}

On peut maintenant donner une
\begin{proof}[Démonstration du théorème \ref{bordexposant}]
D'apr\`es l'\'egalit\'e (\ref{reghyp}), on a
$$\varepsilon(\g) =\min\left(1,\frac{1}{\frac{2}{\chi(\g)} -1}\right),$$
et donc, via le corollaire \ref{coco}, que
$$\varepsilon(\G) =\frac{1}{\frac{2}{\chi(Per)} -1}.$$
De même, on a
$$\varepsilon(\LG) =\frac{1}{\frac{2}{\chi(\NW)} -1}.$$
Le théorème \ref{hamenstadt} implique
$$\varepsilon(\LG) = \varepsilon(\G).$$
\end{proof}

\subsection{Conséquences}

La régularité optimale du bord aux points de $\LG$ se lit donc sur les valeurs propres des éléments de $\G$:

\begin{theo}
Supposons que $\O$ admette une action géométriquement finie \`a cusps asymptotiquement hyperboliques d'un sous-groupe discret $\G$ de $\Aut(\O)$. Alors
$$\varepsilon(\LG) = \inf\left\{\frac{ \ln \l_{n-1}(\g) - \ln \l_n(\g)}{ \ln \l_0(\g) - \ln \l_{n-1}(\g)},\ \g\in\G\ \text{hyperbolique}\right\}.$$
\end{theo}
\begin{proof}
C'est un simple calcul à partir de l'égalité
$$\varepsilon(\LG) = \frac{1}{\frac{2}{\chi(Per)} -1}.$$
\end{proof}

Les corollaires ci-dessous sont sûrement plus parlants, et sont compl\'ementaires du résultat de rigidité de Benoist concernant la régularité des convexes divisibles (Proposition 6.1 de \cite{MR2094116}). Il repose sur le r\'esultat suivant, cas particulier d'un des th\'eor\`emes principaux de \cite{Crampon:2012fk}: 

\begin{theo}
Soit $\G$ un sous-groupe discret de $\Aut(\O)$. Si $\G$ agit de fa\c con g\'eom\'etriquement finie sur $\O$ et contient un \'el\'ement parabolique, alors son adh\'erence de Zariski est soit $\ss$ tout entier, soit conjugu\'ee \`a $\SO$.
\end{theo}

\begin{coro}\label{rigidite_geofini}
Supposons que $\O$ admette une action géométriquement finie d'un sous-groupe discret $\G$ de $\Aut(\O)$ qui contienne un \'el\'ement parabolique. Si le bord $\dO$ est de classe $\C^{1+\varepsilon}$ pour tout $0<\varepsilon<1$ en tout point de $\LG$, alors $\G$ est conjugué à un sous-groupe de $\SO$.
\end{coro}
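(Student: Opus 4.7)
The plan is to reduce the corollary to the dichotomy theorem that immediately precedes it, which asserts that the Zariski closure of such a $\G$ is either $\ss$ or conjugate to $\SO$. Thus everything boils down to ruling out the Zariski-dense alternative, and this will be done by forcing the best contraction rate of the geodesic flow to attain the critical value $1$, where the bound in Corollary \ref{coco} becomes an equality.

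First I would upgrade the geometrical finiteness to the stronger hypothesis of asymptotically hyperbolic cusps, which is needed in order to invoke Theorem \ref{bordexposant}. The key input is the structural result of \cite{Crampon:2012fk} that each maximal parabolic subgroup of $\G$ is conjugate to a parabolic subgroup of $\SO$. If some such parabolic subgroup $\P$ (with fixed point $p\in\LG$) were not of maximal rank, then, as noted in the discussion preceding Lemma~\ref{loincusp1} and illustrated by the example of Section~\ref{sectionex1}, the regularity of $\dO$ at $p$ would only be constrained along the directions of the $\P$-orbit and could be chosen arbitrarily bad in the transverse directions. The hypothesis that $\dO$ is $\C^{1+\varepsilon}$ at every point of $\LG$ for every $\varepsilon<1$ therefore forces all maximal parabolic subgroups of $\G$ to be of maximal rank, and Corollary~\ref{loincusp} then supplies the asymptotic hyperbolicity of the cusps.

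Next I would plug this into Theorem \ref{bordexposant} to obtain $\varepsilon(\LG)=\varepsilon(\G)$. The regularity hypothesis gives $\varepsilon(\LG)=1$, hence $\varepsilon(\G)=1$. Tracing through the formulas established in the proof of that theorem, namely $\varepsilon(\G)=\bigl(\tfrac{2}{\chi(Per)}-1\bigr)^{-1}$, we deduce $\chi(Per)=1$. Corollary~\ref{coco} then implies that $\G$ is not Zariski-dense in $\ss$. Applying the dichotomy theorem preceding the corollary concludes that the Zariski closure of $\G$ is conjugate to $\SO$, so $\G$ itself is a subgroup of a conjugate of $\SO$.

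The main obstacle is the first step: extracting asymptotic hyperbolicity of the cusps from a regularity hypothesis that is only formulated pointwise on $\LG$ and is slightly weaker than the $\C^{1+1}$ plus $2$-convexity assumption appearing in Lemma~\ref{loincusp1}. The rest is essentially bookkeeping: once the cusps are known to be asymptotically hyperbolic, the chain Theorem~\ref{bordexposant} $\Rightarrow$ $\chi(Per)=1$ $\Rightarrow$ Corollary~\ref{coco} $\Rightarrow$ dichotomy theorem runs mechanically.
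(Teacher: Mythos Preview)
Your proposal contains an unnecessary detour that forces you into a genuine gap. The paper's proof is much more direct and avoids Theorem~\ref{bordexposant} entirely.

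The key observation you miss is that the inequality $\varepsilon(\G) \geqslant \varepsilon(\LG)$ is trivial: every hyperbolic fixed point $x_\g^+$ lies in $\LG$, so if $\dO$ is $\C^{1+\varepsilon}$ at every point of $\LG$, it is in particular $\C^{1+\varepsilon}$ at each $x_\g^+$, hence $\varepsilon(\g)\geqslant\varepsilon$ for every hyperbolic $\g$. The hypothesis therefore gives $\varepsilon(\G)=1$ immediately, with no need for Theorem~\ref{bordexposant} or for any assumption on the cusps. The relation $\varepsilon(\G)=\bigl(\tfrac{2}{\chi(Per)}-1\bigr)^{-1}$ comes from Proposition~\ref{regularite} and Corollary~\ref{coco}, both of which hold for an arbitrary quotient $M=\Quo$; so $\chi(Per)=1$, Corollary~\ref{coco} excludes Zariski-density in $\ss$, and the dichotomy theorem finishes. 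This is exactly the paper's three-line argument.

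Your first step, by contrast, has a real logical error. You argue that because the regularity at a parabolic point \emph{could be chosen} arbitrarily bad when the parabolic subgroup is not of maximal rank, the hypothesis of good regularity must force maximal rank. But that inference is backwards: the examples in Section~\ref{sectionex1} show that for \emph{some} convex sets the regularity is bad, not that it is bad for \emph{every} convex set admitting such an action. There is no obstruction to a non-maximal-rank parabolic group preserving a convex set whose boundary happens to be very regular at the parabolic point. So you cannot deduce maximal rank, you cannot invoke Corollary~\ref{loincusp}, and your route to asymptotic hyperbolicity is blocked. Fortunately, as explained above, you never needed it.
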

\begin{proof}
Les hypoth\`eses impliquent en particulier que $\varepsilon(\G)=1$, soit $\chi(Per)=1$. Le corollaire \ref{coco} implique que $\G$ n'est pas Zariski-dense dans $\ss$. D'apr\`es le th\'eor\`eme pr\'ec\'edent, $\G$ est Zariski-dense dans un conjugu\'e de $\SO$.
\end{proof}

Un cas particulier est le suivant, o\`u l'on obtient une \emph{vraie} rigidit\'e:

\begin{coro}\label{rigidite_volfini}
Si $\O$ admet un quotient de volume fini non compact, alors le bord $\dO$ est de classe $\C^{1+\varepsilon}$ pour tout $0<\varepsilon<1$ si et seulement si $\O$ est un ellipsoïde.
\end{coro}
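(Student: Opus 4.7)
La direction facile est imm\'ediate: si $\O$ est un ellipso\"ide, alors $\dO$ est analytique, donc a fortiori de classe $\C^{1+\varepsilon}$ pour tout $\varepsilon\in(0,1)$. Pour l'autre direction, le plan est de combiner le corollaire \ref{rigidite_geofini} avec un argument de densit\'e Zariski. Supposons donc que $M=\Quo$ est de volume fini non compact et que $\dO$ est de classe $\C^{1+\varepsilon}$ pour tout $\varepsilon\in(0,1)$.

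La premi\`ere \'etape est de v\'erifier les hypoth\`eses du corollaire \ref{rigidite_geofini}. D'apr\`es le paragraphe 4.4, toute vari\'et\'e de volume fini est g\'eom\'etriquement finie \`a cusps asymptotiquement hyperboliques (ses sous-groupes paraboliques maximaux \'etant automatiquement de rang maximal), et la non-compacit\'e de $M$ entra\^ine l'existence d'un \'el\'ement parabolique dans $\G$. Le corollaire \ref{rigidite_geofini} fournit alors, quitte \`a conjuguer, une inclusion $\G\subset\SO$; notons $\E\subset\PP^n$ l'unique ellipso\"ide pr\'eserv\'e par $\SO$.

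Il reste \`a d\'eduire l'\'egalit\'e $\O=\E$. Pour cela, j'invoque le th\'eor\`eme de dichotomie rappel\'e juste avant le corollaire \ref{rigidite_geofini}: l'adh\'erence de Zariski de $\G$ dans $\ss$ est soit $\ss$ tout entier, soit conjugu\'ee \`a $\SO$. Comme $\G\subset\SO\subsetneq\ss$, son adh\'erence de Zariski est contenue dans $\SO$, et donc \'egale \`a $\SO$. Or le sous-groupe de $\ss$ qui pr\'eserve $\O$ est Zariski-ferm\'e (c'est une condition alg\'ebrique), donc contient $\overline{\G}^{\mathrm{Zar}}=\SO$; autrement dit, $\SO$ tout entier pr\'eserve $\O$. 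Comme $\SO$ n'admet que trois orbites dans $\PP^n$, \`a savoir $\E$, $\partial\E$ et $\PP^n\smallsetminus\overline{\E}$, et que cette derni\`ere n'est contenue dans aucune carte affine alors qu'un ouvert proprement convexe l'est, on conclut $\O=\E$. L'essentiel du travail a d\'ej\`a \'et\'e accompli au corollaire \ref{rigidite_geofini}; l'unique technicit\'e restante, la classification des ouverts proprement convexes $\SO$-invariants de $\PP^n$, se r\'eduit \`a une analyse essentiellement \'el\'ementaire des orbites de $\SO$.
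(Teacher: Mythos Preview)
Ta r\'eduction au corollaire \ref{rigidite_geofini} est correcte et c'est bien l'esprit de la d\'emonstration: volume fini non compact entra\^ine g\'eom\'etriquement fini avec un parabolique, donc $\G$ est (\`a conjugaison pr\`es) un sous-groupe de $\SO$. En revanche, l'argument final pour conclure $\O=\E$ contient une erreur: l'affirmation \og le sous-groupe de $\ss$ qui pr\'eserve $\O$ est Zariski-ferm\'e\fg{} est fausse en g\'en\'eral. Pr\'eserver un ouvert convexe n'est pas une condition alg\'ebrique. Un contre-exemple explicite: si $\O$ est divisible non ellipso\"idal, divis\'e par un groupe $\G$, alors $\G$ est Zariski-dense dans $\ss$ (Benoist); si $\Aut(\O)$ \'etait Zariski-ferm\'e on aurait $\ss\subset\Aut(\O)$, ce qui est absurde puisque $\ss$ agit transitivement sur $\PP^n$. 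Ton argument est donc circulaire: la conclusion $\Aut(\O)\supset\SO$ n'est vraie que si $\O$ est d\'ej\`a l'ellipso\"ide.

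La bonne fa\c con de conclure utilise la caract\'erisation du volume fini par l'ensemble limite (paragraphe 4.4): on a $\LG=\dO$. Or tout \'el\'ement hyperbolique $\g\in\G\subset\SO$ a son point fixe attractif $x_\g^+$ \`a la fois dans $\dO$ (car $\g\in\Aut(\O)$) et dans $\partial\E$ (car $\g\in\SO$). Les points fixes hyperboliques \'etant denses dans $\LG=\dO$, on obtient $\dO\subset\partial\E$. Comme $\dO$ et $\partial\E$ sont deux $(n-1)$-sph\`eres topologiques compactes, l'invariance du domaine donne $\dO=\partial\E$, d'o\`u $\O=\E$. C'est cet argument, et non un argument de cl\^oture de Zariski, qui justifie que le corollaire \ref{rigidite_volfini} soit un \og cas particulier\fg{} imm\'ediat du corollaire \ref{rigidite_geofini}.
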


\subsection{Démonstration du th\'eor\`eme \ref{hamenstadt}}

Cette d\'emonstration, largement inspir\'ee de \cite{MR1279472}, est assez technique. En voici d'abord le schéma, qui repousse la partie la plus d\'elicate, incluse dans le lemme \ref{lemmehamenstadt}, à la suite.

\begin{proof}
Notons pour simplifier $\chi = \chi(Per)$. Cela veut dire que pour tout $\varepsilon>0$ et point $w\in \NW$ périodique, il existe une constante $C_{\varepsilon}(w)$ telle que, pour tout vecteur tangent stable $Z$ en $w$, on ait
$$\|d\ph^t Z\| \leqslant C_{\varepsilon}(w) e^{-(\chi-\varepsilon) t} \|Z\|.$$

Considérons l'ensemble
$$A_{T,\varepsilon} = \left\{w\in \NW\ |\ \forall Z\in E^s(w),\ \frac{\|d\ph^T Z\|}{\|Z\|} \leqslant e^{-(\chi-\varepsilon)T}\right\}.$$
Un point $w$ n'est pas dans $A_{T,\varepsilon}$ s'il existe un vecteur stable en $w$ qui est contracté par $\ph^T$ avec un exposant inférieur à $\chi-\varepsilon$. En particulier, à $\varepsilon$ fixé, pour tout point $w$ périodique, il existe un temps $T(w)$ tel que pour tout $t\geqslant T(w)$, $w\in A_{t,\varepsilon}$. On va montrer qu'en fait l'orbite de tout point $w\in \NW$ sous $\ph^T$ passe ``la plupart du temps'' dans $A_{T,\varepsilon}$ si $T$ est assez grand. Pour cela on pose
$$N_{n,T,\varepsilon} = \frac{1}{n} \sum_{i=0}^{n-1} \mathbf{1}_{A_{T,\varepsilon}}(\ph^{iT}w).$$
$N_{n,T,\varepsilon}$ compte la proportion des $n$ premiers points de l'orbite de $w$ sous $\ph^T$ qui sont dans $A_{T,\varepsilon}$. Le résultat principal est le suivant :

\begin{lemm}\label{lemmehamenstadt}
Pour tous $\varepsilon,\delta >0$, il existe $T=T(\varepsilon,\delta)$ et $N=N(T)$ tels que, pour $n\geqslant N$ et $w\in \NW$,
$$N_{n,T,\varepsilon}(w) \geqslant 1-\delta.$$
\end{lemm}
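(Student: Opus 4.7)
The plan is to argue by contradiction: suppose that for some $\varepsilon_0,\delta_0>0$ the conclusion fails, so for every $T$ we can extract points $w=w_T\in\NW$ and arbitrarily large $n=n_T$ such that at least a $\delta_0$-fraction of the $\ph^T$-iterates $\ph^{iT}w$ lie in the bad set $B_{T,\varepsilon_0}:=\NW\setminus A_{T,\varepsilon_0}$. The overall strategy is to upgrade this to an invariant probability measure on $\NW$ that charges $B_{T,\varepsilon_0}$, and then to contradict the density of periodic measures (Proposition~\ref{densitemesures}).

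First I would show that, for $T$ sufficiently large, $B_{T,\varepsilon_0}$ is contained in a relatively compact subset of $HM$. For this I use the cusp estimates: by Corollary~\ref{coco} one has $\chi=\chi(Per)\leqslant 1$, and by Remark~\ref{deccusp} the cusp decomposition can be chosen so that, for any stable vector $Z$ at a point $w$ whose entire $[0,T]$-orbit stays in one cusp, $\|d\ph^T Z\|\leqslant C^2(\cos\theta)^{-1}e^{-T}\|Z\|$ with $C^2/\cos\theta$ arbitrarily close to $1$. Hence such a $w$ lies in $A_{T,\varepsilon_0}$ as soon as $T$ is large, so $B_{T,\varepsilon_0}$ must project into a compact thickening $K_T=\bigcup_{0\leqslant s\leqslant T}\ph^{-s}(HM|_{K})$ of the compact part $K$ of the convex core.

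Next I would build a flow-invariant probability measure out of the failing data. Form $\nu_n=\frac{1}{n_nT}\int_0^{n_nT}\delta_{\ph^s w_n}\,ds$; these are asymptotically flow-invariant, and the hypothesis translates into $\nu_n(\overline{B_{T,\varepsilon_0}})\geqslant c_0>0$ after the discrete-to-continuous conversion. Because $\overline{B_{T,\varepsilon_0}}\subset K_T$ is compact, this mass is tight, and after extracting a vaguely convergent subsequence we obtain a finite flow-invariant measure $\mu^\star$ with $\mu^\star(\overline{B_{T,\varepsilon_0}})\geqslant c_0$; normalizing gives $\bar\mu\in\M$ (mass not lost to the cusps on the relevant piece). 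Then Proposition~\ref{densitemesures} writes $\bar\mu=\lim_k \eta_k$ as a weak-$\ast$ limit of convex combinations of periodic measures $\eta_k$, all supported on periodic orbits which therefore satisfy $\chi(\gamma)\geqslant \chi$.

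The main obstacle is closing the contradiction in this last step: for a periodic orbit $\gamma$ of period $\tau$ with $\chi(\gamma)\geqslant\chi$, the inequality $\|d\ph^T|_{E^s}\|\leqslant e^{-(\chi-\varepsilon_0)T}$ only holds for $T$ large compared with $\tau$ and with a multiplicative constant depending on the Jordan structure of the Poincar\'e map, so one cannot conclude directly that $\eta_k(B_{T,\varepsilon_0})=0$ uniformly in $k$. To overcome this I would combine the density argument with the closing lemma (appendix of \cite{MR2735038}) applied inside the compact set $K_T$: the failing orbit of $w_n$ has many long returns close to itself while keeping a $\delta_0$-fraction of length-$T$ subsegments in $B_{T,\varepsilon_0}$, and shadowing produces periodic orbits $\gamma_k$ of period $\approx n_kT$ on which the same $\delta_0$-fraction of time-$T$ segments carry contraction worse than $e^{-(\chi-\varepsilon_0)T}$, leading, through the superadditive inequality $\log\|d\ph^{\tau_k}|_{E^s}\|\leqslant\sum_i\log\|d\ph^T|_{E^s(\ph^{iT}p)}\|$ combined with the uniform contraction bound from Theorem~\ref{anosov} on the $A_{T,\varepsilon_0}$-iterates, to $\chi(\gamma_k)<\chi$, contradicting $\chi(\gamma_k)\geqslant\chi(Per)=\chi$. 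Making this shadowing transfer quantitative, and choosing $T$ large enough that the uniform-hyperbolicity contribution on the good iterates dominates the slack on the bad ones, is the technical heart of the argument.
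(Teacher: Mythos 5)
Your first two structural steps are the same as the paper's: the confinement of the bad set to a compact part for $T$ large is exactly how the paper proves that its functions $F_{k,\varepsilon}=\max\{0,(\chi-\varepsilon)-Q_k\}$ have compact support (using Remark \ref{deccusp} in the cusps together with $\chi(Per)\leqslant 1$ from Corollary \ref{coco}), and your empirical-measure construction with portmanteau is the paper's final step (the ``discrete-to-continuous conversion'' you wave at is made precise there by Lemma \ref{interval}, which spreads a bad discrete time into a bad time-interval at the cost of halving $\varepsilon$ --- a fixable detail). The genuine gap is your closing shadowing argument, and it is a matter of inequality direction. To contradict $\chi(\gamma_k)\geqslant\chi(Per)$ you must show that the shadowing periodic orbit contracts \emph{too weakly}, i.e.\ you need a \emph{lower} bound $\|d\ph^{m\tau_k}|_{E^s}\|\geqslant e^{-(\chi-c)\,m\tau_k}$ along iterates of the period. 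The inequality you invoke, $\log\|d\ph^{\tau_k}|_{E^s}\|\leqslant\sum_i\log\|d\ph^{T}|_{E^s(\ph^{iT}p)}\|$, is submultiplicativity of operator norms: it only bounds the full-period contraction \emph{from above}, i.e.\ it can show the orbit contracts at least at some averaged rate, never that it contracts more slowly than $e^{-\chi\tau_k}$. Worse, the per-block witnesses are existential: at each bad block there is \emph{some} stable vector $Z_i$ with $\|d\ph^{T}Z_i\|>e^{-(\chi-\varepsilon_0)T}\|Z_i\|$, but $d\ph^{T}$ need not carry the weakly contracted direction of one block anywhere near that of the next, so the composition can still contract \emph{every} vector at asymptotic rate $\geqslant\chi$ even when a fixed fraction $\delta_0$ of blocks is bad. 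Hence a $\delta_0$-fraction of bad $T$-segments on $\gamma_k$ does not yield $\chi(\gamma_k)<\chi$, and the contradiction does not close. (Using the uniform constant of Theorem \ref{anosov} on the good blocks does not help: that constant is a priori unrelated to $\chi(Per)$, and in any case it feeds into the same upper bound.)

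The paper sidesteps shadowing of derivatives entirely; the uniformization your last step was meant to supply is obtained by a convexity--monotonicity--Dini argument. It sets $Q_k(w)=\inf_{Z\in E^s(w)}\bigl(-2^{-k}\ln(\|d\ph^{2^k}Z\|/\|Z\|)\bigr)$ at dyadic times and observes the subadditivity $Q_{k+1}\geqslant\frac{1}{2}\bigl(Q_k\circ\ph^{2^k}+Q_k\bigr)$, whence $F_{k+1,\varepsilon}\leqslant\frac{1}{2}\bigl(F_{k,\varepsilon}\circ\ph^{2^k}+F_{k,\varepsilon}\bigr)$ and, for every flow-invariant $\eta$, the sequence $\Psi_k(\eta)=\int F_{k,\varepsilon}\,d\eta$ is nonincreasing in $k$. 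On each periodic measure (hence on the convex hull of $\M_{Per}$) these integrals vanish for $k$ large, so by density (Proposition \ref{densitemesures}) and continuity the decreasing limit is $0$ on all of $\M$; Dini's theorem, applied on the compact set $\M(S)$ of measures restricted to the common compact support $S$ of the $F_{k,\varepsilon}$, then makes the convergence uniform in $\eta$, which is Lemma \ref{pouc}: one fixed $T=2^k$ works for \emph{all} invariant measures simultaneously. This is exactly the uniform statement needed so that your limit measure $\bar\mu$, which charges the bad set with definite mass, yields the contradiction --- no quantitative transfer of derivative data along shadowing orbits is ever required. If you want to salvage your route, you would have to replace the submultiplicative bound by a genuine lower bound on norms of long compositions from a density of weakly contracted blocks, which is false in general once $\dim E^s\geqslant 2$; the paper's $Q_k$-monotonicity argument is the correct substitute.
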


Le théorème découle aisément de ce lemme. En effet, pour tous $w\in \NW$ et $Z\in E^s(w)$, on a
$$\begin{array}{rcl}
\displaystyle \frac{\|d\ph^{nT} Z\|}{\|Z\|} & \leqslant & \displaystyle\prod_{i=0}^{n-1} \displaystyle\frac{\|d\ph^{(i+1)T} Z\|}{\|d\ph^{iT}Z\|} \\

& \leqslant & \displaystyle\prod_{0\leqslant i \leqslant n-1,\ \ph^{iT}w\not\in A_{t,\varepsilon}} \displaystyle\frac{\|d\ph^{(i+1)T} Z\|}{\|d\ph^{iT}Z\|}  \displaystyle\prod_{0\leqslant i \leqslant n-1,\ \ph^{iT}w\in A_{t,\varepsilon}}  \displaystyle\frac{\|d\ph^{(i+1)T} Z\|}{\|d\ph^{iT}Z\|}  \\\\

&\leqslant & 1\ .\ e^{-(\chi-\varepsilon)N_{n,T,\varepsilon}(w) nT}.
  \end{array}
 $$
La majoration par $1$ du premier terme d\'ecoule du lemme \ref{decroissance}. Si maintenant, à $\varepsilon$ et $\delta$ fixés, on prend $T \geqslant T(\varepsilon, \delta)$ et $n\geqslant N(T)$, on obtient, pour tout $Z\in E^s$:
$$\displaystyle \frac{\|d\ph^{nT} Z\|}{\|Z\|} \leqslant  e^{-(\chi-\varepsilon)(1-\delta) nT}.$$
En prenant $\delta=\frac{\varepsilon}{\chi-\varepsilon}$, cela donne
$$\displaystyle \frac{\|d\ph^{nT} Z\|}{\|Z\|} \leqslant e^{-(\chi-2\varepsilon)nT}.$$
On en conclut qu'il existe une constante $C$ telle que pour tous $Z\in E^s$ et $t\geqslant 0$,
$$\displaystyle \frac{\|d\ph^{t} Z\|}{\|Z\|} \leqslant C  e^{-(\chi-2\varepsilon)t}.$$
C'est gagné, puisque $\varepsilon$ est arbitrairement petit.
\end{proof}

Nous allons ici nous servir de la proposition \ref{densitemesures} pour attaquer la partie technique: le lemme \ref{lemmehamenstadt}. On définit pour $k\in\N$ et $w\in \NW$,
$$Q_k(w) = \inf \{-\frac{1}{2^k}\ln \frac{\|d\ph^{2^k} Z\|}{\|Z\|},\ Z \in E^s(w)\}.$$

Comme le flot est uniformément hyperbolique, $Q_k$ est $\geqslant 0$ et majoré, indépendamment de $k$. Posons alors
$$F_{k,\varepsilon} = \max \{0, (\chi-\varepsilon)-Q_k\}.$$
Ainsi, $F_{k,\varepsilon} (w)>0$ s'il y a un vecteur stable en $w$ qui est contracté avec un exposant inférieur à $\chi-\varepsilon$. Autrement dit, $F_{k,\varepsilon} (w)>0$ si et seulement si $w\not\in A_{2^k,\varepsilon}$.\\

Les fonctions $Q_k$ et $F_{k,\varepsilon}$ sont toutes deux positives, continues sur $\NW$, et  majorées indépendamment de $k$. Le premier lemme est le suivant:

\begin{lemm}
Soit $\varepsilon>0$. Pour $k$ assez grand, la fonction $F_{k,\varepsilon}$ est à support compact sur $\NW$.
\end{lemm}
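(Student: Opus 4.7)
Le plan est de montrer que, pour $k$ assez grand, on a $Q_k(w) \geqslant \chi - \varepsilon$ dès que $w \in \NW$ se situe hors d'un certain compact; par définition $F_{k,\varepsilon} = \max(0, (\chi-\varepsilon) - Q_k)$, cela entraînera immédiatement que $F_{k,\varepsilon}$ est à support compact. Deux ingrédients fondamentaux guideront l'argument: d'une part, $\chi := \chi(Per) \leqslant 1$ d'après le corollaire \ref{coco}; d'autre part, dans chaque cusp asymptotiquement hyperbolique, la contraction des vecteurs stables par le flot est essentiellement celle du flot géodésique hyperbolique, donc proche du taux $e^{-t}$. Comme $\chi \leqslant 1$, ce taux <<hyperbolique>> sera largement suffisant pour garantir $Q_k \geqslant \chi - \varepsilon$ profondément dans les cusps.

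Plus précisément, je reprendrais l'estimation établie dans la démonstration du lemme \ref{decroissancecusp}: dans chaque cusp $\C_i$, il existe une constante $K_i > 0$ telle que, pour tout $w$ dont l'orbite positive $(\ph^s(w))_{0\leqslant s \leqslant t}$ reste dans $H\C_i$, et tout vecteur stable $Z \in E^s(w)$, on ait $\|d\ph^t Z\| \leqslant K_i e^{-t} \|Z\|$. Pour un tel $w$ au temps $t=2^k$, cela donne immédiatement $Q_k(w) \geqslant 1 - (\ln K_i)/2^k$, qui dépassera $\chi - \varepsilon$ dès que $k$ est assez grand (uniformément sur les cusps, en nombre fini par finitude géométrique).

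Il reste à contrôler les $w \in \NW \cap H\C_i$ dont l'orbite positive \emph{quitte} $H\C_i$ avant le temps $2^k$. Ces points forment un ensemble $\mathcal{B}_i$ qui est l'image par $\ph^{-s}$, $s \in [0, 2^k]$, des vecteurs de $\NW$ situés au-dessus de l'horosphère bord $\mathcal{H}_i$ du cusp et pointant vers l'extérieur. Comme $M$ est géométriquement finie, $\mathcal{H}_i$ est une hypersurface compacte de $M$, et $\mathcal{B}_i$ est donc compact comme image, par le flot sur un intervalle borné, d'un fermé de cet ensemble compact. En combinant, pour $k$ assez grand, $\{F_{k,\varepsilon} \neq 0\}$ est inclus dans le compact $HM|_K \cup \bigcup_{i=1}^l \mathcal{B}_i$, ce qui conclut.

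Le principal point technique du plan me semble être la vérification de la compacité des $\mathcal{B}_i$, qui repose de façon essentielle sur la compacité des horosphères bord des cusps, elle-même conséquence de la finitude géométrique de $M$. Une fois ce point établi, le reste de l'argument consiste essentiellement à combiner la borne universelle $\chi \leqslant 1$ avec le comportement asymptotiquement hyperbolique des cusps, et ne devrait pas présenter de difficulté nouvelle.
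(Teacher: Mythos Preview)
Ton approche est correcte et suit essentiellement celle du papier: on utilise $\chi\leqslant 1$ (corollaire~\ref{coco}) et l'estimation de contraction $\|d\ph^t Z\|\leqslant K_i e^{-t}\|Z\|$ dans chaque cusp (lemme~\ref{decroissancecusp}/remarque~\ref{deccusp}) pour montrer que $F_{k,\varepsilon}$ s'annule sur l'ensemble des $w\in\NW$ dont l'orbite reste dans un cusp pendant le temps $2^k$, puis on constate que le compl\'ementaire de cet ensemble dans $\NW$ est compact. Tu d\'etailles un peu plus la compacit\'e que le papier; attention simplement \`a ce que l'horosph\`ere bord n'est compacte dans $M$ qu'apr\`es intersection avec le c\oe ur convexe, mais comme $\NW$ se projette dans $C(M)$, c'est bien cette intersection (compacte par finitude g\'eom\'etrique) qui compte.
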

\begin{proof}
Rappelons-nous que le coeur convexe $C(M)$ de $M$ se décompose en une partie compacte $K$ et un nombre fini de cusps $\C_i,\ 1\leqslant i \leqslant l$, asymptotiquement hyperboliques. On a montré (voir la remarque \ref{deccusp}) que pour tout point $w\in HM$ tel que $\ph^s(w)\in H\C_i,\ 0\leqslant s\leqslant t$, on avait
$$\|d\ph^t Z^s \| \leqslant M e^{-t}\|Z^s\|,\ Z^s\in E^s(w),$$
pour une certaine constante $M>0$. Ainsi, pour tout $\delta>0$, il existe $T_{\delta}$ tel que pour $t\geqslant T_{\delta}$, on ait, pour tout point $w\in HM$ tel que $\ph^s(w)\in H\C_i,\ 0\leqslant s\leqslant t$,
$$\|d\ph^t Z^s \| \leqslant e^{-(1-\delta)t}\|Z^s\|,\ Z^s\in E^s(w).$$
Comme d'après le corollaire \ref{coco}, on a toujours $\chi\leqslant 1$, on peut prendre $\delta = 1-\frac{1+\chi-\varepsilon}{2}$ et $k$ tel que $2^k\geqslant T_{\delta}$. On obtient alors que, pour tout point $w\in HM$ tel que $\ph^s(w)\in H\C_i,\ 0\leqslant s\leqslant 2^k$,
$$\|d\ph^{2^k} Z^s \| \leqslant e^{-\frac{1+\chi-\varepsilon}{2}2^k}\|Z^s\|,\ Z^s\in E^s(w).$$
En particulier, $F_{k,\varepsilon}=0$ sur l'ensemble
$$\{w\in \NW\ |\ \ph^s(w)\in H\C_i,\ 0\leqslant s\leqslant 2^k\},$$
dont le complémentaire dans $\NW$ est compact. La fonction $F_{k,\varepsilon}$ est donc à support compact pour $k$ assez grand.
\end{proof}

Rappelons qu'on a not\'e $\M$ l'ensemble des mesures de probabilit\'es sur $\NW$ invariantes par le flot.

\begin{lemm}\label{pouc}
Pour tous $\varepsilon, \delta>0$, il existe $k(\varepsilon,\delta)$ tel que pour tout $k\geqslant k(\varepsilon,\delta)$ et tout $\eta\in\M$,
$$\int F_{k,\varepsilon} \ d\eta < \delta.$$
\end{lemm}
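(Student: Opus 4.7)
L'id\'ee, inspir\'ee de \cite{MR1279472}, consiste \`a exploiter la structure sous-additive du cocycle associ\'e \`a $Q_k$. On introduit $\phi_t(w)=\inf_{Z\in E^s(w)\smallsetminus\{0\}}\ln(\|d\ph^t Z\|/\|Z\|)\leqslant 0$, de sorte que $Q_k=-\phi_{2^k}/2^k$. Le fait que $d\ph^s$ soit une bijection entre $E^s(w)$ et $E^s(\ph^s w)$ entra\^ine la sur-additivit\'e $\phi_{s+t}(w)\geqslant\phi_s(w)+\phi_t(\ph^s w)$; en particulier $2Q_{k+1}(w)\leqslant Q_k(w)+Q_k(\ph^{2^k}w)$. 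Par invariance, la suite $k\mapsto\int Q_k\, d\eta$ est alors d\'ecroissante pour toute $\eta\in\M$ et converge vers une limite $\chi(\eta)\geqslant 0$; le th\'eor\`eme ergodique sous-additif de Kingman donne de plus $Q_k(w)\to\chi(\eta)$ $\eta$-presque partout lorsque $\eta$ est ergodique.

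L'\'etape cl\'e sera de montrer $\chi(\eta)\geqslant \chi:=\chi(Per)$ pour toute $\eta\in\M$. Par d\'ecomposition ergodique on se ram\`ene au cas ergodique. Pour $\eta$-presque tout $w$, la r\'ecurrence de Poincar\'e et la convergence de Kingman produisent une suite $t_n\to+\infty$ telle que $\ph^{t_n}(w)$ se rapproche de $w$ et $-\phi_{t_n}(w)/t_n\to\chi(\eta)$. Le lemme de fermeture, rappel\'e avant la proposition \ref{densitemesures}, fournirait alors des orbites p\'eriodiques $\gamma_n$ de p\'eriode voisine de $t_n$ qui ombragent $w$ sur $[0,t_n]$; la continuit\'e uniforme de $d\ph^{t_n}$ sur un voisinage compact, combin\'ee \`a la r\'egularit\'e $\C^1$ du fibr\'e stable, assurerait que $\chi(\gamma_n)$ est arbitrairement proche de $\chi(\eta)$, et l'in\'egalit\'e $\chi(\gamma_n)\geqslant\chi$ fournirait la minoration voulue. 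Coupl\'e \`a la bornitude uniforme $F_{k,\varepsilon}\leqslant\chi-\varepsilon$, ceci donnerait, par convergence domin\'ee, $\int F_{k,\varepsilon}\, d\eta\to 0$ pour chaque $\eta\in\M$.

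Le principal obstacle sera de rendre cette convergence uniforme en $\eta$, l'espace $\M$ n'\'etant pas faiblement-$*$ compact ici ($\NW$ \'etant non compact en pr\'esence de cusps). On proc\'ederait par l'absurde : si $\int F_{k_n,\varepsilon}\, d\eta_n\geqslant\delta$ pour une suite $k_n\to+\infty$ et $\eta_n\in\M$, la compacit\'e du support commun $K_0$ des $F_{k,\varepsilon}$ ($k$ assez grand) forcerait $\eta_n(K_0)\geqslant\delta/\|F_{k,\varepsilon}\|_\infty$, d'o\`u l'extraction d'une sous-probabilit\'e limite $\eta^*$ invariante et non triviale. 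On conclurait alors en combinant la proposition \ref{densitemesures} (densit\'e des combinaisons convexes de mesures p\'eriodiques) et l'observation $\int F_{k,\varepsilon}\, d\mu_\gamma=0$ pour toute orbite p\'eriodique $\gamma$ de p\'eriode $T_\gamma\leqslant\varepsilon\cdot 2^{k-1}$ (cons\'equence de la sur-additivit\'e et des bornes du corollaire \ref{decroitpasvite} le long de l'orbite). C'est cette uniformisation qui constitue la difficult\'e principale du lemme.
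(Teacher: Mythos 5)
Votre strat\'egie contient, au-del\`a d'une coquille initiale, deux lacunes r\'eelles. La coquille d'abord: comme $Q_k(w)=\inf_Z\bigl(-2^{-k}\ln(\|d\ph^{2^k}Z\|/\|Z\|)\bigr)=-2^{-k}\ln\|d\ph^{2^k}|_{E^s(w)}\|$, le cocycle pertinent est le \emph{sup} $\psi_t(w)=\ln\|d\ph^t|_{E^s(w)}\|$ (sous-additif), et non votre inf $\phi_t$; l'in\'egalit\'e correcte est $Q_{k+1}\geqslant\frac{1}{2}(Q_k+Q_k\circ\ph^{2^k})$, de sorte que $k\mapsto\int Q_k\,d\eta$ est \emph{croissante} et $k\mapsto\int F_{k,\varepsilon}\,d\eta$ d\'ecroissante --- c'est la direction qu'utilise l'article, et plusieurs de vos \'etapes s'appuient sur la direction invers\'ee. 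La lacune s\'erieuse est votre \'etape cl\'e $\chi(\eta)\geqslant\chi(Per)$ par ombrage. Le lemme de fermeture ne contr\^ole que des normes en temps fini sur \emph{une} p\'eriode; or le rayon spectral \'etant major\'e par la norme, on a du c\^ot\'e p\'eriodique $-\frac{1}{t_n'}\ln\|d\ph^{t_n'}|_{E^s(w_n')}\|\leqslant\chi(\gamma_n)$, et le transfert par continuit\'e uniforme ne peut donner que $\chi(\eta)\leqslant\chi(\gamma_n)+o(1)$: une majoration de $\chi(\eta)$ par des exposants p\'eriodiques, c'est-\`a-dire la mauvaise direction pour conclure $\chi(\eta)\geqslant\inf_\gamma\chi(\gamma)$. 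Pour aller dans l'autre sens, il faudrait que les orbites de fermeture r\'ealisent leur exposant asymptotique d\`es l'\'echelle d'une p\'eriode, uniform\'ement en $n$, donc un contr\^ole des constantes $C_\varepsilon(\gamma_n)$: c'est pr\'ecis\'ement l'uniformit\'e que le lemme cherche \`a \'etablir (raison pour laquelle Hamenst\"adt, et l'article \`a sa suite, travaillent au niveau des mesures et non par ombrage). La m\^eme uniformit\'e non justifi\'ee r\'eappara\^it dans votre affirmation que $\int F_{k,\varepsilon}\,d\mu_\gamma=0$ d\`es que $T_\gamma\leqslant\varepsilon\cdot 2^{k-1}$: le rang $k$ n\'ecessaire d\'epend de la constante de contraction finie de l'orbite (conditionnement de l'application de p\'eriode), pas seulement de sa p\'eriode; vous ne pouvez pas non plus invoquer la continuit\'e uniforme du cocycle le long de segments qui visitent les cusps, celle-ci n'\'etant acquise que sur les compacts.

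Votre \'etape d'uniformisation repose en outre sur une pr\'emisse fausse: il n'existe pas de support compact commun $K_0$ aux $F_{k,\varepsilon}$ pour $k$ grand. L'argument de support compact donne seulement supp$\,F_{k,\varepsilon}\subset S_k$, o\`u $S_k$ est le compl\'ementaire dans $\NW$ de l'ensemble des points dont l'orbite reste dans un cusp pendant tout $[0,2^k]$; les $S_k$ sont compacts mais \emph{croissent} avec $k$ et \'epuisent une partie non compacte de $\NW$, de sorte que votre extraction d'une limite invariante non triviale de $(\eta_n)$ \'echoue (la masse peut s'\'echapper dans les cusps, et une limite de restrictions n'a aucune raison d'\^etre invariante). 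L'article \'evite tous ces \'ecueils par un argument purement fonctionnel: \`a $\varepsilon$ fix\'e, il consid\`ere les formes lin\'eaires $\Psi_k(\eta)=\int F_{k,\varepsilon}\,d\eta$, uniform\'ement born\'ees, en extrait une limite faible-$*$ $\Psi$, montre $\Psi=0$ sur les mesures p\'eriodiques (seule l'annulation \'eventuelle \emph{orbite par orbite} est requise, sans aucune uniformit\'e), puis sur $\M$ par densit\'e de l'enveloppe convexe de $\M_{Per}$ (proposition \ref{densitemesures}); la monotonie issue de la sur-additivit\'e et le th\'eor\`eme de Dini sur le compact $\M(S)$ fournissent alors l'uniformit\'e en $\eta$. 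Le lemme de fermeture n'y intervient qu'\`a travers la proposition \ref{densitemesures}, jamais par transfert d'exposants; cette substitution, qui est le c\oe ur de la d\'emonstration, manque \`a votre argument.
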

\begin{proof}
Fixons $\varepsilon>0$, et choisissons $k$ assez grand pour que la fonction $F_{k,\varepsilon}$ soit à support compact $S$ sur $\NW$. Notons $\mathfrak{M}$ l'espace vectoriel des mesures de Radon signées sur $\NW$, muni de la topologie de la convergence étroite des mesures. Si $A$ est un compact de $\NW$ et $m>0$, l'ensemble des mesures de $\mathfrak{M}$ à support dans $A$ et de masse totale $\leqslant m$ est compact pour cette topologie. En particulier, en notant, pour $\eta\in\mathfrak{M}$, $\eta_S$ la mesure définie par
$$\eta_S(B) = \eta(S\cap B),\ B\ \text{Borélien de}\ \NW,$$
l'ensemble
$$\M(S) = \{\eta_S,\ \eta\in\M\}$$
est un ensemble compact.\\

On définit une forme linéaire sur $\mathfrak{M}$  par
$$\Psi_k : \eta\in \mathfrak{M} \longmapsto \int F_{k,\varepsilon} \ d\eta \in \R.$$
Remarquons tout de suite que $\Psi_k(\eta)=\Psi_k(\eta_S)$ pour toute mesure $\eta\in \mathfrak{M}$.
La forme linéaire $\Psi_k$ est positive et continue, et surtout la suite $(\Psi_k)$ est uniformément bornée : pour tout $k$,
$$\|\Psi_k\| = \sup_{\|\eta\| \leqslant 1} |\Psi_k(\eta)| \leqslant \|F_{k,\varepsilon} \|_{\infty} < \chi < +\infty.$$

On munit l'espace $\mathfrak{M}'$ des formes linéaires continues sur $\mathfrak{M}$ de la topologie $*$-faible : une suite $(\Phi_n)$ de $\mathfrak{M}'$ converge vers $\Phi$ si pour toute mesure $\eta\in\mathfrak{M}$, $(\Phi_n(\eta))$ converge vers $\Phi(\eta)$. Pour cette topologie, les ensembles bornés sont relativement compacts. En particulier, on peut supposer, quitte à extraire une sous-suite, que la suite $(\Psi_k)$ converge vers $\Psi \in \mathfrak{M}'$.\\

Notons $C(\M_{Per})$ l'enveloppe convexe de l'ensemble $\M_{Per}$ des mesures port\'ees par les orbites p\'eriodiques. Si $\eta\in C(\M_{Per})$, on a, par construction, $\Psi(\eta)=0$. Par densité de $C(\M_{Per})$ dans $\M$ (proposition \ref{densitemesures}) et continuité de $\Psi$, on en déduit que $\Psi=0$ sur $\M$. Maintenant, en \'ecrivant, pour $Z\in E^s$,
$$-\frac{1}{2^{k+1}}\ln \frac{\|d\ph^{2^{k+1}} Z\|}{\|Z\|}=\frac{1}{2} \left(-\frac{1}{2^k}\ln \frac{\|d\ph^{2^{k}} d\ph^{2^k}Z\|}{\|d\ph^{2^k}Z\|}-\frac{1}{2^k} \ln \frac{\|d\ph^{2^k}Z\|}{\|Z\|}\right),$$
on remarque facilement que
$$Q_{k+1} \geqslant \frac{1}{2}(Q_k \circ \ph^{2^k} + Q_k).$$
De l\`a, on obtient par d\'efinition de $F_{k,\varepsilon}$ que
$$F_{k+1,\varepsilon} \leqslant \frac{1}{2}(F_{k,\varepsilon} \circ \ph^{2^k} + F_{k,\varepsilon}).$$
Cela entraîne que, si $\eta\in\M$ est une mesure de probabilit\'e invariante, 
$$\Psi_{k+1}(\eta) =  \int F_{k+1,\varepsilon} \ d\eta \leqslant \frac{1}{2} \left(\int F_{k,\varepsilon} \circ \ph^{2^k} \ d\eta + \int F_{k,\varepsilon} \ d\eta\right) = \int F_{k,\varepsilon}\ d\eta = \Psi_k(\eta).$$
Ainsi, pour tout $\eta\in\M$, la suite $(\Psi_k(\eta))$ est décroissante.\\

Ainsi, la suite de fonctions $\Psi_k : \M \longrightarrow \R$ converge en décroissant vers $0$. L'ensemble $\M(S)$ \'etant compact, le théorème de Dini entraîne que la convergence de $(\Psi_k)$ vers $0$ est uniforme sur $\M(S)$, donc sur $\M$ puisque $\Psi_k(\eta)=\Psi_k(\eta_S)$ pour tout $\eta\in\M$. Autrement dit, pour tout $\delta>0$, il existe $k(\varepsilon, \delta)$ tel que pour $k\geqslant k(\varepsilon, \delta)$ et $\eta\in\M$,
$$\Psi_k(\eta) = \int F_{k,\varepsilon} \ d\eta \leqslant \delta.$$
\end{proof}

Nous aurons enfin besoin de l'observation \'el\'ementaire qui suit:

\begin{lemm}\label{interval}
Soient $\varepsilon>0$ et $T>0$. Si $w\not\in A_{T,\varepsilon}$, alors, pour tout $-\frac{\varepsilon}{8}<\l<\frac{\varepsilon}{8}$, $\ph^{\l T}(w) \not \in A_{T,\varepsilon/2}$.
\end{lemm}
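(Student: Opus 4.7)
The strategy is to transport the ``bad'' stable vector witnessing $w \notin A_{T,\varepsilon}$ along the flow. By hypothesis there exists $Z\in E^s(w)$ with $\|d\ph^T Z\| > e^{-(\chi-\varepsilon)T}\|Z\|$. Setting $Z' = d\ph^{\lambda T}(Z) \in E^s(\ph^{\lambda T}w)$ (the stable distribution being $\ph^t$-invariant), we have
\[
\frac{\|d\ph^T Z'\|}{\|Z'\|} \;=\; \frac{\|d\ph^{(1+\lambda)T} Z\|}{\|d\ph^{\lambda T} Z\|},
\]
and I will show this ratio exceeds $e^{-(\chi-\varepsilon/2)T}$, which gives $\ph^{\lambda T}(w) \notin A_{T,\varepsilon/2}$.

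The main tool is Corollary \ref{decroitpasvite}, which gives $e^{-2|s|}\|Y\| \leqslant \|d\ph^s Y\| \leqslant e^{2|s|}\|Y\|$ for every tangent vector $Y$, together with Lemma \ref{decroissance}, which says that $s\mapsto\|d\ph^s Y\|$ is decreasing whenever $Y$ is stable. Consider first the case $0 \leqslant \lambda < \varepsilon/8$. In the numerator I apply Corollary \ref{decroitpasvite} to the stable vector $d\ph^T Z$ for time $\lambda T \geqslant 0$, giving $\|d\ph^{(1+\lambda)T}Z\| \geqslant e^{-2\lambda T}\|d\ph^T Z\|$; in the denominator I use that stable norms decrease, so $\|d\ph^{\lambda T}Z\| \leqslant \|Z\|$. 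Combining with the hypothesis yields
\[
\frac{\|d\ph^T Z'\|}{\|Z'\|} \;>\; e^{-2\lambda T}\cdot e^{-(\chi-\varepsilon)T} \;=\; e^{-(\chi-\varepsilon+2\lambda)T}.
\]
Since $\lambda < \varepsilon/8 < \varepsilon/4$, we have $\chi-\varepsilon+2\lambda < \chi-\varepsilon/2$, and the bound $e^{-(\chi-\varepsilon/2)T}$ is beaten.

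In the symmetric case $-\varepsilon/8 < \lambda < 0$, the roles swap: $\|d\ph^{(1+\lambda)T}Z\| = \|d\ph^{\lambda T}(d\ph^T Z)\| \geqslant \|d\ph^T Z\|$ by the monotonicity of Lemma \ref{decroissance} applied to the stable vector $d\ph^T Z$ for negative time, while Corollary \ref{decroitpasvite} gives $\|d\ph^{\lambda T}Z\| \leqslant e^{2|\lambda| T}\|Z\| = e^{-2\lambda T}\|Z\|$. The same estimate as above produces
\[
\frac{\|d\ph^T Z'\|}{\|Z'\|} \;>\; \frac{e^{-(\chi-\varepsilon)T}}{e^{-2\lambda T}} \;=\; e^{-(\chi-\varepsilon-2\lambda)T},
\]
and again $|\lambda|<\varepsilon/8$ gives $\chi-\varepsilon-2\lambda < \chi-\varepsilon/2$, concluding.

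There is no real obstacle here: the lemma is a quantitative stability property of the set $A_{T,\varepsilon}$ under small time shifts, and the constant $\varepsilon/8$ is comfortably smaller than the threshold $\varepsilon/4$ that the argument actually requires, giving a clean margin against the universal distortion constant $2$ in Corollary \ref{decroitpasvite}.
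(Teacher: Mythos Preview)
Your proof is correct and follows the same approach as the paper: transport the witnessing stable vector by $d\ph^{\lambda T}$ and control the distortion via Corollary~\ref{decroitpasvite}. Your estimates are in fact slightly sharper than the paper's --- by invoking the monotonicity of Lemma~\ref{decroissance} on one side and the corollary on the other, you pick up a factor $e^{-2|\lambda|T}$ rather than the paper's $e^{-4|\lambda|T}$, which is why your argument would already work for $|\lambda|<\varepsilon/4$, as you note.
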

\begin{proof}
Soit $\varepsilon>0$. En vertu du corollaire \ref{decroitpasvite}, pour tout $t\geqslant 0$ et tout $Z \in E^s$,
$$\frac{\|d\ph^t Z\|}{\|Z\|} \geqslant e^{-2t}.$$
Ainsi, pour tous $T> 0$, $\l\in\R$,
$$\frac{\|d\ph^{(1+\l)T} Z\|}{\|Z\|} = \frac{\|d\ph^{\l T} d\ph^{T}Z\|}{\|d\ph^{T}Z\|} \frac{\|d\ph^{T} Z\|}{\|Z\|}\geqslant e^{-2\l T}\frac{\|d\ph^T Z\|}{\|Z\|}.$$
Par cons\'equent, si $w\not\in A_{T,\varepsilon}$, alors, pour tout $\l\geqslant 0$ et tout $Z \in E^s(\ph^t w)$,
$$
\begin{array}{rl}
\displaystyle \frac{\|d_{\ph^t w} \ph^{T} Z\|}{\|Z\|}
& = \displaystyle\frac{\|d_{w} \ph^{T+t} d_{\ph^t w}\ph^{-t} Z\|}{\|d_{\ph^t w}\ph^{-t} Z\|}\frac{\|d_{\ph^t w}\ph^{-t} Z\|}{\|Z\|}\\\\
& \geqslant e^{-2\l T}  \displaystyle\frac{\|d_{w} \ph^{T} d_{\ph^t w}\ph^{-t} Z\|}{\|d_{\ph^t w}\ph^{-t} Z\|} e^{-2\l T}\\\\ 
& \geqslant e^{-4\l T} e^{-(\chi-\varepsilon)T} \\
& \geqslant e^{-(\chi-\varepsilon + 4\l)T}.
\end{array}$$
Cela implique que, si $w\not\in A_{T,\varepsilon}$, alors $\ph^{\l T}(w) \not \in A_{T,\varepsilon/2}$ pour tout $-\varepsilon/8<\l<\varepsilon/8$.
\end{proof}

On peut maintenant démontrer le lemme \ref{lemmehamenstadt}:

\begin{proof}[Démonstration du lemme \ref{lemmehamenstadt}]
Fixons pour la suite $\varepsilon>0$. Choisissons $\delta >0$ et $T = 2^k$ avec $k \geqslant k(\varepsilon/4,\delta\varepsilon^2/16)$ donné par le lemme \ref{pouc}. On a donc, pour toute mesure de probabilit\'e invariante $\eta$,
$$\int F_{k,\varepsilon/4}\ d\eta < \delta  \frac{\varepsilon^2}{16}.$$

On procède par l'absurde en supposant qu'il existe un point $w\in \NW$ et une suite $(n_j)_{j\in\N}$ telle que $N_{n_j,\varepsilon, T}(w) \leqslant 1-\delta$. D'apr\`es le lemme \ref{interval}, \`a chaque fois qu'un point $\ph^{iT}(w)$, $i\in \N$, de l'orbite de $w$ sous $\ph^T$ n'est pas dans $A_{T,\varepsilon}$, alors $\ph^{t+iT}(w)$ n'est pas dans $A_{T,\varepsilon/2}$ pour tout $-\varepsilon T/8<t<\varepsilon T/8$; et donc $F_{k,\varepsilon/4}(\ph^t w) \geqslant \frac{\varepsilon}{4}$.\\
Or, parmi les points $\ph^{iT}(w), 0\leqslant i \leqslant n_j$ de l'orbite de $w$ sous $\ph^T$, il y en a $N_{n_j,\varepsilon, T}(w) n_j$ qui sont dans $A_{T,\varepsilon}$; cela implique qu'entre les instants $0$ et $n_jT$, l'orbite de $w$ n'est pas dans $A_{T,\varepsilon/2}$ pendant au moins le temps $n_j T\delta \varepsilon /4$. Autrement dit,

$$\begin{array}{rl}
\displaystyle \frac{1}{n_jT} \int_{0}^{n_j T} F_{k,\varepsilon/4}(\ph^t w)\ dt & \geqslant\displaystyle \frac{1}{n_jT} \int_{0}^{n_j T} F_{k,\varepsilon/4}(\ph^t w)(1-\mathbf{1}_{A_{T,\varepsilon/2}}(\ph^t w)) \ dt \\\\
& \geqslant \displaystyle\frac{1}{n_jT}\ \frac{\varepsilon}{4}\  n_j T \delta  \frac{\varepsilon}{4}\\\\
& = \delta \displaystyle \frac{\varepsilon^2}{16}.
  \end{array}$$

On définit la suite de mesures de probabilités $(\eta_j)_{j\in\N}$ par
$$\int f\ d\eta_j = \frac{1}{n_jT} \int_0^{n_jT} f(\ph^t(w))\ dt,\ f\in C(\NW).$$
Toute valeur d'adhérence $\eta$ de la suite $(\eta_j)$ vérifie
$$\int F_{k,\varepsilon/4}\ d\eta \geqslant \delta  \frac{\varepsilon^2}{16}.$$
Or, une telle mesure $\eta$ est nécessairement invariante par le flot, et cela contredit le choix de $k$.
\end{proof}

\section{Quelques exemples}\label{sectionex}

\subsection{Un exemple o\`u le flot g\'eod\'esique n'est pas uniform\'ement hyperbolique}\label{sectionex1}

On va construire un exemple ``d\'eg\'en\'er\'e'' o\`u le flot g\'eod\'esique a un exposant de Lyapunov nul. Pour cela, on consid\`ere un certain groupe fuchsien $\G$ qui contient un parabolique, et on le fait agir de fa\c con canonique sur $\HH^3$; on construit alors un nouvel ouvert convexe $\G$-invariant dont le bord n'est de classe $\C^{1+\varepsilon}$ en aucun point parabolique, pour tout $\varepsilon>0$. D'apr\`es la proposition \ref{regularite}, toute orbite ultimement incluse dans le cusp aura un exposant de Lyapunov nul.\\ 
Le groupe $\G$ ici pr\'esent\'e n'est donc pas irr\'eductible sur $\PP^3$ mais il est sans doute possible de le d\'eformer par pliage, tout en pr\'eservant les propri\'et\'es de r\'egularit\'e que l'on d\'esirait.\\
Le r\'esultat s'\'enonce ainsi:

\begin{prop}\label{nonanosov}
Il existe un ouvert proprement convexe $\O$ de $\PP^3$, strictement convexe à bord $\C^1$, et un sous-groupe discret $\G$ de $\Aut(\O)$ tels que 
\begin{itemize}
 \item l'action de $\G$ sur $\O$ soit géométriquement finie mais non convexe-cocompacte;
 \item le bord $\dO$ de $\O$ n'est pas de classe $\C^{1+\varepsilon}$ aux points paraboliques de $\LG$, pour tout $\varepsilon > 0$.
\end{itemize}
En particulier, le flot géodésique sur la variété quotient $\Quo$ a un exposant de Lyapunov nul; il n'est donc pas uniformément hyperbolique.
\end{prop}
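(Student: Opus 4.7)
The plan is to construct the example as a controlled local deformation of an ellipsoid. Pick a non-elementary, geometrically finite discrete subgroup $\G\subset\mathrm{PSL}_2(\R)$ containing a parabolic element, and embed it into $\so{3}\subset\s{4}$ via the canonical inclusion $\Isom(\HH^2)\hookrightarrow\Isom(\HH^3)$. The ellipsoid $\E\subset\PP^3$ corresponding to the projective model of $\HH^3$ is $\G$-invariant, and $\G$ also preserves a projective $2$-plane $P$ whose intersection with $\overline{\E}$ is a $2$-dimensional ellipsoid containing the limit set $\LG$. Fix a parabolic fixed point $p\in\LG$ and set $\Q=\Stab_{\G}(p)$; as a subgroup of $\so{3}$, $\Q$ is parabolic of rank $1$, so in horospherical coordinates on $\partial\E\smallsetminus\{p\}\simeq\R^2$ it acts by translation along a direction $u$ tangent to $P$, while the transverse direction $v$ is pointwise fixed. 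This leaves room to modify $\partial\E$ at $p$ in the $v$-direction while retaining $\Q$-invariance.

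The construction then runs as follows. Fix a horoball $H$ of $\E$ based at $p$ small enough that its $\G$-translates are pairwise disjoint, which is possible by geometric finiteness. In an affine chart adapted to $p$ in which the coordinates $(u,v)$ are well-defined, parametrize $\partial\E\cap H$ locally as the graph $z=u^2+v^2$, and replace it by $z=u^2+h(v)$, where $h\colon\R\to\R_{\geqslant 0}$ is a smooth, even, strictly convex function equal to $v^2$ outside a small neighbourhood of $0$, with $h'(0)=0$ and $h(v)\geqslant |v|/|\log|v||$ for $|v|$ small. The replacement is $\Q$-invariant, so translating by $\G/\Q$ produces a local modification at every point of the orbit of $p$, and gluing with the unchanged part of $\partial\E$ outside $\G\cdot H$ defines a globally $\G$-invariant set $\O\subset\E$. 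Disjointness of the translated horoballs prevents overlap of perturbations, the equality $h(v)=v^2$ outside a small interval ensures $\C^1$-gluing with the rest of $\partial\E$, and convexity of $h$ combined with strict convexity of $\E$ yields strict convexity of $\O$ with $\C^1$ boundary.

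The verification is then immediate. By construction, $\G\subset\Aut(\O)$; the action remains geometrically finite with the same limit set $\LG$, since the perturbation is compactly supported inside each horoball and meets $\LG$ only at parabolic fixed points; and it is not convex-cocompact because $\G$ contains a parabolic element. The lower bound on $h$ gives $h(v)/|v|^{1+\varepsilon}\to+\infty$ as $v\to 0$ for every $\varepsilon>0$, so the graph $z=u^2+h(v)$ fails to be $\C^{1+\varepsilon}$ at the origin for every $\varepsilon>0$, and therefore $\partial\O$ fails to be $\C^{1+\varepsilon}$ at every point of the $\G$-orbit of $p$. Proposition \ref{regularite} then gives $\chi(w)=0$ for any $w\in H\O$ with $x^+(w)\in\G\cdot p$. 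Picking such a $w$ with $x^-(w)$ a conical limit point, its projection to $HM$ lies in the non-wandering set and has a zero Lyapunov exponent, so the geodesic flow is not uniformly hyperbolic.

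The main obstacle is to make the $\G$-equivariant local modification glue into a globally strictly convex set with $\C^1$ boundary. This is controlled by choosing the horoball $H$ small (using geometric finiteness), by switching off the deformation $h-v^2$ strictly inside $H$ so that $\partial\O$ already coincides with $\partial\E$ near the horosphere, and by taking $h$ as a sufficiently small convex perturbation of $v^2$ so that local strict convexity is preserved when patched onto the unchanged ellipsoidal part.
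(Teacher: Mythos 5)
Your equivariance computation is correct in the right chart, but the localization claim that everything else rests on is false. In the affine chart in which $\E$ appears as the region $\{z>u^2+v^2\}$ and the parabolic point $p$ is at infinity, $\Q$ does act by $(u,v,z)\mapsto(u+a,v,z+2au+a^2)$, so every graph $z=u^2+h(v)$ is $\Q$-invariant; but in that chart \emph{near $p$ means $|(u,v)|$ large}, so the support of your perturbation, the strip $\{|v|<\epsilon\}$ where $h\neq v^2$, is not contained in any neighbourhood of $p$: it is an infinite $\Q$-invariant slab that sweeps across the whole of $\partial\E$, and its closure in $\PP^3$ contains the \emph{entire} limit circle $\LG$ (the closure of $\{v=0\}$). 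Conversely, in a chart where $p$ is at the origin and $\partial\E\cap H$ is a genuine local graph over the tangent plane, $\Q$ no longer acts by translations in $(u,v)$, and $z=u^2+h(v)$ is not $\Q$-invariant. This cannot be repaired by shrinking the horoball: any $\Q$-invariant set containing a relative neighbourhood of $p$ in $\partial\E$ must contain the full $\Q$-orbits of points approaching $p$ along the translation direction, and these orbit circles pass through the far side of $\partial\E$; so the support of any $\Q$-invariant perturbation affecting a whole neighbourhood of $p$ contains a strip around all of $\LG$. Hence the $\G$-translates of your perturbations overlap in open sets no matter how small $H$ is, the assertion that ``disjointness of the translated horoballs prevents overlap of perturbations'' fails, and your set $\O$ is simply not defined.

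Even granting disjoint supports, gluing locally convex perturbed pieces into a globally strictly convex $\G$-invariant body with $\C^1$ boundary is the actual technical content of the statement, and your last paragraph assumes it rather than proves it. This is precisely where the paper spends its effort: it takes the punctured-torus group with an ideal square fundamental domain admitting a dihedral symmetry group of order $8$, places strictly convex, $\C^1$ but nowhere-$\C^{1+\varepsilon}$-at-$p,q$ curves in the faces of a fundamental cone $D$, sweeps them by the \emph{hyperbolic} one-parameter groups $\g^{\R}$, $\delta^{\R}$ to obtain invariant convex surfaces, assembles them (using the symmetry) into a dome $S$, takes the convex hull $D_0$, proves that $\g(D_0)\cup D_0$ is convex and propagates convexity of $\bigcup_{\g\in\G}\g(D_0)$ by induction on word length, and finally smooths along the gluing curves to restore $\C^1$ without touching the parabolic points. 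Note that there, too, the invariant surfaces through the parabolic points are global orbit-surfaces---the analogue of your strips---which is exactly why the paper organizes equivariance through a fundamental domain rather than by local patching at each cusp. Your final dynamical step (Proposition \ref{regularite}, then choosing $x^-$ conical so the orbit lies in the nonwandering set and carries a zero Lyapunov exponent) agrees with the paper, but the construction of $\O$ itself has a genuine gap.
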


\begin{proof}
Soient $\Sigma$ le  tore à 1 trou et $\G$ son groupe fondamental; $\G$ est un groupe libre à $2$ générateurs. On munit $\Sigma$ d'une structure hyperbolique de volume fini de la façon suivante: on se donne un carré idéal $P$ de $\HH^2$ et on identifie les côtés opposés de ce carré à l'aide de deux éléments hyperboliques $\g$ et $\delta$. Ainsi le domaine fondamental pour l'action de $\G$ sur $\HH^2$ est le carré idéal en question. Pour simplifier la discussion, on choisit ce carré idéal de telle fa\c con qu'il ait un groupe diédral d'ordre 8 de symétrie.\\

\`A présent, on plonge $\G$ dans $\so{3}$ de façon canonique. Ainsi, $\G$ agit sur l'espace hyperbolique $\HH^3$ de dimension $3$. L'ensemble limite de $\G$ sur $\HH^3$ est un cercle, intersection d'un plan projectif $\Pi$ et de $\partial\HH^3$.\\
Le groupe $\G$ préserve le plan $\Pi$ et le point $M$ intersection des hyperplans tangents à $\partial \HH^3$ en $\LG= \Pi \cap \partial \HH^3$. L'ouvert convexe $\O_{\infty}$ obtenu en prenant la réunion des deux cônes de bases $\Pi \cap \HH^3$ et de sommet $M$ est pr\'eserv\'e par $\G$ et par $\so{2}$. Un domaine fondamental $D$ pour l'action de $\G$ sur $\O_{\infty}$ est la réunion des deux cônes de base $P$ et de sommet $M$. Nous allons construire une partie convexe $D_1$ de $D$ telle que la réunion $\underset{\g \in \G}{\bigcup} \g(D_1)$ nous donne l'ouvert convexe $\O$ d\'esir\'e.

\begin{center}
\begin{figure}[h!]
  \centering
\includegraphics[width=5cm]{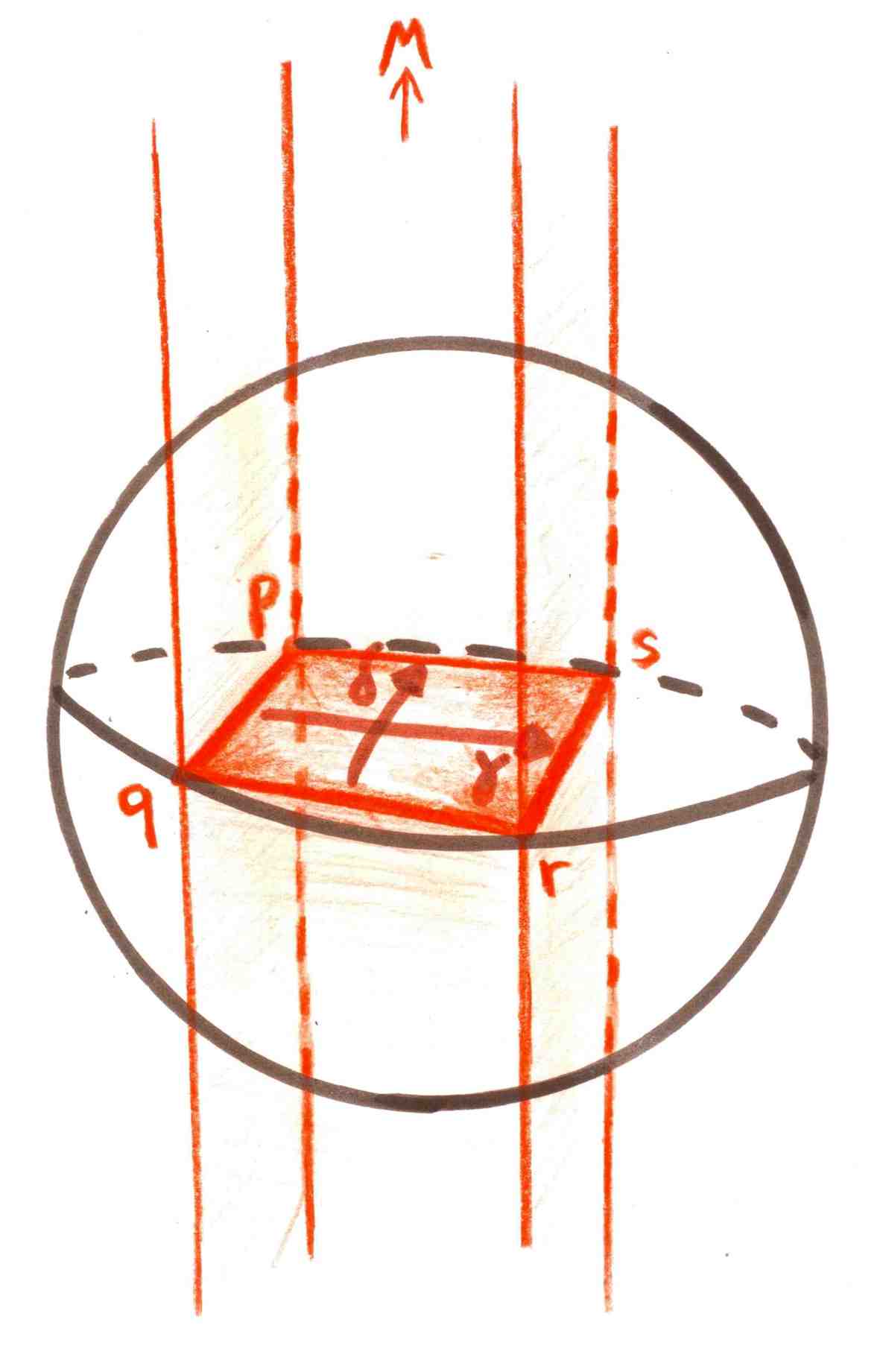}
\caption{Domaines fondamentaux}
\label{domaines}
\end{figure}
\end{center}

On note $p,q,r,s$ les sommets de $P$, $\g$ l'élément qui identifie $[pq]$ avec $[sr]$ et $\delta$ celui qui identifie $[rq]$ avec $[sp]$ (voir la figure \ref{dome}); le groupe $\G$ est engendr\'e par ces deux \'el\'ements $\g$ et $\delta$. On appelle $\Pi_{p,q}$ (resp. $\Pi_{q,s}$, ...) le plan contenant $p,q$ (resp. $q,s$, ...) et $M$.\\
On commence par s'intéresser à l'intersection de $\O_{\infty}$ avec le plan $\Pi_{p,q}$ engendré par $p,q$ et $M$.
On choisit une premi\`ere courbe $\C_{p,q}$ qui joint $p$ \`a $q$ puis $q$ \`a $p$ et qui 
\begin{itemize}
 \item est incluse dans la face de $D$ contenant $[pq]$, c'est-\`a-dire $\Pi_{p,q}\cap  \overline{D}$;
 \item est strictement convexe et de classe $\C^1$;
 \item n'est pas de classe $\C^{1+\varepsilon}$ en $p$ et $q$ pour aucun $\varepsilon > 0$.
\end{itemize}
On peut remarquer au passage que cette courbe est, par la derni\`ere propri\'et\'e, incluse dans $\HH^3$ au voisinage de $p$ et $q$. On utilise à présent les symétries du groupe $\G$ (c'est-\`a-dire celles de $P$) pour copier cette courbe et obtenir des courbes $\C_{p,s}$, $\C_{r,q}$ et $\C_{s,r}$ joignant $\{p,s\}$, $\{r,q\}$ et $\{s,r\}$.\\
Nous allons ``relier'' ces courbes pour construire le bord de $D_1$ (qu'on appellera le d\^ome). Pour simplifier la discussion, il est bon de remarquer que cet ensemble de courbes admet le même groupe de symétrie que le carré idéal de départ, à savoir un groupe diédral d'ordre 8.\\

Soit $\g^{\R}$ (resp. $\delta^{\R}$) le groupe à 1-paramètre engendré par $\g$ (resp. $\delta$). Les orbites d'un point de $\O_{\infty} \smallsetminus \Pi$ sous $\g^{\R}$ sont des demi-ellipses d'extrémités $\g^-$ et $\g^+$; de m\^eme pour $\delta^{\R}$.\\
Le domaine fondamental $D$ privé de $\Pi_{p,r} \cup \Pi_{q,s}$ possède 4 composantes connexes $D_{\delta^-}$, $D_{\g^-}$, $D_{\delta^+}$ et $D_{\g^+}$, naturellement étiquetées par $\delta^-,\g^-,\delta^+,\g^+$. L'orbite de $\C_{p,q}$ sous $\g^{\R}$ est une surface convexe\footnote{On dit qu'une hypersurface de $\PP^n$ est \emph{convexe} si elle est une partie du bord d'un convexe de $\PP^n$.} $S_{p,q}$, qui contient $\C_{s,r}$. De même, en considérant l'orbite de $\C_{r,q}$ sous $\delta^{\R}$, on obtient une surface convexe $S_{r,q}$. Soit $S$ la surface obtenue comme la réunion 
$$S =(D_{\delta^-} \cap S_{r,q}) \cup ( D_{\g^-} \cap S_{p,q}) \cup (D_{\delta^+} \cap S_{r,q})  \cup (D_{\g^+} \cap S_{p,q}).$$ 
La surface $S$ possède encore un groupe diédral d'ordre 8 de symétries.

\begin{center}
\begin{figure}[h!]
  \centering
\includegraphics[width=6cm]{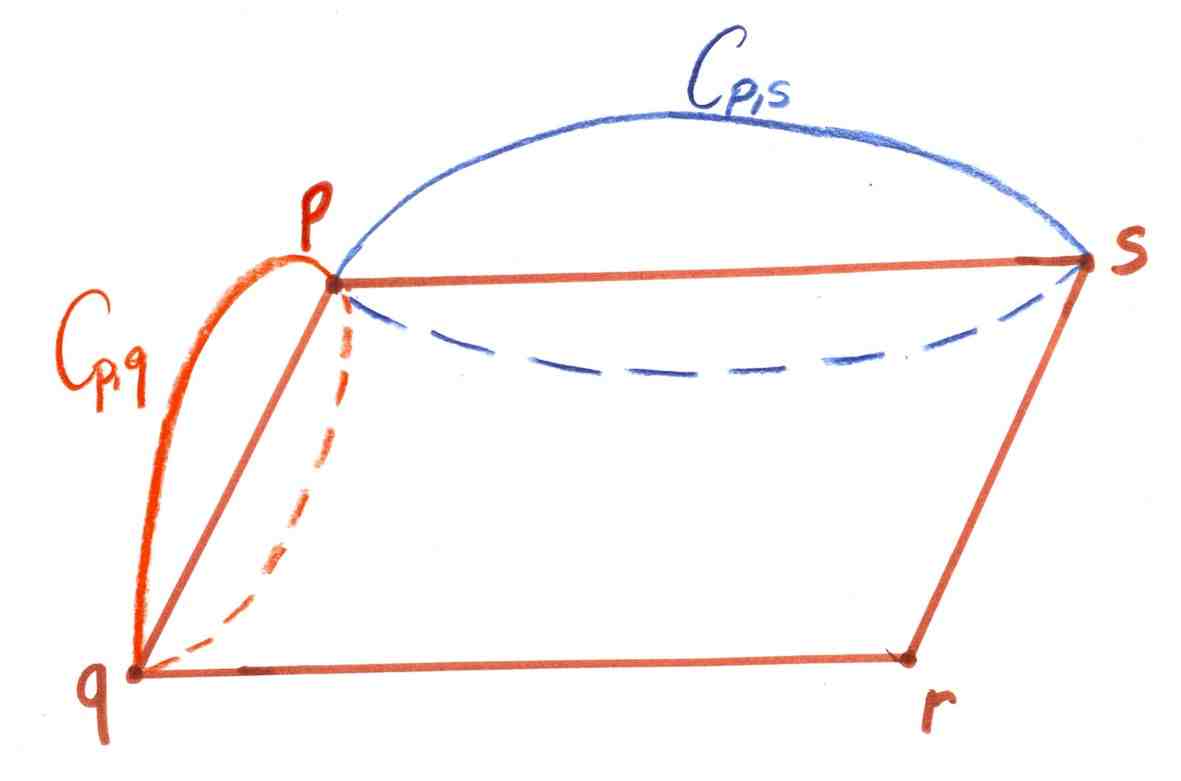}\ \includegraphics[width=6cm]{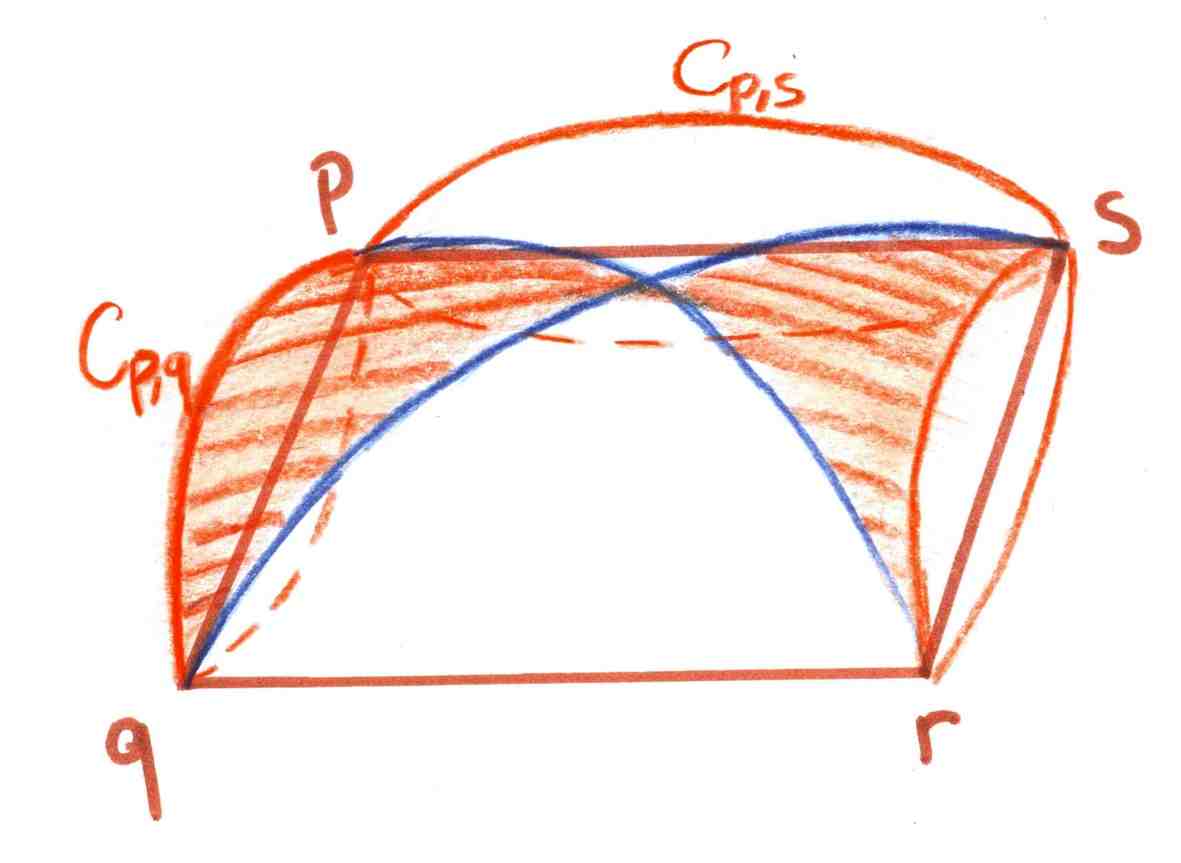}
\caption{Les courbes et le d\^ome}
\label{dome}
\end{figure}
\end{center}

Appelons $D_0$ l'adh\'erence dans $D$ de l'enveloppe convexe de la surface $S$. L'ensemble $D_0$ est convexe, inclus dans $\HH^3$. La réunion $\O_0= \underset{\g \in \G}{\bigcup} \g(D_0)$ est un ouvert proprement convexe. En effet, $\g(D_0) \cup D_0$ est encore convexe puisque, par construction, les surfaces $S_{p,q}$ et $\g(S_{p,q})$ se recollent pour donner une surface convexe; bien sûr, le m\^eme chose est valable pour $\delta$; le résultat pour $\G$ s'en déduit à l'aide d'une récurrence sur la longueur d'un élément pour la métrique des mots de $\G$. 

L'ouvert proprement convexe $\O_0$ est strictement convexe mais son bord n'est pas de classe $\C^1$ a priori. En dehors des courbes $\C_{p,r} = \Pi_{p,r} \cup \dO_0$ et $\C_{q,s} = \Pi_{q,s} \cup \dO_0$ et de leurs images par $\G$, la surface $\dO_0$ est de classe $\C^1$. En lissant $\O_0$ le long de ces courbes, on obtient qu'il existe un voisinage $\V$ de $\C_{p,r} \cup \C_{q,s}$ dans $\dO_0$ et un convexe $D_1$ tel que $\partial D_0 \smallsetminus \V= \partial D_1 \smallsetminus \V$. L'ensemble $\O_1= \underset{\g \in \G}{\bigcup} \g(D_1)$ est alors un ouvert proprement convexe, strictement convexe, à bord $\C^1$ et $\G$-invariant, mais son bord n'est pas de classe $\C^{1+\varepsilon}$ aux points paraboliques de $\LG$, pour tout $\varepsilon > 0$.
\end{proof}

\subsection{Repr\'esentation sph\'erique de $\s2$ dans $\s5$}

Dans \cite{Crampon:2012fk}, on avait introduit deux notions de finitude g\'eom\'etrique, la finitude g\'eom\'etrique sur $\O$ et sur $\dO$. Dans l'article pr\'esent, nous n'avons \'etudi\'e que la premi\`ere.\\
Comme exemple d'action g\'eom\'etriquement finie sur $\dO$ mais pas sur $\O$, on avait donn\'e la repr\'esentation sph\'erique de $\s2$ dans $\s5$: il s'agit de l'action de $\s2$ sur l'espace $V_4=\R_4[X,Y]$ des polyn\^omes homog\`enes de degr\'e $4$ en deux variables, sur lequel $\s2$ agit par coordonn\'ees. Notons $G<\s5$ l'image de $\s2$ par cette repr\'esentation.\\
Rappelons ce qui a \'et\'e vu dans \cite{Crampon:2012fk}. Il existe un point $x \in \PP^4$ dont l'orbite sous $G$ est la courbe de Veronese, dont une \'equation est donn\'ee par:
$$[t:s]\in\PP^1 \rightarrow [t^4:t^3s:t^2s^2: ts^{3}:s^4]\in\PP^4.$$
L'ensemble des ouverts proprement convexes pr\'eserv\'es par cette repr\'esentation de $\s2$ dans $\s5$ forme une famille croissante $\{\O_r,\ 0\leqslant r\leqslant \infty\}$. L'ouvert convexe $\O_0$ est l'enveloppe convexe de la courbe de Veronese, et $\O_{\infty}$ est son dual; ces deux convexes ne sont ni strictement convexes ni \`a bord $\C^1$. Les ouverts convexes $\O_r,\ 0<r<\infty$ sont les $r$-voisinages de $\O_0$ dans la g\'eom\'etrie de Hilbert $(\O_{\infty},d_{\O_{\infty}})$. On a vu dans \cite{Crampon:2012fk} que ces convexes \'etaient strictement convexes et \`a bord $\C^1$. En fait, on peut voir en proc\'edant comme dans la partie \ref{sectionregularite} que:

\begin{prop}\label{reg_spherique}
Pour $0<r<\infty$, le bord de l'ouvert convexe $\O_r$ est de classe $\C^{4/3}$ et $4$-convexe.
\end{prop}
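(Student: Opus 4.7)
La stratégie consistera en un calcul direct d'exposants en un point de la courbe de Veronese $\mathcal V$, dans l'esprit de la démonstration de la proposition \ref{regularite}, complété par une extension via l'action du groupe $G$.

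On commencera par fixer un élément hyperbolique $g\in\s2$ de valeurs propres $\mu,\mu^{-1}$ avec $\mu>1$. Son image $\gamma\in G\subset\s5$ par la représentation sphérique est diagonale, dans la base $X^4,X^3Y,X^2Y^2,XY^3,Y^4$ de $V_4$, de valeurs propres $\mu^4,\mu^2,1,\mu^{-2},\mu^{-4}$. Son point fixe attractif $x_\gamma^+\in\PP^4$, classe du monôme $X^4$, appartient à $\mathcal V$; comme $\mathcal V\subset\partial\O_0\cap\partial\O_\infty$, chaque point de $\mathcal V$ est à distance de Hilbert infinie de $\O_0$ dans $\O_\infty$, d'où $\mathcal V\subset\partial\O_r$ pour tout $0<r<\infty$. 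Le lemme \ref{exposanthyperbolique} appliqué à $\gamma$ fournira $\chi(\gamma)=2\bigl(1-6/8\bigr)=1/2$, ce qui correspond, via la proposition \ref{regularite}, à la régularité $\mathcal C^{1+1/3}=\mathcal C^{4/3}$ en $x_\gamma^+$.

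On se placera ensuite dans une carte affine centrée en $x_\gamma^+$ qui diagonalise $\gamma$. Localement, $\partial\O_r$ sera le graphe d'une fonction convexe $f:\R^3\to\R$ vérifiant $f(0)=0$ et $d_0f=0$, et l'invariance $\gamma\cdot\partial\O_r=\partial\O_r$ se traduira par l'équation fonctionnelle
$$f(\mu^{-2}y_1,\mu^{-4}y_2,\mu^{-6}y_3)=\mu^{-8}f(y_1,y_2,y_3),$$
d'où, par itération, $f(\mu^{-2k}y_1,\mu^{-4k}y_2,\mu^{-6k}y_3)=\mu^{-8k}f(y)$ pour tout $k\in\N$. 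On introduira les boîtes anisotropes $E_k=\{(y_1,y_2,y_3):|y_i|\leq\delta\mu^{-2ik},\ i=1,2,3\}$ pour un $\delta>0$ assez petit, la mise à l'échelle $y_i\mapsto\mu^{-2ik}y_i$ envoyant $E_0$ sur $E_k$. Puisque $\partial\O_r$ est $\mathcal C^1$ et strictement convexe en $x_\gamma^+$, la fonction $f$ sera positive hors de l'origine et continue sur $E_0$, donc encadrée par deux constantes $c,C>0$ sur le compact $E_0\smallsetminus E_1$. Pour $y\in E_k\smallsetminus E_{k+1}$, l'équation fonctionnelle donnera $c\mu^{-8k}\leq f(y)\leq C\mu^{-8k}$; combiné aux encadrements élémentaires $\delta\mu^{-6(k+1)}\leq|y|\leq\sqrt 3\,\delta\mu^{-2k}$, ceci fournira
$$c'|y|^4\leq f(y)\leq C'|y|^{4/3},$$
c'est-à-dire simultanément la $4$-convexité et la régularité $\mathcal C^{4/3}$ de $\partial\O_r$ en $x_\gamma^+$.

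Enfin, $G$ agit transitivement sur $\mathcal V$ par transformations projectives, qui préservent les classes $\mathcal C^{1+\varepsilon}$ et la $\beta$-convexité; les bornes précédentes vaudront donc en tout point de $\mathcal V$. Pour $y\in\partial\O_r\smallsetminus\mathcal V$, l'orbite $G\cdot y$ est de dimension trois, donc ouverte dans $\partial\O_r$; un argument analogue d'équation fonctionnelle mené le long de cette orbite montrera que la régularité et la $\beta$-convexité y sont au moins aussi bonnes, les directions dégénérées associées à $\mathcal V$ n'apparaissant plus. L'obstacle principal sera précisément de tirer proprement les bornes bilatérales sur $f$ de l'équation fonctionnelle anisotrope --- ce qui nécessitera un usage fin de la convexité stricte et de la géométrie des boîtes $E_k$ --- et de contrôler la régularité hors de $\mathcal V$ via l'analyse des orbites ouvertes de $G$ sur $\partial\O_r$.
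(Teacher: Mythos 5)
Votre calcul au point attractif est exactement celui du papier: m\^emes valeurs propres $\mu^4,\mu^2,1,\mu^{-2},\mu^{-4}$, m\^eme $\chi(\gamma)=2(1-6/8)=1/2$ via le lemme \ref{exposanthyperbolique}, m\^eme exposant $\varepsilon=1/3$ via la proposition \ref{regularite}. En revanche, l\`a o\`u le papier obtient la $4$-convexit\'e par dualit\'e (le dual de $\O_r$ est un $\O_{r'}$ de la m\^eme famille, et $\dO$ est $\beta$-convexe en $x$ si et seulement si $\dO^*$ est $\C^{1+\varepsilon}$ en $x^*$ avec $\frac{1}{1+\varepsilon}+\frac{1}{\beta}=1$), et se contente d'\emph{affirmer} que la valeur $\varepsilon(g)$ est atteinte, votre argument d'auto-similarit\'e donne directement l'encadrement bilat\`ere $c'|y|^4\leqslant f(y)\leqslant C'|y|^{4/3}$: j'ai v\'erifi\'e la comptabilit\'e (la mise \`a l'\'echelle envoie bien $E_k$ sur $E_{k+1}$, donc $f\asymp\mu^{-8k}$ sur $E_k\smallsetminus E_{k+1}$, et les encadrements $\delta\mu^{-6(k+1)}\leqslant |y|\leqslant \sqrt{3}\,\delta\mu^{-2k}$ donnent les deux exposants $4$ et $4/3$), et c'est une route plus autonome qui \'etablit en m\^eme temps l'optimalit\'e de la r\'egularit\'e et la convexit\'e aux points de $\mathcal{V}$. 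Un point \`a justifier toutefois: que l'hyperplan tangent en $x_\gamma^+$ soit bien l'hyperplan invariant transverse \`a la direction propre de poids $\mu^{-8}$. Par unicit\'e ($\C^1$) il est $\gamma$-invariant et contient la tangente \`a $\mathcal{V}$, ce qui laisse trois candidats; les deux autres se r\'efutent par le m\^eme argument d'\'echelle, car ils donneraient \`a $f$ un exposant de H\"older $<1$ le long d'une direction contract\'ee, contredisant la r\'egularit\'e $\C^1$.

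Le vrai trou est hors de l'ensemble limite. Votre \og argument analogue d'\'equation fonctionnelle men\'e le long de l'orbite \fg{} ne peut pas exister tel quel: l'action de $G$ sur $\dO_r\smallsetminus\L_G$ est \emph{libre} (c'est pr\'ecis\'ement ce que le papier tire de \cite{Crampon:2012fk}, section 10), donc aucun \'el\'ement non trivial ne fixe un tel point du bord, et il n'y a aucune dynamique contractante centr\'ee en ce point \`a it\'erer --- la m\'ethode de l'\'equation fonctionnelle est sp\'ecifique aux points fixes. La transitivit\'e de l'action donne bien que $\dO_r\smallsetminus\L_G$ est une orbite lisse, ce qui r\`egle la r\'egularit\'e $\C^{4/3}$ en ces points; mais la minoration de $4$-convexit\'e n'est pas automatique pour une hypersurface convexe simplement lisse (le hessien pourrait a priori y d\'eg\'en\'erer, et \og les directions d\'eg\'en\'er\'ees n'apparaissant plus \fg{} n'est pas une preuve). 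Le papier ferme ce point par dualit\'e: $\O_r^*=\O_{r'}$ a un bord lisse hors de l'ensemble limite, ce qui se traduit par la $2$-convexit\'e (donc la $4$-convexit\'e locale) de $\dO_r\smallsetminus\L_G$. Il vous faut substituer cet argument de dualit\'e --- ou un calcul direct de la seconde forme fondamentale sur l'orbite homog\`ene --- \`a votre derni\`ere \'etape.
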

\begin{proof}
L'ensemble limite $\L_G$ de l'action de $G$ sur $\O_r$ est dans tous les cas la courbe de Veronese. Hors de l'ensemble limite, le bord de $\O_r$ est lisse car l'action de $G$ sur $\dO_r\smallsetminus\L_G$ est libre et transitive (voir \cite{Crampon:2012fk}, section 10); $\dO_r\smallsetminus\L_G$ s'identifie donc \`a une orbite de $G$. De plus, le dual de $\O_r$ est un certain $\O_{r'}$. Le fait que $\dO_{r'}\smallsetminus\L_G$ soit lisse implique que $\dO_r\smallsetminus\L_G$ est $2$-convexe. Autrement dit, le bord de $\O_r$ est lisse \`a hessien d\'efini positif hors de l'ensemble limite.\\
La courbe de Veronese $\L_G$ est une courbe alg\'ebrique lisse sur laquelle $G$ agit transitivement. La r\'egularit\'e $\dO_r$ est donc la m\^eme en tout point de $\L_G$. Or, un point $x$ de $\L_G$ est un point fixe d'un certain \'el\'ement hyperbolique $g\in G$. La r\'egularit\'e de $\dO_r$ en $x$ se lit sur les valeurs propres de $g$. Il n'est pas dur de voir que si $g$ est l'image par la repr\'esentation d'un \'el\'ement hyperbolique $\g$ de $\s2$. Si $\l,\l^{-1}$ sont les valeurs propres de $\g$, avec $\l>1$, alors celles de $g$ sont $\l^4,\l^2,1,\l^{-2},\l^{-4}$. D'apr\`es la proposition \ref{regularite} et le lemme \ref{exposanthyperbolique}, $\dO_r$ est $\C^{1+\varepsilon}$ en $x$ pour tout $\varepsilon<\varepsilon(g)$ avec
$$\varepsilon(g) = \frac{\ln \l^{-2} - \ln \l^{-4}}{\ln \l^{4} - \ln \l^{-2}} = \frac{1}{3}.$$
En fait, dans le cas d'un point fixe hyperbolique, on peut \^etre plus pr\'ecis dans la proposition \ref{regularite} et voir que la valeur $\varepsilon(g)$ est atteinte, autrement dit que le bord est $\C^{1+\varepsilon(g)}$ en $x$ (et pas plus). On obtient donc que $\dO_r$ est $\C^{4/3}$ et, par dualit\'e, que $\dO_r$ est $4$-convexe. (En effet, le bord $\dO$ est $\beta$-convexe au point $x$ si et seulement si le bord $\dO^*$ du convexe dual est $\C^{1+\varepsilon}$ au point $x^*$, avec $\frac{1}{1+\varepsilon}+\frac{1}{\beta}=1$.)
\end{proof}

\section{Entropie volumique et exposant critique}\label{sec_entropie}

Si $\G$ est un sous-groupe discret de $\Aut(\O)$, on notera, pour $x\in\O$ et $R\geqslant 0$,
$$N_{\G} (x,R) = \sharp\{g\in \G,\ d_{\O}(x,gx)\leqslant R\}$$
le nombre d'éléments $g$ de $\G$ tels que $gx \in B(x,R)$. L'exposant critique du groupe $\G$, défini par
$$\dgg = \limsup_{R\to +\infty} \frac{1}{R} \ln N_{\G} (x,R),$$
mesure le taux de croissance exponentiel du groupe $\G$ agissant sur $\O$; il est immédiat que la limite précédente ne dépend pas du point $x$ considéré.\\
L'exposant critique $\dgg$ de $\G$ est nommé ainsi car c'est l'exposant critique des séries de Poincaré de $\G$ données par
$$g_{\G}(s,x)=\displaystyle\sum_{\g\in\Gamma} e^{-s \d(x,\g x)},\ x\in\O;$$
cela veut dire que pour $s>\dgg$, la série converge, et pour $s<\dgg$, elle diverge.\\

L'entropie volumique d'une géométrie de Hilbert
$$h_{vol}(\O) = \limsup_{R\to +\infty} \frac{1}{R} \ln \Vol_{\O} B(x,R)$$
réprésente le taux de croissance exponentiel du volume des boules de l'espace métrique $(\O,d_{\O})$.\\

\subsection{Groupes de covolume fini}

Si la géométrie $(\O,\d)$ admet une action cocompacte d'un sous-groupe discret $\G$ de $\Aut(\O)$, on a évidemment l'égalité
$$\dgg = h_{vol}(\O)$$
puisqu'alors l'entropie volumique ne dépend pas de la mesure de volume considérée, pourvu qu'elle soit $\G$-invariante; aussi peut-on prendre la mesure de comptage de l'orbite d'un point $x$ de $\O$ sous $\G$ pour retrouver $\dgg$. Si le groupe est ``trop petit'', cette égalité devient en général fausse, et on a seulement $\dgg\leqslant h_{vol}$. Dans \cite{MR2494912}, Françoise Dal'bo, Marc Peigné, Jean-Claude Picaud et Andrea Sambusetti ont étudié cette question pour les sous-groupes de covolume fini de variétés de Hadamard, à courbure négativé pincée. Ils ont montré le résultat suivant.

\begin{theo}
\
\begin{itemize}
\item Soit $M$ une vari\'et\'e riemannienne \`a courbure strictement n\'egative, de volume fini. Si $M$ est asymptotiquement $1/4$-pinc\'ee, alors $h_{vol}=h_{top}$.
 \item Pour tout $\varepsilon>0$, il existe une vari\'et\'e riemannienne de volume fini et de courbure strictement n\'egative $(1/4+\varepsilon)$-pinc\'ee telle que $h_{top}< h_{vol}$.
\end{itemize}
\end{theo}

L'hypothèse de pincement asymptotique concerne la géométrie de la variété à l'infini, c'est-à-dire dans ses cusps. Dans notre contexte, c'est le lemme \ref{loincusp} qui va nous permettre de montrer le

\begin{theo}\label{volegaltop}
Soit $\G$ un sous-groupe discret de $\Aut(\O)$, de covolume fini. Alors
$$\delta_{\G} = h_{vol}(\O).$$
\end{theo}



La démonstration de ce résultat est fort similaire à celle de \cite{MR2494912}, elle se simplifie par certains aspects et n\'ecessite des arguments un peu diff\'erents par d'autres. Elle reste malgr\'e tout un brin technique... \\
On va commencer par calculer l'exposant critique d'un sous-groupe parabolique de rang maximal.

\begin{lemm}\label{deltainclu}
Soit $\Gamma$ un sous-groupe de $\Aut(\O)$ et $\Aut(\O')$ avec $\O\subset\O'$. Appelons $g_{\Gamma,\O}(s,x)$ et $g_{\Gamma,\O'}(s,x)$ les séries de Poincar\'e pour l'action de $\G$ sur $\O$ et $\O'$, $\dgg(\O)$ et $\dgg(\O')$ leur exposant critique. Alors, pour tout $s>\dgg(\O')$, $g_{\Gamma,\O}(s,x)\leqslant g_{\Gamma,\O'}(s,x)$. En particulier, $\delta_{\Gamma}(\O) \leqslant \delta_{\Gamma}(\O')$.
\end{lemm}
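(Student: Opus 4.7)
The plan is to invoke directly the comparison Proposition~\ref{compa}: since $\O\subset\O'$, one has $d_{\O'}(x,y)\leqslant d_{\O}(x,y)$ for all $x,y\in\O$. In particular, for any $\gamma\in\Gamma$ and any $x\in\O$, $d_{\O'}(x,\gamma x)\leqslant d_{\O}(x,\gamma x)$, so for every $s>0$,
$$e^{-s\,d_{\O}(x,\gamma x)} \leqslant e^{-s\,d_{\O'}(x,\gamma x)}.$$

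Summing termwise over $\gamma\in\Gamma$ yields the pointwise inequality
$$g_{\Gamma,\O}(s,x) \leqslant g_{\Gamma,\O'}(s,x),$$
with the understanding that each side is a sum of nonnegative terms in $[0,+\infty]$. If $s>\dgg(\O')$, the right-hand side is finite by definition of the critical exponent, so the left-hand side is finite as well. This is precisely the first assertion.

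For the second assertion, the finiteness of $g_{\Gamma,\O}(s,x)$ for every $s>\dgg(\O')$ means, by definition of $\dgg(\O)$ as the critical exponent of divergence of $g_{\Gamma,\O}(s,x)$, that $\dgg(\O)\leqslant s$ for every such $s$; letting $s\downarrow\dgg(\O')$ gives $\dgg(\O)\leqslant\dgg(\O')$.

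There is no real obstacle: the whole content is the comparison of Hilbert distances under inclusion of convex sets, which is already stated in Proposition~\ref{compa}. The only point that warrants a word of care is the implicit appeal to the fact that for the Poincar\'e series $g_{\Gamma,\O}$, the critical exponent $\dgg(\O)$ coincides with $\inf\{s\geqslant 0\,|\,g_{\Gamma,\O}(s,x)<+\infty\}$; this is the standard equivalence between the two definitions of $\dgg$ (via counting function and via Poincar\'e series), already recalled in the opening paragraphs of the section, and it depends neither on $\O$ nor on the base point $x$.
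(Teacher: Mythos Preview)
Your proof is correct and follows exactly the paper's approach: use the distance comparison $d_{\O'}\leqslant d_{\O}$ from Proposition~\ref{compa} to get a termwise inequality between the Poincar\'e series, then deduce the comparison of critical exponents. If anything, you are more explicit than the paper, which dispatches the second assertion with a simple ``d'o\`u le r\'esultat''.
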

\begin{proof}
Pour $x,y\in\O$, on a $d_{\O'}(x,y)\leqslant d_{\O}(x,y)$. Donc si $x\in\O$ et $s> \dgg(\O')$, on a $g_{\Gamma,\O}(s,x)\leqslant g_{\Gamma,\O'}(s,x).$ En particulier, la convergence de $g_{\Gamma,\O'}(s,x)$ implique celle de $g_{\Gamma,\O}(s,x)$, d'où le résultat.
\end{proof}

\begin{lemm}\label{par}
L'exposant critique d'un sous-groupe parabolique de rang maximal $\mathcal{P}$ de $\Aut(\O)$ est $\delta_{\mathcal{P}}=\frac{n-1}{2}$ et les séries de Poincaré de $\mathcal{P}$ divergent en $\delta_{\mathcal{P}}$:
$$\forall x\in\O,\ \displaystyle\sum_{\g\in\P} e^{-\delta_{\P} \d(x,\g x)}=+\infty.$$
\end{lemm}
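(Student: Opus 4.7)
The plan is to sandwich the Hilbert geometry of $\O$ between two hyperbolic geometries on which the computation is completely classical, and to transfer the answer via monotonicity of the critical exponent. From Theorem \ref{ellipsoidesecurite} applied to the parabolic fixed point $p$ of $\mathcal{P}$, one obtains two $\mathcal{P}$-invariant ellipsoids $\E^{int} \subset \O \subset \E^{ext}$ tangent to $\dO$ at $p$. Fixing a basepoint $x \in \E^{int}$, the comparison of Hilbert distances under inclusion (Proposition \ref{compa}, as already used in Lemma \ref{deltainclu}) yields
\[
d_{\E^{ext}}(x, \g x) \;\leq\; d_{\O}(x, \g x) \;\leq\; d_{\E^{int}}(x, \g x), \qquad \g \in \mathcal{P}.
\]

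Next I would invoke the classical case: both ellipsoids, endowed with their Hilbert metric, are projective models of real hyperbolic $n$-space $\HH^n$, and $\mathcal{P}$ acts on each as a discrete parabolic group fixing $p$ and acting cocompactly on $\partial \E^{int}\smallsetminus\{p\}$ (respectively $\partial \E^{ext}\smallsetminus\{p\}$). In other words $\mathcal{P}$ is a uniform lattice in the $(n-1)$-dimensional unipotent radical of $\Stab_{\Isom(\HH^n)}(p)$ and is virtually $\Z^{n-1}$. A direct horoball lattice-point count in $\HH^n$ then gives
\[
\delta_{\mathcal{P}}(\E^{int}) \;=\; \delta_{\mathcal{P}}(\E^{ext}) \;=\; \tfrac{n-1}{2},
\]
together with divergence of the associated Poincaré series at $\tfrac{n-1}{2}$. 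Applying Lemma \ref{deltainclu} to the chain $\E^{int} \subset \O \subset \E^{ext}$ squeezes $\delta_{\mathcal{P}}(\O)$ between these two equal quantities, giving $\delta_{\mathcal{P}} = (n-1)/2$. Divergence of $\sum_{\g\in\mathcal{P}} e^{-\delta_{\mathcal{P}} d_{\O}(x,\g x)}$ follows at once from the pointwise inequality $d_{\O}(x,\g x) \leq d_{\E^{int}}(x,\g x)$, which makes that series term-wise larger than the (divergent) series for $\E^{int}$.

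The main obstacle is not any individual step — each is either a direct appeal to an earlier result or a classical hyperbolic computation — but rather the clean identification of $\mathcal{P}$ as a maximal-rank parabolic lattice once we pass to the ellipsoid. This is essentially built into Theorem \ref{ellipsoidesecurite} and the explicit description of $\partial\E^{int}\smallsetminus\{p\}$ and $\partial\E^{ext}\smallsetminus\{p\}$ as single orbits of an abelian unipotent subgroup isomorphic to $\R^{n-1}$, in which $\mathcal{P}$ sits cocompactly by hypothesis. Once that identification is unpacked, the sandwich argument requires nothing beyond the classical parabolic lattice count in real hyperbolic space.
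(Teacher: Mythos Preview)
Your proposal is correct and follows essentially the same argument as the paper: invoke Theorem~\ref{ellipsoidesecurite} to sandwich $\O$ between two $\mathcal{P}$-invariant ellipsoids, quote the classical hyperbolic computation $\delta_{\mathcal{P}}(\E^{int})=\delta_{\mathcal{P}}(\E^{ext})=\frac{n-1}{2}$ with divergence, and then apply Lemma~\ref{deltainclu} to squeeze $\delta_{\mathcal{P}}(\O)$ and deduce divergence from the term-wise comparison with $\E^{int}$. Your write-up is a bit more explicit about the divergence step and the identification of $\mathcal{P}$ as a cocompact lattice in the unipotent radical, but the structure is identical.
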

\begin{proof} Appelons $p$ le point fixe de $\P$. D'après le lemme \ref{ellipsoidesecurite}, on peut trouver deux ellipsoïdes $\E^{int}$ et $\E^{ext}$ $\P$-invariants tels que 
$$\partial\E^{int}\cap \partial\E^{ext}=\partial\E^{int}\cap\dO = \partial\E^{ext}\cap\dO=\{p\}\ \text{et}\ \E^{int} \subset \O \subset \E^{ext}.$$ 
Il est connu en géométrie hyperbolique que $\delta_{\mathcal{P}}(\E^{int})=\delta_{\mathcal{P}}(\E^{ext})=\frac{n-1}{2}$ et que les séries de Poincaré de $\mathcal{P}$ divergent en l'exposant critique (on pourra consulter par exemple \cite{MR1776078}, partie 3, m\^eme si les calculs, \'el\'ementaires, remontent \`a Alan Beardon, dans les ann\'ees 1970). D'après le lemme \ref{deltainclu}, on a de même pour $\mathcal{P}$ agissant sur $\O$.
\end{proof}

Nous aurons aussi besoin du lemme suivant:

\begin{lemm}\label{nombrevol}
Soit $C>1$ arbitrairement proche de $1$ et $\mathcal{P}$ un sous-groupe parabolique maximal de $\Aut(\O)$ fixant $p\in\dO$. Alors il existe une horoboule $H_C$ basée en $p$, d'horosph\`ere au bord $\H_C$ et une constante $D>1$ telles que
$$ \frac{1}{D} N_{\mathcal{P}} (x,\frac{R}{C}) \leqslant \Vol_{\O}(B(x,R)\cap H_C) \leqslant DN_{\mathcal{P}} (x,CR),\ x\in\H_C,\ R>0.$$
\end{lemm}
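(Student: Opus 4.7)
The plan is to reduce to the corresponding estimate in the hyperbolic model, via Corollary \ref{loincusp}. In the context where this lemma is used (Theorem \ref{volegaltop}), the maximal parabolic subgroups of $\Aut(\O)$ are automatically of maximal rank, so Corollary \ref{loincusp} applies with the prescribed constant $C > 1$ close to $1$: it furnishes a horoball $H_C \subset \O$ based at $p$ on which $F \leq \h \leq CF$, where $\h$ is the Hilbert (hence hyperbolic) metric of the outer ellipsoid $\E^{ext}$ from Theorem \ref{ellipsoidesecurite}. In $(\E^{ext}, \h)$ the group $\P$ acts as a parabolic subgroup of maximal rank of $\SO$ fixing $p$, and $H_C$ is contained in a horoball of $\E^{ext}$ based at $p$.

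The heart of the argument is the analogous estimate in the hyperbolic model,
\[
\tfrac{1}{D'}\, N_{\P}^{\h}(x,R) \;\leq\; \Vol_{\h}\!\bigl(B_{\h}(x,R)\cap H_C\bigr) \;\leq\; D'\, N_{\P}^{\h}(x,R), \qquad x \in \H_C,
\]
for some $D' \geq 1$ and $R$ sufficiently large. I would verify this in horospherical coordinates $(y,s) \in \R^{n-1}\times[0,+\infty)$ in which the metric reads $\h^{2} = e^{-2s}|dy|^{2} + ds^{2}$, the horoball is $\{s \geq 0\}$, the horosphere $\H_C$ is $\{s=0\}$, the volume element is $e^{-(n-1)s}\,dy\,ds$, and $\P$ acts as a cocompact lattice of Euclidean translations in the $y$-variable. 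A compact fundamental domain $K \subset \H_C$ for $\P$ together with the explicit distance formula for $\h$ allows one to compute both sides by direct integration: they are of common order $e^{\delta_{\P} R}$ with $\delta_{\P} = (n-1)/2$ (consistently with Lemma \ref{par}) and comparable up to a constant depending only on $K$.

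The transfer back to the Hilbert geometry is then routine. From $F \leq \h \leq CF$ one deduces $d_F \leq d_{\h} \leq C\, d_F$, hence the ball inclusions $B(x,R/C) \subset B_{\h}(x,R) \subset B(x,R)$ and the orbit-count inequalities $N_{\P}(x,R/C) \leq N_{\P}^{\h}(x,R) \leq N_{\P}(x,R)$; the inverse-type comparison of the Finsler tangent unit balls that enters the Busemann density $v_{n}/\Vol_{\mathrm{eucl}}(B^{F}_{T_{x}}(1))$ gives the corresponding volume inequalities $\Vol_{\O} \leq \Vol_{\h} \leq C^{n}\,\Vol_{\O}$. Sandwiching the hyperbolic estimate with these yields the announced bounds with $D = C^{n} D'$, after enlarging $D$ if necessary to absorb the behaviour for small $R$ (where the asymptotic estimate is no longer informative since $N_{\P}(x,R/C) = 1$ while the Hilbert volume tends to $0$). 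The main obstacle is the hyperbolic computation of the second paragraph: although classical, it requires care because a substantial part of $\Vol_{\h}(B_{\h}(x,R)\cap H_C)$ accumulates in the region of maximal metric distortion near the horosphere, so that the exponents of the two sides do match despite first appearances.
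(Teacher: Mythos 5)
Your overall route is the paper's: reduce to a hyperbolic estimate via the sandwich metrics of Corollary \ref{loincusp}, then transfer balls, orbit counts and Busemann volumes back to $(\O,d_{\O})$ with losses controlled by powers of $C$. The one real difference is that the paper does not redo the hyperbolic computation: it quotes Proposition 3.3 of \cite{MR2494912} for the two-sided comparison between $N_{\P}(x,R)$ and $\Vol_{\HH^n}(B(x,R)\cap H)$, whereas you propose to reprove it by direct integration in horospherical coordinates. That computation is classical and your bookkeeping is right (both sides are of order $e^{(n-1)R/2}$, uniformly over a compact fundamental domain of $\P$ in the horosphere, with the volume indeed concentrating near the horosphere), so this substitution is harmless and makes the argument more self-contained; your transfer inequalities for balls, counts and volumes ($\Vol_{\O}\leq\Vol_{\h}\leq C^{n}\Vol_{\O}$) are all correct.

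There is, however, one concrete gap, and it is exactly the point the paper handles in a parenthetical remark. Your hyperbolic estimate is established for the $\h$-horoball $\{s\geq 0\}$, but the horoball $H_C$ of the statement is a horoball of $(\O,d_{\O})$ --- and it needs to be, since in the proof of Theorem \ref{volegaltop} the sets $H_{\xi_i}$ come from the decomposition of the convex core and feed into Lemma \ref{quasigeo}, which uses the $d_{\O}$-projection onto the horoball. The Hilbert horosphere $\H_C$ and the $\h$-horosphere through a given point do not coincide, so your final sandwich compares a Hilbert volume over $B(x,R)\cap H_C$ with a hyperbolic count attached to a different region. The repair is the paper's: since $\P$ acts cocompactly on both horospheres minus $\{p\}$, they lie at finite $\h$-Hausdorff distance, and the volume of the region between them inside $B(x,R)$ is bounded by a constant times the orbit count; this correction term must be inserted. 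Two minor slips besides: the single metric with $F\leq\h\leq CF$ comes from the \emph{inner} ellipsoid $\E^{int}$ of Theorem \ref{ellipsoidesecurite} (the metric of $\E^{ext}\supset\O$ satisfies $\h\leq F$, not the reverse); and for the lower bound at small $R$, enlarging $D$ cannot work --- $N_{\P}(x,R/C)\geq 1$ while $\Vol_{\O}(B(x,R)\cap H_C)\to 0$ as $R\to 0$ --- so the two-sided estimate should be read for $R$ bounded away from $0$, as the paper makes explicit in the general variant (Lemma \ref{nbvolnew}, stated for $R\geq 1$); only the upper bound is used in the sequel, so this costs nothing.
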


\begin{proof}
Dans l'espace hyperbolique, on sait (voir par exemple la proposition 3.3 de \cite{MR2494912}) que, pour tout sous-groupe parabolique maximal $\mathcal{P}$, toute horoboule $H$ stable par $\mathcal{P}$, d'horosph\`ere au bord $\H$, il existe un réel $D\geqslant 1$ tel que
\begin{equation}\label{eqhyp}
 \frac{1}{D} \Vol_{\HH^n}(B(x,R)\cap H_C) \leqslant N_{\mathcal{P}} (x,R) \leqslant D \Vol_{\HH^n}(B(x,R)\cap H),\ x\in\H.
\end{equation}

Soit donc $\mathcal{P}$ un sous-groupe parabolique maximal de $\Aut(\O)$ fixant $p\in\dO$. Le corollaire \ref{loincusp} nous fournit une horoboule $H_C$ basée en $p$ qui porte deux métriques hyperboliques $\mathcal{P}$-invariantes $\mathtt{h}$ and $\mathtt{h}'$ telles que
$$\frac{1}{C} \mathtt{h}' \leqslant  \mathtt{h} \leqslant F \leqslant \mathtt{h}' \leqslant C \mathtt{h}.$$

Prenons $x\in\H$. D'apr\`es la proposition \ref{compa}, on a, pour tout $R>0$,
$$ B_{\mathtt{h}'}\left(x,\frac{R}{C}\right) \subset   B_{\mathtt{h}}(x,R) \subset   B(x,R) \subset  B_{\mathtt{h}'}(x,R) \subset   B_{\mathtt{h}}(x,CR),$$
où, par $B_{\mathtt{h}}$ et $B_{\mathtt{h}'}$, on note les boules métriques pour $\mathtt{h}$ et $\mathtt{h}'$. En appelant $\Vol_{\mathtt{h}}$ et $\Vol_{\mathtt{h}'}$ les volumes riemanniens associés à $\mathtt{h}$ et $\mathtt{h}'$, on a, toujours d'apr\`es la proposition \ref{compa},
$$\Vol_{\mathtt{h}'} \leqslant \Vol_{\O} \leqslant \Vol_{\mathtt{h}}.$$
Ainsi,
$$\Vol_{\mathtt{h}'}(B_{\mathtt{h}'}\left(x,\frac{R}{C}\right)\cap H) \leqslant \Vol_{\O}(B(x,R)\cap H) \leqslant \Vol_{\mathtt{h}}(B_{\mathtt{h}}(x,CR)\cap H).$$
D'après l'encadrement (\ref{eqhyp}), il existe une constante $D>1$ telle que
$$ \frac{1}{D} N^{\mathtt{h}'}_{\mathcal{P}} \left(x,\frac{R}{C}\right)  \leqslant \Vol_{\O}(B(x,R)\cap H) \leqslant D N^{\mathtt{h}}_{\mathcal{P}} (x,CR) ,$$
où $N^{\mathtt{h}}_{\mathcal{P}} (x,R)$ est le nombre de points de l'orbite $\mathcal{P}.x$ dans la boule de rayon $R$ pour $\mathtt{h}$; de même pour $\mathtt{h}'$.\\
(Bien entendu, les horoboules considérées dans l'encadrement (\ref{eqhyp}) sont les horoboules hyperboliques et pas celles de $F$, et il faut donc faire un peu plus attention lorsqu'on dit qu'une telle constante $D$ existe. Si $\H_{\mathtt{h}}$ est l'horosphère pour $\mathtt{h}$ basée en $p$ et passant par $x$, alors la $\mathtt{h}$-distance maximale entre $\H$ et $\H_{\mathtt{h}}$ est finie, car $\mathcal{P}$ agit de façon cocompacte sur $\H\smallsetminus\{p\}$ et $\H_{\mathtt{h}}\smallsetminus\{p\}$. Donc, pour une certaine constante $D'>0$, on a, pour tout $R>0$,
$$ |\Vol_{\mathtt{h}}(B_{\mathtt{h}}(x,R)\cap H) -  \Vol_{\mathtt{h}}(B_{\mathtt{h}}(x,R)\cap H_{\mathtt{h}})| \leqslant D'N_{\mathcal{P}} (x,R),$$
où $H_{\mathtt{h}}$ est l'horoboule définie par $\H_{\mathtt{h}}$. D'où l'existence de $D$.)\\

Pour conclure, il suffit de remarquer que, comme $\mathtt{h} \leqslant F \leqslant \mathtt{h}' $, on a
$$N^{\mathtt{h}}_{\mathcal{P}} (x,R) \leqslant N_{\mathcal{P}} (x,R) \leqslant N^{\mathtt{h}'}_{\mathcal{P}} (x,C).$$
\end{proof}

On peut maintenant donner une

\begin{proof}[Démonstration du théorème \ref{volegaltop}]
On sait déjà que $\delta_{\Gamma}\leqslant h_{vol}$, et il faut donc seulement prouver l'in\'egalit\'e inverse.\\

Fixons $C>1$ arbitrairement proche de $1$, et choisissons $o\in\O$ ainsi qu'un domaine fondamental convexe localement fini pour l'action de $\G$ sur $\O$, qui contienne le point $o$. Décomposons ce domaine fondamental en
$$C_0\bigsqcup \sqcup_{i=1}^l C_i,$$
où $C_0$ est compact et les $C_i,\ 1\leqslant i\leqslant l$ correspondent aux cusps $\xi_i\in\dO$: chaque $C_i$ est une partie d'un domaine fondamental pour l'action d'un sous-groupe parabolique maximal $\mathcal{P}_i$ sur une horoboule $H_{\xi_i}$ basée au point $\xi_i$; les points $\xi_i$ sont les points de $\dO$ adhérents au domaine fondamental. On suppose que les $H_{\xi_i}$ sont choisies de telle façon qu'elles satisfassent au lemme \ref{nombrevol} pour la constante $C$ qu'on a fixée.\\

La boule $B(o,R)$ de rayon $R\geqslant 0$ peut être décomposée en
$$B(o,R) = (\Gamma.C_0\cap B(o,R)) \sqcup \left(\displaystyle\sqcup_{i=1}^l  \Gamma.H_{\xi_i} \cap B(o,R) \right),$$
de telle façon que
$$\Vol_{\O}(B(o,R))=\Vol_{\O}(\Gamma.C_0\cap B(o,R)) + \sum_{i=1}^l \Vol_{\O}(\Gamma.H_{\xi_i} \cap B(o,R)).$$

Pour le premier terme, on a $\Vol_{\O}(\Gamma.C_0\cap B(o,R))\leqslant N_{\Gamma}(o,R) \Vol_{\O}(C_0)$; c'est donc le second qu'il nous faut étudier.\\

Pour chaque horoboule $H_{\g \xi_i}=\g H_{\xi_i}$, appelons $x_{\g,i}$ le point d'intersection de la droite $(o\g\xi_i)$ avec l'horosphère $\partial H_{\g \xi_i}\smallsetminus \{\g \xi_i\}$, qui n'est rien d'autre que la projection de $o$ sur $H_{\g \xi_i}$. Pour chaque $\g\in\Gamma$, notons $\overline{\gamma}\in\Gamma$ un des éléments $g\in\G$, en nombre fini, tels que
$x_{\g,i}\in g.C_i$; $\overline{\g}$ est le ``premier élément pour lequel $H_{\g \xi_i}$ intersecte $B(o,R)$''. Appelons $\overline{\Gamma}$ l'ensemble de ces éléments $\overline{\g}$.

La remarque principale est le lemme ci-dessous, \'equivalent du fait classique suivant en courbure négative pincée: pour chaque $\theta \in (0,\pi)$, on peut trouver une constante $C(\theta)$ telle que, pour chaque triangle géodésique $xyz$ dont l'angle au point $y$ est au moins $\theta$, le chemin $x\to y \to z$ sur le triangle est une quasi-géodésique entre $x$ et $z$ avec une erreur au plus $C(\theta)$.

\begin{lemm}\label{quasigeo}
Il existe $r>0$ tel que, pour chaque $\g\in\G,\ 1\leqslant i\leqslant l$ et $z\in H_{\g \xi_i}$, le chemin formé des segments $[ox_{\g,i}]$ et $[x_{\g,i}z]$ est une quasi-géodésique avec une erreur au plus $r$, c'est-à-dire que
$$d_{\O}(o,z) \geqslant d_{\O}(o,x_{\g,i})+d_{\O}(x_{\g,i},z) -r.$$
\end{lemm}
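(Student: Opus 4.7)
The plan is to adapt to the Hilbert setting the classical quasi-geodesic estimate from pinched negative curvature: projecting a base point onto a horoball and then extending into the horoball produces a quasi-geodesic with a uniform additive defect. Since $M$ has finitely many cusps and each maximal parabolic $\P_i$ acts cocompactly on the horosphere $\partial H_{\xi_i}\smallsetminus\{\xi_i\}$, I first reduce to a uniform estimate: it suffices to prove the inequality for each cusp index $i$ separately (the constant $r$ being the maximum over the finitely many cusps), and, using $\P_i$-equivariance, with $x_{\g,i}$ ranging over a compact fundamental domain of $\partial H_{\g\xi_i}/\P_i$.

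Next, let $z_0$ be the (unique, by convexity of the horoball) point where $[o,z]$ crosses $\partial H_{\g\xi_i}$, so that $\d(o,z)=\d(o,z_0)+\d(z_0,z)$. The Busemann function $b=b_{\g\xi_i}(x_{\g,i},\cdot)$ satisfies $b(o)=\d(o,x_{\g,i})$ and $b(z_0)=0$, hence $\d(o,z_0)\geqslant\d(o,x_{\g,i})$ by the $1$-Lipschitz property of $b$. Combined with the triangle inequality $\d(x_{\g,i},z)\leqslant\d(x_{\g,i},z_0)+\d(z_0,z)$, the lemma is reduced to the horospherical estimate
$$\d(x_{\g,i},z_0) \leqslant \bigl(\d(o,z_0)-\d(o,x_{\g,i})\bigr) + r, \qquad z_0\in\partial H_{\g\xi_i},$$
which is a reverse triangle inequality asserting that the broken path $o\to x_{\g,i}\to z_0$ is a quasi-geodesic with uniform defect $r$.

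This horospherical estimate is the classical consequence of the hyperbolic law of cosines with right angle at the projection point $x_{\g,i}$ of $o$: in $\HH^n$, the inequality $\cosh\d(o,z_0)\geqslant\cosh\d(o,x_{\g,i})\cosh\d(x_{\g,i},z_0)\geqslant\tfrac{1}{2}\cosh\bigl(\d(o,x_{\g,i})+\d(x_{\g,i},z_0)\bigr)$ gives an excess bounded by $\log 2$. To transfer the argument to our setting I use the asymptotic hyperbolicity of the cusps: for any $C>1$, the corollaire \ref{loincusp} provides a hyperbolic metric $\h_i$ on $\C_i$ with the same unparametrised geodesics as $F$ and satisfying $C^{-1}\h_i\leqslant F\leqslant C\h_i$, and the hyperbolic estimate then propagates to $F$ at the cost of a bounded additive error.

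The main obstacle is that the base point $o$ lies outside every horoball $\g H_{\xi_i}$, so the bi-Lipschitz comparison with $\h_i$ is not available along the initial portion of the geodesics $[o,x_{\g,i}]$ and $[o,z_0]$. I plan to handle this by choosing a slightly smaller, deeper horoball $H'\subset H_{\g\xi_i}$ inside which the hyperbolic comparison is used in full, and controlling the annular region between $\partial H_{\g\xi_i}$ and $\partial H'$ via compactness modulo $\P_i$: its Hilbert width is bounded and the Busemann function varies by a bounded amount across it, so the corresponding contributions to $\d(o,x_{\g,i})$, $\d(o,z_0)$ and $\d(x_{\g,i},z_0)$ can be absorbed into the constant. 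Since only finitely many cusps are involved and parabolic subgroups act cocompactly on horospheres, all constants are uniform in $\g$ and $i$, yielding the desired $r$.
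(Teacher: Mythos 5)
Your opening reduction is correct, and it is in fact the same key observation the paper exploits: from $b(o)=\d(o,x_{\g,i})$, $b(z_0)=0$ and the $1$-Lipschitz property of the Busemann function you get that $x_{\g,i}$ is a nearest point of the horoball to every point of $[o\,x_{\g,i}]$, and the lemma correctly reduces to the horospherical estimate $\d(o,z_0)\geqslant \d(o,x_{\g,i})+\d(x_{\g,i},z_0)-r$ for $z_0\in\partial H_{\g\xi_i}$. The gap is in the second half, and it is genuine. The comparison metrics of the corollaire \ref{loincusp} exist only on the cusp, i.e.\ inside the horoball, whereas the geodesics realizing $\d(o,x_{\g,i})$ and $\d(o,z_0)$ lie almost entirely outside it, at arbitrarily large distance from it; passing to a deeper horoball $H'$ and controlling a collar of bounded width between $\partial H_{\g\xi_i}$ and $\partial H'$ does nothing for that exterior portion, since the defect you must bound accrues along the whole geodesics and not across a collar. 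Moreover, even where the comparison is available it is bi-Lipschitz with constant $C$: over the unbounded distances involved ($\d(x_{\g,i},z_0)$ is not bounded) this yields a multiplicative error, hence an additive defect of order $(C-1)\,\d(x_{\g,i},z_0)$, which no uniform $r$ absorbs — and you cannot send $C\to 1$ for a fixed horoball. Finally, the right-angle law of cosines needs the whole triangle $o,\,x_{\g,i},\,z_0$ inside the region of comparison; if you try to localize by intersecting $[o,z_0]$ with the horoball at its entry point $m$, you find you need $\d(o,m)\geqslant \d(o,x_{\g,i})+\d(x_{\g,i},m)-r'$, which is exactly the statement being proved: the argument becomes circular.

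What is missing is a global input, and this is precisely how the paper proceeds: $(\O,\d)$ is Gromov-hyperbolic (\cite{Crampon:2012fk}, section 9), so every geodesic triangle is $\delta$-thin. One picks $p\in[oz]$ at distance at most $\delta$ from both $[o\,x_{\g,i}]$ and $[x_{\g,i}z]$, with companions $o'\in[o\,x_{\g,i}]$ and $z'\in[x_{\g,i}z]$; your nearest-point property then gives $\d(o',x_{\g,i})\leqslant \d(o',z')\leqslant 2\delta$, hence $\d(x_{\g,i},z')\leqslant 4\delta$, and summing along the broken path yields the lemma with $r=8\delta$, uniformly in $\g$, $i$ and $z$. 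Your Busemann reduction combines with this thin-triangle argument verbatim, but without invoking hyperbolicity of the ambient space (or of $C(\LG)$ in the geometrically finite case) the cusp comparison alone does not close the proof.
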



\begin{proof}
Prenons $\g\in\G,\ 1\leqslant i\leqslant l$ et $z\in H_{\g \xi_i}$. Rappelons que l'espace m\'etrique $(\O,d_{\O})$ est Gromov-hyperbolique (voir \cite{Crampon:2012fk}, section 9). Aussi existe t-il un réel $\delta\geqslant 0$ pour lequel chaque triangle géodésique est $\delta$-fin. Ainsi, il existe $p\in [oz]$ tel que
$$d_{\O}(p,[x_{\g,i}z])\leqslant \delta,\ d_{\O}(p,[ox_{\g,i}])\leqslant \delta.$$
On peut donc trouver des points $o'\in [ox_{\g,i}]$ et $z'\in [x_{\g,i}z]$ de telle façon que
$$d_{\O}(o',p) + d_{\O}(p,z') \leqslant 2\delta.$$

\begin{center}
\begin{figure}[h!]
  \centering
\includegraphics[width=7cm]{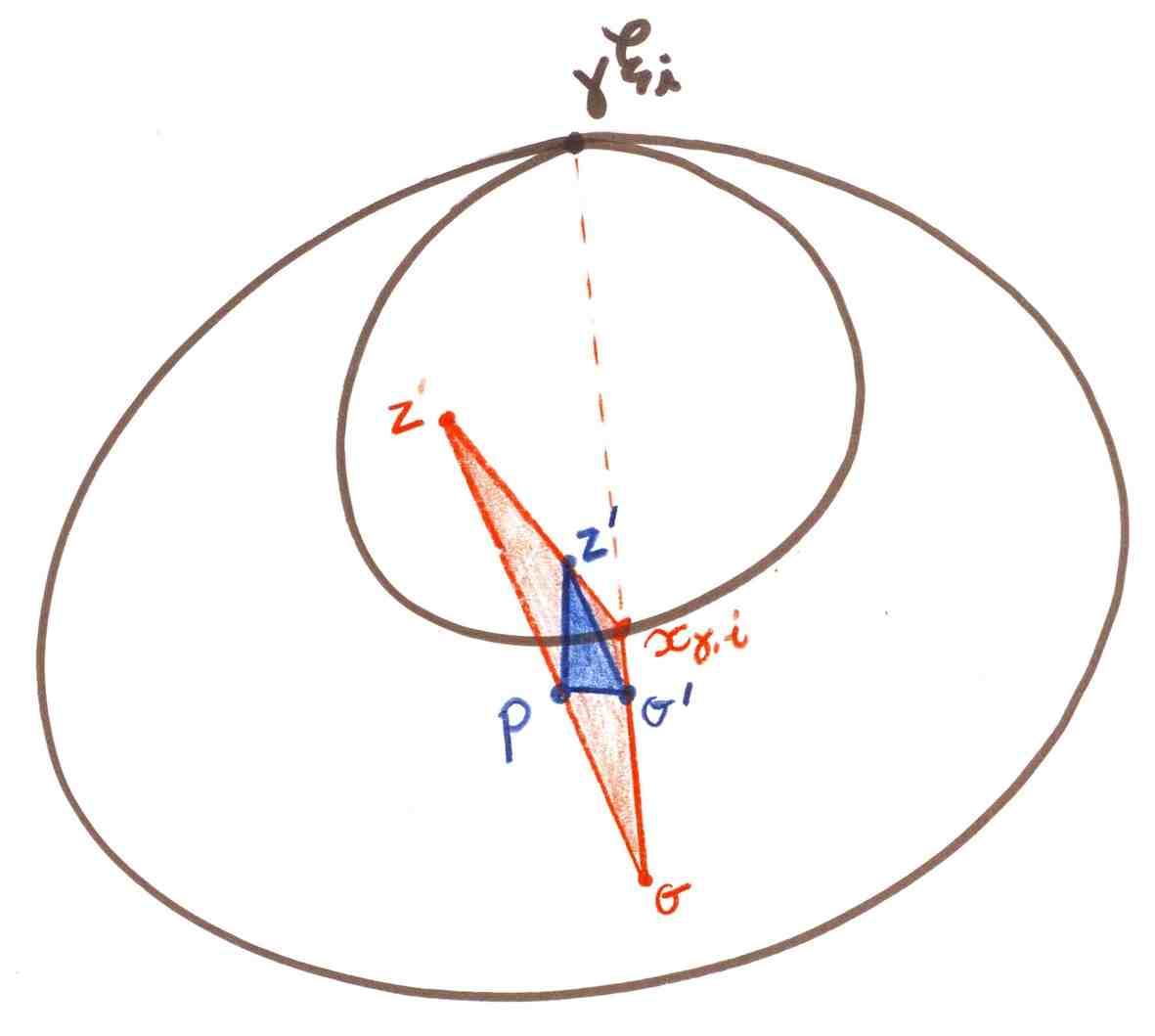}
\caption{}
\label{ouille}
\end{figure}
\end{center}

Par l'inégalité triangulaire, la distance entre $o'$ et $z'$ est alors plus petite que $2\delta$. Puisque $x_{\g,i}$ est la projection de $o'$ sur l'horoboule $H_{\g \xi_i}$ et que $z'$ est dans l'horoboule $H_{\g \xi_i}$, on a $\d(o',x_{\g,i}) \leqslant \d(o',z') \leqslant 2\delta$ et, par l'in\'egalit\'e triangulaire, $d_{\O}(x_{\g,i},z')\leqslant 4\delta$. Ainsi, on obtient
$$d_{\O}(o',x_{\g,i})+d_{\O}(x_{\g,i},z') \leqslant 6\delta.$$
Cela amène
$$\begin{array}{rl}
d_{\O}(o,x_{\g,i})+d_{\O}(x_{\g,i},z) &\leqslant d_{\O}(o,o')+d_{\O}(o',x_{\g,i})+d_{\O}(x_{\g,i},z')+d_{\O}(z',z)\\\\
&\leqslant 6\delta + d_{\O}(o,p)+d_{\O}(p,o')+d_{\O}(z',p)+d_{\O}(p,z)\\\\
&\leqslant 8\delta + d_{\O}(o,z).
\end{array}$$
\end{proof}

Maintenant, si $z$ est un point dans $\g.H_{\xi_i} \cap B(o,R)$, pour certains $\g\in\G$, $1\leqslant i\leqslant l$ et $R>0$, le lemme \ref{quasigeo} implique que
$$ d_{\O}(o,x_{\g,i}) + d_{\O}(x_{\g,i},z) \leqslant \d(o,z) + r \leqslant R+r.$$
Or, il existe $c\geqslant 0$ tel que  $\d(o,x_{\g,i}) \geqslant \d(o,\overline{\g}o)-c$: il suffit de prendre pour $c$ la distance maximale entre $o$ et le bord $\partial C_i \cap \partial H_{\xi_i} \smallsetminus \{\xi_i\}$. D'où
$$ d_{\O}(x_{\g,i},z)  \leqslant R+r - \d(o,\overline{\g}o) + c.$$

Posons $K=r+c$. Pour tous $\g\in\Gamma$, $1\leqslant i\leqslant l$, et $R>0$, on a ainsi
$$\g.H_{\xi_i} \cap B(o,R) \subset \g.H_{\xi_i} \cap B(x_{\g,i}, R-d_{\O}(o,\overline{\g} o)+K).$$

Cela permet d'évaluer le volume $\Vol_{\O}(\Gamma.H_{\xi_i} \cap B(o,R))$. En effet,
$$\begin{array}{rl} \Vol_{\O}(\Gamma.H_{\xi_i} \cap B(o,R))& =\displaystyle\sum_{\overline{\g}\in\overline{\Gamma}} \Vol_{\O}(\overline{\g}.H_{\xi_i} \cap B(o,R))\\\\
 & \leqslant
\displaystyle\sum_{\overline{\g}\in\overline{\Gamma}} \Vol_{\O}(\overline{\g}.H_{\xi_i} \cap B(x_{\g,i}, R-d(o,\overline{\g} o)+K))\\\\

& \leqslant \displaystyle \sum_{0\leqslant k\leqslant [R]} \displaystyle\sum_{\begin{array}{c}\overline{\g}\in\overline{\Gamma}\\ k \leqslant\d(o,\overline{\g}o) \leqslant k+1 \end{array}}
\Vol_{\O}(\overline{\g}.H_{\xi_i} \cap B(x_{\g,i}, R-k+K))\\\\

& \leqslant  \displaystyle \sum_{0\leqslant k\leqslant [R]} \displaystyle N_{\overline{\Gamma}}(o,k,k+1) \Vol_{\O}(H_{\xi_i} \cap B(x_{i}, R-k+K)),
\end{array}$$
où $x_i=x_{Id,i}$ et, pour toute partie $S$ de $\G$ et tout $0\leqslant r < R$,
$$N_S(o,r,R) = \sharp \{\g\in S,\ r\leqslant \d(o,\g o)< R\}.$$

Le lemme \ref{nombrevol} donne
\begin{equation}\label{p1}
  \Vol_{\O}(\Gamma.H_{\xi_i} \cap B(o,R)) \leqslant D \sum_{0\leqslant k\leqslant [R]} \displaystyle N_{\overline{\Gamma}}(x_i,k,k+1) N_{\mathcal{P}_i}(x_i, C(R-k+K))
\end{equation}

pour une certaine constante $D>1$ qui peut être choisie indépendante de $i$. De plus, comme l'exposant critique de chaque $\mathcal{P}_i$ est $\frac{n-1}{2}$, il existe un réel $M\geqslant 1$, indépendant de $i$ mais dépendant de $C$, tel que
$$\frac{1}{M} e^{(\frac{n-1}{2}-(C-1)) R} \leqslant N_{\mathcal{P}_i}(x_i, R)) \leqslant M e^{(\frac{n-1}{2}+(C-1)) R}.$$

D'un c\^ot\'e, cela implique que 
$$N_{\mathcal{P}_i}(x_i, C(R-k+K))\leqslant L N_{\mathcal{P}_i}(x_i, C(R-k)),$$
o\`u $L=M^2 e^{(\frac{n-1}{2}-(C-1)) CK}$. D'un autre c\^ot\'e, cela nous donne
$$\begin{array}{rl}
N_{\mathcal{P}_i}(x_i, CR) & \leqslant  M e^{(\frac{n-1}{2}+(C-1)) CR}\\\\
& = M e^{(\frac{n-1}{2}-(C-1)) R} e^{(\frac{n-1}{2}+C+1)(C-1) R} \\\\
& \leqslant   M^2 e^{(\frac{n-1}{2}+C+1)(C-1) R}  N_{\mathcal{P}_i}(x_i, R).
\end{array}$$
Avec (\ref{p1}), on obtient
\begin{equation}\label{p11}
  \Vol_{\O}(\Gamma.H_{\xi_i} \cap B(o,R)) \leqslant D L M^2 e^{(\frac{n-1}{2}+C+1)(C-1) R}  \sum_{0\leqslant k\leqslant [R]} \displaystyle N_{\overline{\Gamma}}(x_i,k,k+1) N_{\mathcal{P}_i}(x_i, R-k).
\end{equation}

Finalement, remarquons que tout élément $\g\in\G$ tel que $\d(x_i,\g x_i)< R$ peut être écrit de façon unique $\g=\overline{\g_i} p_i$, avec $\d(x_i,\overline{\g_i} x_i)< R$ et $p_i\in\mathcal{P}_i$, de telle façon que
$$d_{\O}(x_i, p_i x_i) + \d(x_i,\overline{\g_i} x_i) \geqslant R.$$
D'où
\begin{equation}\label{p2}
N_{\G}(x_i,R) \geqslant \sum_{0\leqslant k\leqslant [R]} N_{\overline{\G}}(x_i,k,k+1) N_{\mathcal{P}_i}(x_i,R-k).
\end{equation}

Les in\'egalit\'es (\ref{p11}) et (\ref{p2}) impliquent alors
$$\Vol_{\O}(\Gamma.H_{\xi_i} \cap B(o,R)) \leqslant  D L M^2 e^{(\frac{n-1}{2}+C+1)(C-1) R}  N_{\G}(x_i,R);$$
et donc, en mettant tout ensemble
$$\Vol_{\O}(B(o,R)) \leqslant  N e^{(\frac{n-1}{2}+C+1)(C-1) R} N_{\Gamma}(o,R),$$
pour un certain réel $N>1$. Cela donne

$$h_{vol}\leqslant\delta_{\Gamma} + \left(\frac{n-1}{2}+C+1\right)(C-1).$$
Comme $C$ est arbitrairement proche de $1$, on obtient
$$h_{vol} \leqslant \dgg.$$
\end{proof}

\subsection{Groupes dont l'action est g\'eom\'etriquement finie sur $\O$}

En fait, par la même démonstration et les r\'esultats de \cite{Crampon:2012fk}, on peut obtenir un r\'esultat similaire pour des groupes dont l'action est g\'eom\'etriquement finie sur $\O$:

\begin{theo}\label{volegaltopgeneral}
Soit $\G$ un sous-groupe discret de $\Aut(\O)$ dont l'action sur $\O$ est géométriquement finie. Alors
$$\delta_{\G} = \limsup_{R\to +\infty} \frac{1}{R} \ln \Vol_{\O}(B(o,R)\cap C(\LG)),$$
o\`u $o$ est un point quelconque de $\O$.
\end{theo}

Nous avons pr\'ef\'er\'e pr\'esenter la démonstration dans le cas du volume fini que nous consid\'erions d\'ej\`a assez technique pour ne pas la surcharger. Les seuls points \`a v\'erifier pour \'etendre le r\'esultat sont les trois lemmes \ref{par}, \ref{nombrevol} et \ref{quasigeo}; le reste se lit tel quel en pensant seulement \`a consid\'erer l'intersection avec $C(\LG)$.\\
Pour le lemme \ref{quasigeo}, il suffit de se souvenir que, d'apr\`es \cite{Crampon:2012fk}, l'espace $(C(\LG),\d)$ est Gromov-hyperbolique.\\
Pour les deux autres, il nous faut rappeler quelques \'el\'ements de \cite{Crampon:2012fk}, dont on conseille de consulter la partie 7.\\
Tous les groupes paraboliques apparaissant dans une action g\'eom\'etriquement finie d'un groupe $\G$ sur $\O$ sont conjugu\'es dans $\ss$ \`a des sous-groupes paraboliques de $\SO$. En particulier, un sous-groupe parabolique $\P$ de $\G$ est virtuellement isomorphe \`a $\Z^d$ pour un certain $1 \leqslant d\leqslant n-1$; $d$ est le rang de $\P$. De plus, si $p\in\dO$ est le point fixe de $\P$, il existe une coupe $\O_{p}$ de dimension $d+1$ de $\O$, contenant $p$ dans son adh\'erence, c'est-\`a-dire l'intersection de $\O$ avec un sous-espace projectif de dimension $d+1$, qui est pr\'eserv\'ee par $\P$; ainsi, $\P$ appara\^it comme un sous-groupe parabolique de rang maximal de $\Aut(\O_p)$. On obtient la g\'en\'eralisation suivante du lemme \ref{par}:

\begin{lemm}
L'exposant critique d'un sous-groupe parabolique $\mathcal{P}$ de $\Aut(\O)$, de rang $d\leqslant n-1$, est $\delta_{\mathcal{P}}=\frac{d}{2}$ et les séries de Poincaré de $\mathcal{P}$ divergent en $\delta_{\mathcal{P}}$:
$$\forall x\in\O,\ \displaystyle\sum_{\g\in\P} e^{-\delta_{\P} \d(x,\g x)}=+\infty.$$
\end{lemm}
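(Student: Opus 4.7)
L'id\'ee est de se ramener au lemme \ref{par} d\'ej\`a d\'emontr\'e, en utilisant le fait rappel\'e juste avant l'\'enonc\'e: puisque $\P$ est conjugu\'e \`a un sous-groupe parabolique de $\SO$ de rang $d$, il existe un sous-espace projectif $L$ de dimension $d+1$, $\P$-invariant, tel que la coupe $\O_p = \O \cap L$ soit un ouvert proprement convexe de $L$ contenant $p$ dans son adh\'erence, et tel que $\P$ agisse sur $\O_p$ comme un sous-groupe parabolique de rang maximal.

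La premi\`ere \'etape consiste \`a remarquer que la distance de Hilbert sur $\O$ co\"incide, en restriction \`a $\O_p$, avec la distance de Hilbert de $\O_p$. En effet, pour $x,y\in\O_p$, la droite $(xy)$ est incluse dans $L$, donc les points d'intersection $p,q$ de $(xy)$ avec $\partial \O_p$ co\"incident avec ceux de $(xy)$ avec $\partial \O$; le birapport est donc le m\^eme et $\d(x,y) = d_{\O_p}(x,y)$.

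La seconde \'etape est imm\'ediate: comme $\P$ pr\'eserve $\O_p$, l'orbite sous $\P$ d'un point $x\in\O_p$ reste dans $\O_p$. Par cons\'equent, les s\'eries de Poincar\'e de $\P$ pour les actions sur $\O$ et sur $\O_p$, \'evalu\'ees en un tel $x$, sont identiques:
$$\sum_{\g\in\P} e^{-s\, \d(x,\g x)} = \sum_{\g\in\P} e^{-s\, d_{\O_p}(x,\g x)}.$$
Le lemme \ref{par}, appliqu\'e \`a l'action de rang maximal de $\P$ sur l'ouvert proprement convexe $\O_p$ de $L\simeq \PP^{d+1}$, fournit alors la valeur $\delta_{\P}=d/2$ et la divergence de la s\'erie \`a l'exposant critique. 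On conclut puisque l'exposant critique ne d\'epend pas du point base choisi.

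La seule petite subtilit\'e est de s'assurer que la coupe $\O_p$ fournie par \cite{Crampon:2012fk} est bien un ouvert proprement convexe sur lequel $\P$ agit comme sous-groupe parabolique maximal; une fois ce point acquis, la preuve se r\'eduit essentiellement \`a l'observation projective sur le birapport et ne comporte pas d'obstacle technique.
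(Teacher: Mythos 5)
Your proof is correct and follows exactly the paper's (very terse) argument: take the base point in the cut $\O_p$ furnished by \cite{Crampon:2012fk} and invoke the lemma \ref{par} for the maximal-rank action of $\P$ on $\O_p$, using that the exponent is base-point independent. Your explicit verification that $d_{\O}$ restricts to $d_{\O_p}$ on the slice (via the birapport, since a line in $L$ meets $\partial\O_p$ and $\partial\O$ at the same points) is precisely the observation the paper leaves implicit, so the two proofs coincide in substance.
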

\begin{proof}
Comme rien ne d\'epend pas du point base consid\'er\'e, on peut le prendre dans le convexe $\O_p$ pour se ramener au cas original du lemme \ref{par}.
\end{proof}

Le lemme \ref{nombrevol} s'\'etendrait imm\'ediatement sous les conclusions du lemme \ref{loincusp}. En g\'en\'eral, on peut \'etendre au moins la majoration, qui est le point que l'on utilise dans la démonstration du th\'eor\`eme:

\begin{lemm}\label{nbvolnew}
Soit $C>1$ arbitrairement proche de $1$ et $\mathcal{P}$ un sous-groupe parabolique maximal de $\Aut(\O)$ fixant $p\in\dO$. Il existe une horoboule $H_C$ basée en $p$ d'horosph\`ere au bord $\H_C$ et une constante $D>1$ telle que
$$\Vol_{\O}(B(x,R)\cap H\cap C(\LG)) \leqslant DN_{\mathcal{P}} (x,CR),\ x\in\H_C\cap C(\LG),\ R\geqslant 1.$$
\end{lemm}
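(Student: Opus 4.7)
L'id\'ee est d'adapter la strat\'egie du lemme \ref{nombrevol} au cadre de rang quelconque en d\'ecoupant le probl\`eme suivant les horosph\`eres bas\'ees en $p$, puisque l'on ne dispose plus de la comparaison ellipso\"idale (th\'eor\`eme \ref{ellipsoidesecurite}) valable seulement en rang maximal. Les ingr\'edients essentiels seront la conjugaison de $\P$ dans $\ss$ \`a un sous-groupe parabolique de rang $d$ de $\SO$ (rappel\'ee plus haut) et la finitude du volume de Hilbert du cusp $(H_C \cap C(\LG))/\P$ dans le c\oe ur convexe (voir \cite{Crampon:2012fk}, section 8).

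On commence par choisir l'horoboule $H_C$ suffisamment petite pour que, pour toute profondeur de Busemann $t \geqslant 0$ depuis $\H_C$, le groupe $\P$ agisse de fa\c con cocompacte sur $\H_t \cap C(\LG)$ avec domaine fondamental compact $\mathcal{F}_t$; c'est l\`a qu'intervient la finitude g\'eom\'etrique. L'\'etape cl\'e est alors d'\'etablir un contr\^ole uniforme des diam\`etres de Hilbert $\textrm{diam}_\O(\mathcal{F}_t)$ en $t \geqslant 0$: gr\^ace \`a la conjugaison de $\P$ \`a un parabolique hyperbolique, les horosph\`eres se contractent exponentiellement vers $p$ de sorte que $\textrm{diam}_\O(\mathcal{F}_t)$ d\'ecro\^it (au moins) exponentiellement en $t$, comme dans le mod\`ele hyperbolique o\`u un domaine fondamental de $\Z^d$ sur l'horosph\`ere $\{y=e^t\}$ a diam\`etre $O(e^{-t})$.

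Pour $x \in \H_C \cap C(\LG)$ et $R \geqslant 1$, on d\'ecompose alors
\begin{equation*}
B(x, R) \cap H_C \cap C(\LG) = \bigsqcup_{t \geqslant 0} \bigl(B(x, R) \cap \H_t \cap C(\LG)\bigr),
\end{equation*}
en observant que la minoration $d_\O(x, y) \geqslant t$ pour $y \in \H_t$ restreint la r\'eunion \`a $t \in [0, R]$. Chaque tranche est recouverte par les translat\'es $\g \mathcal{F}_t$, $\g \in \P$; un argument de quasi-g\'eod\'esique analogue \`a celui du lemme \ref{quasigeo} montre que les $\g$ pertinents v\'erifient $d_\O(x, \g x) \leqslant R + K$ pour une constante $K$ ind\'ependante de $t$, gr\^ace au contr\^ole uniforme du diam\`etre.

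En int\'egrant sur $t \in [0, R]$ et en utilisant la d\'ecroissance exponentielle du volume horosph\'erique de $\mathcal{F}_t$ (qui garantit au passage la finitude globale du volume du cusp), on obtient un majorant de l'ordre de $N_\P(x, R+K)$. Celui-ci se transforme en $D \, N_\P(x, CR)$ gr\^ace \`a la croissance exponentielle de taux $d/2$ de $N_\P$ (extension du lemme \ref{par}), qui donne $N_\P(x, R+K) \leqslant e^{(d/2 + o(1))K} N_\P(x, R)$ et permet d'absorber la constante additive dans $D$. L'obstacle principal est la justification quantitative du contr\^ole uniforme de $\textrm{diam}_\O(\mathcal{F}_t)$ en m\'etrique de Hilbert: heuristiquement clair via le mod\`ele hyperbolique, il n\'ecessite de traduire la dynamique horocyclique du sous-groupe $\P$ conjugu\'e dans $\SO$ et d'exploiter la stabilit\'e de la coupe $\O_p$ sous cette dynamique pour propager les bornes g\'eom\'etriques obtenues sur $\H_C \cap C(\LG)$ vers les horosph\`eres plus profondes.
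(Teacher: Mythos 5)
Votre sch\'ema bute sur le point que vous identifiez vous-m\^eme comme \og l'obstacle principal\fg{}, mais la r\'esolution que vous proposez ne fonctionne pas: la d\'ecroissance exponentielle de $\mathrm{diam}_{\O}(\mathcal{F}_t)$. La conjugaison de $\P$ \`a un sous-groupe parabolique de $\SO$ est un \'enonc\'e purement alg\'ebrique: elle ne fournit aucune comparaison entre la m\'etrique de Hilbert de $\O$ et une m\'etrique hyperbolique au voisinage de $p$ --- c'est pr\'ecis\'ement ce qui distingue le rang quelconque du rang maximal, o\`u le th\'eor\`eme \ref{ellipsoidesecurite} donne les ellipso\"ides encadrants $\P$-invariants. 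Hors de la coupe invariante $\O_p$ de dimension $d+1$, la g\'eom\'etrie de Hilbert pr\`es de $p$ est gouvern\'ee par la r\'egularit\'e de $\dO$ en $p$ dans les directions transverses, que le groupe n'impose pas: la construction de la partie \ref{sectionex1} montre qu'on peut rendre cette r\'egularit\'e arbitrairement mauvaise en un point parabolique tout en conservant la finitude g\'eom\'etrique, et la vitesse de contraction vers $p$ s'en ressent. Ce qui est vrai --- et c'est ce qu'utilise le papier --- est plus faible: par cocompacit\'e de $\P$ sur $C(\LG)\cap\H$ et r\'egularit\'e $\C^1$ de $\dO$ en $p$, l'intersection $C(\LG)\cap H$ reste dans un $r$-voisinage de $\O_p\cap C(\LG)$ pour un $r$ fini \emph{fixe} (petit si l'horoboule est petite, mais ne tendant pas vers $0$ avec la profondeur). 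On en tire un diam\`etre \emph{uniform\'ement born\'e} des domaines $\mathcal{F}_t$, nullement exponentiellement petit; votre \'etape \og en int\'egrant sur $t$ \ldots{} d\'ecroissance exponentielle du volume horosph\'erique de $\mathcal{F}_t$\fg{} s'effondre donc (la finitude du volume du cusp, elle, est de toute fa\c con d\'ej\`a acquise par \cite{Crampon:2012fk}).

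La d\'emonstration du papier contourne enti\`erement le contr\^ole horosph\'erique dans $\O$: elle ram\`ene tout \`a la coupe $\O_p$, dans laquelle $\P$ est de rang maximal dans $\Aut(\O_p)$, de sorte que le lemme \ref{nombrevol} s'y applique tel quel; puis elle compare $\Vol_{\O}$ et $\Vol_{\O_p}$ sur les $r$-voisinages de parties compactes de la coupe (lemme \ref{compare}), via les bornes uniformes de Colbois--Vernicos (lemme \ref{faitvol}) et un argument d'ensembles $2r$-s\'epar\'es; la majoration finale $\Vol_{\O}(B(x,R)\cap H\cap C(\LG))\leqslant D' N_{\P}(x,C(R+r)+r)$ absorbe les constantes additives gr\^ace \`a $R\geqslant 1$. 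Signalons enfin une seconde faiblesse de votre r\'edaction: l'in\'egalit\'e $N_{\P}(x,R+K)\leqslant e^{(d/2+o(1))K}N_{\P}(x,R)$ ne d\'ecoule pas du seul exposant critique, qui ne donne qu'une $\limsup$ et aucune comparaison ponctuelle \`a rayons d\'ecal\'es; dans la d\'emonstration du th\'eor\`eme \ref{volegaltop}, les encadrements exponentiels bilat\`eres de $N_{\P}$ proviennent de la comparaison hyperbolique \`a l'int\'erieur de la coupe (via les ellipso\"ides), et c'est exactement pour cela que la conclusion s'\'enonce avec la marge multiplicative $N_{\P}(x,CR)$, $C>1$, et non avec $N_{\P}(x,R+\mathrm{const})$. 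Votre strat\'egie par tranches horosph\'eriques pourrait sans doute \^etre r\'epar\'ee en rempla\c cant la d\'ecroissance exponentielle par la borne uniforme ci-dessus, mais il faudrait alors, pour compter les $\g$ pertinents et estimer le volume des cellules, r\'eintroduire pr\'ecis\'ement la coupe $\O_p$ et les lemmes \ref{compare} et \ref{nombrevol} --- c'est-\`a-dire l'argument du papier.
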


On aura besoin du r\'esultat \ref{compare} ci-dessous. Il se d\'eduit du fait suivant:

\begin{lemm}[Bruno Colbois-Constantin Vernicos \cite{MR2245997}]\label{faitvol}
Pour tout $m\geq 1$ et $R>0$, il existe deux constantes $v_m(R), V_m(R)>0$ telles que, pour tout ouvert proprement convexe $\O$ de $\PP^m$ et $x\in\O$,
$$v_m(R) \leqslant Vol_{\O}(B(x,R)) \leqslant V_m(R).$$
\end{lemm}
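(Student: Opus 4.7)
The plan is to combine a projective normalization with a compactness argument. Since the Hilbert distance and Hilbert volume are invariant under $\sss_{m+1}(\R)$, it suffices to bound $\Vol_\O(B_\O(x,R))$ for pairs $(\O,x)$ placed in a fixed normalized position.

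First I would carry out the normalization step by applying John's theorem to the centrally symmetric convex body $B_{T_x\O}(1)$: after choice of an affine chart containing $\overline{\O}$ and composition with a suitable element of $\sss_{m+1}(\R)$ sending $x$ to the origin, one obtains a sandwich
$$\E^{in}\subset\O\subset\E^{out},$$
where $\E^{in}$ and $\E^{out}$ are two fixed ellipsoids centred at $x$, with $\E^{out}$ a dilate of $\E^{in}$ by a factor depending only on $m$. Proposition \ref{compa} then yields $B_{\E^{in}}(x,R)\subset B_\O(x,R)\subset B_{\E^{out}}(x,R)$ and, for any Borel set $\mathcal{A}\subset \E^{in}$,
$$\Vol_{\E^{out}}(\mathcal{A})\leqslant \Vol_\O(\mathcal{A})\leqslant \Vol_{\E^{in}}(\mathcal{A}).$$
The lower bound is then immediate: $\Vol_\O(B_\O(x,R))\geqslant \Vol_{\E^{out}}(B_{\E^{in}}(x,R))=:v_m(R)>0$, the right-hand side being the volume of a hyperbolic ball of fixed radius in a fixed hyperbolic model, depending only on $m$ and $R$.

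For the upper bound I would argue by compactness. The space $\mathcal{K}$ of properly convex open sets $\O'\subset\PP^m$ with $\E^{in}\subset\O'\subset\E^{out}$ is compact in the Hausdorff topology on closed subsets of $\overline{\E^{out}}$, and the map $\O'\mapsto\Vol_{\O'}(B_{\O'}(x,R))$ is positive and continuous on $\mathcal{K}$: Hausdorff convergence of $\O'_n$ to $\O'$ implies uniform convergence of the Finsler norms $F_{\O'_n}$ defined by (\ref{metrique_finsler}) on compact subsets of $\O'$, whence of the Hilbert distances and of the volume densities. The hard part will be ensuring that $B_{\O'}(x,R)$ stays at uniform Euclidean distance from $\partial\O'$ as $\O'$ varies in $\mathcal{K}$ (so that the density $v_m/\Vol(B_{T_y\O'}(1))$ remains uniformly bounded on these balls); this follows from the Hilbert distance formula, which forces $d_{\O'}(x,y)$ to blow up logarithmically as $y$ approaches $\partial\O'$, at a rate uniformly controlled by the fixed ambient ellipsoid $\E^{out}$. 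The continuous positive function $\O'\mapsto\Vol_{\O'}(B_{\O'}(x,R))$ is therefore bounded above on the compact set $\mathcal{K}$ by some $V_m(R)<+\infty$, concluding the proof.
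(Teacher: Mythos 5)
Your overall architecture --- projective normalization of the pair $(\O,x)$ to a position sandwiched between two fixed ellipsoids, then a lower bound by comparison (proposition \ref{compa}) and an upper bound by uniform control on the normalized family --- is exactly the qualitative proof the paper has in mind: the paper does not write it out, but points to the th\'eor\`eme 12 of Colbois--Vernicos \cite{MR2245997} for explicit bounds (exponential in $R$), and to Benz\'ecri's compactness theorem \cite{MR0124005} (the action of $\sss_{n+1}(\R)$ on pairs $(\O,x)$ is proper and cocompact) for the qualitative argument, with details in \cite{Crampon:2012fk}. Granting the sandwich $\E^{in}\subset\O\subset\E^{out}$, both of your bounds are correct; your ``hard part'' is indeed settled by the logarithmic blow-up of the Hilbert distance, which gives that every $y\in B_{\O'}(x,R)$ is at Euclidean distance at least $c_0e^{-2R}$ from $\partial\O'$, where $c_0$ is the distance from $x$ to $\partial\E^{in}$ --- note that the uniform control comes from the \emph{inner} ellipsoid, not from $\E^{out}$ as you wrote. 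In fact, once this is in place your compactness step is superfluous: the Busemann density is then uniformly bounded on $B_{\O'}(x,R)$, which lies in the fixed compact set $B_{\E^{out}}(x,R)$, so the upper bound follows directly without discussing continuity of $\O'\mapsto\Vol_{\O'}(B_{\O'}(x,R))$ in the Hausdorff topology.

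The genuine gap is in your justification of the normalization. John's theorem applied to the tangent unit ball $B_{T_x\O}(1)$ only provides a \emph{linear} normalization of the norm $F(x,\cdot)$, and no affine transformation can yield two ellipsoids centred at $x$ with $\E^{in}\subset\O\subset\E^{out}$ and eccentricity ratio depending only on $m$: the ratios $|xx^+|/|xx^-|$ in which $x$ divides the chords of $\O$ are affine invariants, and they are unbounded as $x$ approaches $\partial\O$ (take $\O$ a disc and $x$ close to the boundary: a linear map can round off the tangent unit ball, but $\O$ stays arbitrarily eccentric about $x$). What is needed is a genuinely \emph{projective} centering --- for instance moving the hyperplane at infinity so that $x$ becomes the centroid of $\O$ in the resulting affine chart, and only then applying John's theorem to $\O$ itself about that centroid --- and the existence of such a normalization with a bound depending only on the dimension is precisely the content of Benz\'ecri's compactness theorem, i.e.\ of the statement you are implicitly assuming. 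So either invoke \cite{MR0124005} explicitly at this point (after which your proof is complete and coincides with the one the paper refers to), or supply the projective centering argument; as it stands, the appeal to John's theorem on $B_{T_x\O}(1)$ does not produce the sandwich.
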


\begin{rema}
Le lemme précédent est contenu dans le théorème 12 de \cite{MR2245997}, qui donne en plus des bornes explicites, dont la dépendance en $R$ est exponentielle. Pour l'\'enonc\'e pr\'esent\'e ici, on peut donner une démonstration qualitative, basé sur le théorème de compacité de Benzécri \cite{MR0124005}; ce th\'eor\`eme affirme que l'action de $\ss$ sur l'ensemble des couples $(\O,x)$, où $\O$ est un ouvert proprement convexe de $\PP^n$ et $x$ un point de $\O$, est propre et cocompacte. On peut trouver cette d\'emonstration dans \cite{Crampon:2012fk}.
\end{rema}

\begin{lemm}\label{compare}
Soient $r>0$ et $1\leqslant d \leqslant n$. Il existe deux constantes $M,m > 0$, d\'ependant seulement de $r$, $n$ et $d$, telles que, pour tout ouvert proprement convexe $\O$ de $\PP^n$, tout sous-espace $\PP^d$ de dimension $d$ intersectant $\O$ et toute partie $A$ compacte de l'ouvert proprement convexe $\O_d=\PP^d\cap\O$ de $\PP^d$, le volume du $r$-voisinage $V_r(A)$ de $A$ dans $\O$ est comparable au volume du $r$-voisinage de $A$ dans $\O_d$:
$$m \leqslant \frac{\Vol_{\O}(V_r(A))}{\Vol_{\O_d}(V_r(A)\cap\O_d)}  \leqslant M.$$
\end{lemm}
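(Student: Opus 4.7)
Le plan consiste \`a r\'eduire la comparaison aux bornes uniformes de volume du lemme~\ref{faitvol} via un argument de r\'eseau $r/5$-s\'epar\'e maximal dans $A$. L'observation pr\'eliminaire cruciale est que la m\'etrique de Hilbert sur $\O$ restreinte \`a $\O_d$ co\"incide avec la m\'etrique de Hilbert intrins\`eque de $\O_d$\,: pour $x, y \in \O_d$, la droite $(xy)$ est contenue dans $\PP^d$, donc son intersection avec $\dO$ co\"incide avec son intersection avec $\partial\O_d = \dO\cap\PP^d$, et les birapports d\'efinissant la distance sont les m\^emes. Par cons\'equent $d_\O(p, A) = d_{\O_d}(p, A)$ pour tout $p\in\O_d$, d'o\`u
$$V_r(A)\cap\O_d = V_r^{\O_d}(A),$$
le $r$-voisinage intrins\`eque de $A$ dans $\O_d$.

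Ensuite, on choisit un sous-ensemble $\{x_1,\dots,x_N\}\subset A$ maximal parmi ceux qui sont $r/5$-s\'epar\'es pour $d_\O$ (soit $d_{\O_d}$). Par maximalit\'e, tout point de $A$ est \`a distance au plus $r/5$ d'un certain $x_i$, d'o\`u
$$V_r(A)\subset \bigcup_i B_\O(x_i, 6r/5), \quad V_r(A)\cap\O_d \subset \bigcup_i B_{\O_d}(x_i, 6r/5),$$
tandis que les boules disjointes $B_\O(x_i, r/10)$ sont contenues dans $V_r(A)$ (puisque $x_i\in A$), et les boules disjointes $B_{\O_d}(x_i, r/10) = B_\O(x_i, r/10)\cap\O_d$ sont contenues dans $V_r(A)\cap\O_d$.

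En appliquant le lemme~\ref{faitvol} simultan\'ement dans $\O$ (dimension $n$) et dans $\O_d$ (dimension $d$), on dispose de constantes positives $v_n(r/10)$, $V_n(6r/5)$, $v_d(r/10)$, $V_d(6r/5)$, ind\'ependantes du convexe, bornant chaque boule inf\'erieurement et sup\'erieurement. En sommant sur $i$ et en utilisant la disjonction pour les minorations et le recouvrement pour les majorations, on obtient
$$N v_n(r/10) \leqslant \Vol_\O(V_r(A)) \leqslant N V_n(6r/5), \quad N v_d(r/10) \leqslant \Vol_{\O_d}(V_r(A)\cap\O_d) \leqslant N V_d(6r/5).$$
Le facteur inconnu $N$ s'\'elimine dans le rapport, laissant les bornes $m = v_n(r/10)/V_d(6r/5)$ et $M = V_n(6r/5)/v_d(r/10)$, qui ne d\'ependent que de $r$, $n$, $d$.

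Le seul r\'esultat non-trivial utilis\'e est le lemme~\ref{faitvol} (qui repose sur le th\'eor\`eme de compacit\'e de Benz\'ecri). Une fois cette entr\'ee acquise, l'argument se r\'eduit \`a un recouvrement de type Vitali\,; le seul v\'eritable point \`a v\'erifier au pr\'ealable est l'identification m\'etrique $d_\O|_{\O_d\times\O_d} = d_{\O_d}$, et aucun autre obstacle s\'erieux n'est \`a pr\'evoir.
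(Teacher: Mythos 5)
Votre d\'emonstration est correcte et suit essentiellement la m\^eme approche que celle de l'article : un r\'eseau s\'epar\'e maximal dans $A$, des petites boules disjointes pour la minoration, un recouvrement par des boules plus grandes pour la majoration, les bornes uniformes du lemme \ref{faitvol}, puis l'\'elimination du cardinal $N$ dans le rapport (l'article prend un r\'eseau $2r$-s\'epar\'e avec les rayons $r$ et $4r$ au lieu de vos $r/5$, $r/10$ et $6r/5$, ce qui ne change rien au fond). Votre v\'erification explicite que $d_{\O}$ se restreint en $d_{\O_d}$ sur $\O_d$ est laiss\'ee implicite dans l'article, mais c'est pr\'ecis\'ement ce qui justifie d'y utiliser le m\^eme r\'eseau et le m\^eme $N$ dans les deux g\'eom\'etries.
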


\begin{proof} Consid\'erons un ensemble $\{x_i\}_{1\leqslant i\leqslant N}$ $2r$-s\'epar\'e maximal de $A$: deux points $x_i, x_j$, $i\not=j$, sont \`a distance au moins $2r$ et il est impossible de rajouter un point \`a l'ensemble qui satisfasse \`a cette propri\'et\'e. En particulier, les boules de de rayon $r$ centr\'ees aux points $x_i$ sont disjointes et incluses dans $V_r(A)$, alors que $V_r(A)$ est recouvert par les boules de rayon $4r$ centr\'ees aux points $x_i$. On a ainsi, en utilisant le lemme \ref{faitvol},
$$ N v_n(r) \leqslant \sum_{i=1}^N Vol_{\O}(B(x_i,r))\leqslant \Vol_{\O}(V_r(A)) \leqslant \sum_{i=1}^N Vol_{\O}(B(x_i,4r)) \leqslant N V_n(4r);$$
de m\^eme,
$$N v_d(r)\leqslant \Vol_{\O_d}(V_r(A)\cap\O_d) \leqslant N V_d(4r).$$
En prenant le quotient, on obtient
$$ m:=\frac{v_n(r)}{V_d(4r)}\leqslant\frac{\Vol_{\O}(V_r(A))}{\Vol_{\O_d}(V_r(A)\cap\O_d)} \leqslant \frac{V_n(4r)}{v_d(r)}=:M.$$
\end{proof}

\begin{proof}[D\'emonstration du lemme \ref{nbvolnew}]
Notons $d$ le rang de $\P$ et $\O_p = \O \cap \PP^{d+1}$ une coupe de $\O$ de dimension $d+1$, contenant $p$ dans son ad\'erence, et pr\'eserv\'ee par $\P$. Pour toute horoboule $H$ bas\'ee en $p$, l'intersection $C(\LG)\cap H$ est dans un $\d$-voisinage de taille $r=r(H)\geqslant 0$ finie de $\O_p\cap C(\LG)$. Cela est simplement d\^u au fait que $\P$ agit de fa\c con cocompacte sur $C(\LG)\cap \H$, o\`u $\H$ est l'horosph\`ere au bord de $H$. De plus, comme le bord de $\O$ est $\C^1$ en $p$, on peut, en consid\'erant une horoboule plus petite, prendre $r$ aussi petit que l'on veut.\\

On fixe l'horoboule $H$ de telle fa\c con que l'intersection $H\cap\O_p$, qui est une horoboule de $\O_p$ bas\'ee en $p$, satisfasse au lemme \ref{nombrevol} pour $\O_p$, avec la constante $C$. On fixe aussi $r>0$ tel que $C(\LG)\cap H$ soit dans un $\d$-voisinage de taille $r$ de $\O_p\cap C(\LG)$.\\

Si $x$ est un point de $\H \cap C(\LG)$, il existe un point $x'$ de $\H\cap\O_p$ \`a distance moins que $r$ de $x$; on a alors $B(x,R)\subset B(x',R+r)$ et
$$\Vol_{\O}(B(x,R)\cap H\cap C(\LG))\leqslant \Vol_{\O}(B(x',R+r)\cap H\cap C(\LG)).$$
Maintenant, si $x' \in\H\cap\O_p$, l'ensemble $B(x',R+r)\cap H\cap C(\LG)$ est inclus dans le $r$-voisinage de $B_{\O_p}(x',R)\cap H$ dans $\O$. On a donc, d'apr\`es le lemme \ref{compare},
\begin{equation}
\Vol_{\O}(B(x',R+r)\cap H\cap C(\LG)) \leqslant M \Vol_{\O_p}(A_r).
\end{equation}
o\`u $A_r$ est le $r$-voisinage de $B_{\O_p}(x',R)\cap H$ dans $\O_p$.\\

La partie de $A_r$ qui est dans $H$ correspond pr\'ecis\'ement \`a l'intersection $B_{\O_p}(x',R+r)\cap H$ \`a laquelle on peut appliquer le lemme \ref{nombrevol}, qui donne:
$$\Vol_{\O_p}(B_{\O_p}(x',R+r)\cap H) \leqslant D N_{\P}(x,C(R+r)).$$

Le reste de $A_r$ est dans un voisinage de taille $r'=\max\{r,diam\}$ de l'ensemble fini de points $\P.x\cap B(x,R+r)$, o\`u $diam$ est le diam\`etre d'un domaine fondamental compact pour l'action de $\P$ sur $\H\cap\O_p\cap C(\LG)$ (tout cela pour la distance $d_{\O_p}$). Le volume de cette partie est donc major\'e par
$$N_{\P}(x',R+r) V_d(r'),$$
o\`u $V_d(r')$ est la constante donn\'ee par le lemme \ref{faitvol}. Au final, on obtient
$$\Vol_{\O_p}(A_r) \leqslant N_{\P}(x',R+r) V_d(r')+D N_{\P}(x',C(R+r)) \leqslant D' N_{\P}(x',C(R+r)),$$
pour une certaine constante $D'$.\\

En regroupant le tout, on arrive \`a 
$$\Vol_{\O}(B(x,R)\cap H\cap C(\LG))\leqslant D' N_{\P}(x',C(R+r)) \leqslant D' N_{\P}(x,C(R+r)+r).$$
Cela donne le r\'esultat, la condition $R\geqslant 1$ \'etant due \`a la pr\'esence du $r$.
\end{proof}

Le th\'eor\`eme \ref{volegaltopgeneral} soul\`eve la question suivante, sur laquelle nous terminerons ce texte. C'est une question qu'on peut poser de fa\c con tr\`es g\'en\'erale mais une r\'eponse dans des cas particuliers serait d\'ej\`a int\'eressante.

\begin{qu}
Soit $\O$ un ouvert proprement convexe de $\PP^n$ (strictement convexe et \`a bord $\C^1$). A t-on $\dgg = h_{vol}(C(\LG),d_{C(\LG)})$ pour tout sous-groupe discret $\G$ de $\Aut(\O)$ ?
\end{qu}

\bibliographystyle{alphaurl}

\end{document}